
\documentclass[11pt]{amsart}

\setlength{\oddsidemargin}{0.125in}
\setlength{\evensidemargin}{0.125in}
\setlength{\textwidth}{6.375in}
\setlength{\textheight}{8.5in}
\topskip 0in
\topmargin 0.375in
\footskip 0.25in

%
%





\setcounter{tocdepth}{4}



\newtheorem{thm}{Theorem}[section]
\newtheorem*{thm*}{Theorem}

\newtheorem{claim}[thm]{Claim}

\newtheorem{cor}[thm]{Corollary}

\newtheorem{lem}[thm]{Lemma}
\newtheorem*{lem*}{Lemma}
\newtheorem{mainthm}{Theorem}
\newtheorem*{mainthm*}{Theorem}
\newtheorem{maincor}[mainthm]{Corollary}

\newtheorem{prop}[thm]{Proposition}

\theoremstyle{definition}

\newtheorem*{case*}{Case}

\newtheorem{data}[thm]{Data}
\newtheorem{defn}[thm]{Definition}
\newtheorem*{defn*}{Definition}
\newtheorem{exmp}[thm]{Example}
\newtheorem*{exmp*}{Example}

\newtheorem{step}{Step}\renewcommand{\thestep}{}

\theoremstyle{remark}

\renewcommand{\thecase}{}

\newtheorem{rmk}[thm]{Remark}
\newtheorem*{rmk*}{Remark}


\makeatletter
\def\alphenumi{
  \def\theenumi{\alph{enumi}}
  \def\p@enumi{\theenumi}
  \def\labelenumi{(\@alph\c@enumi)}}
\makeatother




\makeatletter
\def\thecase{\@arabic\c@case}
\makeatother





\makeatletter
\def\thestep{\@arabic\c@step}
\makeatother

%
%

%
%

\newcount\hh
\newcount\mm
\mm=\time
\hh=\time
\divide\hh by 60
\divide\mm by 60
\multiply\mm by 60
\mm=-\mm
\advance\mm by \time
\def\hhmm{\number\hh:\ifnum\mm<10{}0\fi\number\mm}


%

\setlength{\marginparwidth}{1.0in}
\let\oldmarginpar\marginpar
\renewcommand\marginpar[1]{\-\oldmarginpar[\raggedleft\footnotesize #1]%
{\raggedright\footnotesize #1}}


\renewcommand\emptyset{\varnothing}





\newcommand\AAA{\mathbb{A}}

\newcommand\CC{\mathbb{C}}

\newcommand\HH{\mathbb{H}}

\newcommand\NN{\mathbb{N}}

\newcommand\PP{\mathbb{P}}

\newcommand\RR{\mathbb{R}}

\newcommand\VV{\mathbb{V}}


\newcommand\cB{{\mathcal{B}}}

\newcommand\cF{{\mathcal{F}}}

\newcommand\cR{{\mathcal{R}}}
\newcommand\cS{{\mathcal{S}}}

\newcommand\cU{{\mathcal{U}}}


\newcommand\fg{{\mathfrak{g}}}

\newcommand\fS{{\mathfrak{S}}}

\newcommand\fu{{\mathfrak{u}}}
\newcommand\fU{{\mathfrak{U}}}

\newcommand\fV{{\mathfrak{V}}}


\newcommand\sA{{\mathscr{A}}}
\newcommand\sB{{\mathscr{B}}}

\newcommand\sG{{\mathscr{G}}}
\newcommand\sH{{\mathscr{H}}}
\newcommand\sI{{\mathscr{I}}}

\newcommand\sL{{\mathscr{L}}}

\newcommand\sN{{\mathscr{N}}}
\newcommand\sO{{\mathscr{O}}}

\newcommand\sS{{\mathscr{S}}}

\newcommand\sU{{\mathscr{U}}}

\newcommand\sX{{\mathscr{X}}}


\newcommand\bgamma{{\boldsymbol{\gamma}}}
\newcommand\bchi{{\boldsymbol{\chi}}}

\newcommand\bpsi{{\boldsymbol{\psi}}}


\newcommand\bB{{\mathbf{B}}}

\newcommand\bC{{\mathbf{C}}}

\newcommand\bH{{\mathbf{H}}}

\newcommand\bO{{\mathbf{O}}}

\newcommand\bs{{\mathbf{s}}}

\newcommand\bzero{{\mathbf{0}}}




\newcommand\eps{\varepsilon}






\newcommand\GL{\operatorname{GL}}
\newcommand\Or{\operatorname{O}}

\newcommand\SO{\operatorname{SO}}

\DeclareMathOperator{\Sp}{Sp}

\newcommand\SU{\operatorname{SU}}
\newcommand\U{\operatorname{U}}


\newcommand\less{\setminus}



\newcommand\ad{{\operatorname{ad}}}
\newcommand\Ad{{\operatorname{Ad}}}

\newcommand\Aut{\operatorname{Aut}}

\newcommand\Center{\operatorname{Center}}

\newcommand\Conf{\operatorname{Conf}}

\newcommand\codim{\operatorname{codim}}

\newcommand\Coker{\operatorname{Coker}}

\newcommand\dist{\operatorname{dist}}

\newcommand{\esssup}{\operatornamewithlimits{ess\ sup}}

\newcommand\Fr{\operatorname{Fr}}

\newcommand\Gl{\operatorname{Gl}}

\newcommand\Hol{\operatorname{Hol}}

\newcommand\ind{\operatorname{ind}}

\newcommand\Ind{\operatorname{Index}}
\DeclareMathOperator{\Inj}{Inj}

\DeclareMathOperator{\Int}{Int}
\newcommand\Imag{\operatorname{Im}}

\newcommand\Isom{\operatorname{Isom}}

\newcommand\Ker{\operatorname{Ker}}

\DeclareMathOperator{\pt}{pt}

\newcommand\Ran{\operatorname{Ran}}

\newcommand\Riem{\operatorname{Riem}}

\newcommand\Scale{\operatorname{Scale}}

\newcommand\Span{\operatorname{Span}}

\newcommand\Stab{\operatorname{Stab}}

\newcommand\supp{\operatorname{supp}}

\newcommand\Sym{\operatorname{Sym}}

\newcommand\tr{\operatorname{tr}}

\newcommand\vol{\operatorname{vol}}


\newcommand\euclid{{\mathrm{euclid}}}

\newcommand\id{{\mathrm{id}}}

\newcommand\loc{{\mathrm{loc}}}

\newcommand\mutatis{{\emph{mutatis mutandis }}}

\newcommand\round{{\mathrm{round}}}

\newcommand\sym{{\mathrm{sym}}}



%
%


\numberwithin{equation}{section}

\usepackage{amssymb}
\usepackage{amscd}
\usepackage{bbm}
\usepackage{chemarr}
\usepackage{chemarrow}
\usepackage{graphicx}
\usepackage{hyperref}
\usepackage{mathrsfs}
\usepackage{mathtools}
\usepackage[usenames]{color}
\usepackage{paralist}
\usepackage{slashed}
\usepackage{tikz-cd}
\usepackage{url}
\usepackage{verbatim}
\usepackage{xr}
\usepackage[all,cmtip]{xy}

\newcommand{\transv}{\mathrel{\text{\tpitchfork}}}
\makeatletter
\newcommand{\tpitchfork}{%
  \vbox{
    \baselineskip\z@skip
    \lineskip-.52ex
    \lineskiplimit\maxdimen
    \m@th
    \ialign{##\crcr\hidewidth\smash{$-$}\hidewidth\crcr$\pitchfork$\crcr}
  }%
}
\makeatother


\hypersetup{pdftitle={Gluing in geometric analysis via maps of Banach manifolds with corners and applications to gauge theory}}
\hypersetup{pdfauthor={Paul M. N. Feehan and Thomas G. Leness}}

%
%

\begin{document}

\title[Gluing via maps of Banach manifolds with corners]{Gluing in geometric analysis via maps of Banach manifolds with corners and applications to gauge theory}

\author[Paul M. N. Feehan]{Paul M. N. Feehan}
\address{Department of Mathematics, Rutgers, The State University of New Jersey, 110 Frelinghuysen Road, Piscataway, NJ 08854-8019, United States}
\email{feehan@math.rutgers.edu}
\urladdr{math.rutgers.edu/$\sim$feehan}
\author[Thomas G. Leness]{Thomas G. Leness}
\address{Department of Mathematics, Florida International University, Miami, FL 33199, United States}
\email{lenesst@fiu.edu}
\urladdr{fiu.edu/$\sim$lenesst}

\date{\today{ }\hhmm}

\begin{abstract}
We describe a new approach to the problem of constructing gluing parameterizations for open neighborhoods of boundary points of moduli spaces of anti-self-dual connections over closed four-dimensional manifolds. Our approach employs general results from differential topology for $C^1$ maps of smooth Banach manifolds with corners, providing a method that should apply to other problems in geometric analysis involving the gluing construction of solutions to nonlinear partial differential equations. 
\end{abstract}




\thanks{Paul Feehan was partially supported by National Science Foundation grant DMS-1510064 and Thomas Leness was partially supported by National Science Foundation grant DMS-1510063.}

\maketitle
\tableofcontents

\section{Introduction}
\label{sec:Introduction}
In this article, we develop a new approach to the problem of constructing gluing parameterizations for open neighborhoods of boundary points of moduli spaces of anti-self-dual connections over closed four-dimensional manifolds, building on prior approaches pioneered by Taubes \cite{JaffeTaubes, TauSelfDual, TauIndef, TauFrame}) and Donaldson \cite{DonConn, DK}. Our method employs general results from differential topology for $C^1$ maps of smooth Banach manifolds with corners, providing a package that should apply to other problems in geometric analysis involving the gluing construction of solutions to nonlinear partial differential equations. Gluing problems that may be amenable to this approach are listed in Section \ref{subsec:Other_gluing_scenarios}. The case of smooth Banach manifolds with boundary is sufficient for the application in this article to gluing families of anti-self-dual connections on smooth principal $G$-bundles $P_0$ and $P_1$ over four-dimensional, oriented, smooth, Riemannian manifolds $(X_0,g_0)$ and $(X_1,g_1)$, where $G$ is compact Lie group.

As in previous work of Taubes and Donaldson, we employ a \emph{splicing map for connections}. However, we do \emph{not} use splicing in the traditional way of first splicing families of anti-self-dual connections to construct approximate anti-self-dual connections and then solving an elliptic, quasilinear, second-order partial differential equation to obtain a family of exact solutions to the  anti-self-dual equation. Rather, we use splicing to construct a smooth \emph{surjective submersion} $\cS$ from products of Banach affine spaces of Sobolev connections on $P_0$ and $P_1$ and a finite-dimensional smooth manifold of auxiliary splicing data onto the Banach affine space of all Sobolev connections on the smooth, connected-sum principal $G$-bundle $P=P_0\#P_1$ over the smooth Riemannian connected sum $(X,g) = (X_0\# X_1, g_0\# g_1)$, where the width of the neck is controlled by a small positive scale parameter $\lambda$. The composition $F^+\circ\cS$ of the splicing map $\cS$ and the self-dual curvature map $F^+$ from the Banach affine space of all Sobolev connections on $P$ into the Banach space of $\ad P$-valued self-dual two-forms on $X$ is a smooth map. As in \cite{TauSelfDual, TauIndef}, the finite-dimensional manifold of auxiliary splicing data comprises open neighborhoods of points in $X_0$ and $X_1$ used to define the connected sum $(X,g)$, a space of principal $G$-bundle gluing parameters that is isomorphic to a copy of $G$, and an interval $(0,\lambda_0)$ of \emph{scale} (or \emph{neck}) parameters, where $\lambda_0\in(0,1]$ is a small constant. When $\lambda=0$, the neck is fully pinched, the four-manifolds $X_0$ and $X_1$ are joined at a single common point, and the splicing map $\cS$ restricts to the identity on the face of a smooth Banach manifold with boundary comprising products of Banach affine spaces of Sobolev connections on $P_0$ and $P_0$ and the finite-dimensional manifold of auxiliary splicing data containing the factor $[0,\lambda_0)$. The composition $F^+\circ\cS$ extends to a $C^1$ map of smooth Banach manifolds with boundary and vanishes transversely (in the sense of Definition \ref{defn:Transversality_maps_Banach_manifolds_boundary}) at boundary points (where $\lambda=0$) corresponding to pairs of regular anti-self-dual connections (where there are no cokernel obstructions) on $P_0$ and $P_1$, respectively. We then appeal to an abstract result (Theorem \ref{mainthm:Preimage_submanifold_under_transverse_map_and_implied_embedding}) from differential topology for Banach manifolds with boundary (and which extends to Banach manifolds with corners more generally) to give a \emph{gluing map} $\bgamma$ that provides a smooth coordinate chart (Theorem \ref{mainthm:Gluing}) for the moduli space $\widetilde{M}(P,g)$ of anti-self-dual connections on $P$ on an open neighborhood of a regular boundary point.

The preceding splicing and gluing paradigm extends to the case of \emph{non-regular} boundary points by analogy with the \emph{Kuranishi method} \cite{Kuranishi} for parameterizing open neighborhoods of interior points in the moduli space of anti-self-dual connections \cite{AHS,DK} (see Theorem \ref{maincor:Gluing_HA2_non-zero}). We also construct a splicing map $\fS$ for gauge transformations that is a surjective smooth submersion onto the Banach Lie group of all gauge transformations on $P$. Gauge equivariance then yields the corresponding gluing coordinate chart for the moduli space $M(P,g)$ of gauge-equivalence classes of anti-self-dual connections on $P$ on a neighborhood of a boundary point that may be non-regular or have non-trivial isotropy in the product of Banach Lie groups of gauge transformations on $P_0$ and $P_1$ (see Theorem \ref{maincor:Gluing_HA2_and_HA0_non-zero}).

While we restrict our attention in this article for the sake of simplicity to the case where $(X_1,g_1)$ is the four-dimensional sphere $S^4$ with its standard round metric of radius one, the paradigm just outlined should generalize to many other gluing scenarios, including those listed in Section \ref{subsec:Other_gluing_scenarios}.

\subsection{Main results}
\label{subsec:Main_results}
We begin with an abstract result that lays the foundation for our approach.

\subsubsection{Transversal maps of Banach manifolds with boundary}
If $X$ is a Banach manifold with boundary, we let $\partial X \subset X$ denote the subset of its boundary points and $\Int(X) = X \less \partial X$ denote the subset of its interior points. (See Section \ref{subsubsec:Margalef-Roig_1-2} for a formal definition of the boundary of a Banach manifold.)

\begin{defn}[Transversality for maps of Banach manifolds with boundary]
\label{defn:Transversality_maps_Banach_manifolds_boundary}	
Let $f:X\to X'$ be a $C^p$ map ($p\geq 1$) of $C^p$ Banach manifolds with boundary and $X'' \subset X'$ be a $C^p$ Banach submanifold with boundary. Then $f$ is \emph{transverse to $X''$ at $x\in X$}, denoted $f\transv_x X''$, if one of the following three conditions hold:
\begin{enumerate}
\item $x \notin f^{-1}(X'')$; or
\item $x \in f^{-1}(X'')\cap\Int(X)$ and
\begin{enumerate}	
	\item\label{item:Transversality_linear_span_interior_point}
	$T_{f(x)}X' = \Ran df(x) + T_{f(x)}X''$ and
	
	\item\label{item:Transversality_closed_complement_interior_point}
	$(df(x))^{-1}(T_{f(x)}X'')$ admits a closed complement in $T_xX$; or
\end{enumerate}
\item $x \in f^{-1}(X'')\cap\partial X$ and
\begin{enumerate}	
	\item\label{item:Transversality_linear_span_boundary_point}
	$T_{f(x)}X' = \Ran d(\partial f)(x) + T_{f(x)}X''$ and
	
	\item\label{item:Transversality_closed_complement_boundary_point}
	$(d(\partial f)(x))^{-1}(T_xX'')$ admits a closed complement in $T_xX'$,
\end{enumerate}
where $\partial f \equiv f \restriction \partial X : \partial X \to X'$.
\end{enumerate}
If $f\transv_x X''$ for all $x \in X$, then $f$ is \emph{transverse to $X''$}, denoted $f\transv X''$.
\end{defn}	

If $X$ is a $C^p$ Banach manifold with boundary, then a \emph{chart} for $X$ is a triple $(V,\psi,(E,\alpha))$ comprising an open subset $V\subset X$, a $C^p$ diffeomorphism $\psi:V\to E$ onto an open subset of a closed half space $E_\alpha^+ := \{x \in E: \alpha(x) \geq 0\}$, where $E$ is a real Banach space and $\alpha:E\to\RR$ is a continuous linear function. (See Section \ref{subsubsec:Margalef-Roig_1-2} for a formal definition of a $C^p$ Banach manifold with boundary.) Following Definition \ref{defn:Margalef-Roig_3-1-10}, one says that a $C^p$ Banach submanifold with boundary $Y \subset X$ is \emph{neat} if 
\[
  \partial Y = Y \cap \partial X.
\]
The forthcoming theorem is proved in Section \ref{subsec:Margalef-Roig_7-1}.

\begin{mainthm}[Preimage of a submanifold under a transverse map and the implied embedding map]
\label{mainthm:Preimage_submanifold_under_transverse_map_and_implied_embedding}
Let $X$ and $X'$ be $C^p$ Banach manifolds with boundary ($p\geq 1$), and $X'' \subset X'$ be a neat $C^p$ Banach submanifold, and $f:X\to X'$ be a $C^p$ map, and $x_0\in f^{-1}(X'')$ be a point. If $f \transv_{x_0} X''$, then there are a chart $(V,\psi,(E,\alpha))$ for $X$ with $\psi(x_0)=0$, a closed subspace
\begin{equation}
\label{eq:Maintheorem_definition_L}
L := d\psi(x_0)\left((df(x_0))^{-1}(T_{f(x_0)}X'')\right) \subset E
\end{equation}
with closed complement in $E$ and continuous inclusion operator $\iota_L:L\to E$, and a $C^p$ embedding
\begin{equation}
\label{eq:Maintheorem_preimage_submanifold_under_transverse_map_and_implied_embedding}
g \equiv \psi^{-1}\circ\iota_L  \restriction \psi(V)\cap L : \psi(V)\cap L \to \psi^{-1}(\psi(V)\cap L) \subset X
\end{equation}
from the relatively open subset $\psi(V)\cap L \subset E_\alpha^+$ onto a $C^p$ submanifold $\psi^{-1}(\psi(V)\cap L) \subset X$. Moreover, the following hold:
\begin{enumerate}
\item
\label{item:Maintheorem_Margalef-Roig_4-2-1_submanifold_set}
$\psi^{-1}(\psi(V)\cap L) = V\cap f^{-1}(X'')$, and
\item
\label{item:Maintheorem_Margalef-Roig_4-2-1_submanifold_tangent spaces}
$T_x(f^{-1}(X'')) = (df(x))^{-1}(T_{f(x)}X'') = (d\psi(x))^{-1}L$, for all $x \in V\cap f^{-1}(X'')$.  
\end{enumerate}
Finally,  $f^{-1}(X'')\cap V$ is a neat $C^p$ Banach submanifold of $V$.
\end{mainthm}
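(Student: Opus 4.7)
The plan is to reduce the statement to a normal form for $f$ near $x_0$ and then apply the inverse function theorem for $C^p$ maps of Banach half-spaces. First, using that $X''\subset X'$ is neat, I would invoke the local structure theorem for neat submanifolds from Margalef--Roig Section 3 to obtain a chart $(V',\psi',(E',\alpha'))$ at $f(x_0)$ with $\psi'(f(x_0))=0$, together with a topological direct-sum decomposition $E'=F'\oplus G'$ into closed subspaces such that $\psi'(V'\cap X'') = \psi'(V')\cap F'$ and such that the defining functional $\alpha'$ depends only on the $F'$-factor; this last property is precisely the neatness of $X''$. Let $\pi_{G'}:E'\twoheadrightarrow G'$ be the associated continuous projection. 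Take any initial chart $(\tilde V,\tilde\psi,(E,\alpha))$ at $x_0$ with $\tilde\psi(x_0)=0$ and $f(\tilde V)\subset V'$, and define the auxiliary $C^p$ map
\[
h := \pi_{G'}\circ\psi'\circ f\circ\tilde\psi^{-1} : \tilde\psi(\tilde V) \cap E_\alpha^+ \to G',
\]
so that $h^{-1}(0) = \tilde\psi(\tilde V\cap f^{-1}(X''))$ and $\Ker dh(0) = d\tilde\psi(x_0)\bigl((df(x_0))^{-1}(T_{f(x_0)}X'')\bigr) =: L_0$. The transversality hypothesis of Definition~\ref{defn:Transversality_maps_Banach_manifolds_boundary} translates directly into the surjectivity of $dh(0):E\to G'$, the existence of a closed complement $K\subset E$ to $L_0$, and---when $x_0\in\partial X$---into the surjectivity of $dh(0)\restriction\ker\alpha:\ker\alpha\to G'$ as well.

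The key construction is a straightening diffeomorphism. With $K$ a closed complement of $L_0$ in $E$ such that $dh(0)\restriction K:K\to G'$ is a Banach space isomorphism, define
\[
\eta(v) := \pi_{L_0}(v) + \bigl(dh(0)|_K\bigr)^{-1}\bigl(h(v)\bigr).
\]
Then $\eta(0)=0$ and $d\eta(0)=\id_E$, and the inverse function theorem (in its Banach half-space form when $x_0\in\partial X$) produces $\eta$ as a $C^p$ diffeomorphism of open neighborhoods of $0$ in $E_\alpha^+$. The new chart is then $\psi := \eta\circ\tilde\psi$ on $V := \tilde\psi^{-1}(\Dom\eta)$, and $L$ is the closed subspace $L_0\subset E$. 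By construction $h\circ\eta^{-1}$ coincides with the projection onto $K$ composed with the isomorphism $dh(0)|_K$, so $\psi(V\cap f^{-1}(X'')) = \psi(V)\cap L$.

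The main obstacle is the boundary case: one must simultaneously arrange that $\eta$ restricts to a diffeomorphism between relatively open subsets of $E_\alpha^+$ and that $\psi(V)\cap L$ is relatively open in $E_\alpha^+$. The first requires choosing the complement $K$ inside $\ker\alpha$, so that $\alpha\circ\eta^{-1}$ factors through $\pi_{L_0}$; the second is then automatic. That such a choice of $K$ exists uses both the neatness compatibility of $\alpha'$ with the splitting $E'=F'\oplus G'$ established in the first stage, and the boundary transversality condition (c)(i), which forces $L_0\not\subset\ker\alpha$. Indeed, if $L_0\subset\ker\alpha$ then $dh(0)|_{\ker\alpha}$ would factor through $\ker\alpha/L_0 \hookrightarrow E/L_0 \cong G'$, giving an image of codimension one in $G'$ and contradicting surjectivity. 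Hence $\alpha|_{L_0}\neq 0$, and one can complement $L_0$ by a subspace of the codimension-one hyperplane $\ker\alpha$.

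Once the chart is in hand the verifications are essentially formal. Item \eqref{item:Maintheorem_Margalef-Roig_4-2-1_submanifold_set} is the identity $V\cap f^{-1}(X'') = \psi^{-1}(\psi(V)\cap L)$ established above. Item \eqref{item:Maintheorem_Margalef-Roig_4-2-1_submanifold_tangent spaces} follows by differentiating this identity at $x\in V\cap f^{-1}(X'')$ and using the chain rule relating $dh(x)$ and $df(x)$ through the isomorphisms $d\psi'(f(x))$ and $d\tilde\psi(x)$ together with the projection $\pi_{G'}$. The embedding property of $g=\psi^{-1}\circ\iota_L$ is immediate because $\iota_L:L\to E$ is a closed topological embedding (as $L$ is complemented in $E$) and $\psi^{-1}$ is a diffeomorphism. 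Finally, neatness of $f^{-1}(X'')\cap V$ in $V$ reduces under the straightened chart $\psi$ to the elementary identity $L\cap\partial E_\alpha^+ = \partial(L\cap E_\alpha^+)$, which holds precisely because $\alpha$ factors through the projection onto $L$ and $\alpha|_L\neq 0$ in the boundary case.
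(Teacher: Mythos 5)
Your proposal is correct, and its first stage is the same as the paper's: both reduce the problem to the auxiliary map $h=\pi_{G'}\circ\varphi'\circ f$ built from a chart adapted to the neat submanifold, using neatness (via $\mu=c\lambda'\circ\iota_{F'}$ as in Lemma \ref{lem:Half_plane_neat_submanifold_half_plane}, equivalently $F_\mu'^+=F'\cap E_{\lambda'}'^+$ as in Lemma \ref{lem:Margalef-Roig_7-1-13}) to guarantee that $h^{-1}(0)$ is exactly the local preimage of $X''$, and translating $f\transv_{x_0}X''$ into surjectivity and splitting conditions on $dh(0)$. You diverge in the second stage: the paper quotes the submersion characterization (Proposition \ref{prop:Margalef-Roig_4-1-13}) together with Theorem \ref{mainthm:Preimage_point_under_submersion_and_implied_embedding} to produce the chart, and it settles the final neatness assertion by citing Corollary \ref{cor:Margalef-Roig_7-1-20}; you instead build the straightening chart by hand via $\eta=\pi_L+(dh(0)\restriction K)^{-1}\circ h$ and the half-space inverse mapping theorem (Theorem \ref{thm:Margalef-Roig_2-2-4}), after arranging $K\subset\Ker\alpha$ so that $\alpha\circ\eta=\alpha$, and you then read off both Item \eqref{item:Maintheorem_Margalef-Roig_4-2-1_submanifold_set} and the neatness of $V\cap f^{-1}(X'')$ directly from the straightened chart together with $\alpha\restriction L\neq 0$. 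Your codimension-one argument showing that the boundary span condition forces $L\not\subset\Ker\alpha$ is correct and is precisely what makes the choice $K\subset\Ker\alpha$ (and hence the boundary hypothesis of Theorem \ref{thm:Margalef-Roig_2-2-4}) available; it also makes the neatness conclusion self-contained rather than imported from Corollary \ref{cor:Margalef-Roig_7-1-20}. One detail you should make explicit: at a boundary point, Definition \ref{defn:Transversality_maps_Banach_manifolds_boundary} supplies a closed complement of $(d(\partial f)(x_0))^{-1}(T_{f(x_0)}X'')$, i.e.\ of $L\cap\Ker\alpha$ inside the boundary tangent space, not directly a complement of $L$ in $E$; since $\alpha\restriction L\neq 0$, such a complement $K\subset\Ker\alpha$ automatically satisfies $E=L\oplus K$ (algebraically, hence topologically by the closed graph theorem), and $dh(0)\restriction K$ is then an isomorphism by the open mapping theorem, which is exactly the $K$ your map $\eta$ requires. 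With that spelled out, your route trades the paper's appeal to the submersion preimage machinery for an explicit inverse-function-theorem normal form, which is somewhat longer but more elementary and keeps the boundary bookkeeping ($\alpha\circ\eta=\alpha$) completely transparent.
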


\begin{rmk}[Comment on extensions to Banach manifolds with corners]
Although Theorem \ref{mainthm:Preimage_submanifold_under_transverse_map_and_implied_embedding} is phrased in terms of maps of Banach manifolds with boundary, it can be easily extended to the setting of maps of Banach manifolds with corners by drawing on the technical generalizations described by Margalef Roig and Outerelo Dom{\'\i}nguez \cite{Margalef-Roig_Outerelo-Dominguez_differential_topology}. Such extensions are required in many applications, including to the development of gluing theory for anti-self-dual connections and $\SO(3)$ monopoles over four-dimensional manifolds in \cite{FL5,FL7,FL8}.
\end{rmk}

\begin{rmk}[Smooth maps of finite-dimensional manifolds that are transverse to a submanifold without boundary]
When $X$ and $X'$ are finite-dimensional manifolds, $X'$ and $X''$ are without boundary, and $f \transv X''$ in the sense of Definition \ref{defn:Transversality_maps_Banach_manifolds_boundary} (in other words, the smooth maps $f:\Int(X)\to X'$ and $\partial f:\partial X\to X'$ are transverse to $X''$), then Theorem \ref{mainthm:Preimage_submanifold_under_transverse_map_and_implied_embedding} implies that $f^{-1}(X'')$ is a smooth manifold with boundary
\[
  \partial(f^{-1}(X'')) = f^{-1}(X'')\cap\partial X
\]
and the codimension of $f^{-1}(X'')$ in $X$ is equal to the codimension of $X''$ in $X'$. See Guillemin and Pollack \cite[p. 60]{Guillemin_Pollack} for a statement and proof of this result when $X$ and $X'$ are embedded smooth submanifolds of Euclidean space; their statement is quoted here as Theorem \ref{thm:Preimage_theorem_domain_manifold_with_boundary}. 
\end{rmk}

Note that if $\partial f$ obeys the linear span condition in Item \eqref{item:Transversality_linear_span_boundary_point} of Definition \ref{defn:Transversality_maps_Banach_manifolds_boundary} at a boundary point $x\in\partial X$,
\begin{equation}
\label{eq:Linear_span_boundary_map_boundary_point}  
  T_{f(x)}X' = \Ran d(\partial f)(x) + T_{f(x)}X'',
\end{equation}
then we necessarily also have that $f$ obeys the linear span condition
\begin{equation}
\label{eq:Linear_span_map_boundary_point}  
T_{f(x)}X' = \Ran df(x) + T_{f(x)}X'',
\end{equation}
since the continuous linear operator $d(\partial f)(x): T_x(\partial X) \to T_{f(x)}X''$ can be expressed as the composition of the continuous linear operator $df(x): T_xX \to T_{f(x)}X''$ and the continuous embedding of Banach spaces $T_x(\partial X) \subset T_xX$. Example \ref{exmp:X_half_plane_X_prime_plane_f_transv_Y_but_partial_f_not_transv_Y} gives a simple illustration of how Theorem \ref{thm:Preimage_theorem_domain_manifold_with_boundary} can fail to hold when condition \eqref{eq:Linear_span_map_boundary_point} is satisfied at $x\in\partial X$ but not condition \eqref{eq:Linear_span_boundary_map_boundary_point}.

More generally, when $X'$ is a manifold with boundary and $X''\subset X'$ is submanifold with boundary but is not neat, Example \ref{exmp:X_and_X_prime_half_planes_f_transv_Y_but_partial_f_not_transv_Y_and_Y_not_neat} gives a simple illustration of how more general results, such as Theorem \ref{thm:Margalef-Roig_proposition_4-2-1} or Corollary \ref{cor:Margalef-Roig_7-1-20} (that allow $X'$ and $X''$ to have boundary) can fail to hold when $f\transv X''$ in the sense of Definition \ref{defn:Transversality_maps_Banach_manifolds_boundary} but $X''$ is not neat. 

\begin{rmk}[Interpretation of implied embeddings as gluing maps]
\label{rmk:theorem_preimage_submanifold_under_transverse_map_and_implied_embedding_gluing_map}  
The maps $g=\psi^{-1}\circ\iota_L \restriction \psi(V)\cap L$ in \eqref{eq:Maintheorem_preimage_submanifold_under_transverse_map_and_implied_embedding} or $g=\varphi^{-1}\circ\iota_K \restriction \varphi(U)\cap K$ in \eqref{eq:Maintheorem_preimage_point_under_submersion_and_implied_embedding} arise as the \emph{gluing map} when we apply Theorems \ref{mainthm:Preimage_submanifold_under_transverse_map_and_implied_embedding} or  \ref{mainthm:Preimage_point_under_submersion_and_implied_embedding} to prove Theorem \ref{mainthm:Gluing} and Corollaries \ref{maincor:Gluing_smooth}, \ref{maincor:Gluing_HA2_non-zero}, \ref{maincor:Gluing_HA2_and_HA0_non-zero}, and \ref{maincor:Gluing_HA2_and_HA0_non-zero_and_non-flat_metric}.
\end{rmk}

While Theorem \ref{mainthm:Preimage_submanifold_under_transverse_map_and_implied_embedding} allows considerable flexibility in application to the construction of gluing maps, the forthcoming simpler statement (Theorem \ref{mainthm:Preimage_point_under_submersion_and_implied_embedding}) should suffice for some applications.

\begin{defn}[Submersion of Banach manifolds with boundary]
\label{defn:Submersion_Banach_manifolds_boundary}	
Let $f:X\to X'$ be a $C^p$ map ($p\geq 1$) of $C^p$ Banach manifolds with boundary and $x\in X$ be a point. Then $f$ is a \emph{submersion at $x\in X$}, denoted $f\transv_x \pt$, if the following hold:
\begin{enumerate}
\item\label{item:Relatively_open_neighborhood_boundary_point_maps_into_boundary}
  There is an open neighborhood $V_x\subset X$ of $x$ such that $f (V_x\cap \partial X) \subset \partial X'$ and
  
\item\label{item:Surjectivity_linearization}
  $T_{f(x)}X' = \Ran df(x)$ and
	
\item\label{item:Kernel_closed_complement}
  $\Ker df(x)$ admits a closed complement in $T_xX$.
\end{enumerate}
If $f\transv_x \pt$ for all $x \in X$, then $f$ is a \emph{submersion}, denoted $f\transv \pt$.
\end{defn}	

\begin{mainthm}[Preimage of a point under a submersion and the implied embedding map]
\label{mainthm:Preimage_point_under_submersion_and_implied_embedding}
Let $X$ and $X'$ be $C^p$ Banach manifolds ($p\geq 1$) with boundary, $f:X\to X'$ be a $C^p$ map, $x_0'\in X'$ be a point, and $x_0\in f^{-1}(x_0')$. If $f$ is a submersion at $x_0$, then there are a chart $c = (U,\varphi,(E,\lambda))$ for $X$ with $\varphi(x_0)=0$, a closed subspace,
\begin{equation}
\label{eq:Maintheorem_definition_K}
  K := d\varphi(x_0)\left((df(x_0))^{-1}(0)\right) \subset E,
\end{equation}
with closed complement in $E$ and continuous inclusion operator $\iota_K:K\to E$, and a $C^p$ embedding
\begin{equation}
\label{eq:Maintheorem_preimage_point_under_submersion_and_implied_embedding}
  g \equiv \varphi^{-1}\circ\iota_K  \restriction \varphi(U)\cap K : \varphi(U)\cap K \to \varphi^{-1}(\varphi(U)\cap K) \subset X
\end{equation}
from the relatively open subset $\varphi(U)\cap K \subset E_\lambda^+$ onto the $C^p$ Banach submanifold $\varphi^{-1}(\varphi(U)\cap K) \subset X$. Moreover, the following hold:
\begin{enumerate}
\item
\label{item:Maintheorem_Margalef-Roig_4-1-13_submanifold_set}
$\varphi^{-1}(\varphi(U)\cap K) = U\cap f^{-1}(x_0')$.
\item
\label{item:Maintheorem_Margalef-Roig_4-1-13_submanifold_tangent spaces}
$T_x(f^{-1}(x_0')) = (df(x))^{-1}(0) = (d\varphi(x))^{-1}K$, for all $x \in U\cap f^{-1}(x_0')$.  
\end{enumerate}
Finally, if at every point $x \in f^{-1}(x_0')$ there is an open neighborhood $V_x$ of $x$ in $X$ such that $f(V_x \cap \partial X) \subset \partial X'$ and $f$ is a submersion at $x$, then $f^{-1}(x_0')$ is a $C^p$ Banach submanifold of $X$. 
\end{mainthm}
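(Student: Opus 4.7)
The plan is to deduce the interior case from Theorem~\ref{mainthm:Preimage_submanifold_under_transverse_map_and_implied_embedding} applied with $X'' = \{x_0'\}$, and to handle the boundary case directly via the Banach-space inverse function theorem after a Hadamard-type factorization.

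If $x_0 \in \Int(X)$ I first show $x_0' = f(x_0) \in \Int(X')$: otherwise, in any chart $(U',\varphi',(E',\lambda'))$ for $X'$ around $x_0'$, the real-valued function $\lambda'\circ\varphi'\circ f$ would attain a local minimum of zero at the interior point $x_0$, giving $d(\lambda'\circ\varphi'\circ f)(x_0) = 0$ and contradicting the surjectivity of $df(x_0)$. Consequently $\{x_0'\}$ is a zero-dimensional neat $C^p$ Banach submanifold of $X'$ (since $\partial\{x_0'\} = \emptyset = \{x_0'\} \cap \partial X'$), and items~\ref{item:Surjectivity_linearization} and~\ref{item:Kernel_closed_complement} of Definition~\ref{defn:Submersion_Banach_manifolds_boundary} coincide with the transversality hypotheses in item~(2) of Definition~\ref{defn:Transversality_maps_Banach_manifolds_boundary}. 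Theorem~\ref{mainthm:Preimage_submanifold_under_transverse_map_and_implied_embedding} applied to $f$ and $\{x_0'\}$ then delivers the chart, the subspace $K$, and the embedding $g$ of the statement, together with its two numbered conclusions.

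For the boundary case $x_0 \in \partial X$, condition~\ref{item:Relatively_open_neighborhood_boundary_point_maps_into_boundary} of Definition~\ref{defn:Submersion_Banach_manifolds_boundary} forces $x_0' \in \partial X'$, whence $\{x_0'\}$ is not neat and Theorem~\ref{mainthm:Preimage_submanifold_under_transverse_map_and_implied_embedding} no longer applies directly. I would instead work in charts $(U,\varphi,(E,\lambda))$ for $X$ at $x_0$ and $(U',\varphi',(E',\lambda'))$ for $X'$ at $x_0'$, shrinking $U$ so that $f(U \cap \partial X) \subset U' \cap \partial X'$. The local representative $\tilde f := \varphi'\circ f\circ\varphi^{-1}$ then satisfies $\lambda'\circ\tilde f = 0$ on $\{\lambda = 0\} \cap \varphi(U)$, and a Hadamard-type argument (splitting $E = \Ker\lambda \oplus \RR\cdot e$ with $\lambda(e)=1$ and integrating along the $e$-direction) produces a continuous function $h$ with $\lambda'\circ\tilde f = h\cdot\lambda$. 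Differentiating at $0$ yields $\lambda'\circ T = h(0)\cdot\lambda$ with $T := d\tilde f(0)$; surjectivity of $T$ combined with $\lambda' \neq 0$ forces $h(0) \neq 0$, and the inclusion $\tilde f(\{\lambda \geq 0\}) \subset \{\lambda' \geq 0\}$ then forces $h(0) > 0$. In particular $K := \Ker T \subset \Ker\lambda$, and if $E_1$ is any closed complement of $K$ in $E$ (guaranteed by item~\ref{item:Kernel_closed_complement}), then $T_1 := T|_{E_1} : E_1 \to E'$ is a Banach-space isomorphism by the open mapping theorem.

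With the decomposition $E = K \oplus E_1$, define $\Psi : \varphi(U) \to E$ by $\Psi(k,e_1) := (k,\, T_1^{-1}\tilde f(k,e_1))$. Since $d\Psi(0) = \id_E$, the inverse function theorem makes $\Psi$ a local $C^p$ diffeomorphism, and the identity $\lambda\circ\Psi(k,e_1) = \lambda'(\tilde f(k,e_1)) = h(k,e_1)\cdot\lambda(k,e_1)$ with $h(0) > 0$ shows that $\Psi$ preserves the half-space structure near $0$. Thus $\tilde\varphi := \Psi\circ\varphi$ is a valid chart for $X$ at $x_0$; since $\tilde f(k,e_1) = 0$ if and only if the $E_1$-component of $\Psi(k,e_1)$ vanishes, we have $\tilde\varphi(U \cap f^{-1}(x_0')) = \tilde\varphi(U) \cap K$, and $d\tilde\varphi(x_0) = d\varphi(x_0)$ gives $K = d\tilde\varphi(x_0)((df(x_0))^{-1}(0))$, furnishing the embedding $g$ and conclusions~\ref{item:Maintheorem_Margalef-Roig_4-1-13_submanifold_set} and~\ref{item:Maintheorem_Margalef-Roig_4-1-13_submanifold_tangent spaces}. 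The global claim follows by covering $f^{-1}(x_0')$ with such charts. The principal technical obstacle is the boundary case—specifically, ensuring that $\Psi$ honours the half-space structure—and the Hadamard factorization $\lambda'\circ\tilde f = h\cdot\lambda$ with $h(0) > 0$, which packages both the compatibility hypothesis~\ref{item:Relatively_open_neighborhood_boundary_point_maps_into_boundary} and the surjectivity of $T$, is what makes this possible.
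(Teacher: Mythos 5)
Your argument is correct in substance, but it takes a genuinely different route from the paper. The paper's proof is short because it delegates the real work to the quoted local normal form for submersions of manifolds with boundary (the equivalence of Items \eqref{item:Margalef-Roig_proposition_4-1-13_a} and \eqref{item:Margalef-Roig_proposition_4-1-13_c} in Proposition \ref{prop:Margalef-Roig_4-1-13}): one gets charts in which $f$ is represented by a continuous linear surjection $q$, and both numbered conclusions follow by chart manipulation, with $K=q^{-1}(0)$. You instead reconstruct that normal form by hand at a boundary point: the Hadamard factorization $\lambda'\circ\tilde f=h\cdot\lambda$ with $h(0)>0$ encodes the boundary-compatibility condition \eqref{item:Relatively_open_neighborhood_boundary_point_maps_into_boundary}, forces $\Ker\,d\tilde f(0)\subset\Ker\lambda$, and your map $\Psi(k,e_1)=(k,T_1^{-1}\tilde f(k,e_1))$ together with the half-space inverse function theorem (Theorem \ref{thm:Margalef-Roig_2-2-4}) produces a new chart $\tilde\varphi=\Psi\circ\varphi$ in which the local representative of $f$ is exactly the linear map $(k,u)\mapsto T_1u$ --- precisely the commutative diagram the paper imports. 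What your route buys is self-containedness and extra geometric information (near such a boundary point $f^{-1}(x_0')$ lies in $\partial X$, since $K\subset\Ker\lambda$); what it costs is length, and you should note explicitly that conclusion \eqref{item:Maintheorem_Margalef-Roig_4-1-13_submanifold_tangent spaces} at \emph{every} $x\in U\cap f^{-1}(x_0')$ (not just at $x_0$) comes from the linearity of the representative in the chart $\tilde\varphi$, exactly as in the paper's diagram chase.

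Two repairs are needed before splicing this in. First, your interior case cites Theorem \ref{mainthm:Preimage_submanifold_under_transverse_map_and_implied_embedding}, but in this paper that theorem is \emph{proved from} Theorem \ref{mainthm:Preimage_point_under_submersion_and_implied_embedding}, so the citation is circular in context; replace it either by the boundaryless preimage theorem quoted as Theorem \ref{thm:Preimage_theorem_all_manifolds_without_boundary_AMR} (applied to $f:\Int(X)\to\Int(X')$, after your correct observation that surjectivity of $df(x_0)$ forces $f(x_0)\in\Int(X')$), or simply by running your own $\Psi$-construction with the Hadamard step omitted. Second, two small slips: the identity is $\lambda\circ\Psi=h(0)^{-1}h\cdot\lambda$ rather than $\lambda'\circ\tilde f$ itself (harmless, since only positivity of the factor near $0$ is used), and the Hadamard integration requires shrinking $\varphi(U)$ to a product-type neighborhood of $0$ in $\Ker\lambda\oplus\RR e$ so that the segments $(k,st)$, $s\in[0,1]$, stay in the domain.
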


\subsubsection{Gluing maps for anti-self-dual connections}
Let $(X,g)$ denote a closed, connected, four-dimensional, oriented, smooth Riemannian manifold, $G$ denote a Lie group, and $P$ denote a smooth principal $G$-bundle over $X$. Let $p\in (2,\infty)$ and $\sA(P)$ denote the affine space of all $W^{1,p}$ connections on $P$, and $\Aut(P)$ denote the Banach Lie group of all $W^{2,p}$ automorphisms of $P$, and $\sB(P) = \sA(P)/\Aut(P)$ denote the Banach stratified quotient space. As customary, we write $\ad P := P\times_{\Ad}\fg$, where $\Ad:G\to\Aut(\fg)$ is the adjoint representation of $G$ on its Lie algebra $\fg$. If $A$ is a connection on $P$, we let $F_A \in \Omega^2(X;\ad P)$ denote its curvature and $F_A \in \Omega^+(X;\ad P)$ denote its self-dual component with respect to the splitting $\Omega^2(X) = \Omega^+(X)\oplus\Omega^-(X)$ of two-forms into their self-dual and anti-self-dual components with respect to the Riemannian metric, $g$. If $A$ is a smooth anti-self-dual connection on $P$, we let $H_A^\bullet(X;\ad P)$ denote the cohomology groups of the elliptic deformation complex
\begin{equation}
\label{eq:Elliptic_deformation_complex_ASD_equation}  
\Omega^0(X;\ad P) \xrightarrow{d_A} \Omega^1(X;\ad P) \xrightarrow{d_A^+} \Omega^{2,+}(X;\ad P)
\end{equation}
and $\bH_A^\bullet(X;\ad P)$ denote their harmonic representatives, so
\begin{subequations}
\label{eq:Harmonic_cohomology_elliptic_deformation_complex_ASD_equation}    
  \begin{align}
  \label{eq:H0}  
  \bH_A^0(X;\ad P) &:= \Ker d_A \cap \Omega^0(X;\ad P),
  \\
  \label{eq:H1}    
  \bH_A^1(X;\ad P) &:= \Ker(d_A^++d_A^*)\cap \Omega^1(X;\ad P),
  \\
  \label{eq:H2}  
  \bH_A^2(X;\ad P) &:= \Ker d_A^{+,*}\cap \Omega^+(X;\ad P).
\end{align}
\end{subequations}
Recall (see \cite[Equation (4.2.28)]{DK}) that the expected dimension $\dim M(P,g)$ of the \emph{moduli space of anti-self-dual connections} on $P$ (at a point $[A]$),
\begin{equation}
\label{eq:Moduli_space_asd_connections}
  M(P,g) := \{A \in \sA(P): F^{+,g}(A) = 0\}/\Aut(P).
\end{equation}
is given by minus the index of the elliptic deformation complex \eqref{eq:Elliptic_deformation_complex_ASD_equation}, namely
\begin{equation}
\label{eq:Index_elliptic_deformation_complex_ASD_equation}
  s_A(X;\ad P) := h_A^1(X;\ad P) - h_A^0(X;\ad P) - h_A^2(X;\ad P),
\end{equation}
where the quantities $h_A^\bullet(X;\ad P)$ denote the dimensions of the cohomology groups $H_A^\bullet(X;\ad P)$.

Our main result here, namely Theorem \ref{mainthm:Gluing}, complements earlier results due to Taubes (for example, \cite[Theorem 1.1]{TauSelfDual}, \cite[Theorems 1.1 and 1.2]{TauIndef}, and \cite[Proposition 8.2]{TauFrame}), Donaldson and Kronheimer (for example, \cite[Theorems 7.2.62, 8.2.3, and 8.2.4]{DK}), and Mrowka \cite[Main Theorem]{MrowkaThesis}. The novel aspect of our article is the fact that Theorem \ref{mainthm:Gluing} can be derived with relative ease from Theorem \ref{mainthm:Preimage_submanifold_under_transverse_map_and_implied_embedding} and the method extended to cover many other gluing scenarios, as described in Section \ref{subsec:Other_gluing_scenarios}. If $r\in(0,\infty)$ is a constant that is less than or equal to the injectivity radius of a Riemannian manifold $(X,g)$ and $x\in X$ is a point, then we let $B_r(x)$ denote the open geodesic ball in $X$ with center $x$ and radius $r$. We shall first prove a gluing result (see Theorem \ref{mainthm:Gluing}) under several simplifying hypotheses that we then successively relax (see Corollaries \ref{maincor:Gluing_HA2_non-zero},  \ref{maincor:Gluing_HA2_and_HA0_non-zero}, and  \ref{maincor:Gluing_HA2_and_HA0_non-zero_and_non-flat_metric}). Let $g_\round$ denote the standard, round metric of radius one on the sphere $S^4 = \{x\in\RR^5:|x|=1\}$.

\begin{mainthm}[Existence of local gluing chart near a boundary point of the moduli space of anti-self-dual connections]
\label{mainthm:Gluing}
Let $(X,g)$ denote a closed, connected, four-dimensional, oriented, smooth Riemannian manifold, $G$ denote a compact Lie group, $P_0$ denote a smooth principal $G$-bundle over $X$, and $P_1$ denote a smooth principal $G$-bundle over $S^4$. Let $A_{0\flat}$ be a smooth anti-self-dual connection over $(X,g)$ and $A_{1\flat}$ be a smooth centered anti-self-dual connection on $P_1$ over $(S^4,g_\round)$. Assume further that
\begin{gather}
  \label{eq:HA2_zero}
  \bH_{A_{0\flat}}^2(X;\ad P_0) = 0,
  \\
  \label{eq:HA0_zero}
  \bH_{A_{0\flat}}^0(X;\ad P_0) = 0,
  \\
  \label{eq:Riemg_zero}
  g \text{ is conformally flat on } B_{\varrho_0}(x_{0\flat}),
\end{gather}
for some point $x_{0\flat}\in X$ and constant $\varrho_0\in(0,1]$, where $\Riem(g)$ denotes the Riemann curvature tensor of the metric, $g$. Then there is a constant $\delta \in (0,\varrho_0]$ with the following significance. Let $p\in(2,\infty)$ be a constant and
\begin{align*}
  \bC_\delta(A_{0\flat}) &:= \left\{A_0 \in A_{0\flat} + \Ker d_{A_{0\flat}}^*\cap W^{1,p}(T^*X\otimes \ad P_0): F^{+,g}(A_0) = 0 \right.
  \\
  &\qquad \left.\text{and } \|A_0-A_{0\flat}\|_{W_{A_{0\flat}}^{1,2}(X)} < \delta \right\},
  \\
  \bC_\delta^\diamond(A_{1\flat}) &:= \left\{A_1 \in A_{1\flat} + \Ker d_{A_{1\flat}}^*\cap W^{1,p}(S^4;T^*S^4\otimes \ad P_1): F^{+,g_\round}(A_1) = 0 \right.
  \\
                         &\qquad\text{ and }\ (\Center[A_1],\Scale[A_1]) = (\bzero,1) \in \RR^4\times\RR^+
  \\
                         &\qquad\text{ and } \left.\|A_1-A_{1\flat}\|_{W_{A_{1\flat}}^{1,2}(X)} < \delta \right\}.
\end{align*}
Let $P\cong P_0\#_{(x_0,\lambda)} P_1$ denote the smooth principal $G$-bundle over the connected sum $X\#_{(x_0,\lambda)} S^4 \cong X$ defined by the fixed parameters in Data \ref{data:Fixed_parameters_for_definition_splicing_and_unsplicing_maps} and the triples $(\rho,x_0,\lambda) \in \Gl_{x_{0\flat}}\times B_\delta(x_{0\flat})\times (0,\lambda_0)$. Then there is a \emph{gluing map},
\begin{equation}
\label{eq:Gluing_map}
\bgamma: \bC_\delta(A_{0\flat})\times \bC_\delta^\diamond(A_{1\flat}) \times \Gl_{x_{0\flat}} \times B_\delta(x_{0\flat}) \times (0,\lambda_0) \to \sA(P),
\end{equation}
where
\begin{equation}
\label{eq:Bundle_gluing_parameters}
  \Gl_{x_{0\flat}} := \Isom_G(P_0|_{x_{0\flat}}, P_1|_s) \cong G,
\end{equation}
with the following properties:
\begin{enumerate}
\item\label{item:Gluing_map_is_C1_embedding} The map $\bgamma$ is a $C^1$ embedding.
  
\item\label{item:Gluing_map_image_is_open_subset_moduli_space_ASD_connections} The image of $\bgamma$ is an open subset of the moduli space of anti-self-dual connections on $P$:
\[
  \Imag\bgamma \subset M(P,g).
\]

\item\label{item:Gluing_map_extension_homeomorphism_manifolds_with_boundary} The map $\bgamma$ extends to a continuous embedding of manifolds with boundary,
\begin{multline}
\label{eq:Gluing_map_extended}
\bgamma: \bC_\delta(A_{0\flat})\times \bC_\delta^\diamond(A_{1\flat}) \times \Gl_{x_{0\flat}} \times B_\delta(x_{0\flat}) \times [0,\lambda_0)
\\
\to \sA(P)\sqcup \left(\sA(P_0)\times \sA(P_1) \times \Gl_{x_{0\flat}} \times B_\delta(x_{0\flat}) \right),
\end{multline}
when the codomain has the Uhlenbeck topology \cite[Section 4.4.1]{DK}.

\item\label{item:Gluing_map_extension_image_open_subset_bubbletree_compactification} The image of $\bgamma$ in \eqref{eq:Gluing_map_extended} is an open neighborhood of the boundary portion,
\[
  \bC_\delta(A_{0\flat})\times \bC_\delta^\diamond(A_{1\flat}) \times \Gl_{x_{0\flat}}\times B_\delta(x_{0\flat}) \times \{0\},
\]
in the bubble tree compactification $\widehat M(P,g)$ of $M(P,g)$.
\end{enumerate}
\end{mainthm}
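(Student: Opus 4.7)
The plan is to realize $\Imag(\bgamma)$ as the zero set of a composition $\Phi := F^+\circ\cS$ and then to invoke Theorem \ref{mainthm:Preimage_point_under_submersion_and_implied_embedding} at an appropriate boundary point. I first construct the splicing map as a $C^1$ map of Banach manifolds with boundary,
\[
  \cS : \cU_0\times\cU_1\times\Gl_{x_{0\flat}}\times B_\delta(x_{0\flat})\times[0,\lambda_0) \longrightarrow \cA,
\]
where $\cU_i\subset\sA(P_i)$ is an open Coulomb-gauge neighborhood of $A_{i\flat}$ consisting of all $W^{1,p}$ connections (not only anti-self-dual ones), and $\cA$ is a Banach manifold-with-boundary whose interior stratum ($\lambda>0$) is $\sA(P)$ and whose boundary face ($\lambda=0$) is the product $\sA(P_0)\times\sA(P_1)\times\Gl_{x_{0\flat}}\times B_\delta(x_{0\flat})$. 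For $\lambda>0$, $\cS$ is the standard Taubes--Donaldson cutoff-and-splice construction on annuli of scale $\sqrt\lambda$ about $x_0\in X$ and the south pole of $S^4$; at $\lambda=0$ it is the natural inclusion onto the boundary of $\cA$. The conformal flatness hypothesis \eqref{eq:Riemg_zero} is used so that stereographic projection identifies $B_{\varrho_0}(x_{0\flat})\subset(X,g)$ with an open subset of $(S^4,g_\round)$ up to a conformal factor, avoiding a metric-distortion error on the neck.

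Setting $\Phi := F^+\circ\cS$, which takes values in an appropriate Banach manifold-with-boundary of $L^p$ self-dual two-forms, I verify that $\Phi$ is a submersion at the boundary point $m_\flat := (A_{0\flat},A_{1\flat},\rho_0,x_{0\flat},0)$ in the sense of Definition \ref{defn:Submersion_Banach_manifolds_boundary}. Item \eqref{item:Relatively_open_neighborhood_boundary_point_maps_into_boundary} is automatic from the construction of $\cS$. The boundary restriction is $\partial\Phi(A_0,A_1,\rho,x_0) = (F^{+,g}(A_0),F^{+,g_\round}(A_1))$, whose linearization at $m_\flat$ on Coulomb variations $(a_0,a_1)\in\Ker d_{A_{0\flat}}^*\oplus\Ker d_{A_{1\flat}}^*$ is $(d_{A_{0\flat}}^+a_0,\,d_{A_{1\flat}}^+a_1)$. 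This is surjective onto the relevant $L^p$ spaces precisely by hypothesis \eqref{eq:HA2_zero} together with the well-known identity $\bH_{A_{1\flat}}^2(S^4;\ad P_1)=0$ for centered anti-self-dual connections on the round four-sphere, and its kernel admits a closed complement via the Hodge decomposition for $d_A^++d_A^*$. Theorem \ref{mainthm:Preimage_point_under_submersion_and_implied_embedding} then produces a $C^1$ embedding from a relatively open subset of a finite-dimensional half-space onto $\Phi^{-1}(0)\cap V$ near $m_\flat$; a Hodge-theoretic dimension count identifies that half-space with $\bC_\delta(A_{0\flat})\times\bC_\delta^\diamond(A_{1\flat})\times\Gl_{x_{0\flat}}\times B_\delta(x_{0\flat})\times[0,\lambda_0)$, and composing with $\cS$ yields the gluing map $\bgamma$, establishing \eqref{item:Gluing_map_is_C1_embedding}.

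The irreducibility hypothesis \eqref{eq:HA0_zero} combined with local Coulomb slice theory shows that $\bgamma$ descends to an injective map into $M(P,g)$ with open image, giving \eqref{item:Gluing_map_image_is_open_subset_moduli_space_ASD_connections}. Items \eqref{item:Gluing_map_extension_homeomorphism_manifolds_with_boundary} and \eqref{item:Gluing_map_extension_image_open_subset_bubbletree_compactification} follow from Uhlenbeck's compactness theorem applied along sequences $\lambda_n\to 0$, since the splicing structure forces Uhlenbeck convergence to the pair $(A_0,A_1)$ with curvature concentration at $x_0$. The principal obstacle is the $C^1$ extension of $\Phi$ across the face $\{\lambda=0\}$, since the underlying bundle $P$ itself degenerates there; this demands precise pointwise estimates in the spirit of Taubes \cite{TauSelfDual,TauIndef} and \cite[Chapter 7]{DK} for the self-dual curvature of a spliced connection on the neck of width $\sqrt\lambda$, showing that all cutoff error terms and their derivatives with respect to the splicing data (in particular $\partial_\lambda$) tend to zero in $L^p$ uniformly and with the correct continuity as $\lambda\to 0$. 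The conformal flatness of $g$ near $x_{0\flat}$ is precisely what rules out an additional $O(\sqrt\lambda)$ metric-distortion contribution on the neck that would otherwise obstruct this $C^1$ extension.
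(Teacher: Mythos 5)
Your overall architecture---extend $F^+\circ\,\cS$ to a $C^1$ map of Banach manifolds with boundary, check that it is a submersion (equivalently, transverse to $\{0\}$) at the boundary point $(A_{0\flat},A_{1\flat},\rho_0,x_{0\flat},0)$ using \eqref{eq:HA2_zero} together with the vanishing of $\bH^2$ for anti-self-dual connections over $(S^4,g_\round)$, and then invoke the abstract boundary preimage theorem---is the same as the paper's; the paper applies Theorem \ref{mainthm:Preimage_submanifold_under_transverse_map_and_implied_embedding} with $\sX''=\{0\}$, which is interchangeable with your use of Theorem \ref{mainthm:Preimage_point_under_submersion_and_implied_embedding}. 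The gap lies in the step you yourself single out as the principal obstacle. The $C^1$ extension across $\{\lambda=0\}$ cannot be carried out in the $W^{1,p}$/$L^p$ topologies for the stated $p\in(2,\infty)$, and the derivative terms do not ``tend to zero in $L^p$'' as you assert. The estimates underlying Theorems \ref{thm:Smoothness_splicing_map_manifold_with_boundary} and \ref{thm:Smooth_extension_composition_self-dual_curvature_splicing_maps} give
\[
\left\|\frac{\partial(F^+\circ\,\cS)}{\partial\lambda}\right\|_{L^p(X)}\lesssim\lambda^{2/p-1}
\quad\text{and}\quad
\left\|\frac{\partial\cS}{\partial\rho}\right\|_{W^{1,p}(X)}\sim\lambda^{2/p-1},
\]
which are bounded (not small) as $\lambda\downarrow 0$ precisely when $p\le 2$ and blow up for $p>2$. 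This is why the paper runs the implicit-function-theorem argument in the $W^{1,2}$/$L^2$ structures ($\sA^{1,2}(P_i)$, $L^2(\wedge^+\otimes\ad P_i)$), reserving $p>2$ solely for the gauge-theoretic regularity (Coulomb slices, $\sB(P)$), and why $\bC_\delta(A_{0\flat})$ and $\bC_\delta^\diamond(A_{1\flat})$ in the statement are $W^{1,p}$ sets cut out by $W^{1,2}$ balls. As written, your verification of Definition \ref{defn:Submersion_Banach_manifolds_boundary} at $m_\flat$ concerns a map that is not $C^1$ up to the boundary in your chosen topology, so Theorem \ref{mainthm:Preimage_point_under_submersion_and_implied_embedding} does not apply; the repair is exactly this two-exponent bookkeeping.

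A second, related point: you place $\Phi$ in ``an appropriate Banach manifold-with-boundary of $L^p$ self-dual two-forms,'' but since the bundle $P$ itself degenerates at $\lambda=0$, the codomain fibers change there, and a definite construction is needed before transversality at a boundary point even makes sense. The paper supplies this by factoring the self-dual curvature of the spliced connection through the cutoffs, i.e.\ the map $\widehat{F}^+\circ\widehat{\cS}$ with the \emph{fixed} codomain $L^2(\wedge^+(T^*X)\otimes\ad P_0)\times L^2(\wedge^+(T^*S^4)\otimes\ad P_1)$, together with the $\lambda$-uniform Hilbert-space identifications of Theorem \ref{thm:Comparison_Hilbert_spaces_bundle-valued_self-dual_two-forms}; some such device must be made explicit for your argument to close. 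Your restriction to Coulomb slices in the domain from the outset, and your treatment of Items \eqref{item:Gluing_map_image_is_open_subset_moduli_space_ASD_connections}--\eqref{item:Gluing_map_extension_image_open_subset_bubbletree_compactification} via \eqref{eq:HA0_zero}, slice theory, and Uhlenbeck compactness, are consistent with what the paper does.
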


\begin{maincor}[Smoothness of local gluing chart for the moduli space of anti-self-dual connections as a map of smooth manifolds with boundary]
\label{maincor:Gluing_smooth}
Continue the hypotheses of Theorem \ref{mainthm:Gluing}. Then the gluing map \eqref{eq:Gluing_map_extended} is the restriction of a $C^1$ map of smooth Banach manifolds with boundary in the sense of Theorem \ref{thm:Smoothness_splicing_map_manifold_with_boundary} and restricts to the identity map on the boundary portion
\[
  \bC_\delta(A_{0\flat})\times \bC_\delta^\diamond(A_{1\flat}) \times \Gl_{x_{0\flat}}\times B_\delta(x_{0\flat}) \times \{0\}.
\]
\end{maincor}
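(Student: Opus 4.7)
The plan is to show that the extended gluing map \eqref{eq:Gluing_map_extended} factors as $\bgamma = \cS \circ g$, where both factors are $C^1$ maps of smooth Banach manifolds with boundary. Recall that $\bgamma$ is produced by applying Theorem~\ref{mainthm:Preimage_submanifold_under_transverse_map_and_implied_embedding} to the composition $F^+\circ\cS$ defined on the splicing domain
\[
\sD := \bC_\delta(A_{0\flat})\times \bC_\delta^\diamond(A_{1\flat}) \times \Gl_{x_{0\flat}} \times B_\delta(x_{0\flat}) \times [0,\lambda_0),
\]
with transversality at boundary points secured by the hypotheses $\bH^0_{A_{0\flat}}(X;\ad P_0) = \bH^2_{A_{0\flat}}(X;\ad P_0) = 0$ and the analogous regularity (together with the centering conditions) at $A_{1\flat}$. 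The implied embedding $g = \psi^{-1}\circ \iota_L\restriction \psi(V)\cap L$ from \eqref{eq:Maintheorem_preimage_submanifold_under_transverse_map_and_implied_embedding} then takes values in $\sD$, and by the construction of $\bgamma$ we have $\bgamma = \cS \circ g$.

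Next I would verify that each factor is $C^1$ as a map of Banach manifolds with boundary. The splicing map $\cS$ itself enjoys this smoothness by Theorem~\ref{thm:Smoothness_splicing_map_manifold_with_boundary}. For $g$, the chart inverse $\psi^{-1}$ is a $C^1$ diffeomorphism of manifolds with boundary by the construction carried out in the proof of Theorem~\ref{mainthm:Preimage_submanifold_under_transverse_map_and_implied_embedding}, while $\iota_L$ is a continuous linear embedding of a closed subspace with closed complement. Provided the chart $(V,\psi,(E,\alpha))$ is chosen so that the defining function $\alpha$ agrees with the $\lambda$-coordinate on $\sD$, the intersection $\psi(V)\cap L$ inherits the structure of a manifold with boundary and $g$ is $C^1$ in that sense. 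Composing the two $C^1$ maps then yields the desired smoothness of $\bgamma$.

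For the identity statement on the boundary face, I would exploit two features of the construction. On the one hand, as noted in the introduction, $\cS$ restricts to the identity on $\sD\cap\{\lambda=0\}$, so it suffices to show that $g$ restricts to the identity on $\psi(V)\cap L \cap \{\lambda=0\}$. On the other hand, since the boundary face of $\sD$ consists entirely of pairs of ASD connections, the boundary restriction $\partial(F^+\circ\cS)$ vanishes identically, so its derivative at the base point is the zero operator on $T(\partial\sD)$. Consequently, $(df(x_0))^{-1}(0) \supseteq T_{x_0}(\partial\sD)$, which forces the subspace $L$ in \eqref{eq:Maintheorem_definition_L} to contain the image of $T_{x_0}(\partial\sD)$ under $d\psi(x_0)$. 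With the chart chosen as above, this means the restriction of $g$ to the boundary face is the identity inclusion into $\partial\sD$, and composing with $\cS$ (which is the identity there) gives $\bgamma\restriction \partial\sD = \id$.

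The main obstacle I anticipate is precisely the compatibility of the chart $(V,\psi,(E,\alpha))$ supplied by Theorem~\ref{mainthm:Preimage_submanifold_under_transverse_map_and_implied_embedding} with the natural product chart on $\sD$; this is not automatic from the abstract statement. I would handle it by inspecting the inverse function theorem argument underlying that theorem: writing $E = E_0 \oplus \RR$ with $\alpha$ the projection to the second factor and choosing the preliminary isomorphism to respect the splitting $T_{x_0}\sD = T_{x_0}(\partial\sD)\oplus T_0[0,\lambda_0)$, one can arrange that the boundary face of $\sD$ corresponds to $E_0\times\{0\}$ in the chart, making both the $C^1$-smoothness across the boundary and the identity on the boundary face transparent.
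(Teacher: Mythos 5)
Your overall strategy is the paper's: factor the extended gluing map as $\bgamma=\cS\circ\widehat{\bgamma}$, get $C^1$ regularity up to $\lambda=0$ from Theorems \ref{thm:Smooth_extension_composition_self-dual_curvature_splicing_maps} and \ref{thm:Smoothness_splicing_map_manifold_with_boundary} together with the embedding supplied by Theorem \ref{mainthm:Preimage_submanifold_under_transverse_map_and_implied_embedding}, and obtain the identity on the boundary face because both factors restrict appropriately at $\lambda=0$. But there is a genuine defect in your setup: you apply Theorem \ref{mainthm:Preimage_submanifold_under_transverse_map_and_implied_embedding} to $F^+\circ\cS$ ``defined on the splicing domain $\sD$'', where $\sD$ is the finite-dimensional set of pairs of anti-self-dual connections (in Coulomb slices) times the gluing parameters. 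On that domain the map vanishes identically, so it cannot be transverse to $\{0\}$ in the sense of Definition \ref{defn:Transversality_maps_Banach_manifolds_boundary} (the boundary differential cannot be surjective onto $L^2(\wedge^+(T^*X)\otimes\ad P_0)\times L^2(\wedge^+(T^*S^4)\otimes\ad P_1)$), and indeed your two assertions --- ``transversality at boundary points secured by the hypotheses'' and ``$\partial(F^+\circ\cS)$ vanishes identically, so its derivative at the base point is the zero operator'' --- contradict each other. The theorem must be applied to the extension $\widehat{F}^+\circ\widehat{\cS}$ on the infinite-dimensional Banach manifold with boundary $\sA^{1,2}(P_0)\times\sA^{1,2}(P_1)\times\Gl_{x_{0\flat}}\times B_\delta(x_{0\flat})\times[0,\lambda_0)$ with the factored codomain of Theorem \ref{thm:Smooth_extension_composition_self-dual_curvature_splicing_maps}; transversality at the boundary point is then precisely the surjectivity of $(a_0,a_1)\mapsto(d_{A_{0\flat}}^+a_0,d_{A_{1\flat}}^+a_1)$ provided by \eqref{eq:HA2_zero} and the corresponding regularity over $(S^4,g_\round)$.

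Relatedly, the implied embedding $\widehat{\bgamma}$ (your $g$) does \emph{not} take values in $\sD$: its image is the local zero locus of the split self-dual curvature map, whose interior points are tuples $(A_0,A_1,\rho,x_0,\lambda)$ with $\lambda>0$ for which the \emph{spliced} connection $\cS(A_0,A_1,\rho,x_0,\lambda)$ is anti-self-dual, while $A_0$ and $A_1$ themselves are in general no longer anti-self-dual; only the boundary portion of the zero locus lies over $\sD\cap\{\lambda=0\}$. This misdescription does not break the factorization $\bgamma=\cS\circ\widehat{\bgamma}$ (as in \eqref{eq:Gluing_map_factored_codomain}), and your boundary argument survives in corrected form: since $\widehat{F}^+\circ\widehat{\cS}$ vanishes on $\bC_\delta(A_{0\flat})\times\bC_\delta^\diamond(A_{1\flat})\times\Gl_{x_{0\flat}}\times B_\delta(x_{0\flat})\times\{0\}$, that set lies in the zero locus, $\widehat{\bgamma}$ restricts there to the chart inclusion, and $\cS$ restricts there to the identity by Theorem \ref{thm:Smoothness_splicing_map_manifold_with_boundary} --- which is exactly the paper's reasoning. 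Your closing remark about choosing the chart $(V,\psi,(E,\alpha))$ compatibly with the splitting off of the $\lambda$-coordinate does address a real point left largely implicit in the paper, but it should be made after correcting the domain on which the transversality theorem is invoked.
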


\begin{maincor}[Existence of local gluing chart near a boundary point of the moduli space of anti-self-dual connections that may be non-regular]
\label{maincor:Gluing_HA2_non-zero}
Continue the hypotheses of Theorem \ref{mainthm:Gluing} but omit the assumption \eqref{eq:HA2_zero} that $\bH_{A_{0\flat}}^2(X;\ad P_0)=0$. The gluing map $\bgamma$ in \eqref{eq:Gluing_map} still has the properties listed in Items \eqref{item:Gluing_map_is_C1_embedding} and \eqref{item:Gluing_map_extension_homeomorphism_manifolds_with_boundary}. Moreover, there is an \emph{obstruction section},
\begin{equation}
\label{eq:Obstruction_map}
\bchi: \bC_\delta(A_{0\flat})\times \bC_\delta^\diamond(A_{1\flat}) \times \Gl_{x_{0\flat}} \times B_\delta(x_{0\flat}) \times (0,\lambda_0) \to \bH_{A_{0\flat}}^2(X;\ad P_0),
\end{equation}
with the following properties:
\begin{enumerate}
\item\label{item:Obstruction_map_is_C1_section} The map $\bchi$ is a $C^1$ section of the product bundle.

\item\label{item:Gluing_map_image_zero_set_is_open_subset_moduli_space_ASD_connections} The image of the zero set of $\bchi$ under the gluing map $\bgamma$ is an open subset of the moduli space of anti-self-dual connections on $P$:
\[
  \bgamma(\bchi^{-1}(0)) \subset M(P,g).
\]
(This extends Item \eqref{item:Gluing_map_image_is_open_subset_moduli_space_ASD_connections} in Theorem \ref{mainthm:Gluing}.)

\item\label{item:Obstruction_map_extension_is_C1_section_vector_bundle_over_manifold_with_boundary} The map \eqref{eq:Obstruction_map} extends to a $C^1$ section of a vector bundle over a manifold with boundary.

\item\label{item:Gluing_map_extension_image_zero_set_is_open_subset_bubbletree_compactification} The map $\bgamma$ gives a homeomorphism from
\[
  \bchi^{-1}(0)\cap\left( \bC_\delta(A_{0\flat})\times \bC_\delta^\diamond(A_{1\flat}) \times \Gl_{x_{0\flat}} \times B_\delta(x_{0\flat}) \times [0,\lambda_0) \right)
\]
onto an open neighborhood of the boundary portion,
\[
  \bchi_0^{-1}(0)\cap\bC_\delta(A_{0\flat})\times \bC_\delta^\diamond(A_{1\flat}) \times \Gl_{x_{0\flat}}\times B_\delta(x_{0\flat}) \times \{0\},
\]
in the bubble tree compactification $\widehat M(P,g)$ of $M(P,g)$, where
\[
  \bchi_0:\bC_\delta(A_{0\flat}) \to \bH_{A_{0\flat}}^2(X;\ad P_0)
\]
is the \emph{Kuranishi obstruction section}
\footnote{See Section \ref{subsec:Local_Kuranishi_parameterization_neighborhood_interior_point_Donaldson_approach}}. (This extends Item \eqref{item:Gluing_map_extension_image_open_subset_bubbletree_compactification} in Theorem \ref{mainthm:Gluing}.)
\end{enumerate}
\end{maincor}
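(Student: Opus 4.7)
The plan is to parallel the proof of Theorem \ref{mainthm:Gluing}, except that $F^{+,g}\circ\cS$ is no longer surjective at the boundary when $\bH_{A_{0\flat}}^2(X;\ad P_0)\neq 0$, so its zero locus must be replaced with the zero locus of an augmented map whose linearization is surjective. Write $\fV := \bH_{A_{0\flat}}^2(X;\ad P_0)$ and, using a cutoff function $\beta_\lambda$ that is supported in the complement of a small geodesic ball around $x_{0\flat}$ and equal to one outside a slightly larger ball (with radii scaling appropriately with $\lambda$), define a continuous linear lift
\[
\iota_{(\rho,x_0,\lambda)}:\fV\hookrightarrow L^p\bigl(\Omega^+(X;\ad P)\bigr), \qquad \iota_{(\rho,x_0,\lambda)}(\psi) = \beta_\lambda\cdot\psi,
\]
where $\beta_\lambda\psi$ is interpreted on the spliced bundle $P = P_0\#_{(x_0,\lambda)}P_1$ via the canonical identification of $\ad P$ with $\ad P_0$ off the neck. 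The conformal flatness hypothesis \eqref{eq:Riemg_zero} guarantees that this lift depends $C^1$-smoothly on $(\rho,x_0,\lambda)\in\Gl_{x_{0\flat}}\times B_\delta(x_{0\flat})\times[0,\lambda_0)$ and extends continuously to $\lambda=0$ as the identity lift from $\fV$ to self-dual two-forms on $X$.

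With this in hand, consider the augmented map
\[
\bPhi\bigl(A_0,A_1,\rho,x_0,\lambda,\psi\bigr) := F^{+,g}\bigl(\cS(A_0,A_1,\rho,x_0,\lambda)\bigr) + \iota_{(\rho,x_0,\lambda)}(\psi),
\]
defined on $\bC_\delta(A_{0\flat})\times\bC_\delta^\diamond(A_{1\flat})\times\Gl_{x_{0\flat}}\times B_\delta(x_{0\flat})\times[0,\lambda_0)\times\fV$ with values in $L^p(\Omega^+(X;\ad P))$. Both factors are $C^1$ on the manifold with boundary (the first by the smoothness of the splicing construction used for Theorem \ref{mainthm:Gluing}, the second by construction of $\iota$). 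The linearization of $\bPhi$ at the boundary point $(A_{0\flat},A_{1\flat},\rho,x_{0\flat},0,0)$ has range equal to $d^+_{A_{0\flat}}W^{1,p}(T^*X\otimes\ad P_0) + \fV$, which exhausts $L^p(\Omega^+(X;\ad P_0))$ by the Hodge decomposition; a splicing-and-perturbation estimate (as in the proof of Theorem \ref{mainthm:Gluing}) then shows that $\bPhi$ is a submersion in the sense of Definition \ref{defn:Submersion_Banach_manifolds_boundary} at every point of $\bPhi^{-1}(0)$ in a neighborhood of that boundary point, after shrinking $\delta,\lambda_0$ if necessary (Item \eqref{item:Relatively_open_neighborhood_boundary_point_maps_into_boundary} is automatic because the codomain has no boundary).

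Now apply Theorem \ref{mainthm:Preimage_point_under_submersion_and_implied_embedding} to $\bPhi$ at $0\in L^p(\Omega^+(X;\ad P))$. The resulting $C^1$ embedding gives $\bPhi^{-1}(0)$ the structure of a $C^1$ Banach submanifold with boundary, and a local implicit-function argument resolves it as the graph of a $C^1$ section $\bchi$ of the trivial bundle with fiber $\fV$ over $\bC_\delta(A_{0\flat})\times\bC_\delta^\diamond(A_{1\flat})\times\Gl_{x_{0\flat}}\times B_\delta(x_{0\flat})\times[0,\lambda_0)$, together with a $C^1$ gluing map $\bgamma$ into $\sA(P)$, defined by $\bgamma(\mathbf a) := \cS(\mathbf a)$ and characterized by the identity $F^{+,g}(\bgamma(\mathbf a)) = -\iota_\lambda(\bchi(\mathbf a))$. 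Item \eqref{item:Gluing_map_is_C1_embedding} then holds by construction; Items \eqref{item:Obstruction_map_is_C1_section} and \eqref{item:Obstruction_map_extension_is_C1_section_vector_bundle_over_manifold_with_boundary} follow because $\bchi$ is built from a $C^1$ submanifold of a Banach manifold with boundary. Since $\iota_\lambda$ is injective with image transverse to the range of the splicing linearization, $\bchi(\mathbf a)=0$ if and only if $F^{+,g}(\bgamma(\mathbf a))=0$, which yields Item \eqref{item:Gluing_map_image_zero_set_is_open_subset_moduli_space_ASD_connections}. The boundary trace $\bchi_0$ of $\bchi$ at $\lambda=0$ coincides with the classical Kuranishi obstruction section at $A_{0\flat}$ because at $\lambda=0$ the lift $\iota_0$ is the orthogonal inclusion $\fV\hookrightarrow L^p(\Omega^+(X;\ad P_0))$. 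For Item \eqref{item:Gluing_map_extension_image_zero_set_is_open_subset_bubbletree_compactification}, a standard Uhlenbeck-continuity and a-priori-estimate argument for anti-self-dual connections near the Uhlenbeck boundary (as in \cite[Section 7.3]{DK}) identifies the image of $\bgamma\circ\bchi^{-1}(0)$ with an open neighborhood of the claimed boundary stratum.

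The main technical obstacle is showing that $\bPhi$ is genuinely $C^1$ across the boundary $\lambda=0$ and that its linearization at the boundary is surjective with closed complement uniformly in the gluing parameters. Both require delicate control of $\iota_\lambda$ and of $\cS$ as $\lambda\to 0$, precisely because the range of the linearization of $F^{+,g}\circ\cS$ is a proper subspace whose complement degenerates as the neck pinches; the conformal flatness hypothesis \eqref{eq:Riemg_zero} is what makes the spliced Green-operator estimates uniform and forces the extended map to have the required regularity up to $\lambda=0$, at which point the general framework of Theorem \ref{mainthm:Preimage_point_under_submersion_and_implied_embedding} does the remaining abstract work.
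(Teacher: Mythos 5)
Your overall strategy---stabilize the equation by the finite-dimensional cokernel $\bH_{A_{0\flat}}^2(X;\ad P_0)$ so that the linearization becomes surjective, then invoke the abstract boundary preimage theorem and read off $\bgamma$ and $\bchi$ from the resulting zero locus---is exactly the route the paper takes in Section \ref{sec:Non-regular_boundary_points}, where the splicing map is enlarged by the factor $\bH_{A_{0\flat}}^2(X;\ad P_0)$ and $F^+$ is replaced by the stabilized map $F^++L_{A_{0\flat}}$ in the proof of Theorem \ref{mainthm:Gluing}; your explicit cutoff lift $\iota_{(\rho,x_0,\lambda)}$ is a reasonable way to make the paper's implicit identification of $\bH_{A_{0\flat}}^2(X;\ad P_0)$ with a subspace of $L^p(\wedge^+(T^*X)\otimes\ad P)$ precise. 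However, your implementation has a genuine flaw: you define $\bPhi$ on $\bC_\delta(A_{0\flat})\times\bC_\delta^\diamond(A_{1\flat})\times\Gl_{x_{0\flat}}\times B_\delta(x_{0\flat})\times[0,\lambda_0)\times\fV$, and the connection factors here are the \emph{finite-dimensional} sets of exact anti-self-dual connections (whose tangent space at the basepoint is $\bH_{A_{0\flat}}^1$, on which $d_{A_{0\flat}}^+$ vanishes). A $C^1$ map from this finite-dimensional domain cannot be a submersion onto the infinite-dimensional codomain $L^p(\wedge^+(T^*X)\otimes\ad P)$, and your computation of the range of the linearization as $d_{A_{0\flat}}^+W^{1,p}(T^*X\otimes\ad P_0)+\fV$ tacitly lets $a_0$ range over the full $W^{1,p}$ space, which is not tangent to $\bC_\delta(A_{0\flat})$. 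Relatedly, $\bPhi^{-1}(0)$ over your base is not a graph: for exact anti-self-dual $A_0,A_1$ the splicing error $F^+(\cS(A_0,A_1,\rho,x_0,\lambda))$ is a neck-supported term that does not lie in the finite-dimensional subspace $\iota_\lambda(\fV)$, so the defining identity $F^{+,g}(\bgamma(\mathbf{a}))=-\iota_\lambda(\bchi(\mathbf{a}))$ with $\bgamma(\mathbf{a}):=\cS(\mathbf{a})$ is unsolvable for generic $\mathbf{a}$; the gluing map must incorporate the correction produced by the preimage theorem (in the paper, $\bgamma=\cS\circ\widehat{\bgamma}$), and is not the splicing map applied to the raw gluing data.

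The repair is precisely the paper's argument: run the stabilized map $(F^++L_{A_{0\flat}})\circ\cS$ on the infinite-dimensional Coulomb-slice domain used in the proof of Theorem \ref{mainthm:Gluing}, where surjectivity of the linearization at the boundary point is genuine---including onto the $\ad P_1$ factor of the boundary fiber $L^2(\wedge^+(T^*X)\otimes\ad P_0)\times L^2(\wedge^+(T^*S^4)\otimes\ad P_1)$ of Theorem \ref{thm:Smooth_extension_composition_self-dual_curvature_splicing_maps}, which your codomain at $\lambda=0$ omits---and then extract $\bgamma$ and $\bchi$ from the embedding supplied by Theorem \ref{mainthm:Preimage_submanifold_under_transverse_map_and_implied_embedding}, with $\bchi$ the $\bH_{A_{0\flat}}^2$-component along the zero locus and its boundary trace the Kuranishi section $\bchi_0$. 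Note also that your parenthetical claim that Item \eqref{item:Relatively_open_neighborhood_boundary_point_maps_into_boundary} of Definition \ref{defn:Submersion_Banach_manifolds_boundary} is ``automatic because the codomain has no boundary'' is backwards: that condition requires $f(V_x\cap\partial X)\subset\partial X'$, which \emph{fails} at a boundary point when $\partial X'=\emptyset$; this is why the paper applies Theorem \ref{mainthm:Preimage_submanifold_under_transverse_map_and_implied_embedding}, whose transversality condition at boundary points is phrased through the boundary restriction $\partial f$, rather than Theorem \ref{mainthm:Preimage_point_under_submersion_and_implied_embedding}.
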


Recall that if $A$ is a connection on a principal $G$-bundle $P$, then $\Stab(A)$ denotes the stabilizer (or isotropy) group of $A$ in $\Aut(P)$.

\begin{maincor}[Existence of local gluing chart near a boundary point of the moduli space of anti-self-dual connections that may be non-regular or have non-trivial stabilizer]
\label{maincor:Gluing_HA2_and_HA0_non-zero}
Continue the hypotheses of Theorem \ref{mainthm:Gluing} but omit the assumptions \eqref{eq:HA2_zero} that $\bH_{A_{0\flat}}^2(X;\ad P_0)=0$ and \eqref{eq:HA0_zero} that $\bH_{A_{0\flat}}^0(X;\ad P_0)=0$. Then the gluing map $\bgamma$ and obstruction section $\bchi$ and their $C^1$ extensions are $\Stab(A_{0\flat})$-equivariant and Item \eqref{item:Gluing_map_extension_image_zero_set_is_open_subset_bubbletree_compactification} extends to the following: The map $\overline{\bgamma}$ gives a homeomorphism from
\[
  \bchi^{-1}(0)\cap\left( \bC_\delta(A_{0\flat})/\Stab(A_{0\flat}) \times \bC_\delta^\diamond(A_{1\flat}) \times \Gl_{x_{0\flat}} \times B_\delta(x_{0\flat}) \times (0,\lambda_0) \right)
\]
onto an open neighborhood of the boundary portion,
\[
  \bchi_0^{-1}(0)\cap\bC_\delta(A_{0\flat})/\Stab(A_{0\flat}) \times \bC_\delta^\diamond(A_{1\flat}) \times \Gl_{x_{0\flat}}\times B_\delta(x_{0\flat}) \times \{0\},
\]
in the bubble tree compactification $\widehat M(P,g)$ of $M(P,g)$.    
\end{maincor}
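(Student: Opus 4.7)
The plan is to rerun the construction of Corollary \ref{maincor:Gluing_HA2_non-zero} in a $\Stab(A_{0\flat})$-equivariant fashion and then descend to the quotient. Writing $\Gamma := \Stab(A_{0\flat}) \subset \Aut(P_0)$, observe that $\Gamma$ is a compact Lie group (embedded in $G$ by evaluation at $x_{0\flat}$ since $X$ is connected) whose Lie algebra is exactly $\bH_{A_{0\flat}}^0(X;\ad P_0)$. The natural diagonal action of $\Gamma$ on the source of the gluing map $\bgamma$ is as follows: on $\bC_\delta(A_{0\flat})$ by the gauge action, which preserves the Coulomb slice because $u \cdot A_{0\flat} = A_{0\flat}$ intertwines $d_{A_{0\flat}}^*$; trivially on $\bC_\delta^\diamond(A_{1\flat})$, on $B_\delta(x_{0\flat})$, and on $(0,\lambda_0)$; and on $\Gl_{x_{0\flat}}$ by precomposition with the evaluation $u \mapsto u|_{x_{0\flat}}$.

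First I would verify that the splicing map for connections $\cS$ is $\Gamma$-equivariant once its target $\sA(P)$ carries the action induced by splicing the pair $(u,\id_{P_1})$ into an automorphism of $P$ via the gauge-splicing map $\fS$; this reduces to the manifest invariance of the splicing cut-off functions, the frame at $s \in S^4$, and the connected-sum metric $g \# g_\round$, all of which are built into the fixed data. It follows that $F^+ \circ \cS$ is $\Gamma$-equivariant and, since the $L^2$-harmonic projection onto $\bH_{A_{0\flat}}^2(X;\ad P_0)$ is $\Gamma$-equivariant, so is the Kuranishi obstruction section $\bchi$ from Corollary \ref{maincor:Gluing_HA2_non-zero}. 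The transversality chart $(V,\psi)$ supplied by Theorem \ref{mainthm:Preimage_submanifold_under_transverse_map_and_implied_embedding} can then be chosen $\Gamma$-invariantly by averaging over the compact group $\Gamma$, so the resulting $C^1$ embedding $\bgamma$, together with its extension across $\lambda = 0$, is $\Gamma$-equivariant.

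Finally, I would pass to the quotient: the induced map $\overline{\bgamma}\colon \bchi^{-1}(0)/\Gamma \to M(P,g)$ is well-defined and continuous by equivariance, and openness of its image follows from the openness already established in Corollary \ref{maincor:Gluing_HA2_non-zero} combined with the fact that $M(P,g)$ carries the quotient topology from $\{F^{+,g}=0\} \subset \sA(P)$ under $\Aut(P)$. The main obstacle, and the only analytically delicate new ingredient, is the injectivity of $\overline{\bgamma}$: one must show that if $\bgamma(\xi)$ and $\bgamma(\xi')$ represent the same point in $M(P,g)$, then $\xi$ and $\xi'$ lie in the same $\Gamma$-orbit. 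This requires a local inverse for the \emph{unsplicing map} for gauge transformations that is uniformly valid as $\lambda \to 0$: any $v \in \Aut(P)$ conjugating the two glued connections must, for $\lambda$ sufficiently small, decompose via $\fS$ as a splice of some $(u_0,u_1) \in \Aut(P_0) \times \Aut(P_1)$, and the Coulomb-slice conditions on both sides then force $u_0 \in \Gamma$ and $u_1$ to lie in the trivial stabilizer of the centered anti-self-dual connection on $S^4$, whence $\xi' = u_0 \cdot \xi$. The quantitative unsplicing estimate, uniform across the collar $\lambda \in [0,\lambda_0)$, is where the hard analysis is concentrated; once it is in hand, the rest of the proof is an equivariant repackaging of Corollary \ref{maincor:Gluing_HA2_non-zero}.
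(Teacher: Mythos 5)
Your outline shares the paper's central mechanism: equivariance of the splicing construction under $\Stab(A_{0\flat})$, implemented by splicing gauge transformations via $\fS$, together with the action on the gluing parameter $\Gl_{x_{0\flat}}$ by evaluation at $x_{0\flat}$, and the same final step of running the stabilized (Kuranishi) version of Corollary \ref{maincor:Gluing_HA2_non-zero} equivariantly. Where you diverge is in how the passage to the quotient is justified. The paper does not attempt a direct injectivity argument for the descended map: it quotients first by the \emph{based} gauge groups, using the slice theorem for $\Aut_0$ (Theorem \ref{thm:Feehan_Maridakis_14_based}, Corollary \ref{cor:Feehan_Maridakis_18_based}), and then treats the residual action of the finite-dimensional compact group via the equivariant tubular neighborhood theorem (Theorem \ref{thm:Existence_smooth_slices}), so that a neighborhood of the relevant orbit in $\sB(P)$ is modeled on $N_{(A)}/\Stab(A)$; the assertion that gauge equivalence of nearby slice points is implemented by an element of the stabilizer is thereby packaged in the slice machinery rather than proved by unsplicing a gauge transformation. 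You instead propose to prove injectivity of $\overline{\bgamma}$ by decomposing an arbitrary $v\in\Aut(P)$ identifying two glued connections through $\fS$, with estimates uniform in $\lambda\in[0,\lambda_0)$, and you explicitly leave that uniform unsplicing estimate unproved. That estimate is the substantive content of the homeomorphism claim, and it is precisely what the paper's slice-theorem route is designed to replace; as written, your proposal therefore has a real gap at its crux, even though the surrounding equivariance bookkeeping (the $\Gamma$-action on the Coulomb slice and on $\Gl_{x_{0\flat}}$, equivariance of $F^+\circ\cS$ and of the harmonic projection defining $\bchi$, averaging over the compact group to make the chart invariant) is correct and, in places, stated more carefully than in the paper itself. (One small caution: the stabilizer of an anti-self-dual connection on $P_1$ in the full group $\Aut(P_1)$ contains the center of $G$; it is only the \emph{based} stabilizer that is trivial, and the residual center action is absorbed by the $\Gl_{x_{0\flat}}$ factor, which must be tracked when you force $u_1$ to be trivial.)

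A second point the paper makes that your proposal does not engage with is the Sobolev-exponent tension: the $W^{2,p}$ gauge-group quotient requires $p>2$, whereas the $C^1$ extensions of the spliced data across $\lambda=0$ (Theorems \ref{thm:Smoothness_splicing_map_manifold_with_boundary} and \ref{thm:Smooth_extension_composition_self-dual_curvature_splicing_maps}) are established only for $p\leq 2$. The paper resolves this by restricting $\cS$ and $F^+\circ\cS$ to the Coulomb-gauge slices \emph{before} quotienting and only then invoking the equivariant machinery. Your argument implicitly works on the slices from the start, which is consistent with this, but your proposed unsplicing of $W^{2,p}$ gauge transformations uniformly down to $\lambda=0$ would run directly into this exponent issue and needs to be addressed if you pursue that route.
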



\begin{maincor}[Existence of local gluing chart near a boundary point of the moduli space of anti-self-dual connections that may be non-regular or have non-trivial stabilizer or when the Riemannian metric need not be locally flat]
\label{maincor:Gluing_HA2_and_HA0_non-zero_and_non-flat_metric}
Continue the hypotheses of Theorem \ref{mainthm:Gluing} but omit the assumptions \eqref{eq:HA2_zero} that $\bH_{A_{0\flat}}^2(X;\ad P_0)=0$ and \eqref{eq:HA0_zero} that $\bH_{A_{0\flat}}^0(X;\ad P_0)=0$ and \eqref{eq:Riemg_zero} that $g$ is conformally flat near $x_{0\flat}$. Then the conclusions of Corollary \ref{maincor:Gluing_HA2_and_HA0_non-zero} continue to hold.
\end{maincor}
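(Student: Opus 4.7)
The plan is to reduce Corollary \ref{maincor:Gluing_HA2_and_HA0_non-zero_and_non-flat_metric} to Corollary \ref{maincor:Gluing_HA2_and_HA0_non-zero} by modifying the splicing construction so that it no longer requires $g$ to be conformally flat near $x_{0\flat}$. The hypothesis \eqref{eq:Riemg_zero} enters the earlier arguments only through the construction of the splicing map $\cS$ and through the verification that $F^{+,g}\circ\cS$ extends $C^1$ to the boundary $\{\lambda=0\}$ and vanishes transversely there. Once these two properties are re-established for the modified $\cS$, the application of Theorem \ref{mainthm:Preimage_submanifold_under_transverse_map_and_implied_embedding}, combined with the Kuranishi-type obstruction analysis and $\Stab(A_{0\flat})$-equivariance arguments from the preceding corollaries, goes through verbatim.

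First, for each $x_0\in B_\delta(x_{0\flat})$, fix a $g$-geodesic normal coordinate chart $\exp_{x_0}: B_{\varrho_0}(0)\subset T_{x_0}X \to B_{\varrho_0}(x_0)\subset X$, in which $g_{ij}(x) = \delta_{ij} + O(|x|^2)$. Redefine the splicing map $\cS$ by using this chart to transfer the rescaled $S^4$-instanton $A_1$ (pulled back through stereographic projection to $\RR^4\cong T_{x_0}X$ and rescaled by $\lambda$) onto $A_0$ via a bump cutoff in the neck annulus, exactly as in the conformally flat case but with the chart $\exp_{x_0}$ replacing the conformal identification used there. The resulting $\cS$ is still a smooth surjective submersion onto $\sA(P)$ extending continuously to $\lambda=0$ with the same boundary behavior as before, since only the Riemannian identification—not the bundle or connection splicing—has changed.

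The key technical step is the $C^1$-extension and transversality of $F^{+,g}\circ\cS$ at $\lambda=0$. In the neck region the difference between $F^{+,g}(\cS)$ and the flat-metric model $F^{+,g_{\eucl}}(\cS)$ is a sum of terms of schematic form $h\cdot F_{\cS}$, where $h := g - g_\eucl$ satisfies $|h(x)|\leq C|x|^2$ in the chart, while the rescaled instanton profile obeys $|F_{A_1^\lambda}(x)|\leq C\lambda^2/(|x|^2+\lambda^2)^2$. Rescaling $y = x/\lambda$ gives $\|h\cdot F_{\cS}\|_{L^p(X)} = O(\lambda^{2-4/p})$ for $p>2$, which vanishes at $\lambda=0$, and analogous weighted estimates for $\partial_\lambda$ and tangent-direction derivatives show that this perturbation is $C^1$ in the splicing data up to $\lambda=0$. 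Consequently the boundary linearization $d(\partial(F^{+,g}\circ\cS))$ at $(A_{0\flat},A_{1\flat},\rho,x_0,0)$ coincides with that of the conformally flat case, so the transversality hypothesis of Definition \ref{defn:Transversality_maps_Banach_manifolds_boundary} at boundary points carries over without modification.

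With the splicing map redefined and its boundary transversality verified, Theorem \ref{mainthm:Preimage_submanifold_under_transverse_map_and_implied_embedding}, applied after projecting $F^{+,g}\circ\cS$ onto the complement of $\bH_{A_{0\flat}}^2(X;\ad P_0)$ to extract the Kuranishi obstruction $\bchi$, produces the gluing map $\bgamma$; the $\Stab(A_{0\flat})$-equivariance of the whole construction then yields the homeomorphism statement from Corollary \ref{maincor:Gluing_HA2_and_HA0_non-zero} in this broader setting. The main obstacle will be the uniform $C^1$-control of the metric-perturbation terms in $F^{+,g}\circ\cS$ as $\lambda\to 0^+$: one must verify that derivatives of $h\cdot F_{\cS}$ with respect to $\lambda$, $x_0$, $\rho$, and the connection variables remain bounded in the appropriate weighted Sobolev norms when the rescaling $y=x/\lambda$ is applied, which is technically more delicate than the case $h\equiv 0$ treated in Theorem \ref{mainthm:Gluing}.
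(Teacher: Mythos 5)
Your overall strategy is correct and matches the paper's: the conformal-flatness hypothesis \eqref{eq:Riemg_zero} enters only through the neck construction and the verification that $F^{+,g}\circ\cS$ (and its partial derivatives) extend to $\lambda=0$ with the right boundary behavior, and once that is re-established the arguments of Theorem \ref{mainthm:Gluing} and Corollaries \ref{maincor:Gluing_HA2_non-zero}--\ref{maincor:Gluing_HA2_and_HA0_non-zero} go through unchanged. The paper's own proof is deliberately brief: it defers the derivative estimates to the prior computations of Feehan \cite[Section 3.5]{FeehanGeometry} and Peng \cite[Section 5]{Peng_1995}, \cite[Section 2]{Peng_1996}. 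Where you differ in mechanism is instructive: the paper does not keep $g$ and compare against the Euclidean model; instead it replaces $g_\round$ on the $S^4$ factor by the $\lambda$- and $x_0$-dependent ``almost round'' metric $\tilde g_\round$ of \eqref{eq:Feehan_1995_3-33}, chosen so that the connected-sum metric remains \emph{conformally equivalent} to $g$ on the neck for every $\lambda>0$ and degenerates to $g_\round$ only at $\lambda=0$. This device exploits the conformal invariance of the $L^2$ norm on two-forms, of the $L^4$ norm on one-forms, and of the self-dual projection, and it is what makes the Hilbert-space comparisons of Theorems \ref{thm:Comparison_Hilbert_spaces_bundle-valued_one-forms} and \ref{thm:Comparison_Hilbert_spaces_bundle-valued_self-dual_two-forms} (whose statements assume \eqref{eq:Riemg_zero}) usable in the non-flat case. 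Your direct-perturbation route, with $h=g-g_\eucl=O(|x|^2)$ in normal coordinates, captures the same quantitative input but leaves implicit how the $\lambda$-dependent codomain and self-dual subbundle $\wedge^{+,g}$ are identified with the boundary fiber $L^2(\wedge^{+,g_\round}(T^*S^4)\otimes\ad P_1)$; you would need to say this explicitly (or adopt the $\tilde g_\round$ device) to splice your estimates into the extension framework of Theorem \ref{thm:Smooth_extension_composition_self-dual_curvature_splicing_maps}. Two smaller points: your scaling exponent is off (with $|h|\lesssim|x|^2$ and $|F_{A_1^\lambda}|\lesssim\lambda^2(|x|^2+\lambda^2)^{-2}$ one gets $\|hF\|_{L^p}=O(\lambda^{4/p})$ for $p>2$ and $O(\lambda^2(\log\lambda^{-1})^{1/2})$ for $p=2$, not $O(\lambda^{2-4/p})$ --- still vanishing, so the conclusion stands, and note that the boundary-extension framework actually runs at $p=2$); and, like the paper, you defer the genuinely laborious part (uniform boundedness and uniform continuity up to $\lambda=0$ of the $\rho$-, $x_0$-, $\lambda$-, and connection-derivatives of the perturbation terms), which is acceptable here since that is exactly what the cited calculations of \cite{FeehanGeometry,Peng_1995,Peng_1996} supply.
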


\subsection{Application to other gluing problems in geometric analysis}
\label{subsec:Other_gluing_scenarios}
The framework that we describe in this article should apply to more challenging gluing problems for anti-self-dual connections or $\SO(3)$ monopoles over four-dimensional manifolds, and also extend to other applications in geometric analysis.

We do not consider applications to Gromov--Witten invariants or symplectic field theory since the analytical difficulties involved in gluing pseudoholomorphic curves in symplectic manifolds are distinctly challenging. A general theory of \emph{polyfolds} have been developed for this purpose by Hofer, Wysocki, and Zehnder in \cite{Hofer_2006, Hofer_Wysocki_Zehnder_1996_ppcs1, Hofer_Wysocki_Zehnder_1998_ppcs1_correction, Hofer_Wysocki_Zehnder_1995_ppcs2, Hofer_Wysocki_Zehnder_1999_ppcs3, Hofer_Wysocki_Zehnder_1996_ppcs4, Hofer_Wysocki_Zehnder_2007, Hofer_Wysocki_Zehnder_2009gafa, Hofer_Wysocki_Zehnder_2009gt, Hofer_Wysocki_Zehnder_2010dcds, Hofer_Wysocki_Zehnder_2010ma, Hofer_Wysocki_Zehnder_2017}. See the survey article by Fabert, Fish, Golovko, Wehrheim \cite{Fabert_Fish_Golovko_Wehrheim_2016} for an introduction to these concepts. Related ideas have been and remain under development by Joyce \cite{Joyce_2012, Joyce_2016, Joyce_2016arxiv}, with similar goals. Our approach to gluing is distinct from the theory of polyfolds; it is also quite different from the methods of other researchers in this field, including Fukaya, Joyce, and McDuff and their collaborators \cite{McDuff_Tehrani_Fukaya_Joyce_2019}.

Our approach also differs from the \emph{Cauchy data matching} gluing constructions for constant mean curvature Riemannian metrics by Mazzeo, Pacard, and Pollack \cite{Mazzeo_Pacard_2001, Mazzeo_Pacard_Pollack_2001}, constant scalar curvature K\"ahler metrics by Arezzo, Pacard, and Singer \cite{Arezzo_Pacard_Singer_2011}, Yamabe metrics by Mazzeo and Pacard \cite{Mazzeo_Pacard_1999}, and Seiberg--Witten monopoles by Kronheimer and Mrowka \cite[Sections 18 and 19]{KMBook}.

\subsection{Outline of the article}
\label{subsec:Outline}
Section \ref{sec:Preliminaries} provides an overview of concepts and notation in gauge theory that we shall need in this article for our proofs of Theorem \ref{mainthm:Gluing} and Corollaries \ref{maincor:Gluing_smooth}, \ref{maincor:Gluing_HA2_non-zero}, \ref{maincor:Gluing_HA2_and_HA0_non-zero}, and \ref{maincor:Gluing_HA2_and_HA0_non-zero_and_non-flat_metric}. In Section \ref{sec:Local_Kuranishi_parameterization_moduli_space_anti-self-dual_connections} we give two expositions of the Kuranishi model for an open neighborhood of an interior point in the moduli space $M(P,g)$ of anti-self-dual connections on a principal $G$-bundle over a closed, four-dimensional Riemannian manifold $(X,g)$ and introduce our approach to constructing a Kuranishi model for an open neighborhood of a boundary point of $M(P,g)$ using techniques drawn from differential topology for Banach manifolds with corners \cite{Margalef-Roig_Outerelo-Dominguez_differential_topology}, specializing to the case of Banach manifolds with boundary in this article.

In Section \ref{sec:Banach_manifolds_boundary}, we summarize the concepts that we shall need for the development and application of the techniques of differential topology in the setting of Banach manifolds with boundary, culminating in the proofs of our Theorems \ref{mainthm:Preimage_submanifold_under_transverse_map_and_implied_embedding} and \ref{mainthm:Preimage_point_under_submersion_and_implied_embedding}. Our review closely follows the work of Margalef Roig and Outerelo Dom{\'\i}nguez \cite{Margalef-Roig_Outerelo-Dominguez_differential_topology}, though we simplify their definitions and results for Banach manifolds with corners to the case of Banach manifolds with boundary. However, as we note in Section \ref{subsec:Other_gluing_scenarios}, a generalization of this article to allow for more than one bubble would require us to avail of the methods and results of \cite{Margalef-Roig_Outerelo-Dominguez_differential_topology} in the case of Banach manifolds with corners.

In Section \ref{sec:Splicing_map_connections}, we describe the lengthy construction of the \emph{splicing map for connections}. We first consider the case of a pair of principal $G$-bundles $P_0$ and $P_1$ over a pair $(X_0,g_0)$ and $(X_1,g_1)$ of closed, connected, four-dimensional, oriented, smooth Riemannian manifolds and form a connected-sum principal $G$-bundle $P$ over a connected-sum four-manifold $X = X_0\# X_1$. We broadly follow the method described by Donaldson and Kronheimer in \cite[Section 7.2.1]{DK}, but we add detail that will become important in our later calculations. This construction involves (in part --- see Data \ref{data:Splicing_parameters_connected_sum_G-bundle_connected_sum_Riemannian_4-manifold} for the complete list of parameter choices) choosing basepoints $x_{0\flat} \in X_0$ and $x_{1\flat} \in X_1$, a scale parameter $\lambda\in(0,\lambda_0)$ (for a small constant $\lambda_0\in(0,1]$), open balls $B_{2\sqrt{\lambda}}(x_{i\flat}) \subset X_i$  for $i=0,1$. We initially assume that the metrics $g_0$ and $g_1$ are flat near the basepoints and identify the small annuli $\Omega(x_{i\flat};\frac{1}{2}\sqrt{\lambda},2\sqrt{\lambda}) \subset X_i$ for $i=0,1$ via the conformal, orientation-reversing diffeomorphism $f_\lambda$ in \cite[Equation (7.2.2)]{DK} (we suppress notation indicating dependence on other choices is suppressed for simplicity) to give
\[
  X = X_0\#_\lambda X_1 := X_0'\cup_{f_\lambda} X_1'
\]
where $X_i' := X_i \less B_{\frac{1}{2}\sqrt{\lambda}}(x_{i\flat})$. Given connections $A_i$ on $P_i$, we cut them off over the annuli $\Omega(x_{i\flat};\frac{1}{2}\sqrt{\lambda},2\sqrt{\lambda})$ with the aid of a partition of unity $\{\chi_0,\chi_1\}$ for $X$ to form connections $\chi_iA_i$ that coincide with $A_i$ over $X_i \less B_{2\sqrt{\lambda}}(x_{i\flat})$ (where $\chi_i=1$) and the product connection $\Theta$ over $B_{\frac{1}{2}\sqrt{\lambda}}(x_{i\flat})\times G$ (where $\chi_i=0$). We obtain a connection $A=\chi_0A_0+\chi_1A_1$ on $P$ and thus define the splicing map for connections,
\begin{equation}
\label{eq:Splicing_map_connections_X0_and_X1}
\cS: \sA(P_0)\times \sA(P_1) \times \Gl_{x_{0\flat},x_{1\flat}} \to \sA(P),
\end{equation}
again broadly following the method described by Donaldson and Kronheimer in \cite[Sections 4.4.2, 4.4.3, and 7.2.1]{DK}. Here, $\Gl_{x_{0\flat},x_{1\flat}} = \Isom_G(P_0|_{x_0\flat}, P_1|_{x_{1\flat}})$, the set of $G$-equivariant isomorphisms of $P_0|_{x_0\flat}$ with $P_1|_{x_{1\flat}}$, or \emph{bundle gluing parameters}. It is straightforward to prove (as we do in Section \ref{sec:Splicing_map_connections}) that $\cS$ in \eqref{eq:Splicing_map_connections_X0_and_X1} is a smooth surjective submersion of smooth Banach manifolds.

In order to illustrate the application of our abstract results (Theorems \ref{mainthm:Preimage_submanifold_under_transverse_map_and_implied_embedding} and \eqref{eq:Maintheorem_preimage_point_under_submersion_and_implied_embedding}) from differential topology for Banach manifolds with boundary in the simplest possible setting, we specialize to the case $(X_1,g_1) = (S^4,g_\round)$ and $(X_0,g_0) = (X,g)$, choose $x_{1\flat}$ to be the south pole in $S^4$, allow the center point $x_0 \in X$ defining the connected sum to vary in a small open ball $B_\delta(x_{0\flat}) \subset X$ (for a constant $\delta\in(0,1]$ that is less than half the injectivity radius of $(X,g)$), and allow the scale $\lambda\in(0,\lambda_0)$ to vary. We thus obtain the splicing map for connections \eqref{eq:Splicing_map_connections} that we primarily study in this article: 
\[
\cS: \sA(P_0)\times \sA(P_1) \times \Gl_{x_{0\flat}}\times B_\delta(x_{0\flat}) \times (0,\lambda_0) \to \sA(P),
\]
where $\Gl_{x_{0\flat}} := \Isom_G(P_0|_{x_0\flat}, P_1|_s)$. In the definition \eqref{eq:Splicing_map_connections}, the map $\cS$ is easily shown to be a surjective submersion, even for fixed parameters $(x_0,\lambda) \in B_\delta(x_{0\flat}) \times (0,\lambda_0)$. On the other hand, the domain of $\cS$ in \eqref{eq:Splicing_map_connections} clearly contains some redundancy since
\begin{itemize}
\item Varying the point $x_0 \in B_\delta(x_{0\flat})$ is equivalent to varying the \emph{center of mass} of the energy density $|F_{A_1}|^2$ over $\RR^4 \cong S^4\less\{s\}$, and
\item Varying the parameter $\lambda \in (0,\lambda_0)$ is equivalent to varying the \emph{scale} or \emph{standard deviation} of the energy density $|F_{A_1}|^2$ over $\RR^4$.
\end{itemize}
We then prove that by restricting $\cS$ to the codimension-five submanifold of the domain in \eqref{eq:Splicing_map_connections} obtained by replacing the Banach affine space $\sA(P_1)$ with the codimension-five submanifold $\sA^\diamond(P_1)$ of \emph{centered connections} (those connections with center of mass at the origin of $\RR^4$ or north pole of $S^4$ and scale one), we still obtain a splicing map \eqref{eq:Splicing_map_connections_centered_connections_4-sphere} that is a smooth surjective submersion:
\[
\cS: \sA(P_0)\times \sA^\diamond(P_1) \times \Gl_{x_{0\flat}}\times B_\delta(x_{0\flat}) \times (0,\lambda_0) \to \sA(P).
\]
The domain of $\cS$ in \eqref{eq:Splicing_map_connections_centered_connections_4-sphere} has a boundary at $\lambda=0$. 

We recall in Section \ref{sec:Splicing_gauge_transformations} that the quotient space $(\sA(P_1) \times P_1|_s)/\Aut(P_1)$ is naturally identified with $\sB_0(P_1) = \sA(P_1)/\Aut_0(P_1)$, where $\Aut_0(P_1) \subset \Aut(P_1)$ is the normal Banach Lie subgroup of automorphisms that restrict to the identity map on the fiber $P_1|_s$. If we fix a fiber point once and for all,
\begin{equation}
\label{eq:Fixed_fiber_point_x0} 
  p_0 \in P_0|_{x_{0\flat}},
\end{equation}
then we obtain a $G$-equivariant isomorphism
\begin{equation}
\label{eq:Isomorphism_bundle_gluing_parameters_fiber_points_south_pole}  
  \Gl_{x_{0\flat}} \ni \rho \mapsto p_1 = \rho(p_0) \in P_1|_s.
\end{equation}
With this in mind, it is convenient to modify the domain of $\cS$ in \eqref{eq:Splicing_map_connections} to give a map
\begin{multline*}
\cS: \sA(P_0)\times \sA^\diamond(P_1) \times P_1|_s \times B_\delta(x_{0\flat}) \times (0,\lambda_0) \ni (A_0,A_1,p_1,x_0,\lambda)
\\
\mapsto (A,p_1,x_0,\lambda) \in \sA(P)\times P_1|_s \times B_\delta(x_{0\flat}) \times (0,\lambda_0).
\end{multline*}
In Section \ref{sec:Smooth_extension_splicing_map_connections}, we prove that $\cS$ extends (in the sense described in that section) to a $C^1$ map of Banach manifolds with boundary that restricts to the identity map on the boundary face $\{\lambda=0\}$.

In order to prove Theorem \ref{mainthm:Gluing} and its corollaries, we must construct local parameterizations of the finite-dimensional moduli subspace of anti-self-dual connections,
\[
  M(P,g) = \{A \in \sA(P):F_A^+=0\}/\Aut(P) \subset \sB(P).
\]
Therefore, we are led in Section \ref{sec:Composition_self-dual_curvature_splicing_maps} to consider the composed map
\[
F^+\circ\cS: \sA(P_0)\times \sA^\diamond(P_1) \times \Gl_{x_{0\flat}}\times B_\delta(x_{0\flat}) \times (0,\lambda_0) \to L^p(\wedge^+(T^*X)\otimes\ad P).
\]
In Section \ref{sec:Smooth_extension_composition_self-dual_curvature_and_splicing_maps}, we prove that $F^+\circ\cS$ extends  (in the sense described in that section) to a $C^1$ map of Banach manifolds with boundary that restricts to the map
\[
  (A_0,A_1,p_1,x_0,0) \mapsto (F_{A_0}^+,F_{A_1}^+)
\]  
on the boundary face $\{\lambda=0\}$. The points $[A_1,p_1] \in M_0(P_1,g_\round)$ are \emph{always} (gauge-equivalence classes of) regular points of the smooth map
\[
  \sA(P_1) \times P_1|_s \ni (A_1,p_1) \mapsto F_{A_1}^+ \in L^p(\wedge^+(T^*S^4)\otimes\ad P_1).
\]
In the hypotheses of Theorem \ref{mainthm:Gluing}, we also assume that $A_{0\flat}$ is a regular point of the smooth map
\[
  \sA(P_0) \ni A_0 \mapsto F_{A_0}^+ \in L^p(\wedge^+(T^*X)\otimes\ad P_0).
\]
In Section \ref{sec:Completion_proof_main_gluing_theorem}, we apply our results from differential topology for abstract Banach manifolds with boundary, Theorem \ref{mainthm:Preimage_submanifold_under_transverse_map_and_implied_embedding}, to conclude the existence of a gluing map with factored codomain,
\begin{multline}
\label{eq:Gluing_map_factored_codomain}  
\widehat{\bgamma}: \bC_\delta(A_{0\flat}) \times \bC_\delta^\diamond(A_{1\flat}) \times P_1|_s \times B_\delta(x_{0\flat}) \times (0,\lambda_0)
\\
\to \sA(P_0)\times \sA^\diamond(P_1) \times P_1|_s \times B_\delta(x_{0\flat}) \times (0,\lambda_0),
\end{multline}
that extends to a $C^1$ map of smooth Banach manifolds with boundary. We obtain the desired gluing map \eqref{eq:Gluing_map} with codomain $\sA(P)$ as the composition
\[
\bgamma = \cS\circ\widehat{\bgamma}: \bC_\delta(A_{0\flat}) \times \bC_\delta^\diamond(A_{1\flat}) \times P_1|_s \times B_\delta(x_{0\flat}) \times (0,\lambda_0) \to \sA(P).
\]
The preceding observations yield Theorem \ref{mainthm:Gluing} and Corollary \ref{maincor:Gluing_smooth}.


By adapting methods due to Donaldson and Kronheimer \cite{DonConn, DK} and Taubes \cite{TauIndef, TauFrame}, that are in turn inspired by ideas of Kuranishi \cite{Kuranishi} from the context of deformation of complex structure, it is relatively straightforward to extend Theorem  \ref{mainthm:Gluing} to the case where $A_{0\flat}$ is not a regular point and prove Corollary \ref{maincor:Gluing_HA2_non-zero} in Section \ref{sec:Non-regular_boundary_points}. In Section \ref{sec:Splicing_gauge_transformations}, we construct the splicing map for based gauge transformations,
\[
  \fS:\Aut_0(P_0) \times \Aut_0(P_1) \times P_1|_s \times B_\delta(x_{0\flat}) \times (0,\lambda_0) \to \Aut_0(P),
\]
observe that $\fS$ is a smooth surjective submersion that extends in a natural way to a $C^1$ map of Banach manifolds with boundary that restricts to an identity map on the boundary face $\{\lambda=0\}$. We then consider the gauge equivariance of the maps $\cS$ and $F^+$, consider the case where $A_{0\flat}$ may have a non-trivial isotropy group in $\Aut(P_0)$, and prove Corollary \ref{maincor:Gluing_HA2_and_HA0_non-zero_and_non-flat_metric}. Finally, in Section \ref{sec:Riemannian_metric_not_locally_flat} we remove the hypothesis in Theorem \ref{mainthm:Gluing} that $g$ is flat near $x_{0\flat}$ and prove Corollary \ref{maincor:Gluing_HA2_and_HA0_non-zero_and_non-flat_metric}.

\subsection{Acknowledgments}
\label{subsec:Acknowledgments}
Paul Feehan is grateful to Helmut Hofer, Rafe Mazzeo, Tom Parker, Cliff Taubes, and Zhengyi Zhou for helpful communications and conversations. This article was completed while he was a visiting professor in the Department of Mathematics at Princeton University. He would like to thank David Gabai and Peter Ozsv{\'a}th for arranging his visit during his sabbatical year and thank Alice Chang for her invitation to speak about this work in the Differential Geometry and Geometric Analysis seminar. Both authors thank the National Science Foundation for their support.

\section{Preliminaries}
\label{sec:Preliminaries}
Throughout our article, $G$ denotes a compact Lie group and $P$ a smooth principal $G$-bundle over a closed, smooth manifold, $X$, of dimension $d \geq 2$ and endowed with Riemannian metric, $g$. We let\footnote{We follow the notational conventions of Friedman and Morgan \cite[p. 230]{FrM}, where they define $\ad P$ as we do here and define $\Ad P$ to be the group of automorphisms of the principal $G$-bundle, $P$.}
$\ad P := P\times_{\ad}\fg$ denote the real vector bundle associated to $P$ by the adjoint representation of $G$ on its Lie algebra,
$\Ad:G \ni u \to \Ad_u \in \Aut\fg$. We fix an inner product on the Lie algebra $\fg$ that is invariant under the adjoint action of $G$ and thus define a fiber metric on $\ad P$. When $\fg$ is semisimple, one may use a negative multiple of the Cartan--Killing form $\kappa:\fg\times\fg\to\RR$ to define such an inner product on $\fg$ --- for example, see Hilgert and Neeb \cite[Definition 5.5.3 and Theorem 5.5.9]{Hilgert_Neeb_structure_geometry_lie_groups}. More generally, because $G$ is compact it has a faithful representation $\rho:G\to \Aut(V)$ for some complex vector space $V$ as a consequence of the Peter--Weyl Theorem and so $G$ is isomorphic to a closed subgroup of $\U(n)$ or $\Or(n)$ for some integer $n$ (see Br\"ocker and tom Dieck \cite[Theorem III.4.1 and Exercise III.4.7.1]{BrockertomDieck} or Knapp \cite[Corollary 4.22]{Knapp_1986}). We can then obtain the desired inner product on $\fg$ by applying \cite[Proposition 4.24]{Knapp_1986} or by restricting the inner product $\langle\xi,\eta\rangle := \tr(\xi^*\eta)$, for all $\xi,\eta\in\fu(n)$.

Because choices of conventions in Yang--Mills gauge theory vary among authors and as such choices will matter here, we shall summarize our choices. We follow the mathematical conventions of Kobayashi and Nomizu \cite[Chapters II and III]{Kobayashi_Nomizu_v1}, with amplifications described by Bleecker \cite[Chapters 1--3]{Bleecker_1981} that are useful in gauge theory, though we adopt the notation employed by Donaldson and Kronheimer \cite[Chapters 2--4]{DK} and Uhlenbeck \cite{UhlLp}. Bourguignon and Lawson \cite[Section 2]{Bourguignon_Lawson_1981} provide a useful summary of Yang--Mills gauge theory that overlaps with our development here.

We assume that $G$ acts on $P$ on the right \cite[Definition 1.1.1]{Bleecker_1981}, \cite[Section I.1.5]{Kobayashi_Nomizu_v1}. We let $A$ denote a smooth connection on $P$ through any one of its three standard equivalent definitions, namely \cite[Definitions 1.2.1, 1.2.2, and 1.2.3 and Theorems 1.2.4 and 1.2.5]{Bleecker_1981}, \cite[Section II.1]{Kobayashi_Nomizu_v1}:
\begin{inparaenum}[(\itshape i\upshape)]
\item a connection one-form $A \in \Omega^1(P;\fg)$,
\item a family of horizontal subspaces $H_p\subset T_pP$ smoothly varying with $p\in P$, or
\item a set of smooth local connection one-forms $A_\alpha \in \Omega^1(U_\alpha;\fg)$ with respect to an open cover $\{U_\alpha\}_{\alpha\in\sI}$ of $X$ and smooth local sections $\sigma_\alpha:U_\alpha \to P$.
\end{inparaenum}
In particular, if $g_{\alpha\beta}:U_\alpha\cap U_\beta\to G$ is a smooth transition function \cite[Definition 1.1.3]{Bleecker_1981}, \cite[Section I.1.5]{Kobayashi_Nomizu_v1} defined by $\sigma_\beta = \sigma_\alpha g_{\alpha\beta}$, then \cite[Definition 1.2.3]{Bleecker_1981}, \cite[Proposition II.1.4]{Kobayashi_Nomizu_v1}
\begin{equation}
\label{eq:Kobayashi_Nomizu_proposition_II-1-4}
A_\beta = \Ad(g_{\alpha\beta}^{-1})A_\alpha + g_{\alpha\beta}^*\theta \quad\text{on } U_\alpha\cap U_\beta,  
\end{equation}
where $\theta \in \Omega^1(G;\fg)$ is the \emph{Maurer--Cartan form} (or \emph{canonical one-form}); when $G\subset\GL(n,\CC)$, then \eqref{eq:Kobayashi_Nomizu_proposition_II-1-4} simplifies to give
\[
  A_\beta = g_{\alpha\beta}^{-1}A_\alpha g_{\alpha\beta} + g_{\alpha\beta}^{-1}dg_{\alpha\beta} \quad\text{on } U_\alpha\cap U_\beta.
\]
In particular, if $B$ is any other smooth connection on $P$, then $A-B \in \Omega^1(X;\ad P)$ \cite[Theorem 3.2.8]{Bleecker_1981}, where we let
\[
  \Omega^l(X; \ad P) :=  C^\infty(X;\wedge^l(T^*X)\otimes\ad P)
\]
the Fr{\'e}chet space of $C^\infty$ sections of $\wedge^l(T^*X)\otimes\ad P$, for an integer $l\geq 0$.

Given a connection $A$ on $P$, one obtains the \emph{exterior covariant derivative} \cite[Definitions 2.2.2 and 3.1.3]{Bleecker_1981}, \cite[Proposition II.5.1]{Kobayashi_Nomizu_v1}
\[
  d_A: \bar\Omega^l(P;\fg) \to \bar\Omega^{l+1}(P;\fg),
\]
where $\l\geq 0$ is an integer and $\bar\Omega^l(P;\fg) \subset \Omega^l(P;\fg)$ is the subspace of \emph{tensorial $l$-forms $\varphi$ of type $\ad\, G$} such that \cite[Definition 3.1.2]{Bleecker_1981}, \cite[p. 75]{Kobayashi_Nomizu_v1}
\begin{inparaenum}[(\itshape i\upshape)]
\item
\label{item:tensorial_invariance}
$R_g^*\varphi = \Ad(g^{-1})\varphi$ for all $g\in G$, where $R_g:P\to P$ denotes right multiplication by $g$, and
\item
\label{item:tensorial_vanish_vertical_fibers}
$\varphi_p(\xi_1,\ldots,\xi_l)=0$ if any one of $\xi_i\in T_pP$ is vertical, for $p\in P$.
\end{inparaenum}
If $\varphi \in \Omega^l(P;\fg)$ obeys condition \eqref{item:tensorial_invariance} but not \eqref{item:tensorial_vanish_vertical_fibers}, then $\varphi$ is a \emph{pseudotensorial $l$-form of type $\ad\,G$}. In particular, $A \in \Omega^1(P;\fg)$ is a pseudotensorial $1$-form of type $\ad\, G$ by \cite[Proposition II.1.1]{Kobayashi_Nomizu_v1}. As customary \cite[Equation (2.1.12)]{DK}, we also let
\begin{equation}
\label{eq:Exterior_covariant_derivative}
  d_A: \Omega^l(X;\ad P) \to \Omega^{l+1}(X;\ad P),
\end{equation}
denote the equivalent expression for exterior covariant derivative and let
\begin{equation}
\label{eq:Exterior_covariant_derivative_L2_adjoint}
d_A^*:\Omega^{l+1}(X; \ad P) \to \Omega^l(X; \ad P),
\end{equation}
and denote its $L^2$-adjoint with respect to the Riemannian metric \cite[Equation (2.1.24)]{DK}.

If $\varphi \in \bar\Omega^l(P;\fg)$, then \cite[Corollary 3.1.6]{Bleecker_1981}
\begin{equation}
\label{eq:Bleecker_corollary_3-1-6}
  d_A\varphi = d\varphi + [A,\varphi] \in \bar\Omega^{l+1}(P;\fg).
\end{equation}
If $\varphi \in \Omega^l(X;\ad P)$, then we have the corresponding local expressions,
\begin{equation}
\label{eq:Bleecker_corollary_3-1-6_local}
  d_A\varphi\restriction_{U_\alpha} = d\varphi + [A_\alpha,\varphi] \in \Omega^{l+1}(U_\alpha;\fg),
\end{equation}
or in the case of $G\subset\GL(n,\CC)$ \cite[Theorem 2.2.12]{Bleecker_1981},
\[
  d_A\varphi\restriction_{U_\alpha} = d\varphi + A_\alpha\wedge\varphi - (-1)^l\varphi\wedge A_\alpha  \in \Omega^{l+1}(U_\alpha;\fg).
\]
The \emph{curvature} of $A\in \Omega^1(P;\fg)$ is defined by \cite[Definition 2.2.3]{Bleecker_1981}, \cite[p. 77]{Kobayashi_Nomizu_v1}
\begin{equation}
\label{eq:Kobayashi_Nomizu_page_77}
  F_A = d_A A \in \bar\Omega^2(P;\fg),
\end{equation}
and by virtue of the \emph{structure equation} \cite[Theorem 2.2.4]{Bleecker_1981}, \cite[Theorem II.5.2]{Kobayashi_Nomizu_v1}, one has
\begin{equation}
\label{eq:Kobayashi_Nomizu_theorem_2-5-2}
F_A = dA + \frac{1}{2}[A,A] \in \bar\Omega^2(P;\fg).
\end{equation}
(Note that $d_A\varphi \in \bar\Omega^{l+1}(P;\fg)$ even if $\varphi \in \Omega^l(P;\fg)$ is only pseudotensorial by \cite[Proposition  II.5.1 (c)]{Kobayashi_Nomizu_v1}.) We also write $F_A \in \Omega^2(X;\ad P)$ for the curvature equivalently defined by the corresponding set of local expressions \cite[Theorem 2.2.11]{Bleecker_1981}
\begin{equation}
\label{eq:Bleecker_theorem_2-2-11}
  F_A\restriction_{U_\alpha} = dA_\alpha + \frac{1}{2}[A_\alpha,A_\alpha] \in \Omega^2(U_\alpha;\fg),
\end{equation}
or in the case of $G\subset\GL(n,\CC)$ \cite[Corollary 2.2.13]{Bleecker_1981},
\[
  F_A\restriction_{U_\alpha} = dA_\alpha + A_\alpha\wedge A_\alpha \in \Omega^2(U_\alpha;\fg).
\]
If $a \in \bar\Omega^1(P;\fg)$, then \eqref{eq:Kobayashi_Nomizu_theorem_2-5-2} yields
\[
  F_{A+a} = dA + \frac{1}{2}[A,A] + da + \frac{1}{2}[A,a] + \frac{1}{2}[a,A] + \frac{1}{2}[a,a],
\]
that is, using \eqref{eq:Bleecker_corollary_3-1-6_local} and $[a,A]=[A,a]$ by the forthcoming \eqref{eq:Bleecker_definition_2-1-1_one_forms},
\begin{equation}
\label{eq:Donaldson_Kronheimer_2-1-14}
F_{A+a} = F_A + d_Aa + \frac{1}{2}[a,a].
\end{equation}
or in the case of $G\subset\GL(n,\CC)$ \cite[Equation 2.1.14]{DK},
\[
  F_{A+a} = F_A + d_Aa + a\wedge a.
\]
We note that if $a, b \in \Omega^1(X;\ad P)$ and $\xi,\eta\in C^\infty(TX)$, then \cite[Definition 2.1.1]{Bleecker_1981}
\begin{equation}
  \label{eq:Bleecker_definition_2-1-1_one_forms}
  [a,b](\xi,\eta) = [a(\xi),b(\eta)] - [a(\eta),b(\xi)]
\end{equation}
or in the case of $G\subset\GL(n,\CC)$ \cite[Theorem 2.2.12]{Bleecker_1981},
\[
  [a,b] = a\wedge b + b\wedge a.
\]
We let $\Aut(P)$ denote the Fr{\'e}chet space of all smooth automorphisms of $P$ \cite[Definition 3.2.1]{Bleecker_1981}, or \emph{gauge transformations}. We recall that $\Aut(P) \cong \Omega^0(X;\Ad P)$ by \cite[Theorem 3.2.2]{Bleecker_1981}, where $\Ad P := P\times_G G$ and $g\in G$ acts on $G$ on the left by conjugation via $h \mapsto ghg^{-1}$ for all $h\in G$ \cite[Definition 3.1.1]{Bleecker_1981}. If $\sA(P)$ denotes the Fr{\'e}chet space of all connections on $P$, then one obtains a right action \cite[Theorem 3.2.5]{Bleecker_1981}, \cite[Theorem II.6.1]{Kobayashi_Nomizu_v1},
\begin{equation}
\label{eq:Right_action_gauge_transformations_on_connections}
  \sA(P) \times \Aut(P) \ni (A,u) \mapsto u(A) = u^*A \in \sA(P).
\end{equation}
If $u \in \Aut(P)$ is represented locally by $u(\sigma_\alpha) = \sigma_\alpha s_\alpha$ on $U_\alpha \subset X$, where $\sigma_\alpha:U_\alpha\to P$ is a local section and $s_\alpha:U_\alpha\to G$ is a smooth map, then \cite[Theorem 3.2.14]{Bleecker_1981}
\begin{equation}
\label{eq:Bleecker_theorem_3-2-14}
u(A)\restriction_{U_\alpha} = \Ad(s_\alpha^{-1})A_\alpha + s_\alpha^*\theta \in \Omega^1(U_\alpha;\fg),  
\end{equation}  
or in the case of $G\subset\GL(n,\CC)$,
\[
  u(A)\restriction_{U_\alpha} = s_\alpha^{-1}A_\alpha s_\alpha + s_\alpha^{-1}ds_\alpha \in \Omega^1(U_\alpha;\fg).
\]
If $B$ is any other smooth connection on $P$ and $G\subset\GL(n,\CC)$, then
\begin{align*}
  (u(A) - B)_\alpha
  &=
    s_\alpha^{-1}A_\alpha s_\alpha + s_\alpha^{-1}ds_\alpha - B_\alpha
  \\
  &= s_\alpha^{-1}(A_\alpha-B_\alpha) s_\alpha + s_\alpha^{-1}(ds_\alpha + [B_\alpha,s_\alpha])
  \\
  &= s_\alpha^{-1}(A-B)_\alpha s_\alpha + s_\alpha^{-1}d_Bs_\alpha \quad\text{on } U_\alpha.
\end{align*}
If $s \in \Omega^0(X;\Ad P)$ is represented locally by the collection $\{s_\alpha\}_{\alpha\in\sI}$, then (as in \cite[p. 32]{UhlLp}) the corresponding global expression for the action of $u\in\Aut(P)$ is given by 
\begin{equation}
\label{eq:Uhlenbeck_1982_Lp_gauge_action_page_32}
u(A)-B = s^{-1}(A-B)s + s^{-1}d_Bs.
\end{equation}
In order to construct Sobolev spaces of connections and gauge transformations, extending the usual definitions of Sobolev spaces of functions on open subsets of Euclidean space in Adams and Fournier \cite[Chapter 3]{AdamsFournier}, we shall need suitable covariant derivatives. If $E$ is a smooth vector bundle over $X$ with covariant derivative \cite[Section III.1]{Kobayashi_Nomizu_v1}
\[
  \nabla:C^\infty(X;E) \to C^\infty(X;T^*X\otimes E),
\]
and $A$ is smooth connection on $P$ with induced covariant derivative (see \cite[Equation (2.1.12) (ii)]{DK} or Kobayashi \cite[Equation (1.1.1)]{Kobayashi})
\begin{equation}
\label{eq:Covariant_is_exterior_covariant_derivative_on_sections}
  \nabla_A = d_A:C^\infty(X;\ad P) \to C^\infty(X;T^*X\otimes \ad P),
\end{equation}
we let $\nabla_A$ denote the induced covariant derivative on the tensor product bundle $E\otimes\ad P$,
\[
  \nabla_A:C^\infty(X;E\otimes\ad P) \to C^\infty(X;T^*X\otimes E\otimes \ad P).
\]
The covariant derivative on $E=\wedge^l(T^*X)$ is induced by the Levi--Civita connection on $T^*X$.

We denote the Banach space of sections of $\wedge^l(T^*X)\otimes\ad P$ of Sobolev class $W^{k,p}$, for any $k\in \NN$ and $p \in [1,\infty]$, by $W_A^{k,p}(X; \wedge^l(T^*X)\otimes\ad P)$, with norm,
\[
\|\phi\|_{W_A^{k,p}(X)} := \left(\sum_{j=0}^k \int_X |\nabla_A^j\phi|^p\,d\vol_g \right)^{1/p},
\]
when $1\leq p<\infty$ and
\[
\|\phi\|_{W_A^{k,\infty}(X)} := \sum_{j=0}^k \esssup_X |\nabla_A^j\phi|,
\]
when $p=\infty$, where $\phi \in W_A^{k,p}(X; \wedge^l(T^*X)\otimes\ad P)$.

For $p \geq 1$ and a fixed $C^\infty$ connection on $P$, we let
\begin{equation}
\label{eq:Affine_space_W1p_connections}  
  \sA^{1,p}(P) := A_1 + W_{A_1}^{1,p}(X;T^*X\otimes\ad P)
\end{equation}
denote the affine space of Sobolev $W^{1,p}$ connections on $P$. For $p \in (d/2,\infty)$, we let $\Aut^{2,p}(P)$ denote the Banach Lie group of Sobolev $W^{2,p}$ automorphisms of $P$ \cite[Section 2.3.1]{DK}, \cite[Appendix A and p. 32 and pp. 45--51]{FU}, \cite[Section 3.1.2]{FrM}, let
\begin{equation}
\label{eq:Quotient_space_W1p_connections}  
 \sB^{1,p}(P) := \sA^{1,p}(P)/\Aut^{2,p}(P)
\end{equation}
denote the quotient space of gauge-equivalence classes of $W^{1,p}$ connections on $P$, and let
\begin{equation}
\label{eq:Quotient_map_W1p_connections}  
 \pi: \sA^{1,p}(P) \ni A \mapsto [A] \in \sB^{1,p}(P)
\end{equation}
denote the quotient map.

\subsection{Notation and conventions}
\label{subsec:Notation}
Throughout this article, constants are generally denoted by $C$ (or $C(*)$ to indicate explicit dependencies) and may increase from one line to the next in a series of inequalities. We write $\eps \in (0,1]$ to emphasize a positive constant that is understood to be small or $K \in [1,\infty)$ to emphasize a constant that is understood to be positive but finite. We let $\Inj(X,g)$ denote the injectivity radius of a smooth Riemannian manifold $(X,g)$. Following Adams and Fournier \cite[Sections 1.26 and 1.28]{AdamsFournier}, for an open subset $U\subset\RR^n$ and integer $m\geq 0$, we let $C^m(U)$ (respectively, $C^m(\bar U)$) denote the vector space of (real or complex-valued) functions on $U$ which, together with their derivatives up to order $m$, are continuous (respectively, bounded and uniformly continuous) on $U$. The H\"older spaces $C^{m,\lambda}(\bar U)$ for $\lambda\in(0,1]$ are defined as in \cite[Section 1.29]{AdamsFournier}. We write $C^{m,\lambda}(U)$ (or equivalently, $C_{\loc}^{m,\lambda}(U)$) for the vector space of functions $f$ such that $f \in C^{m,\lambda}(\bar V)$ for all $V\Subset U$.

Unless we need to indicate a different regularity for connections and gauge transformations, we shall always assume that $p\in(d/2,\infty)$ and abbreviate $\sA^{1,p}(P)$, $\Aut^{2,p}(P)$, $\sB^{1,p}(P)$, and so on, by $\sA(P)$, $\Aut(P)$, $\sB(P)$, respectively. We call $X$ an \emph{admissible four-manifold} if it is a closed\footnote{By which we mean, as usual, compact and without boundary.}, connected, four-dimensional, orientable, smooth manifold and call $(X,g)$ an \emph{admissible Riemannian four-manifold} if $X$ is an admissible four-manifold that is equipped with a smooth Riemannian metric $g$. As usual, we let $\NN=\{1,2,3,\ldots\}$ denote the set of positive integers.

\section{Kuranishi charts for moduli spaces of anti-self-dual connections}
\label{sec:Local_Kuranishi_parameterization_moduli_space_anti-self-dual_connections}

\subsection{Kuranishi chart around an interior point via orthogonal projection}
\label{subsec:Local_Kuranishi_parameterization_neighborhood_interior_point_Taubes_approach}
The construction of the \emph{Kuranishi chart} for an open neighborhood of an \emph{interior} point $[A]$ of the moduli space $M(P,g)$ in \eqref{eq:Moduli_space_asd_connections} is well known and goes back to Atiyah, Hitchin, and Singer \cite[Theorem 6.1]{AHS}, based on an idea of Kuranishi in the deformation of complex structures \cite{Kuranishi}, and described by Donaldson and Kronheimer \cite{DK}, Freed and Uhlenbeck \cite{FU}, and Friedman and Morgan \cite{FrM} in terms of the elliptic deformation complex \eqref{eq:Elliptic_deformation_complex_ASD_equation} for a smooth anti-self-dual connection $A$. The approach we describe here is modeled on that of Taubes \cite{TauIndef, TauFrame}. Because the map
\[
  \sA(P) \ni A \mapsto F_A^+ \in W^{1,p}(\wedge^+(T^*X)\otimes\ad P)
\]
defines a $C^1$ Fredholm section\footnote{The $C^1$ maps, sections, and manifolds discussed here are actually real analytic.} of the vector bundle
\[
 \fV(P) := \sA(P)\times_{\Aut(P)} W^{1,p}(\wedge^+(T^*X)\otimes\ad P),
\]
one can apply the Implicit Mapping Theorem for $C^1$ maps of $C^1$ Banach manifolds to provide
\begin{itemize}
\item a $\Stab_A$-equivariant $C^1$ embedding
\[
  \bgamma: \bH_A^1(X;\ad P) \supset \bO_A \ni \tau \mapsto A + \bgamma(\tau) \in A+\Ker d_A^*\cap W^{1,p}(T^*X\otimes\ad P)
\]
from a $\Stab_A$-invariant open neighborhood $\bO_A$ of the origin.

\item a $\Stab_A$-equivariant $C^1$ map
  \[
    \bpsi: \bH_A^1(X;\ad P) \supset \bO_A \ni \tau \to \bpsi(\tau) \in \bH_A^2(X;\ad P)
  \]
  such that
\[
  \bgamma: \bO_A\cap\bpsi^{-1}(0)/\Stab_A \to M(P,g)
\]
is a continuous embedding onto an open neighborhood of $[A]$ in $M(P,g)$, where $\Stab_A = \{u\in\Aut(P): u(A)=A\}$ is the stabilizer or isotropy subgroup\footnote{This subgroup of $\Aut(P)$ is isomorphic to the centralizer of the holonomy subgroup for $A$ in $G$.} for $A$ in $\Aut(P)$.
\end{itemize}  

This chart can be obtained by splitting the local defining equation, $F_{A+a}^+=0$ for $a \in \Ker d_A^*\cap W^{1,p}(T^*X\otimes\ad P)$, into two parts: For each $\tau\in \bO_A$, first solve for the unique $v=\wp(\tau) \in \Pi_A^\perp W^{2,p}(\wedge^+(T^*X)\otimes\ad P)$ such that for $a=d_A^{+,*}v$ one has
\[
  \Pi_A^\perp F_{A+\tau+a}^+ = 0.
\]
Indeed, we can solve for $v$ by using the map
\[
  \bar\bB_\delta^+(A) \ni v \mapsto \Pi_A^\perp F_{A+\tau+d_A^{+,*}v}^+ \in \Pi_A^\perp L^p(\wedge^+(T^*X)\otimes\ad P)
\]
to define a self map of $\bar\bB_\delta^+(A)$,
\[
\bar\bB_\delta^+(A) \ni v \mapsto G_AF_{A+\tau+d_A^{+,*}v}^+ \in \bar\bB_\delta^+(A),
\]
and hence solve a fixed point equation for $v$, where
\[
  \bB_\delta^+(A) := \left\{v \in \Pi_A^\perp W^{2,p}(\wedge^+(T^*X)\otimes\ad P): \|v\|_{W^{2,p}(X)} < \delta\right\}
\]
and 
\[
G_A: L^p(\wedge^+(T^*X)\otimes\ad P) \to W^{2,p}(\wedge^+(T^*X)\otimes\ad P)
\]
is the Green's operator for $d_A^+d_A^{+,*}$, so that
\begin{align*}
  d_A^+d_A^{+,*}G_A &= \Pi_A^\perp \quad\text{on } L^p(\wedge^+(T^*X)\otimes\ad P),
  \\
  G_Ad_A^+d_A^{+,*} &= \Pi_A^\perp \quad\text{on } W^{2,p}(\wedge^+(T^*X)\otimes\ad P).
\end{align*}
We then consider the subset of all $\tau \in \bO_A$ such that
\[
  \bpsi(\tau) := \Pi_A F_{A+\bgamma(\tau)}^+ = 0,
\]
where $\Pi_A$ is $L^2$-orthogonal projection from $L^2(\wedge^+(T^*X)\otimes\ad P)$ onto the finite-dimensional subspace $\Ker d_A^+d_A^{+,*}$, noting that $d_A^+d_A^{+,*}$ is an $L^2$-self-adjoint elliptic operator with discrete spectrum in $[0,\infty)$, and $\bgamma(\tau):=\tau+d_A^{+,*}\wp(\tau)$ for all $\tau\in\bO_A$.

In the preceding discussion, while one can say little about the zero set $\bO_A\cap\bpsi^{-1}(0)$, it is at least straightforward to apply the Implicit Mapping Theorem to reduce the problem of describing a local neighborhood of a point in $M(P,g)$ to one of describing the finite-dimensional local model $\bO_A\cap\bpsi^{-1}(0)/\Stab_A$. In this article, we shall therefore focus the majority of our attention on the case where the origin is a regular point of the map
\[
  W^{1,p}(T^*X\otimes\ad P) \ni a \mapsto F_{A+a}^+ \in L^p(\wedge^+(T^*X)\otimes\ad P),
\]
that is, when $H_A^2(X;\ad P)=0$, since the extension to the case $H_A^2(X;\ad P)\neq 0$ requires us only to replace the role of the preceding map by
\[
  W^{1,p}(T^*X\otimes\ad P) \ni a \mapsto \Pi_A^\perp F_{A+a}^+ \in \Pi_A^\perp L^p(\wedge^+(T^*X)\otimes\ad P).
\]
When $H_A^2(X;\ad P)=0$, one can obtain the \emph{gluing map} $\bgamma$ from an immediate application of Theorems \ref{mainthm:Preimage_point_under_submersion_and_implied_embedding} or \ref{mainthm:Preimage_submanifold_under_transverse_map_and_implied_embedding} in the special case of an abstract $C^1$ map of smooth Banach manifolds \emph{without} boundary, namely the Banach spaces
\[
  \Ker d_A^*\cap W^{1,p}(T^*X\otimes\ad P) \quad\text{and}\quad L^p(\wedge^+(T^*X)\otimes\ad P),
\]
and one follows this paradigm \mutatis to obtain the general case $H_A^2(X;\ad P)\neq 0$.

Our main observation in this article is that by replacing the preceding pair of Banach spaces with a pair of Banach manifolds with boundary, or Banach manifolds with corners more generally, one can again directly deduce the existence of the desired gluing map --- parameterizing a neighborhood of a point in the bubble tree compactification $\widehat M(P,g)$ of $M(P,g)$ --- from an application of the Inverse Mapping Theorem for $C^1$ maps of $C^1$ Banach manifolds with corners.

In this article, we focus on the problem of describing an open neighborhood of a point in $\widehat M(P,g)$ corresponding to formation of a single bubble point of curvature concentration and, for this purpose, it is enough to apply the Inverse Mapping Theorem for $C^1$ maps of smooth Banach manifolds with boundary in the shape of Theorems \ref{mainthm:Preimage_point_under_submersion_and_implied_embedding} or \ref{mainthm:Preimage_submanifold_under_transverse_map_and_implied_embedding}. In the general case of many bubble points of curvature concentration, one would have to apply versions of Theorems \ref{mainthm:Preimage_point_under_submersion_and_implied_embedding} or \ref{mainthm:Preimage_submanifold_under_transverse_map_and_implied_embedding} for maps of Banach manifolds with corners. However, such a generalization is purely technical and follows in a straightforward manner from methods described in this article and by Margalef Roig and Outerelo Dom{\'\i}nguez in \cite{Margalef-Roig_Outerelo-Dominguez_differential_topology} in their treatment of differential topology for Banach manifolds with corners. We refer to Ulyanov \cite{Ulyanov} for a manifolds with corners resolution of singularities for the symmetric products $\Sym^l(X)$ appearing in the Uhlenbeck compactification $\bar M(P,g)$ of $M(P,g)$.

\subsection{Kuranishi chart around an interior point via stabilization}
\label{subsec:Local_Kuranishi_parameterization_neighborhood_interior_point_Donaldson_approach}
There is a convenient alternative approach to constructing the Kuranishi model for a neighborhood of an anti-self-dual connection due to Donaldson (see Donaldson \cite{DonConn}, Donaldson and Kronheimer \cite[pp. 290--291]{DK}, or Donaldson and Sullivan \cite{Donaldson_Sullivan_1990}). While the Taubes--Kuranishi approach in Section \ref{subsec:Local_Kuranishi_parameterization_neighborhood_interior_point_Taubes_approach} seeks to replace the codomain $L^p(\wedge^+(T^*X)\otimes\ad P)$ by the smaller subspace
\[
  \Pi_A^\perp L^p(\wedge^+(T^*X)\otimes\ad P)
\]
such that the smooth composition $a\mapsto \Pi_A^\perp F_{A+a}^+$ is a submersion on an open neighborhood of the origin, where $\Ran \Pi_A = H_A^2(X;\ad P)$ and $\Pi_A^\perp = \id - \Pi_A$, in the Donaldson approach one instead replaces the domain $W^{1,p}(T^*X\otimes\ad P)$ by the larger space
\[
  W^{1,p}(T^*X\otimes\ad P)\oplus H_A^2(X;\ad P).
\]
When $\bH_A^2(X;\ad P) \neq 0$, then the smooth map
\[
  W^{1,p}(T^*X\otimes\ad P) \ni a \mapsto F_{A+a}^+ \in L^p(\wedge^+(T^*X)\otimes\ad P)
\]
has derivative $DF_{A+\cdot}^+(0) = d_A^+$ at the origin that is not surjective. We let
\[
  L_A:\bH_A^2(X;\ad P) \to L^p(\wedge^+(T^*X)\otimes\ad P)
\]
denote the natural inclusion map. Hence, the smooth map
\[
  W^{1,p}(T^*X\otimes\ad P)\oplus \bH_A^2(X;\ad P) \ni (a,v) \mapsto F_{A+a}^+ + L_Av \in L^p(\wedge^+(T^*X)\otimes\ad P)
\]
has derivative $d_A^+\oplus L_A$ at the origin that is, by construction, surjective. Therefore, an open neighborhood of the origin $(0,0)$ in the set
\[
  \left\{(a,v) \in \Ker d_A^*\cap W^{1,p}(T^*X\otimes\ad P)\oplus \bH_A^2(X;\ad P): F_{A+a}^+ + L_Av = 0 \right\}
\]
is an open smooth manifold of dimension equal to that of
\begin{multline*}
  \Ker(d_A^+\oplus L_A)\cap \left(\Ker d_A^*\cap W^{1,p}(T^*X\otimes\ad P)\oplus \bH_A^2(X;\ad P)\right)
  \\
  = \Ker d_A^+ \cap \Ker d_A^*\cap W^{1,p}(T^*X\otimes\ad P),
\end{multline*}
namely, $h_A^1(X;\ad P) = \dim\bH_A^1(X;\ad P)$. We now obtain a model for an open neighborhood in $M(P,g)$ of the point $[A]$ by cutting down the preceding set of pairs $(a,v)$ via the equation $L_Av=0$, that is, $v=0$.

\subsection{Excision principle for the index of an elliptic operator and gluing}
\label{subsec:Excision_principle_index_elliptic_operator_gluing}
The elliptic complex \eqref{eq:Elliptic_deformation_complex_ASD_equation} for an anti-self-dual connection $A$ on a principal $G$-bundle $P$ over $X$ may be rolled up in the standard way \cite[Section 1.5]{Gilkey2} to define a first-order elliptic operator
\begin{equation}
\label{eq:Elliptic_deformation_complex_ASD_equation_rolled_up}  
d_A^+ + d_A^*:\Omega^1(X;\ad P) \to \Omega^{2,+}(X;\ad P)\oplus \Omega^0(X;\ad P)
\end{equation}
with
\[
  \Ker(d_A^+ + d_A^*) = \bH_A^1(X;\ad P)
  \quad\text{and}\quad
  \Coker(d_A^+ + d_A^*) = \bH_A^2(X;\ad P)\oplus \bH_A^0(X;\ad P).
\]
The expected dimension of $M(P,g)$ (at a point $[A]$) is given by
\begin{align*}
  s_A(X;\ad P) &= \Ind(d_A^+ + d_A^*)
  \\
  &= \Ker(d_A^+ + d_A^*) - \Coker(d_A^+ + d_A^*)
  \\
  &= h_A^1(X;\ad P) - h_A^0(X;\ad P) - h_A^2(X;\ad P),
\end{align*}
just as in \eqref{eq:Index_elliptic_deformation_complex_ASD_equation}.

Suppose that we are given the following data required to construct a smooth principal bundle over a closed, connected, four-dimensional, oriented, smooth Riemannian manifold:

\begin{data}[Splicing data for a connected sum principal bundle over a connected sum Riemannian four-manifold]
\label{data:Splicing_parameters_connected_sum_G-bundle_connected_sum_Riemannian_4-manifold}  
Let $(X_0,g_0)$ and $(X_1,g_1)$ be admissible Riemannian four-manifolds; $G$ be a compact Lie group; $P_0$ and $P_1$ be smooth principal $G$-bundles over $X_0$ and $X_1$, respectively; $x_0 \in X$ and $x_1 \in X_1$ be points; $\lambda_0\in(0,1]$ be a scale parameter whose square root that is less than one quarter of the injectivity radii of $(X_i,g_i)$ for $i=0,1$; $v_0$ and $v_1$ be oriented, orthonormal frames for $TX_0|_{x_0}$ and $TX_1|_{x_1}$, respectively; $\rho \in \Isom_G(P_0|_{x_0}, P_1|_{x_1})$ be a bundle gluing parameter; and $A_{0\flat}$ on $P_0$ and $A_{1\flat}$ on $P_1$ be smooth connections.
\end{data}  

Varying the choice of bundle gluing parameter $\rho \in \Isom_G(P_0|_{x_0}, P_1|_{x_1})$ is equivalent to a choice of fiber points $p_0 \in P_0|_{x_0}$ and $p_1 \in P_1|_{x_1}$ and varying one of those points. As explained in \cite[Section 7.2.1]{DK}, the Data \ref{data:Splicing_parameters_connected_sum_G-bundle_connected_sum_Riemannian_4-manifold} can be used to define a closed, connected, oriented, smooth connected sum manifold $X=X_0\# X_1$, where $X_0$ and $X_1$ are joined by a small cylinder with cross section $S^3$, and a smooth connected sum principal $G$-bundle $P=P\#P_1$ over $X$. On the complement of the small cylinder, the Riemannian metric $g$ can be defined to agree with $g_0$ on $X_0$ and $g_1$ on $X_1$. 

Given anti-self-dual connections $A_0$ on $P_0$ and $A_1$ on $P_1$, one can use the Data \ref{data:Splicing_parameters_connected_sum_G-bundle_connected_sum_Riemannian_4-manifold} to form an approximately anti-self-dual connection $A$ on $P$ using the splicing method described in \cite[Section 7.2.1]{DK}. The excision principle for elliptic operators \cite{AS4}, \cite[Proposition 7.1.2]{DK} yields the following formula \cite[Equation (7.2.47)]{DK} for the expected dimension of the moduli space $M(P,g)$ (at a point $[A]$) in terms of the expected dimensions of the moduli spaces $M(P_0,g_0)$ and $M(P_1,g_1)$ (at points $[A_0]$ and $[A_1]$, respectively):
\begin{equation}
\label{eq:Excision_dimension_formula_ASD_elliptic_operator}
s_A(X;\ad P) = s_{A_0}(X_0;\ad P_0) + s_{A_1}(X_1;\ad P_1) + \dim G.  
\end{equation}
In the simplest case where $\bH_{A_0}^2(X_0;\ad P_0) = 0$ and $\bH_{A_1}^2(X_1;\ad P_1) = 0$ (no cokernel obstructions to deformation) and $\bH_{A_0}^0(X_0;\ad P_0) = 0$ and $\bH_{A_1}^0(X_1;\ad P_1) = 0$ (trivial isotropy groups), the dimension of $M(P,g)$ (at a point $[A]$) is given by
\begin{equation}
\label{eq:Excision_dimension_formula_ASD_elliptic_operator_regular_points_trivial_isotropy}
  h_A^1(X;\ad P) = h_{A_0}^1(X_0;\ad P_0) + h_{A_1}^1(X_1;\ad P_1) + \dim G.
\end{equation}
The preceding dimension formula can be used to help identify local coordinates for $M(P,g)$ near the boundary point defined by $\lambda=0$, as in \cite[Section 7.2.5]{DK}.

\subsection{Kuranishi chart around a boundary point}
\label{subsec:Local_Kuranishi_parameterization_neighborhood_boundary_point}
Let $P_1$ be a smooth principal $G$-bundle over the four-dimensional sphere, $S^4 = \{y\in\RR^5: |y|=1\}$, with its standard round metric $g_\round$ of radius one. In the simplest example of gluing a family of anti-self-dual connections on a principal $G$-bundle $P_1$ onto a family of anti-self-dual connections on a principal $G$-bundle $P_0$ over $(X,g)$, the essential idea is to use a splicing map $\cS$ to define a $C^1$ Banach manifold with boundary structure on an open subset of the infinite-dimensional quotient space $\sB(P)$ of $W^{1.p}$ connections ($p>2$) on the principal $G$-bundle $P$ obtained by splicing $P_0$ and $P_1$ over a small annulus $\Omega(x_0;\sqrt{\lambda}/4, 4\sqrt{\lambda})$ in $X$ defined by a small scale parameter $\lambda\in(0,1]$ and a point $x_0\in X$. The self-dual components of the curvatures of connections on $P$ define a section $F^+$ of a $C^1$ Banach vector bundle over $\sB(P)$ with fiber $L^p(\wedge^+(T^*X)\otimes\ad P)$ that extends to the $C^1$ Banach manifold with boundary, $\bar\sB(P)$. (For the purpose of this Introduction, we ignore the minor additional complication posed by the presence of points $[A]$ in $\sB(P)$ represented by connections $A$ with nontrivial isotropy in the Banach Lie group $\Aut(P)$ of $W^{2,p}$ automorphisms of $P$.) For compact Lie groups $G$, the moduli spaces $M(P_1)$ of anti-self-dual connections are nonempty by virtue of the construction due to Atiyah, Hitchin, Drinfel$'$d, and Manin \cite{AtiyahGeomYM, ADHM, Atiyah_Hitchin_Singer_1977, AHS}. We also assume that the moduli spaces $M(P_0)$ of anti-self-dual connections on $P_0$ are nonempty. A typical point in the boundary $\bar\sB(P)$, where $\lambda=0$, is represented by $(A_0,A_1,x_0,0)$, where $A_0$ is a connection on $P_0$ and $A_1$ is a connection on $P_1$ whose curvature density $|F_{A_1}|^2$ has center-of-mass at the north pole of $S^4$ and standard deviation one. If both $A_0$ and $A_1$ are anti-self-dual, then $[A_0,A_1,x_0,0]$ lies in the zero-locus $(F^+)^{-1}(0)\cap \bar\sB(P)$. The existence of an embedding (the `gluing map') from a finite-dimensional manifold of gluing data defined by open neighborhoods of $[A_0]$ in $M(P_0)$ and $[A_1]$ in $M(P_1)$ onto an open neighborhood of $[A_0,A_1,x_0,0]$ in the bubble-tree compactification $\bar M(P)$ of $M(P)$ now follows from a version of the Inverse Mapping Theorem for $C^1$ maps of smooth Banach manifolds with boundary, namely, Theorem \ref{mainthm:Preimage_point_under_submersion_and_implied_embedding} or \ref{mainthm:Preimage_submanifold_under_transverse_map_and_implied_embedding}. As noted earlier, when there are many bubble points, one would have to apply versions of Theorems \ref{mainthm:Preimage_point_under_submersion_and_implied_embedding} or \ref{mainthm:Preimage_submanifold_under_transverse_map_and_implied_embedding} for maps of Banach manifolds with corners. 

When we consider the proof of existence of a bubble tree compactification for $M(P,g)$, we shall need to restrict our attention to \emph{compact} Lie groups in order to apply Uhlenbeck's Weak Compactness Theorem \cite{UhlLp} or in order to take advantage of existence of anti-self-dual connections over $S^4$ \cite{AtiyahGeomYM, ADHM, Atiyah_Hitchin_Singer_1977, AHS}; until that stage, however, we may allow $G$ to be any Lie group.

\subsubsection{Lessons from the analysis of Donaldson's Collar Theorem}
One of the key calculations is to show that all partial derivatives of $\cF=F^+\circ\,\cS$ extend continuously from $\lambda\in (0,\lambda_0]$ up to $\lambda=0$, for example, as the limits as $\lambda \downarrow 0$ of partial derivatives that are defined when $\lambda>0$. This shows that $\cF$ is $C^1$ up to $\lambda=0$, as would be required by a boundary version of the Implicit Mapping Theorem. Recall that even greater boundary regularity known for Donaldson's Collar Map due to prior results of Groisser and Parker \cite{Groisser_1993, Groisser_1998, GroisserParkerSphere, GroisserParkerGeometryDefinite}.
See Donaldson \cite[Theorem 11]{DonApplic}, Freed and Uhlenbeck \cite[Chapter 9]{FU}, Groisser \cite{Groisser_1993, Groisser_1998} and Groisser and Parker \cite[Figure 1 and Theorems II, III, and IV]{GroisserParkerGeometryDefinite}.

\section{Differential topology for Banach manifolds with boundary}
\label{sec:Banach_manifolds_boundary}
In this section, we review essential concepts from differential topology for Banach manifolds with boundary, drawing heavily on the monograph by Margalef Roig and Outerelo Dom{\'\i}nguez \cite{Margalef-Roig_Outerelo-Dominguez_differential_topology}, and conclude with proofs of Theorems \ref{mainthm:Preimage_submanifold_under_transverse_map_and_implied_embedding} and \ref{mainthm:Preimage_point_under_submersion_and_implied_embedding}. Nice developments of some of these concepts for Banach manifolds without boundary are provided by Abraham, Marsden, and Ratiu \cite{AMR} and by Klingenberg \cite{Klingenberg_riemannian_geometry}. (While often cited as a reference differential topology for Banach manifolds without boundary, Lang \cite{Lang_introduction_differential_topology} is inaccurate in some respects, as we note below.)

\subsection{Preimage of a submanifold without boundary under a smooth map of manifolds without boundary}
We recall that if $f: X \to Y$ is a smooth map of \emph{finite}-dimensional smooth manifolds without boundary and $Z \subset Y$ is a smooth submanifold without boundary, then $f$ is \emph{transverse} to $Z$, denoted $f\transv Z$, if either $f^{-1}(Z) = \emptyset$ or
\[
  \Ran df(x) + T_{f(x)}Z = T_{f(x)}Y, \quad\forall x \in f^{-1}(Z).
\]
In particular, if $f\transv Z$, then $f^{-1}(Z)\subset X$ is a smooth manifold without boundary and
\[
  \codim f^{-1}(Z) = \codim Z.
\]
See Guillemin and Pollack \cite[Theorem, p. 28]{Guillemin_Pollack} or Hirsch \cite[p. 22 and Theorem 1.3.3]{Hirsch} for this statement of the \emph{Preimage Theorem}. When we pass to the setting of \emph{infinite}-dimensional Banach manifolds, however, the preceding definition of transversality requires refinement in order to yield the analogous statement of the Preimage Theorem.

\begin{defn}[Transversality for maps of Banach manifolds without boundary]
\label{defn:Transversality_all_Banach_manifolds_without_boundary_AMR}    
(See Abraham, Marsden, and Ratiu \cite[Definition 3.5.10]{AMR}.)
Let $f: X \to Y$ be a $C^p$ map ($p\geq 1$) of $C^p$ Banach manifolds without boundary and let $Z \subset Y$ be a $C^p$ Banach submanifold without boundary. Then $f$ is \emph{transverse to $Z$ at $x\in X$}, denoted $f\transv_x Z$, if either $f(x) \notin Z$ or if $f(x) \in Z$, then
\begin{enumerate}
\item
\label{item:Definition_transversality_all_Banach_manifolds_without_boundary_span}   
$\Ran df(x) + T_{f(x)}Z = T_{f(x)}Y$, and
\item
\label{item:Definition_transversality_all_Banach_manifolds_without_boundary_split}   
The subspace $(df(x))^{-1}(T_{f(x)}Z)$ has a closed complement\footnote{Lang \cite[p. 27 and Proposition 2.2.4]{Lang_introduction_differential_topology} is inaccurate here since he omits the condition on existence of a closed complement.} in $T_xX$. 
\end{enumerate}
If $f\transv_x Z$ for all $x \in X$, then $f$ is \emph{transverse to $Z$}, denoted $f\transv Z$.
\end{defn}

Condition \eqref{item:Definition_transversality_all_Banach_manifolds_without_boundary_span} in Definition \ref{defn:Transversality_all_Banach_manifolds_without_boundary_AMR} is purely algebraic; there is no assumption that $\Ran df(x)$ has a closed complement in $T_{f(x)}Y$. Condition \eqref{item:Definition_transversality_all_Banach_manifolds_without_boundary_split} in Definition \ref{defn:Transversality_all_Banach_manifolds_without_boundary_AMR} is automatic when $X$ is a Hilbert manifold or finite-dimensional.

\begin{thm}[Preimage of a Banach manifold without boundary under a map whose domain and codomain are Banach manifolds without boundary]
\label{thm:Preimage_theorem_all_manifolds_without_boundary_AMR}
(See Abraham, Marsden, and Ratiu \cite[Theorem 3.5.12]{AMR} or Margalef Roig and Outerelo Dom{\'\i}nguez \cite[Proposition 7.1.14]{Margalef-Roig_Outerelo-Dominguez_differential_topology}, which includes the case of Banach manifolds without boundary.)
Let $f: X \to Y$ be a $C^p$ map ($p\geq 1$) of $C^p$ Banach manifolds without boundary and let $Z \subset Y$ be a Banach submanifold without boundary. If $f \transv Z$, then $f^{-1}(Z)$ is a $C^p$ Banach submanifold without boundary, $T_x(f^{-1}(Z)) = (df(x))^{-1}(T_{f(x)}Z)$, and if $Z$ has finite codimension, then $\codim f^{-1}(Z) = \codim Z$.
\end{thm}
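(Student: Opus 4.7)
The plan is to reduce the global statement to a local application of Theorem \ref{mainthm:Preimage_point_under_submersion_and_implied_embedding} (specialized to the empty-boundary case) by straightening $Z$ in a chart near each point of $f^{-1}(Z)$ and composing $f$ with a complementary projection.

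First I would fix a point $x_0 \in f^{-1}(Z)$. Because $Z \subset Y$ is a $C^p$ Banach submanifold, there is a $C^p$ submanifold chart $(V,\psi)$ for $Y$ centered at $f(x_0)$, with model Banach space $E'$, such that $\psi(V \cap Z) = \psi(V) \cap F$ for some closed subspace $F \subset E'$ admitting a closed complement $F' \subset E'$; let $\pi : E' = F \oplus F' \to F'$ denote the projection along $F$. Shrinking $V$ if necessary, set $U := f^{-1}(V)$ and define the $C^p$ map
$$
g := \pi \circ \psi \circ f : U \to F',
$$
so that $U \cap f^{-1}(Z) = g^{-1}(0)$.

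Next I would verify that $g$ is a submersion at $x_0$ in the sense of Definition \ref{defn:Submersion_Banach_manifolds_boundary} (with empty boundary). The span condition in Definition \ref{defn:Transversality_all_Banach_manifolds_without_boundary_AMR} reads $\Ran df(x_0) + T_{f(x_0)} Z = T_{f(x_0)} Y$, and applying $d\psi(f(x_0))$ followed by $\pi$ yields surjectivity $\Ran dg(x_0) = F'$. A direct diagram chase, using that $d\psi(f(x_0))$ carries $T_{f(x_0)} Z$ isomorphically onto $F$, gives
$$
\Ker dg(x_0) \;=\; (df(x_0))^{-1}(T_{f(x_0)} Z),
$$
which admits a closed complement in $T_{x_0} X$ by the second clause of Definition \ref{defn:Transversality_all_Banach_manifolds_without_boundary_AMR}.

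I would then invoke Theorem \ref{mainthm:Preimage_point_under_submersion_and_implied_embedding} to produce, near $x_0$, a chart for $U$ in which $g^{-1}(0)$ is cut out by a closed splitting subspace; this exhibits $U \cap f^{-1}(Z)$ as a $C^p$ Banach submanifold with tangent space $T_x(f^{-1}(Z)) = (df(x))^{-1}(T_{f(x)} Z)$. Varying $x_0$ over $f^{-1}(Z)$ yields the global submanifold structure, and the finite-codimension assertion is then immediate since $\dim F' = \codim_Y Z$. The main obstacle, and indeed the only essential analytic point beyond transversality as a purely algebraic span condition, is the closed-complement hypothesis on $(df(x_0))^{-1}(T_{f(x_0)} Z)$: without this splitting the Banach submersion theorem is inapplicable, and $f^{-1}(Z)$ may fail to be a submanifold even when the span condition holds, which is precisely why this splitting is built into Definition \ref{defn:Transversality_all_Banach_manifolds_without_boundary_AMR}.
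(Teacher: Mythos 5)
Your proof is correct. The paper does not prove Theorem \ref{thm:Preimage_theorem_all_manifolds_without_boundary_AMR} itself (it simply cites Abraham--Marsden--Ratiu and Margalef Roig and Outerelo Dom{\'\i}nguez), but your reduction --- straighten $Z$ in an adapted chart, compose $f$ with the projection onto a closed complement of the model subspace, check that the composite is a submersion with kernel $(df(x_0))^{-1}(T_{f(x_0)}Z)$, and then apply Theorem \ref{mainthm:Preimage_point_under_submersion_and_implied_embedding} --- is precisely the Guillemin--Pollack trick the paper itself uses to prove the boundary version, Theorem \ref{mainthm:Preimage_submanifold_under_transverse_map_and_implied_embedding}, via Lemma \ref{lem:f_transverse_Y_prime_at_x0_iff_h_is_submersion_at_x0}, so your argument is essentially the same approach specialized to empty boundary.
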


Margalef Roig and Outerelo Dom{\'\i}nguez further assert in \cite[Proposition 7.1.14]{Margalef-Roig_Outerelo-Dominguez_differential_topology} that the induced map
\[
df(x): T_xX/T_x(f^{-1}(Z)) \to T_{f(x)}Y/T_{f(x)}Z
\]
is an isomorphism of Banach spaces and use this isomorphism to conclude that codimension of submanifolds is preserved under pullback by maps that have the transversality property described in Definition \ref{defn:Transversality_all_Banach_manifolds_without_boundary_AMR}.

\subsection{Preimage of a submanifold without boundary under a map of a manifold with boundary into  a manifold without boundary}
When $X$ has boundary but $Y$ is without boundary, the following version of Theorem \ref{thm:Preimage_theorem_all_manifolds_without_boundary_AMR} is well-known in the case of finite-dimensional manifolds.

\begin{thm}[Preimage of a manifold without boundary by a map whose domain is a manifold with boundary and codomain is a manifold without boundary]
\label{thm:Preimage_theorem_domain_manifold_with_boundary}  
(See Guillemin and Pollack \cite[Theorem, p. 60]{Guillemin_Pollack} or Hirsch \cite[Theorem 1.4.2]{Hirsch}.)  
Let $X$ be a finite-dimensional smooth manifold with boundary, $Y$ be a finite-dimensional manifold without boundary, and $Z \subset Y$ be a submanifold without boundary. If $f: X \to Y$ is a smooth map such that $\mathring{f} \transv Z$ and $\partial f \transv Z$, where $\mathring{f} := f \restriction \Int(X)$ and $\partial f := f \restriction \partial X$, then the preimage $f^{-1}(Z)$ is a smooth manifold with boundary
\[
  \partial(f^{-1}(Z)) = f^{-1}(Z)\cap\partial X
\]
and $\codim f^{-1}(Z) = \codim Z$.
\end{thm}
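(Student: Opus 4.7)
The plan is to work locally around each point of $f^{-1}(Z)$, treating interior and boundary points separately, and then assemble the resulting smooth charts. At an interior point $x_0 \in f^{-1}(Z) \cap \Int(X)$, the restriction $\mathring{f} : \Int(X) \to Y$ is a smooth map between finite-dimensional manifolds without boundary that is transverse to $Z$; the classical Preimage Theorem (Theorem \ref{thm:Preimage_theorem_all_manifolds_without_boundary_AMR}, or Guillemin--Pollack \cite[p.~28]{Guillemin_Pollack}) immediately produces an open neighborhood of $x_0$ in $f^{-1}(Z)$ carrying the structure of a smooth submanifold of $\Int(X)$ of codimension $\codim Z$.

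The substance of the argument lies at a boundary point $x_0 \in f^{-1}(Z) \cap \partial X$. First, I would choose a boundary chart around $x_0$ identifying a neighborhood $U \subset X$ with a relatively open subset of the closed upper half-space $\RR^n_+$, where $n = \dim X$, and then extend $f|_U$ to a smooth map $\tilde f : \tilde U \to Y$ on a two-sided open neighborhood $\tilde U \subset \RR^n$ of $x_0$ (a routine smooth extension, e.g.\ by Seeley reflection after a further localization in $Y$). Because $T_{x_0}(\partial X) \subset T_{x_0}\tilde U$ and $d\tilde f(x_0)$ restricts along this inclusion to $d(\partial f)(x_0)$, the hypothesis $\partial f \transv_{x_0} Z$ at once yields $\tilde f \transv_{x_0} Z$. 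After shrinking $\tilde U$, the manifold-without-boundary version of the Preimage Theorem produces a smooth submanifold $W := \tilde f^{-1}(Z) \subset \tilde U$ with $\codim_{\tilde U} W = \codim_Y Z$ and $T_{x_0} W = (d\tilde f(x_0))^{-1}(T_{f(x_0)} Z)$.

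The key geometric step is to show that the hypersurface $\partial X \cap \tilde U$ meets $W$ transversely at $x_0$ inside $\tilde U$. Given $v \in T_{x_0}\tilde U$, apply $\partial f \transv_{x_0} Z$ to $d\tilde f(x_0)(v) \in T_{f(x_0)} Y$ to obtain $u \in T_{x_0}(\partial X)$ and $z \in T_{f(x_0)} Z$ with $d\tilde f(x_0)(v) = d\tilde f(x_0)(u) + z$. Then $d\tilde f(x_0)(v-u) = z$ forces $v - u \in T_{x_0} W$, giving the decomposition $v = (v-u) + u$ and hence $T_{x_0} W + T_{x_0}(\partial X) = T_{x_0}\tilde U$. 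Transversality of $W$ and $\partial X \cap \tilde U$ inside $\tilde U$ therefore holds at $x_0$, and persists nearby by continuity. One may consequently choose a slice chart for $W$ in $\tilde U$ that simultaneously straightens $\partial X$, in which $f^{-1}(Z) \cap U = W \cap X$ appears as a relatively open subset of a closed half-space in $W$ with boundary $W \cap \partial X = f^{-1}(Z) \cap \partial X \cap U$. Piecing these boundary models together with the interior charts from the first paragraph yields the claimed smooth manifold-with-boundary structure on $f^{-1}(Z)$ together with
\[
\partial(f^{-1}(Z)) = f^{-1}(Z) \cap \partial X \qquad \text{and} \qquad \codim_X f^{-1}(Z) = \codim_Y Z.
\]

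The main obstacle is the transverse-intersection step: the abstract local preimage $W$ could \emph{a priori} be tangent to the boundary hyperplane $\partial X$ at $x_0$, in which case $W \cap X$ would fail to be a manifold with boundary near $x_0$. The hypothesis $\partial f \transv Z$, as opposed to the weaker $f \transv Z$ at boundary points, is precisely what rules out this pathology, as illustrated by Example \ref{exmp:X_half_plane_X_prime_plane_f_transv_Y_but_partial_f_not_transv_Y}. Everything else is local bookkeeping and invocation of the Preimage Theorem for manifolds without boundary.
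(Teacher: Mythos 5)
Your argument is correct. Bear in mind, though, that the paper does not prove Theorem \ref{thm:Preimage_theorem_domain_manifold_with_boundary} at all: it is quoted with a citation to Guillemin--Pollack and Hirsch, so there is no internal proof to compare against, and what you have written is essentially the standard Guillemin--Pollack argument. The interior case is the boundaryless Preimage Theorem; at a boundary point you extend $f$ across $\partial X$ to $\tilde f$, obtain $W=\tilde f^{-1}(Z)$ from the boundaryless theorem (using $\Ran d(\partial f)(x_0)\subset \Ran d\tilde f(x_0)$), and your tangent-space computation giving $T_{x_0}W+T_{x_0}(\partial X)=T_{x_0}\tilde U$ is exactly where the stronger hypothesis $\partial f\transv Z$ (rather than merely $f\transv Z$ at boundary points) enters, consistent with the failure mode in Example \ref{exmp:X_half_plane_X_prime_plane_f_transv_Y_but_partial_f_not_transv_Y}. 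The only step you gloss is the last one: rather than asserting a chart that simultaneously straightens $W$ and $\partial X$, the cleaner textbook route is to observe that transversality of $W$ with the boundary hyperplane says precisely that the last coordinate function $x_n\restriction W$ has $0$ as a regular value near $x_0$, and then apply the elementary lemma that $\{g\geq 0\}$ is a manifold with boundary $g^{-1}(0)$ when $0$ is a regular value of $g$; your simultaneous-straightening chart exists for the same reason, so this is a presentational difference, not a gap. For contrast, the route the paper actually develops for its own needs (Theorem \ref{mainthm:Preimage_submanifold_under_transverse_map_and_implied_embedding}, following Margalef Roig and Outerelo Dom{\'\i}nguez) avoids two-sided extensions altogether: it reduces to the submersion case by projecting off a closed complement of the tangent space of the submanifold and then invokes an inverse mapping theorem adapted to half-spaces, which is what makes the argument carry over to Banach manifolds with boundary, where your extend-and-restrict trick is less convenient.
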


Following Hirsch \cite[p. 30]{Hirsch}, one calls $W \subset X$ a \emph{neat} submanifold if $\partial W = W \cap \partial X$ and $W$ is covered by coordinate charts $(\varphi, U)$ for $X$ such that
\[
W \cap U = \varphi^{-1}(\RR^m),
\]
where $m=\dim W$. A \emph{neat} embedding is one whose image is a neat submanifold. See \cite[Figure 1.6]{Hirsch} for one illustration of a submanifold that is neat and two that are not. In general, $W$ is neat if and only if $\partial W = W\cap\partial X$ and $W$ is not tangent to $\partial X$ at any point of $x\in\partial W$, that is, $T_xW \not\subset T_x(\partial X)$ \cite[p. 31]{Hirsch}.

\subsection{Elementary examples}
When the domain and codomain of a smooth map are finite-dimensional manifolds, the presence of non-empty boundaries in the domain or codomain requires modifications in the Preimage Theorem \ref{thm:Preimage_theorem_all_manifolds_without_boundary_AMR} for manifolds \emph{without} boundary in order to give the nicest possible analogue for manifolds \emph{with} boundary. The elementary examples in this section illustrate some of the key considerations.

Let $f:X\to X'$ be a smooth map of finite-dimensional smooth manifolds with boundary and $X'' \subset X'$ be a smooth submanifold with boundary. We abbreviate $\partial f = f \restriction \partial X$ and $\mathring{f} = f \restriction\Int(X)$ as in Theorem \ref{thm:Preimage_theorem_domain_manifold_with_boundary}. If
\begin{align*}
  T_{f(x)}X' &= \Ran df(x) + T_{f(x)}X'', \quad\forall\, x \in f^{-1}(Y)\cap \Int(X),
  \\
  T_{f(x)}X' &= \Ran d(\partial f)(x) + T_{f(x)}X'', \quad\forall\, x \in f^{-1}(Y)\cap \partial X,
\end{align*}
then $f\transv X''$ by Definition ~\ref{defn:Transversality_maps_Banach_manifolds_boundary}. We abbreviate writing that $f$ is a submersion (so $X''$ is any point in $X'$) by $f \transv \pt$.

\begin{exmp}[Domain of $f_1$ is a half plane, codomain of $f_1$ is a plane, $\mathring{f_1}\transv \pt$, $\partial f_1\transv \pt$, and $Y$ is a submanifold of the codomain]
\label{exmp:X_half_plane_X_prime_plane_f_and_partial_f_transv_Y}  
Consider $f_1:\HH^3 \ni (x,y,z) \mapsto (x,y) \in \RR^2$, where $\HH^3 = \{(x,y,z)\in\RR^3: z \geq 0\}$ and $\partial\HH^3 =  \{(x,y,0):(x,y)\in\RR^2\}$, so that $\partial f_1: \partial\HH^3 \ni (x,y,0) \mapsto (x,y) \in \RR^2$. The maps $f_1$ and $\partial f_1$ preserve strata. 

Let $Y=\{(0,y):y\in\RR\} \subset \RR^2$, so $\partial Y=\emptyset$. We have $f_1^{-1}(Y) = \{(0,y,z): y \in \RR, z\geq 0\}$ and $(\partial f_1)^{-1}(Y) = \{(0,y,0):y\in\RR\}$. Observe that $\mathring{f_1}\transv \pt$ and $\partial f_1 \transv \pt$. Note that
\begin{multline*}
  \partial\{f_1^{-1}(Y)\} = \{(0,y,0): y\in\RR\} \quad\text{and}
  \\
  f_1^{-1}(Y) \cap \partial\HH^3 = \{(0,y,0): y\in\RR\} \cap \partial\HH^3 = \{(0,y,0): y\in\RR\},
\end{multline*}
giving
\[
    \partial\{f_1^{-1}(Y)\} = f_1^{-1}(Y) \cap \partial\HH^3.
\]
Moreover, $\codim (f_1^{-1}(Y);\HH^3) = 1 = \codim (Y;\HH^2)$. The conclusions agree with our expectation from Theorem \ref{thm:Preimage_theorem_domain_manifold_with_boundary}. \qed
\end{exmp}

\begin{exmp}[Domain of $f_2$ is a half plane, codomain of $f_2$ is a plane, $\mathring{f_2}\transv \pt$, and $\partial f_2\not\transv Y$]
\label{exmp:X_half_plane_X_prime_plane_f_transv_Y_but_partial_f_not_transv_Y} 
Consider $f_2:\HH^3 \ni (x,y,z) \mapsto (y,z) \in \RR^2$, where $\HH^3 = \{(x,y,z)\in\RR^3: z \geq 0\}$ and $\partial\HH^3 =  \{(x,y,0):(x,y)\in\RR^2\}$, so that $\partial f_2: \partial\HH^3 \ni (x,y,0) \mapsto (y,0) \in Y \subset \RR^2$. The maps $f_2$ and $\partial f_2$ preserve strata. 

Let $Y = \{(y,0):y\in\RR\} \subset \RR^2$, so $\partial Y=\emptyset$. We have $f_2^{-1}(Y) = \{(x,y,0):x,y\in\RR\}$ and $(\partial f_2)^{-1}(Y) = \{(x,y,0):x,y\in\RR\}$. Observe that $\mathring{f_2}\transv \pt$ but $\partial f_2 \not\transv Y$. Note that
\[
    \partial\{f_2^{-1}(Y)\} = \emptyset \quad\text{and}\quad f_2^{-1}(Y) \cap \partial\HH^3 = \{(x,y,0):x,y\in\RR\} \cap \partial\HH^3 = \partial\HH^3,
\]
giving
\[
    \partial\{f_2^{-1}(Y)\} \neq f_2^{-1}(Y) \cap \partial\HH^3.
\]
However, $\codim (f_2^{-1}(Y);\HH^3) = 1 = \codim (Y;\RR^2)$. The conclusions agree with our expectation from Theorem \ref{thm:Preimage_theorem_domain_manifold_with_boundary}. \qed
\end{exmp}

\begin{exmp}[Domain and codomain of $f_2$ are half planes, $\mathring{f_2} \transv \pt$, $\partial f_2\not\transv Y$, and $Y$ is not a neat submanifold of the codomain]
\label{exmp:X_and_X_prime_half_planes_f_transv_Y_but_partial_f_not_transv_Y_and_Y_not_neat}   
Consider $f_2:\HH^3 \ni (x,y,z) \mapsto (y,z) \in \HH^2$, where $\HH^3 = \{(x,y,z)\in\RR^3: z \geq 0\}$ and $\HH^2 = \{(y,z)\in\RR^2: z \geq 0\}$, so $\partial\HH^3 =  \{(x,y,0):(x,y)\in\RR^2\}$ and $\partial\HH^2 =  \{(y,0):y\in\RR\}$, and $\partial f_2: \partial\HH^3 \ni (x,y,0) \mapsto (y,0) \in \HH^2$. The maps $f_2$ and $\partial f_2$ preserve strata. 

Let $Y = \{(y,0):y\in\RR\} \subset \HH^2$, so $\partial Y = \emptyset$. We have $f_2^{-1}(Y) = \{(x,y,0):x,y\in\RR\}$ and $(\partial f_2)^{-1}(Y) = \{(x,y,0):x,y\in\RR\}$. Observe that $\mathring{f_2} \transv \pt$ but $\partial f_2 \not\transv Y$. Note that
\[
    \partial\{f_2^{-1}(Y)\} = \emptyset \quad\text{and}\quad f_2^{-1}(Y) \cap \partial\HH^3 = \{(x,y,0):x,y\in\RR\} \cap \partial\HH^3 = \partial\HH^3,
\]
giving
\[
    \partial\{f_2^{-1}(Y)\} \neq f_2^{-1}(Y) \cap \partial\HH^3.
\]
However, $\codim (f_2^{-1}(Y);\HH^3) = 1 = \codim (Y;\HH^2)$. The conclusions agree with our expectation from Theorem \ref{thm:Margalef-Roig_proposition_4-2-1}. \qed
\end{exmp}

\begin{exmp}[Domain and codomain of $f_2$ are half planes, $\mathring{f_2}\transv \pt$, $\partial f_2\transv \partial Z$, and $Z$ is a neat submanifold of the codomain]
Consider $f_2:\HH^3 \ni (x,y,z) \mapsto (y,z) \in \HH^2$, where $\HH^3 = \{(x,y,z)\in\RR^3: z \geq 0\}$ and $\HH^2 = \{(y,z)\in\RR^2: z \geq 0\}$, so $\partial\HH^3 =  \{(x,y,0):(x,y)\in\RR^2\}$ and $\partial\HH^2 =  \{(y,0):y\in\RR\}$, and $\partial f_2: \partial\HH^3 \ni (x,y,0) \mapsto (y,0) \in \partial\HH^2 \subset \HH^2$. The maps $f_2$ and $\partial f_2$ preserve strata. 

Let $Z = \{(0,z):z\geq 0\} \subset \HH^2$, so $\partial Z = \{(0,0)\} \subset \partial \HH^2$. We have $f_2^{-1}(Z) = \{(x,0,z):x\in\RR, z\geq 0\}$ and $(\partial f_2)^{-1}(Z) = \{(0,0,0)\}$. Observe that $\mathring{f_2}\transv \pt$ and $\partial f_2 \transv Z$ (since $T_{(0,0)} Z = z$-axis and $\Ran d(\partial f_2) = y$-axis). Note that
\begin{multline*}
    \partial\{f_2^{-1}(Z)\} = \{(x,0,0):x\in\RR\} \quad\text{and}
    \\
    f_2^{-1}(Z) \cap \partial\HH^3 = \{(x,0,z):x\in\RR, z\geq 0\} \cap \partial\HH^3 = \{(x,0,0):x\in\RR\},
\end{multline*}
giving
\[
    \partial\{f_2^{-1}(Z)\} = f_2^{-1}(Z) \cap \partial\HH^3.
\]
Moreover, $\codim (f_2^{-1}(Z);\HH^3) = 1 = \codim (Z;\HH^2)$. The conclusions agree with our expectation from Theorem \ref{thm:Margalef-Roig_proposition_4-2-1}. \qed
\end{exmp}

\begin{exmp}[Domain and codomain of $f_3$ are half planes, $\mathring{f_3}\transv Z$ and $\partial f_3 \transv Z$]
Consider $f_3:\HH^3 \ni (x,y,z) \mapsto (y,0) \in \HH^2$, where $\HH^3 = \{(x,y,z)\in\RR^3: z \geq 0\}$ and $\HH^2 = \{(y,z)\in\RR^2: z \geq 0\}$, so $\partial\HH^3 =  \{(x,y,0):(x,y)\in\RR^2\}$ and $\partial\HH^2 =  \{(y,0):y\in\RR\}$, and $\partial f_3: \partial\HH^3 \ni (x,y,0) \mapsto (y,0) \in \HH^2$. The maps $f_3$ and $\partial f_3$ preserve strata. 

Let $Z = \{(0,z):z\geq 0\} \subset \HH^2$, so $\partial Z = \{(0,0)\} \subset \partial \HH^2$. We have $f_3^{-1}(Z) = \{(x,0,0):x\in\RR\}$ and $(\partial f_3)^{-1}(Z) = \{(x,0,z):x\in\RR, z\geq 0\}$. Observe that $\mathring{f_3} \transv Z$ (since $T_{(0,z)} Z = z$-axis and $\Ran d\mathring{f_3} = y$-axis) and $\partial f_3 \transv \partial Z$ (since $T_{(0,0)} Z = z$-axis and $\Ran d(\partial f_3) = y$-axis). Note that
\begin{multline*}
  \partial\{f_3^{-1}(Z)\} = \{(x,0,0):x\in\RR\} \quad\text{and}
  \\
    f_3^{-1}(Z) \cap \partial\HH^3 = \{(x,0,0):x\in\RR\} \cap \partial\HH^3 = \{(x,0,0):x\in\RR\},
  \end{multline*}
giving
\[
    \partial\{f_3^{-1}(Z)\} = f_3^{-1}(Z) \cap \partial\HH^3.
\]
Moreover, $\codim (f_3^{-1}(Z);\HH^3) = 1 = \codim (Z;\HH^2)$. The conclusions agree with our expectation from Theorem \ref{thm:Margalef-Roig_proposition_4-2-1}. \qed
\end{exmp}

\subsection{Banach manifolds with boundary}
Margalef Roig and Outerelo Dom{\'\i}nguez define Banach manifolds with corners in \cite[Section 1.2]{Margalef-Roig_Outerelo-Dominguez_differential_topology}, based on their analysis of differentials of maps over open subsets of quadrants of
Banach spaces in \cite[Section 1.1]{Margalef-Roig_Outerelo-Dominguez_differential_topology}. We shall only need special cases of their results applying to Banach manifolds with boundary, based on the half planes in Banach spaces instead of the more general quadrants in Banach spaces employed in \cite{Margalef-Roig_Outerelo-Dominguez_differential_topology}. Joyce \cite{Joyce_2012} and Melrose \cite{Melrose_differential_analysis_manifolds_corners} provide complementary treatments of finite-dimensional manifolds with corners, but their emphasis are somewhat different to that of \cite{Margalef-Roig_Outerelo-Dominguez_differential_topology}, whose treatment directly addresses our need.

\subsubsection{Differential of maps over open sets of half planes of Banach spaces}
\label{subsubsec:Margalef-Roig_1-1}
If $E, F$ are real Banach spaces, we let $\sL(E,F)$ denote the Banach space of bounded, linear operators $u:E\to F$ with the operator norm
\[
  \|u\|_{\sL(E,F)} := \sup_{x\in E\less\{0\}} \frac{\|ux\|_F}{\|x\|_E}.
\]
If $F=\RR$, we let $E^*=\sL(E,\RR)$ denote the continuous dual space\footnote{When necessary to make a distinction, we let $L(E,F)$ denote the vector space of linear operators $u:E\to F$ and $E^\vee := L(E,\RR)$ denote the algebraic dual space of $E$.} of $E$.

\begin{defn}[Hyperplanes and half planes]
\label{defn:Margalef-Roig_1-1-1_half_plane}
(See Margalef Roig and Outerelo Dom{\'\i}nguez \cite[Definition 1.1.1]{Margalef-Roig_Outerelo-Dominguez_differential_topology}.)  
Let $E$ be a real Banach space and $\lambda \in E^* = \sL(E,\RR)$ be non-constant. We call $E_\lambda^0 := \{x\in E: \lambda(x) = 0\}$ a \emph{hyperplane} and $E_\lambda^+ := \{x\in E: \lambda(x) \geq 0\}$ a \emph{half plane}.
\end{defn}

We shall also find it convenient to denote the boundary and interior of $E_\lambda^+$ by
\begin{equation}
\label{eq:Boundary_and_interior_half_plane}
\partial E_\lambda^+ := E_\lambda^0 = \Ker\lambda \quad\text{and}\quad \Int(E_\lambda^+) = E_\lambda^+\less E_\lambda^0 = \{x\in E: \lambda(x) > 0\}.
\end{equation}
If $\mu:E\to\RR$ is another non-constant linear map and $E_\mu^+ = E_\lambda^+$, then there exists a number $c > 0$ such that $\lambda = c\mu$ \cite[Section 2.4]{Lang_introduction_differential_topology}. To see this, observe that
\[
  \Ker\lambda = \Ker\mu, 
\]
while $\dim E/\Ker\lambda = 1$ and so $E = \Ker\lambda\oplus F$, for some closed complement $F\subset E$ of real dimension one by \cite[Lemma 4.21 (b)]{Rudin}. Choose $x_0 \in F$ such that $\lambda(x_0)=1$, so $x_0 \in \Int(E_\lambda^+)$ and because $\Int(E_\lambda^+) = \Int(E_\mu^+)$, then $\mu(x_0)>0$ too. Define $\alpha \in E^*$ by
\[
  \alpha := \lambda - \frac{\lambda(x_0)}{\mu(x_0)}\mu
\]
and observe that $\Ker\alpha = \Ker\lambda$ while $\alpha(x_0)=0$ and so $\alpha\equiv 0$ on $F$ and hence $\alpha\equiv 0$ on $E$. Thus, we can take $c = \lambda(x_0)/\mu(x_0)$.

Note that if we allowed $\lambda=1$ in Definition \ref{defn:Margalef-Roig_1-1-1_half_plane}, then we would have $E_\lambda^0=\emptyset$ and $E_\lambda^+=E$; this choice will allow us to consider manifolds without boundary as special cases of manifolds with boundary. Conversely, if we had allowed $\lambda=0$ in Definition \ref{defn:Margalef-Roig_1-1-1_half_plane}, then we would have $E_\lambda^0=E$ and $E_\lambda^+=E$ and if $E_\mu^+=E$ for some $\mu\in E^*$, then we would necessarily also have $\mu=0$.

\begin{defn}[Derivative]
\label{defn:Margalef-Roig_1-1-6}  
(See Margalef Roig and Outerelo Dom{\'\i}nguez \cite[Definition 1.1.6]{Margalef-Roig_Outerelo-Dominguez_differential_topology}.)  
Let $E, F$ be real Banach spaces, $\lambda\in E^*$, and $U \subset E_\lambda^+$ be an open subset, and $x\in U$, and $f:U\to F$ be a map. If there exists $u \in \sL(E,F)$ such that
\[
\lim_{y\to x} \frac{\|(f(y)-f(x)-u(y-x)\|_F}{\|y-x\|_E} = 0,
\]
then $f$ is \emph{differentiable} at $x$, and $u$ is the \emph{derivative} of (or \emph{tangent} to) $f$ at $x$ and denoted by $Df(x) = f'(x)$. If $f$ is differentiable at every point $x\in U$, then $f$ is \emph{differentiable on $U$}.
\end{defn}

According to \cite[Proposition 1.1.5]{Margalef-Roig_Outerelo-Dominguez_differential_topology}, the derivative $u$ is unique. Definition \ref{defn:Margalef-Roig_1-1-6}, unlike that of Lang \cite[Section 2.4]{Lang_introduction_differential_topology}, does \emph{not} require any choice of extension of $f$ to some open neighborhood of $x$ in $E$. According to \cite[Proposition 1.1.13]{Margalef-Roig_Outerelo-Dominguez_differential_topology}, if $f$ is $p-1$ times differentiable on $U$ and $p$ times differentiable at $x$, where $p\geq 2$, then $D^pf(x) \in \sL^p(E,F) = \sL(\otimes^p E,F)$ is a $p$-linear, continuous, and symmetric operator. We let $\sL_\sym^p(E,F) \subset \sL^p(E,F)$ denote the closed subspace of $p$-linear, continuous, and symmetric operators. One says that $f$ is \emph{map of class $p$} (or a \emph{$C^p$ map}) if $f$ is $p$ times differentiable on $U$ and the map $D^pf: U \to \sL_\sym^p(E,F)$ is continuous \cite[Definition 1.1.14]{Margalef-Roig_Outerelo-Dominguez_differential_topology}.

\subsubsection{Differentiable manifolds with boundary}
\label{subsubsec:Margalef-Roig_1-2}
Let $X$ be a set and\footnote{We write $\NN=\{0,1,2,3,\ldots\}$ for the set of natural numbers including zero.} $p\in\NN\cup\{\infty\}$. Following \cite[Section 1.1.2]{Margalef-Roig_Outerelo-Dominguez_differential_topology}, one says that $(U,\varphi,(E,\lambda))$ is a \emph{chart} for $X$ if the following hold: $U$ is a subset of $X$, and $E$ is a real Banach space, $\lambda\in E^*$, and $\varphi:U\to E_\lambda^+$ is an injective map, and $\varphi(U)$ is an open subset of $E_\lambda^+$. One calls two charts $(U,\varphi,(E,\lambda))$ and $(U',\varphi',(E',\lambda'))$ \emph{compatible of class $p$} (or \emph{$C^p$ compatible}) if $\varphi(U\cap U')$ and $\varphi'(U\cap U')$ are open subsets of $E_\lambda^+$ and ${E'}_{\lambda'}^{+}$, respectively, and the maps
\[
  \varphi'\circ\varphi^{-1}:\varphi(U\cap U') \to \varphi'(U\cap U')
  \quad\text{and}\quad
  \varphi\circ{\varphi'}^{-1}:\varphi'(U\cap U') \to \varphi(U\cap U')
\]
are $C^p$ (and hence homeomorphisms). A set $\sA$ of charts for $X$ is called an \emph{atlas of class $p$ on $X$} if the domains of the charts cover $X$ and any two of them are $C^p$ compatible. According to \cite[Definition 1.2.2]{Margalef-Roig_Outerelo-Dominguez_differential_topology}, the equivalence class $[\sA]$ defined by an atlas $\sA$ is called a \emph{differentiable structure of class $p$ on $X$} and the pair $(X,[\sA])$ is called a \emph{differentiable manifold of class $p$} (or a \emph{$C^p$ Banach manifold}), usually denoted simply by $X$. By \cite[Proposition 1.2.3]{Margalef-Roig_Outerelo-Dominguez_differential_topology}, the set $\cB=\{U: U \text{ is a domain of a chart for } X\}$ is a basis for a topology on $X$.

\begin{rmk}[Topological properties of Banach manifolds with boundary]
\label{rmk:Corollary_Margalef-Roig_1-4-12}  
The topologies of the Banach manifolds that we encounter in our applications to gauge theory will generally have additional features. For example, they are typically regular Hausdorff spaces, paracompact, and modelled on separable real Banach spaces. Such manifolds are metrizable according to \cite[Corollary 1.4.12]{Margalef-Roig_Outerelo-Dominguez_differential_topology}. 
\end{rmk}  

Let $E$ be a real Banach space, $\lambda\in E^*$, and $U$ be an open subset of $E_\lambda^+$. Following \cite[Definition 1.2.6]{Margalef-Roig_Outerelo-Dominguez_differential_topology}, we call $\partial U = \partial_\lambda U = \{x\in U:\lambda(x)=0\}$ the \emph{$\lambda$-boundary} of $U$ and call $\Int(U) = \Int_\lambda(U) = \{x\in U:\lambda(x)>0\}$ the \emph{$\lambda$-interior} of $U$.

One needs to prove that the boundary is preserved by diffeomorphisms. If $E,F$ are real Banach spaces, $\lambda\in E^*$ and $\mu\in F^*$ are non-constant, and $U\subset E_\lambda^+$ and $V \subset F_\mu^+$ are open subsets, we recall \cite[Definition 1.2.9]{Margalef-Roig_Outerelo-Dominguez_differential_topology} that a map $f:U\to V$ is a \emph{diffeomorphism of class $p$} (or a $C^p$ \emph{diffeomorphism}) if it is bijective and both $f$ and $f^{-1}:V\to U$ are of class $p$.

\begin{thm}[Boundary invariance]
\label{thm:Margalef-Roig_1-2-12}
(See Margalef Roig and Outerelo Dom{\'\i}nguez \cite[Theorem 1.2.12]{Margalef-Roig_Outerelo-Dominguez_differential_topology}.)  
Let $E,F$ be real Banach spaces, $\lambda\in E^*$ and $\mu\in F^*$ be non-constant, $U$ be an open subset of $E_\lambda^+$, and $V$ be an open subset of $F_\mu^+$, and $f:U \to V$ be a $C^p$ diffeomorphism ($p\geq 1$). Then
\[
  f(\Int_\lambda(U)) = \Int_\mu(V) \quad\text{and}\quad f(\partial_\lambda U) = \partial_\mu V
\]
while
\[
  f \restriction \Int_\lambda(U) :\Int_\lambda(U) \to \Int_\mu(V)
\]
is a $C^p$ diffeomorphism.
\end{thm}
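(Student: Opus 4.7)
The plan is to prove the boundary–interior invariance by contradiction, using the classical inverse function theorem for $C^p$ maps on open subsets of Banach spaces, applied to the branch where we have honest open sets rather than half-planes.

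First, I would observe that the chain rule (as established in Margalef Roig and Outerelo Domínguez \cite[Section 1.1]{Margalef-Roig_Outerelo-Dominguez_differential_topology} for maps defined on half-planes in Banach spaces, via Definition \ref{defn:Margalef-Roig_1-1-6}) applied to $f^{-1}\circ f = \id_U$ and $f\circ f^{-1} = \id_V$ implies that for every $x\in U$ the derivative $Df(x) \in \sL(E,F)$ is a topological isomorphism with continuous inverse $(Df(x))^{-1} = D(f^{-1})(f(x))$. This is the essential input that lets us trade smoothness of the half-space map for a local homeomorphism on honest open sets of Banach spaces.

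Next, I would prove the inclusion $f(\partial_\lambda U) \subset \partial_\mu V$ by contradiction. Suppose there exists $x\in \partial_\lambda U$ with $y := f(x) \in \Int_\mu(V)$. Choose an $F$-open ball $B \subset \Int_\mu(V)$ centered at $y$; note that $B$ is an open subset of $F$ (not merely of $F_\mu^+$), so the restriction $g := f^{-1}\restriction B : B \to E$ is a $C^p$ map of an open subset of the Banach space $F$ into $E$ in the classical sense, with $Dg(y) = (Df(x))^{-1}$ a topological isomorphism. The classical inverse function theorem for $C^p$ maps of Banach spaces then furnishes an open neighborhood $B'\subset B$ of $y$ in $F$ such that $g(B')$ is an open neighborhood of $x$ in $E$. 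However, $g(B') \subset U \subset E_\lambda^+$, and since $\lambda(x) = 0$ the set $E_\lambda^+$ contains no $E$-open neighborhood of $x$ — every $E$-open ball around $x$ meets $\{z\in E: \lambda(z) < 0\}$. This contradiction shows $f(\partial_\lambda U) \subset \partial_\mu V$. Applying the same argument to the diffeomorphism $f^{-1}: V \to U$ gives $f^{-1}(\partial_\mu V) \subset \partial_\lambda U$, hence $f(\partial_\lambda U) = \partial_\mu V$. Since $f$ is a bijection between $U = \partial_\lambda U \sqcup \Int_\lambda(U)$ and $V = \partial_\mu V \sqcup \Int_\mu(V)$, we also obtain $f(\Int_\lambda(U)) = \Int_\mu(V)$.

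Finally, the restriction $f\restriction \Int_\lambda(U): \Int_\lambda(U) \to \Int_\mu(V)$ is a bijection by the previous paragraph, and it is $C^p$ with $C^p$ inverse because $\Int_\lambda(U)$ and $\Int_\mu(V)$ are genuine open subsets of the Banach spaces $E$ and $F$, on which the intrinsic derivative of Definition \ref{defn:Margalef-Roig_1-1-6} agrees with the classical Fréchet derivative; hence it is a $C^p$ diffeomorphism in the classical sense.

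The main obstacle is the step that invokes the Banach-space inverse function theorem to $g = f^{-1}\restriction B$: one must be careful that $B$, while a subset of $F_\mu^+$, is in fact $F$-open (because it sits inside $\Int_\mu(V)$), so that the classical inverse function theorem on Banach spaces applies verbatim and produces an $E$-open neighborhood of $x$, which is precisely what yields the contradiction with $x\in\partial_\lambda U$. Once this observation is in place, the remainder of the argument is formal.
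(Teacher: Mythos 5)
Your proof is correct: the chain rule for the half-plane derivative gives that $Df(x)$ is a linear homeomorphism at every $x\in U$ (including boundary points), and applying the classical Banach inverse function theorem to $f^{-1}$ on an $F$-open ball inside $\Int_\mu(V)$ to produce an $E$-open neighborhood of a point of $\partial_\lambda U$ contained in $E_\lambda^+$ yields the contradiction, with the non-constancy of $\lambda$ supplying a direction $e$ with $\lambda(e)<0$. The paper itself does not prove this statement but quotes it from Margalef Roig and Outerelo Dom{\'\i}nguez \cite[Theorem 1.2.12]{Margalef-Roig_Outerelo-Dominguez_differential_topology}, and your argument is essentially the standard proof given there, so there is nothing further to reconcile.
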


Theorem \ref{thm:Margalef-Roig_1-2-12} yields the

\begin{prop}[Boundary and interior of a manifold]
\label{prop:Margalef-Roig_1-2-13}
(See Margalef Roig and Outerelo Dom{\'\i}nguez \cite[Proposition 1.2.13]{Margalef-Roig_Outerelo-Dominguez_differential_topology}.)  
If $X$ is a differentiable manifold of class $p\geq 1$ and $x\in X$ is a point and $(U,\varphi,(E,\lambda))$ and $(V,\psi,(F,\mu))$ are charts for $X$ such that $x\in U\cap V$, then $\varphi(x) \in \partial_\lambda\varphi(U) \iff \psi(x) \in \partial_\mu\psi(V)$ and $\varphi(x) \in \Int_\lambda(\varphi(U)) \iff \psi(x) \in \Int_\mu(\psi(V))$.
\end{prop}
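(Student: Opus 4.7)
The plan is to reduce the claim to a direct application of the boundary invariance Theorem \ref{thm:Margalef-Roig_1-2-12} applied to the transition map between the two charts. Since the statement concerns only the point $x$ and local behavior near $x$, I will restrict attention to the overlap $U \cap V$, which is an open neighborhood of $x$ in $X$.

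First, observe that $U \cap V$ is open in $X$ (since it is the intersection of two chart domains, each of which is open in the topology generated by the chart basis). Consequently, $\varphi(U\cap V)$ is an open subset of $E_\lambda^+$ and $\psi(U\cap V)$ is an open subset of $F_\mu^+$, and both restricted charts are still $C^p$ compatible. The compatibility of the atlas then provides the transition map
\[
\tau := \psi\circ\varphi^{-1} : \varphi(U\cap V) \to \psi(U\cap V),
\]
which by the $C^p$ compatibility of the two charts is a $C^p$ diffeomorphism, with inverse $\varphi\circ\psi^{-1}$. Note also that from the defining formulas \eqref{eq:Boundary_and_interior_half_plane} and the fact that $\partial_\lambda W = \{y\in W : \lambda(y)=0\}$ depends only on $\lambda$ and $W$, one has for any open subset $W \subset E_\lambda^+$ contained in the chart image the compatibility
\[
\partial_\lambda W = W \cap \partial_\lambda \varphi(U), \qquad \Int_\lambda(W) = W\cap \Int_\lambda(\varphi(U)),
\]
and similarly for $\mu$ and $\psi$. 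In particular, $\varphi(x) \in \partial_\lambda \varphi(U)$ iff $\varphi(x) \in \partial_\lambda \varphi(U\cap V)$, and analogously on the $\psi$ side.

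Now apply Theorem \ref{thm:Margalef-Roig_1-2-12} to the $C^p$ diffeomorphism $\tau$ with $\lambda$-boundary and $\mu$-boundary taken on the domain and codomain respectively. The theorem yields
\[
\tau\bigl(\partial_\lambda \varphi(U\cap V)\bigr) = \partial_\mu \psi(U\cap V) \quad\text{and}\quad \tau\bigl(\Int_\lambda \varphi(U\cap V)\bigr) = \Int_\mu \psi(U\cap V).
\]
Since $\tau(\varphi(x)) = \psi(x)$, the two asserted equivalences
\[
\varphi(x)\in \partial_\lambda \varphi(U) \iff \psi(x)\in\partial_\mu \psi(V), \quad \varphi(x)\in\Int_\lambda(\varphi(U)) \iff \psi(x)\in\Int_\mu(\psi(V))
\]
follow immediately by combining the previous two displayed assertions.

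The main (and essentially only) obstacle is really the boundary invariance Theorem \ref{thm:Margalef-Roig_1-2-12}, which is assumed already established in the excerpt; given that theorem, the proof of Proposition \ref{prop:Margalef-Roig_1-2-13} is a routine bookkeeping argument about restricting to the chart overlap and invoking invariance under the transition diffeomorphism. No use of the atlas beyond the two given charts is required, and no assumption on $p\geq 2$ or on separability/Hausdorffness of $X$ is needed for this local statement.
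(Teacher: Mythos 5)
Your proposal is correct and follows exactly the route the paper takes: the paper deduces Proposition \ref{prop:Margalef-Roig_1-2-13} directly from the boundary invariance Theorem \ref{thm:Margalef-Roig_1-2-12} applied to the transition diffeomorphism on the chart overlap, which is precisely your argument with the routine restriction-to-$U\cap V$ bookkeeping spelled out.
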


Hence, by virtue of Proposition \ref{prop:Margalef-Roig_1-2-13} one can make the

\begin{defn}[Boundary and interior of a $C^p$ Banach manifold]
\label{defn:Margalef-Roig_1-2-14_and_16}
(See Margalef Roig and Outerelo Dom{\'\i}nguez \cite[Definitions 1.2.14 and 1.2.16]{Margalef-Roig_Outerelo-Dominguez_differential_topology}.)  
Let $X$ be a differentiable manifold of class $p\geq 1$. Then $\partial X := \{x\in X: \varphi(x) \in \partial_\lambda\varphi(U) \text{ for some chart } (U,\varphi,(E,\lambda))\}$ is called the \emph{boundary of $X$} while $\Int(X) := \{x\in X: \varphi(x) \in \Int_\lambda(\varphi(U)) \text{ for some chart } (U,\varphi,(E,\lambda))\}$ is called the \emph{interior of $X$}.
\end{defn}

\begin{rmk}[Manifolds without boundary as a special case of manifolds with boundary]
\label{rmk:Manifolds_with_and_without_boundary}  
As usual, the definition of manifold with boundary subsumes that of a manifold without boundary by taking $\lambda=1$, as noted following Definition \ref{defn:Margalef-Roig_1-1-1_half_plane}.
\end{rmk}  

\begin{prop}[Differentiable structure of class $p$ on the boundary and interior of a $C^p$ Banach manifold]
\label{prop:Margalef-Roig_1-2-18_and_19}
(See Margalef Roig and Outerelo Dom{\'\i}nguez \cite[Proposition 1.2.18 and Corollary 1.2.19]{Margalef-Roig_Outerelo-Dominguez_differential_topology}.)  
Let $X$ be a differentiable manifold of class $p\geq 1$. Then the following hold:
\begin{enumerate}
\item There is a unique differentiable structure on $\Int(X)$ such that for all $x \in \Int(X)$ and all charts $(U,\varphi,(E,\lambda))$ for $X$
  with $x\in U$ and $\varphi(x)=0$, the triplet $(U,\varphi,E)$ is a chart for $\Int(X)$. Also, $\Int(X)$ has no boundary and its topology is the topology induced by $X$.
\item There is a unique differentiable structure on $\partial X$ such that for all $x \in \partial X$ and all charts $(U,\varphi,(E,\lambda))$ for $X$ with $x\in U$ and $\varphi(x)=0$, the triplet $(U\cap\partial X,\varphi \restriction U\cap\partial X, E_\lambda^0)$ is a chart for $\partial X$. Also, $\partial X$ has no boundary and its topology is the topology induced by $X$.  
\end{enumerate}
\end{prop}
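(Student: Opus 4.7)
The plan is to construct the required $C^p$ atlases on $\Int(X)$ and $\partial X$ by restricting the charts of $X$, verify compatibility using Theorem~\ref{thm:Margalef-Roig_1-2-12}, and then read off uniqueness, the topological agreement with the subspace topology, and the absence of manifold boundary.

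For $\Int(X)$, I would take as charts the triples $(U\cap\Int(X),\varphi\restriction U\cap\Int(X),E)$ as $(U,\varphi,(E,\lambda))$ ranges over an atlas for $X$. By Proposition~\ref{prop:Margalef-Roig_1-2-13}, the condition $x\in\Int(X)$ is equivalent to $\varphi(x)\in\Int_\lambda(\varphi(U))$, so $\varphi$ restricts to an injection onto the subset $\Int_\lambda(\varphi(U))\subset E$, which is open in $E$ (not just in the half plane). Compatibility is immediate from Theorem~\ref{thm:Margalef-Roig_1-2-12}, which asserts that the original transition $\varphi'\circ\varphi^{-1}$ restricts to a $C^p$ diffeomorphism between the $\lambda$- and $\lambda'$-interiors.

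For $\partial X$, I would take the triples $(U\cap\partial X,\varphi\restriction U\cap\partial X,E_\lambda^0)$; here $\varphi(U\cap\partial X)=\partial_\lambda\varphi(U)$ is open in the Banach space $E_\lambda^0=\Ker\lambda$. The only non-routine step — and the one I expect to be the main obstacle — is verifying that a $C^p$ diffeomorphism $f:U\to V$ between open subsets of half planes, which by Theorem~\ref{thm:Margalef-Roig_1-2-12} sends $\partial_\lambda U$ onto $\partial_\mu V$, restricts to a $C^p$ diffeomorphism between open subsets of the Banach spaces $E_\lambda^0$ and $F_\mu^0$. To handle this I would factor the restriction as $f\circ\iota$, where $\iota:\partial_\lambda U\to U$ is the inclusion coming from the continuous linear inclusion $E_\lambda^0\subset E$; Definition~\ref{defn:Margalef-Roig_1-1-6} and the chain rule then give $D^k(f\restriction\partial_\lambda U)(x)=D^kf(x)\restriction(E_\lambda^0)^{\otimes k}$ at every boundary point $x$, with image contained in $F_\mu^0$ because $f$ preserves boundaries. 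Applying the same argument to $f^{-1}$ produces the desired $C^p$ diffeomorphism.

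Uniqueness of both differentiable structures is automatic, since any atlas with the prescribed chart-property must contain exactly the charts we have constructed. Topological agreement follows because $\Int(X)$ is open in $X$ (so the chart basis for $\Int(X)$ is a basis for the subspace topology) and because every boundary chart domain $U\cap\partial X$ is a trace on $\partial X$ of the open set $U\subset X$, so the manifold topology on $\partial X$ matches the subspace topology. Finally, neither $\Int(X)$ nor $\partial X$ has manifold boundary: the atlas for $\Int(X)$ takes values in open subsets of $E$ and the atlas for $\partial X$ in open subsets of $E_\lambda^0$, which formally corresponds to taking $\lambda=1$ as in Remark~\ref{rmk:Manifolds_with_and_without_boundary}, so Definition~\ref{defn:Margalef-Roig_1-2-14_and_16} gives empty boundary in each case.
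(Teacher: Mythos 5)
Your argument is correct: restricting charts, invoking Theorem \ref{thm:Margalef-Roig_1-2-12} (boundary invariance) together with Proposition \ref{prop:Margalef-Roig_1-2-13} for chart-independence, and checking that a boundary-preserving $C^p$ transition restricts (via composition with the continuous linear inclusion $E_\lambda^0\subset E$ and closedness of $F_\mu^0$ in $F$) to a $C^p$ diffeomorphism of open subsets of the kernel hyperplanes is exactly the standard proof. The paper itself offers no proof of this statement --- it is quoted from Margalef Roig and Outerelo Dom\'{\i}nguez \cite[Proposition 1.2.18 and Corollary 1.2.19]{Margalef-Roig_Outerelo-Dominguez_differential_topology} --- and your construction matches that reference's approach, so there is nothing further to compare.
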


\subsubsection{Differentiable maps}
\label{subsubsec:Margalef-Roig_1-3}
We begin with the

\begin{defn}[Maps of class $p$]
\label{defn:Margalef-Roig_1-3-2}
(See Margalef Roig and Outerelo Dom{\'\i}nguez \cite[Definition 1.3.2]{Margalef-Roig_Outerelo-Dominguez_differential_topology}.)  
Let $X$ and $X'$ be differentiable manifolds of class $p\geq 1$. We say that $f:X\to X'$ is a \emph{map of class $p$} or a \emph{$C^p$ map} if for every $x\in X$ there are a chart $(U,\varphi,(E,\lambda))$ for $X$ at $x$ and a chart $(V,\psi,(F,\mu))$ of $X'$ at $f(x)$ such that $f(U) \subset V$ and the map
\[
  \psi\circ f\circ\varphi^{-1}:\varphi(U) \to \psi(V)
\]
is a map of class $p$.
\end{defn}

One can show \cite[p. 36]{Margalef-Roig_Outerelo-Dominguez_differential_topology} that every $C^p$ map ($p\geq 1$) is necessarily a continuous map. The map $f$ in Definition \ref{defn:Margalef-Roig_1-3-2} is a \emph{diffeomorphism of class $p$} (or $C^p$ \emph{diffeomorphism}) if $f$ is bijective and $f^{-1}$ is also a map of class $p$ \cite[Definition 1.3.4]{Margalef-Roig_Outerelo-Dominguez_differential_topology}.

\begin{thm}[Boundary invariance]
\label{thm:Margalef-Roig_1-3-6}
(See Margalef Roig and Outerelo Dom{\'\i}nguez \cite[Theorem 1.3.6]{Margalef-Roig_Outerelo-Dominguez_differential_topology}.)
Let $X$ and $X'$ be differentiable manifolds of class $p\geq 1$ and $f:X\to X'$ be a diffeomorphism of class $p$. Then $f(\partial X) = \partial X'$ and $f(\Int(X)) = \Int(X')$.
\end{thm}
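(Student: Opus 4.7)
The plan is to reduce the global statement to the local boundary invariance for diffeomorphisms between open sets of half planes, namely Theorem \ref{thm:Margalef-Roig_1-2-12}, and to exploit the chart-based characterization of $\partial X$ and $\Int(X)$ given in Definition \ref{defn:Margalef-Roig_1-2-14_and_16} together with Proposition \ref{prop:Margalef-Roig_1-2-13} (which guarantees that being a boundary or interior point is independent of the chart).

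First I would fix a point $x\in X$ and set up a compatible pair of charts around $x$ and $f(x)$. Choose any chart $(U_0,\varphi,(E,\lambda))$ for $X$ at $x$ and any chart $(V_0,\psi,(F,\mu))$ for $X'$ at $f(x)$. Since $f$ and $f^{-1}$ are $C^p$, they are in particular continuous, so $f$ is a homeomorphism. Set $U := U_0\cap f^{-1}(V_0)$ and $V := f(U)$; then $U$ is an open neighborhood of $x$, $V$ is an open neighborhood of $f(x)$ contained in $V_0$, and $f$ restricts to a homeomorphism $U\to V$. The triples $(U,\varphi\restriction U,(E,\lambda))$ and $(V,\psi\restriction V,(F,\mu))$ are charts for $X$ and $X'$, respectively, with $f(U)=V$.

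Next I would pass to the local representative $\tilde f := \psi\circ f\circ\varphi^{-1}:\varphi(U)\to\psi(V)$. By construction $\tilde f$ is bijective, and $\tilde f$ is $C^p$ because $f$ is, while its inverse $\varphi\circ f^{-1}\circ\psi^{-1}$ is $C^p$ because $f^{-1}$ is. Hence $\tilde f$ is a $C^p$ diffeomorphism between the open subsets $\varphi(U)\subset E_\lambda^+$ and $\psi(V)\subset F_\mu^+$. Theorem \ref{thm:Margalef-Roig_1-2-12} applies and gives $\tilde f(\partial_\lambda\varphi(U))=\partial_\mu\psi(V)$ and $\tilde f(\Int_\lambda\varphi(U))=\Int_\mu\psi(V)$. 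If $x\in\partial X$, then by Proposition \ref{prop:Margalef-Roig_1-2-13} $\varphi(x)\in\partial_\lambda\varphi(U)$, so $\psi(f(x))\in\partial_\mu\psi(V)$, and by Definition \ref{defn:Margalef-Roig_1-2-14_and_16} $f(x)\in\partial X'$. Thus $f(\partial X)\subset\partial X'$, and applying the same argument to the $C^p$ diffeomorphism $f^{-1}$ yields the reverse inclusion, so $f(\partial X)=\partial X'$.

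For the interior, one can either repeat the argument using the $\Int$ half of Theorem \ref{thm:Margalef-Roig_1-2-12}, or simply observe that $X=\Int(X)\sqcup\partial X$ and $X'=\Int(X')\sqcup\partial X'$ are disjoint unions (by Definition \ref{defn:Margalef-Roig_1-2-14_and_16} combined with Proposition \ref{prop:Margalef-Roig_1-2-13}), and that $f$ is a bijection sending $\partial X$ onto $\partial X'$; hence $f(\Int(X))=\Int(X')$. There is no serious obstacle here: the only subtle point is the initial chart-shrinking step, which is needed so that $\tilde f$ is literally a diffeomorphism between open sets of half planes rather than merely a $C^p$ map with some restricted image; once that setup is made, everything follows mechanically from Theorem \ref{thm:Margalef-Roig_1-2-12} and the chart-independence of the notions of boundary and interior point.
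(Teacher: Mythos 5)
Your proof is correct: the paper itself does not prove this statement but simply cites Margalef Roig and Outerelo Dom\'{\i}nguez, and the argument there is exactly your reduction --- shrink the charts so the local representative $\psi\circ f\circ\varphi^{-1}$ is a $C^p$ diffeomorphism of open subsets of half planes, apply Theorem \ref{thm:Margalef-Roig_1-2-12}, and invoke the chart-independence of boundary and interior points from Proposition \ref{prop:Margalef-Roig_1-2-13} and Definition \ref{defn:Margalef-Roig_1-2-14_and_16}. Your closing observation that $f(\Int(X))=\Int(X')$ follows from bijectivity and the disjoint decompositions $X=\Int(X)\sqcup\partial X$, $X'=\Int(X')\sqcup\partial X'$ is also fine.
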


Moreover, in the setting of Theorem \ref{thm:Margalef-Roig_1-3-6} we recall by \cite[Proposition 1.3.7]{Margalef-Roig_Outerelo-Dominguez_differential_topology} that $f\restriction \partial X: \partial X \to \partial X'$ and $f\restriction \Int(X): \Int(X) \to \Int(X')$ are diffeomorphisms of class $p$.

\subsubsection{Tangent spaces and tangent bundles}
Let $X$ be a $C^p$ Banach manifold ($p\geq 1$) and $x\in X$. Let $C_xX := \{(c,v):c=(U,\varphi,(E,\lambda))$ is a chart for $X$ at $x$ and $v\in E\}$ and consider the binary relation $\sim$ on $C_xX$ defined by
\[
  (c,v) \sim (c',v') \iff D(\varphi'\circ\varphi^{-1})(\varphi(x))v = v'.
\]
According to \cite[Proposition 1.6.1]{Margalef-Roig_Outerelo-Dominguez_differential_topology}, this binary relation is an equivalence relation on $C_xX$ and one denotes
\[
  T_xX := C_xX/\sim
\]
and denotes the equivalence class of $(c,v)$ by $[c,v]$.

\begin{prop}[Tangent space of a differentiable manifold at a point]
\label{prop:Margalef-Roig_1-6-2}
(See Margalef Roig and Outerelo Dom{\'\i}nguez \cite[Proposition 1.6.2]{Margalef-Roig_Outerelo-Dominguez_differential_topology}.) 
Let $X$ be a $C^p$ Banach manifold ($p\geq 1$) and $x\in X$. Then the following hold:
\begin{enumerate}
\item For every chart $c=(U,\varphi,(E,\lambda))$ for $X$ at $x$, the map $\sO_c^x:E \to T_xX$
defined by $O_c^x(v) = [c,v]$ is bijective.
\item There is a unique structure of a real topological vector space on $T_xX$ such that for every chart $c=(U,\varphi,(E,\lambda))$ for $X$ with $x\in U$, the map $\sO_c^x:E\to T_xX$ is a linear homeomorphism and this structure is Banachable.
\item If $c=(U,\varphi,(E,\lambda))$ and $c'=(U',\varphi',(E',\lambda'))$ are charts for $X$ at the point $x$, then
\[
(\sO_{c'}^x)^{-1}\circ\sO_c^x = D(\varphi'\circ\varphi)^{-1}(\varphi(x)).
\]
\end{enumerate}
\end{prop}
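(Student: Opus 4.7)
The plan is to prove (1) and (3) essentially by direct computation from the definition of the equivalence relation on $C_xX$, and then to deduce (2) by transporting the Banach space structure from the model space $E$ through the bijections $\sO_c^x$. The preliminary observation underlying everything is that for any two charts $c=(U,\varphi,(E,\lambda))$ and $c'=(U',\varphi',(E',\lambda'))$ for $X$ at $x$, the operator $D(\varphi'\circ\varphi^{-1})(\varphi(x))\in\sL(E,E')$ is a continuous linear isomorphism; this follows from $C^p$-compatibility of the charts together with the chain rule applied to the identities $(\varphi'\circ\varphi^{-1})\circ(\varphi\circ{\varphi'}^{-1})=\id$ and $(\varphi\circ{\varphi'}^{-1})\circ(\varphi'\circ\varphi^{-1})=\id$, which hold on open subsets of half planes and whose differentiation is permitted by the calculus developed in Section \ref{subsubsec:Margalef-Roig_1-1}.

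With this in hand, (3) is immediate: by the definition of $\sim$, we have $(c,v)\sim(c',D(\varphi'\circ\varphi^{-1})(\varphi(x))v)$ for every $v\in E$, so
\[
  \sO_{c'}^x\bigl(D(\varphi'\circ\varphi^{-1})(\varphi(x))v\bigr)=[c',D(\varphi'\circ\varphi^{-1})(\varphi(x))v]=[c,v]=\sO_c^x(v),
\]
and applying $(\sO_{c'}^x)^{-1}$ yields the asserted formula. The bijectivity of $\sO_c^x$ in (1) now follows at once: injectivity is the specialization $c=c'$ (where $D(\id)(\varphi(x))=\id_E$ forces $v=v'$), while surjectivity onto an arbitrary class $[c',w]\in T_xX$ is achieved by setting $v:=D(\varphi\circ{\varphi'}^{-1})(\varphi'(x))w$ and using the chain-rule identity above to get $D(\varphi'\circ\varphi^{-1})(\varphi(x))v=w$, so that $(c,v)\sim(c',w)$.

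For (2), I would transport the real Banach space structure of $E$ to $T_xX$ through the bijection $\sO_c^x$, declaring addition, scalar multiplication, and the topology on $T_xX$ to be the unique ones that make $\sO_c^x$ a linear homeomorphism. Independence of the choice of chart is exactly where (3) enters: the transition $(\sO_{c'}^x)^{-1}\circ\sO_c^x=D(\varphi'\circ\varphi^{-1})(\varphi(x))$ is, by the preliminary step, a continuous linear isomorphism of $E$ with $E'$, so the structures coming from $c$ and $c'$ coincide, and the Banachable property is inherited by pulling back a complete norm on $E$ through $\sO_c^x$. Uniqueness of a topological vector space structure making \emph{every} $\sO_c^x$ a linear homeomorphism is then clear, since such a structure is already determined on a single chart. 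I expect no serious obstacle in this proof: each step is essentially an unwinding of the definitions, and the only mildly subtle point is that the chain rule and the invertibility of $D(\varphi'\circ\varphi^{-1})$ must be applied at points on the boundary $E_\lambda^0$ of the half plane $E_\lambda^+$, which rests on the half-plane derivative machinery of Section \ref{subsubsec:Margalef-Roig_1-1} rather than on ordinary calculus for open sets of Banach spaces.
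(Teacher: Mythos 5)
Your proof is correct; the paper does not prove this proposition at all but simply quotes it from Margalef Roig and Outerelo Dom\'{\i}nguez, and your argument (chain rule for the half-plane calculus of Section \ref{subsubsec:Margalef-Roig_1-1} to see that $D(\varphi'\circ\varphi^{-1})(\varphi(x))$ is a continuous linear isomorphism, then injectivity/surjectivity of $\sO_c^x$ and the transition formula by unwinding the definition of $\sim$, then transport of the Banach structure with chart-independence supplied by item (3)) is exactly the standard proof given in that reference. The only cosmetic remark is that the formula in the statement should read $D(\varphi'\circ\varphi^{-1})(\varphi(x))$, as you wrote it, rather than $D(\varphi'\circ\varphi)^{-1}(\varphi(x))$; your ordering of (3) before (1) is harmless since the injectivity needed to invert $\sO_{c'}^x$ follows, as you note, directly from the definition of the equivalence relation with $c=c'$.
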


The real Banachable space $T_xX$ is the \emph{tangent space} of $X$ at $x$ and the elements of $T_xX$ are \emph{tangent vectors} of $X$ at $x$. It is important to remember that $T_xX \cong E$ (as Banach spaces) irrespective of whether $x\in\Int(X)$ or $x\in\partial X$.

\begin{prop}[Tangent space of a differentiable manifold at a point]
\label{prop:Margalef-Roig_1-6-3}
(See Margalef Roig and Outerelo Dom{\'\i}nguez \cite[Proposition 1.6.3]{Margalef-Roig_Outerelo-Dominguez_differential_topology}.) 
If $X$ and $X'$ are $C^p$ Banach manifolds ($p\geq 1$) and $f:X\to X'$ is a $C^p$ map and $x\in X$, then there is a unique continuous linear map,
\[
  df(x) = T_xf:T_xX \to T_{f(x)}X',
\]
such that for every chart $c = (U,\varphi,(E,\lambda))$ for $X$ at $x$ and every chart $c' = (U',\varphi',(E',\lambda'))$ for $X'$ at $f(x)$ one has
\[
  df(x) = \sO_{c'}^{f(x)} \circ D(\varphi'\circ\varphi)^{-1}(\varphi(x)) \circ (\sO_c^x)^{-1}.
\]
\end{prop}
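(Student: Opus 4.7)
The plan is to define $df(x)$ via a single choice of charts, check linearity and continuity immediately, and then verify independence of the choice of charts using the chain rule. Note: I read the displayed composition in the statement as $\sO_{c'}^{f(x)} \circ D(\varphi'\circ f\circ \varphi^{-1})(\varphi(x)) \circ (\sO_c^x)^{-1}$, i.e. the middle factor is the derivative of the chart representation of $f$, not of the overlap of charts (what is written would not even type-check, since $\varphi$ and $\varphi'$ have unrelated domains). With that reading the proof goes as follows.

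First I would fix any chart $c=(U,\varphi,(E,\lambda))$ for $X$ at $x$ and any chart $c'=(U',\varphi',(E',\lambda'))$ for $X'$ at $f(x)$. Since $f$ is continuous (being $C^p$) and $U'$ is open, we may shrink $U$ so that $f(U)\subset U'$; by Definition \ref{defn:Margalef-Roig_1-3-2} we may in fact arrange this while keeping the local representative $f_{c,c'}:=\varphi'\circ f\circ\varphi^{-1}:\varphi(U)\to\varphi'(U')$ a $C^p$ map between open subsets of half-spaces. Then $Df_{c,c'}(\varphi(x))\in\sL(E,E')$ exists by Definition \ref{defn:Margalef-Roig_1-1-6}, and I define
\[
T_xf := \sO_{c'}^{f(x)} \circ Df_{c,c'}(\varphi(x)) \circ (\sO_c^x)^{-1}:T_xX\to T_{f(x)}X'.
\]
This is linear and continuous because it is the composition of three such maps, the outer two being linear homeomorphisms by Proposition \ref{prop:Margalef-Roig_1-6-2}. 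Uniqueness (among maps satisfying the chart formula for all $c,c'$) is then immediate since $\sO_c^x$ and $\sO_{c'}^{f(x)}$ are bijections.

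The only real content is independence of the chart choice. Let $\tilde c=(\tilde U,\tilde\varphi,(\tilde E,\tilde\lambda))$ be another chart at $x$ and $\tilde c'=(\tilde U',\tilde\varphi',(\tilde E',\tilde\lambda'))$ another chart at $f(x)$, again arranged so that $f(\tilde U)\subset\tilde U'$. On a sufficiently small neighborhood of $\varphi(x)$ we have the factorization
\[
f_{\tilde c,\tilde c'} = (\tilde\varphi'\circ {\varphi'}^{-1}) \circ f_{c,c'} \circ (\varphi\circ\tilde\varphi^{-1}).
\]
Differentiating by the chain rule available in the half-space setting of Section \ref{subsubsec:Margalef-Roig_1-1} and evaluating at $\tilde\varphi(x)$ gives
\[
Df_{\tilde c,\tilde c'}(\tilde\varphi(x)) = D(\tilde\varphi'\circ{\varphi'}^{-1})(\varphi'(f(x)))\circ Df_{c,c'}(\varphi(x))\circ D(\varphi\circ\tilde\varphi^{-1})(\tilde\varphi(x)).
\]
By Proposition \ref{prop:Margalef-Roig_1-6-2}(3), the two outer overlap derivatives are exactly $(\sO_{c'}^{f(x)})^{-1}\circ\sO_{\tilde c'}^{f(x)}$ and $(\sO_c^x)^{-1}\circ\sO_{\tilde c}^x$ respectively. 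Substituting, the definition using $(\tilde c,\tilde c')$ telescopes back to the one using $(c,c')$, proving that $T_xf$ is well defined.

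The main place where care is needed is the chain rule step: one must ensure that each overlap map is differentiated at an interior point of its domain (which is automatic since the relevant domains are full open subsets of half-spaces, on which Definition \ref{defn:Margalef-Roig_1-1-6} applies uniformly) and that the cancellations use the identification from Proposition \ref{prop:Margalef-Roig_1-6-2}(3) rather than an ad hoc notion of tangent vector. Once those are in place, everything else is formal.
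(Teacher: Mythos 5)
The paper does not actually prove this proposition---it is quoted verbatim (misprint included) from Margalef Roig and Outerelo Dom\'{\i}nguez---so there is no in-paper argument to compare against; your proof is the standard one and is correct, including your reading of the garbled middle factor as $D(\varphi'\circ f\circ\varphi^{-1})(\varphi(x))$, which is clearly the intended statement. One small slip in the chart-independence step: by Item (3) of Proposition \ref{prop:Margalef-Roig_1-6-2}, the left overlap derivative $D\bigl(\tilde\varphi'\circ(\varphi')^{-1}\bigr)(\varphi'(f(x)))$ equals $(\sO_{\tilde c'}^{f(x)})^{-1}\circ\sO_{c'}^{f(x)}$, not its inverse as you wrote; with that orientation (and the second factor identified as $(\sO_c^x)^{-1}\circ\sO_{\tilde c}^x$, which you have correctly) the telescoping cancellation goes through exactly as you describe, so the argument stands.
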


The map $ df(x)$ is called the \emph{derivative} or \emph{tangent map} for $f$ at the point $x$ \cite[p. 63]{Margalef-Roig_Outerelo-Dominguez_differential_topology}. The set $TX = \cup_{x\in X} T_xX$ denotes the \emph{tangent bundle} for $X$, with projection map $\tau_X: TX \to X$ defined by $(x,v) \mapsto x$ for all $(x,v) \in TX$ \cite[p. 66]{Margalef-Roig_Outerelo-Dominguez_differential_topology}. 

\begin{prop}[Properties of the tangent bundle]
\label{prop:Margalef-Roig_1-6-9}
(See Margalef Roig and Outerelo Dom{\'\i}nguez \cite[Proposition 1.6.9]{Margalef-Roig_Outerelo-Dominguez_differential_topology}.) 
If $X$ is a $C^p$ Banach manifold ($p\geq 1$), then $TX$ has a unique structure as a $C^{p-1}$ manifold and the following hold:
\begin{enumerate}
\item The projection $\tau_X:TX\to X$ is a $C^{p-1}$ map.
\item If $p\geq 2$, then for all $(x,v) \in TX$,
  \[
    (x,v) \in \Int(TX) \iff x\in\Int(X) \quad\text{and}\quad (x,v) \in \partial(TX) \iff x\in\partial X. 
  \]
\end{enumerate}
\end{prop}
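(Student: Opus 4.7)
The plan is to construct an atlas of $C^{p-1}$ charts on $TX$ from a $C^p$ atlas $\{(U_i,\varphi_i,(E_i,\lambda_i))\}$ on $X$, verify compatibility, read off the projection in these coordinates, and then identify the boundary and interior via the half-plane structure inherited on the total space. Throughout I will exploit Proposition \ref{prop:Margalef-Roig_1-6-2} so that, for a chart $c=(U,\varphi,(E,\lambda))$ at $x$, the bijection $\sO_c^x:E\to T_xX$ gives a canonical identification $TU = \bigcup_{x\in U} T_xX \leftrightarrow \varphi(U)\times E$ via $(x,[c,w]) \mapsto (\varphi(x),w)$.

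For each chart $c=(U,\varphi,(E,\lambda))$ on $X$, define the candidate chart $\tc=(TU,\tilde\varphi,(E\oplus E,\tilde\lambda))$ on $TX$, where $\tilde\varphi(x,[c,w]) := (\varphi(x),w)$ and $\tilde\lambda\in(E\oplus E)^*$ is $\tilde\lambda(v,w):=\lambda(v)$. Note $(E\oplus E)_{\tilde\lambda}^+ = E_\lambda^+ \oplus E$, and $\tilde\varphi(TU)=\varphi(U)\times E$ is open there. For the transition between $\tc$ and $\tc'=(TU',\tilde\varphi',(E'\oplus E',\tilde{\lambda'}))$ on $TU\cap TU'$, Proposition \ref{prop:Margalef-Roig_1-6-2} gives
\begin{equation*}
\tilde\varphi'\circ\tilde\varphi^{-1}(y,v) = \bigl(\varphi'\circ\varphi^{-1}(y),\, D(\varphi'\circ\varphi^{-1})(y)\,v\bigr),
\end{equation*}
which is $C^{p-1}$ because $\varphi'\circ\varphi^{-1}$ is $C^p$ and hence its derivative is $C^{p-1}$; the inverse transition is $C^{p-1}$ for the same reason. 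Therefore $\{\tc\}$ is a $C^{p-1}$ atlas, and by Proposition \ref{prop:Margalef-Roig_1-2-3}-style argument (basis-of-topology construction from \cite[Proposition 1.2.3]{Margalef-Roig_Outerelo-Dominguez_differential_topology}) it defines a $C^{p-1}$ Banach manifold structure on $TX$.

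For item (1), in the charts $(\tc,c)$ the projection $\tau_X$ becomes $\varphi\circ\tau_X\circ\tilde\varphi^{-1}(y,v) = y$, the linear projection $E\oplus E\to E$, which is smooth; thus $\tau_X$ is $C^{p-1}$ by Definition \ref{defn:Margalef-Roig_1-3-2}. For item (2), note the half-plane structure in the chart $\tc$: a point $(y,v)\in\tilde\varphi(TU)$ lies on $\partial_{\tilde\lambda}\tilde\varphi(TU)$ iff $\tilde\lambda(y,v)=0$, i.e.\ $\lambda(y)=0$, i.e.\ $y\in\partial_\lambda\varphi(U)$. Applying Definition \ref{defn:Margalef-Roig_1-2-14_and_16} chart-by-chart (with well-definedness provided by Proposition \ref{prop:Margalef-Roig_1-2-13}) yields the equivalences $(x,v)\in\partial(TX) \iff x\in\partial X$ and $(x,v)\in\Int(TX) \iff x\in\Int(X)$.

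For uniqueness, suppose $\sM$ is another $C^{p-1}$ Banach manifold structure on $TX$ making $\tau_X$ a $C^{p-1}$ map and compatible with the $E$-valued local trivializations $\sO_c^x$ of the fibers over chart domains of $X$. Then for any chart $c=(U,\varphi,(E,\lambda))$ on $X$, the map $\tilde\varphi:TU\to\varphi(U)\times E$ defined above must be a $C^{p-1}$ diffeomorphism for the $\sM$-structure, so $\sM$ contains the atlas $\{\tc\}$. Hence the two $C^{p-1}$ structures coincide, which completes the proof. The only genuine technical point is the transition formula and the observation that differentiating a $C^p$ diffeomorphism loses exactly one order of regularity; the boundary identification then reduces to bookkeeping on the continuous linear functional $\tilde\lambda$.
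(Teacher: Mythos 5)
Your argument is correct and is essentially the proof in Margalef Roig and Outerelo Dom{\'\i}nguez that the paper merely cites (the paper gives no proof of this proposition): the canonical atlas $(TU,\tilde\varphi,(E\oplus E,\lambda\circ\mathrm{pr}_1))$ with transition maps $(y,v)\mapsto\bigl(\varphi'\circ\varphi^{-1}(y),\,D(\varphi'\circ\varphi^{-1})(y)v\bigr)$, losing exactly one order of differentiability, is the standard route. The only points worth making explicit are that the hypothesis $p\geq 2$ in item (2) is precisely what legitimizes the appeal to Proposition \ref{prop:Margalef-Roig_1-2-13}, since $TX$ is only of class $p-1$, and that the uniqueness assertion is uniqueness relative to the canonical charts $(TU,\tilde\varphi)$ being admitted, which is the qualifier you supply.
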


If $f:X\to X'$ is a $C^p$ map, then the map $df:TX \to TX'$ given by $(x,v) \mapsto (f(x),df(x)v)$ is of class $C^{p-1}$ \cite[Proposition 1.6.10]{Margalef-Roig_Outerelo-Dominguez_differential_topology}.

Tangent vectors to $X$ at a point $x$ may be equivalently described in terms of curves passing through $x$ \cite[pp. 67--68]{Margalef-Roig_Outerelo-Dominguez_differential_topology}. If $\alpha: [0,a)\to X$ is a $C^1$ map such that $\alpha(0)=x$, then the element of $T_xX$ defined by
\[
  d\alpha(0)\circ \sO_{c_0}^0(1)
\]
is called the \emph{tangent vector} to $\alpha$ at the point $0$ and is denoted by $\dot\alpha(0)$, where $c_0 = ([0,a), \iota, (\RR, 1_\RR))$ and $\iota:[0,a)\to\RR$ is the inclusion map. If $\beta: (b,0]\to X$ is a $C^1$ map such that $\beta(0)=x$, then the element of $T_xX$ defined by
\[
  d\beta(0)\circ \sO_{c_0}^0(1)
\]
is called the \emph{tangent vector} to $\beta$ at the point $0$ and is denoted by $\dot\beta(0)$, where $c_0 = ((b,0], \iota, (\RR, -1_\RR))$ and $\iota:(b,0]\to\RR$ is the inclusion map. If $\gamma: (c,d)\to X$ is a $C^1$ map such that $\gamma(0)=x$, then $\dot\gamma(0)$ is similarly defined.

If $\alpha: [0,a)\to X$ (respectively, $\alpha: (b, 0]\to X$) is a $C^1$ map such that $\alpha(0)=x$ and $\dot\alpha(0)=v$, then $v$ is called an \emph{inner} (respectively, \emph{outer}) tangent vector at $x$. The set of the inner tangent vectors at $x$ is denoted by $(T_xX)^i$ and the set of the outer tangent vectors at $x$ is denoted by $(T_xX)^o$.

\begin{prop}[Inner and outer tangent vectors]
\label{prop:Margalef-Roig_1-6-12_and_13}
(See Margalef Roig and Outerelo Dom{\'\i}nguez \cite[Propositions 1.6.12 and 1.6.13]{Margalef-Roig_Outerelo-Dominguez_differential_topology}.) 
If $X$ is a $C^p$ Banach manifold ($p\geq 1$) and $x\in X$, then the following hold:
\begin{enumerate}
\item $(T_xX)^i = -(T_xX)^o$.
\item $T_xX = \Span((T_xX)^i) = \Span((T_xX)^o)$.
\item $\sO_c^x(E_\lambda^+) = (T_xX)^i$ if $c = (U,\varphi,(E,\lambda))$ is a chart for $X$ at $x$.  
\end{enumerate}  
\end{prop}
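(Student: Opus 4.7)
The plan is to work entirely in a fixed chart $c = (U,\varphi,(E,\lambda))$ at $x$ with $\varphi(x)=0$, transporting the problem via the linear homeomorphism $\sO_c^x: E \to T_xX$ of Proposition \ref{prop:Margalef-Roig_1-6-2} to a statement about velocity vectors of curves in the half plane $E_\lambda^+$ based at the origin. The key elementary fact I will use repeatedly is that if $\eta:[0,a)\to[0,\infty)$ is differentiable at $0$ with $\eta(0)=0$, then $\eta'(0)\geq 0$, and conversely any $s\geq 0$ is realized as $\eta'(0)$ by $\eta(t)=st$.

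I would first establish Item (3), the identity $\sO_c^x(E_\lambda^+) = (T_xX)^i$. For the inclusion $\supset$, given an inner curve $\alpha:[0,a)\to X$ with $\alpha(0)=x$, the composite $\tilde\alpha := \varphi\circ\alpha : [0,a)\to E_\lambda^+$ satisfies $\tilde\alpha(0)=0$ and $\lambda\circ\tilde\alpha \geq 0$ with value $0$ at $0$; hence $\lambda(\dot{\tilde\alpha}(0))\geq 0$, so $\dot{\tilde\alpha}(0)\in E_\lambda^+$, and by the chain rule $\dot\alpha(0) = \sO_c^x(\dot{\tilde\alpha}(0))\in \sO_c^x(E_\lambda^+)$. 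For the inclusion $\subset$, given $v\in E_\lambda^+$, the line $t\mapsto tv$ lies in $E_\lambda^+$ for $t\geq 0$, so for $a>0$ small enough the curve $\alpha(t):=\varphi^{-1}(tv)$ defines a $C^1$ map $[0,a)\to X$ with $\alpha(0)=x$ and $\dot\alpha(0) = \sO_c^x(v)$; thus $\sO_c^x(v)\in (T_xX)^i$.

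Item (1) will follow by time reversal: if $\alpha:[0,a)\to X$ realizes $v\in (T_xX)^i$, then $\beta(t):=\alpha(-t)$ for $t\in(-a,0]$ realizes $-v\in (T_xX)^o$, giving $(T_xX)^i \subset -(T_xX)^o$, and the reverse inclusion is symmetric. For Item (2), note that every $w\in E$ satisfies either $\lambda(w)\geq 0$ (so $w\in E_\lambda^+$) or $\lambda(w)<0$ (so $-w\in E_\lambda^+$), hence $E = E_\lambda^+\cup (-E_\lambda^+)$ and in particular $\Span_\RR(E_\lambda^+)=E$. Applying the linear homeomorphism $\sO_c^x$ to this identity and invoking Item (3) gives $\Span((T_xX)^i) = T_xX$; combined with Item (1), this also yields $\Span((T_xX)^o) = T_xX$.

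The only step requiring any genuine argument is Item (3), and within it the sign condition $\lambda(\dot{\tilde\alpha}(0))\geq 0$; everything else is formal from the chart isomorphism and the half-plane decomposition $E = E_\lambda^+\cup(-E_\lambda^+)$. I do not anticipate a substantive obstacle: the proposition is essentially the assertion that inner tangent vectors see exactly the closed half of the model Banach space cut out by $\lambda$, and the reasoning is a one-sided version of the standard curve characterization of the tangent space for manifolds without boundary.
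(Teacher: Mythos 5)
The paper itself contains no proof of this proposition: it is imported verbatim from Margalef Roig and Outerelo Dom{\'\i}nguez \cite[Propositions 1.6.12 and 1.6.13]{Margalef-Roig_Outerelo-Dominguez_differential_topology}, so there is no in-paper argument to compare yours against; judged on its own, your proof is correct and is the expected one. You transport everything through the linear homeomorphism $\sO_c^x$ of Proposition \ref{prop:Margalef-Roig_1-6-2}, identify $\dot\alpha(0)$ with $\sO_c^x\bigl((\varphi\circ\alpha)'(0)\bigr)$ (this is exactly where Proposition \ref{prop:Margalef-Roig_1-6-3} and the definition of the tangent vector to a curve are used), get one inclusion of item (3) from the one-sided sign argument applied to $\lambda\circ\varphi\circ\alpha$ and the other from the rays $t\mapsto tv$, then obtain item (1) by time reversal and item (2) from $E=E_\lambda^+\cup(-E_\lambda^+)$. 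Two small remarks. First, your normalization $\varphi(x)=0$ is not cosmetic but is genuinely part of the correct reading of ``chart for $X$ at $x$'' in item (3): at an interior point seen through a non-centered half-plane chart one has $(T_xX)^i=T_xX$, which strictly contains $\sO_c^x(E_\lambda^+)$, so the centering hypothesis (explicit in Proposition \ref{prop:Margalef-Roig_1-6-15} and Definition \ref{defn:Margalef-Roig_3-2-11}) is needed; it is also harmless, since for $x\in\partial X$ any chart at $x$ can be recentred by a translation, which changes neither $E_\lambda^+$ nor $\sO_c^x$, while for interior $x$ the centered chart has $E_\lambda^+=E$ and the claim reduces to the statement that all tangent vectors are inner. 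Second, in the inclusion $(T_xX)^i\subset\sO_c^x(E_\lambda^+)$ you should first shrink the interval so that $\alpha([0,a'))\subset U$ before forming $\varphi\circ\alpha$; with that routine adjustment the argument is complete.
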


\begin{prop}[Strictly inner and outer tangent vectors]
\label{prop:Margalef-Roig_1-6-15}
(See Margalef Roig and Outerelo Dom{\'\i}nguez \cite[Proposition 1.6.15]{Margalef-Roig_Outerelo-Dominguez_differential_topology}.) 
If $X$ is a $C^p$ Banach manifold ($p\geq 1$) and $x\in X$ and $c = (U,\varphi,(E,\lambda))$ and $c' = (U',\varphi',(E',\lambda'))$ are charts for $X$ such that $x\in U\cap U'$ and $\varphi(x)=0=\varphi'(x)$, then
\[
  \sO_c^x(\Int(E_\lambda^+)) = \sO_{c'}^x(\Int(E_{\lambda'}'^+)) \subset (T_xX)^i.
\]
\end{prop}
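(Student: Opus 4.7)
The proof splits into the inclusion $\sO_c^x(\Int(E_\lambda^+)) \subset (T_xX)^i$ and the equality of the two images, which together imply the full statement.

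For the inclusion, I would produce an explicit inner curve. Given $v \in \Int(E_\lambda^+)$, since $\varphi(U)$ is open in $E_\lambda^+$ and $\varphi(x) = 0 \in \varphi(U)$, there is a constant $a > 0$ such that $tv \in \varphi(U)$ for all $t \in [0, a)$ (and in fact $tv \in \Int_\lambda(\varphi(U))$ for $t > 0$). The map $\alpha: [0, a) \to X$ defined by $\alpha(t) := \varphi^{-1}(tv)$ is of class $C^p$ with $\alpha(0) = x$; unwinding the definition of $\dot\alpha(0)$ through $c_0 = ([0, a), \iota, (\RR, 1_\RR))$ and the composite $\varphi \circ \alpha(t) = tv$ yields $\dot\alpha(0) = \sO_c^x(v)$. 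Hence $\sO_c^x(v)$ is the tangent vector of an inner $C^1$ curve at $x$, i.e., $\sO_c^x(v) \in (T_xX)^i$.

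For the equality, my plan is to reduce everything to linear algebra of the transition map $\psi := \varphi' \circ \varphi^{-1}$, a $C^p$ diffeomorphism between open neighborhoods of $0$ in $E_\lambda^+$ and $E_{\lambda'}'^{+}$ with $\psi(0) = 0$. By Proposition~\ref{prop:Margalef-Roig_1-6-2}(3), $(\sO_{c'}^x)^{-1} \circ \sO_c^x = D\psi(0)$, so the desired equality is equivalent to $D\psi(0)(\Int(E_\lambda^+)) = \Int(E_{\lambda'}'^{+})$. Here $D\psi(0) \in \sL(E, E')$ is a topological linear isomorphism with inverse $D(\psi^{-1})(0)$. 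The boundary invariance theorem (Theorem~\ref{thm:Margalef-Roig_1-2-12}) supplies $\psi(\partial_\lambda \varphi(U \cap U')) = \partial_{\lambda'} \varphi'(U \cap U')$: for $w \in E_\lambda^0$ and small $t \in \RR$, the point $tw$ lies in the $\lambda$-boundary, so $\lambda'(\psi(tw)) = 0$, and differentiation at $t = 0$ gives $\lambda'(D\psi(0)w) = 0$. Thus $D\psi(0)(E_\lambda^0) \subseteq E_{\lambda'}'^{0}$, and applying the same argument to $\psi^{-1}$ yields the reverse inclusion, so $D\psi(0)$ restricts to a linear isomorphism $E_\lambda^0 \to E_{\lambda'}'^{0}$.

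The main (small) obstacle is promoting a nonstrict inequality to a strict one. For $v \in \Int(E_\lambda^+)$, apply the curve argument again: Theorem~\ref{thm:Margalef-Roig_1-2-12} gives $\psi(tv) \in \Int_{\lambda'}(\varphi'(U \cap U'))$ for small $t > 0$, so $\lambda'(\psi(tv)) > 0$, and differentiating at $t = 0$ produces $\lambda'(D\psi(0)v) \geq 0$. Suppose for contradiction that $\lambda'(D\psi(0)v) = 0$; then $D\psi(0)v \in E_{\lambda'}'^{0}$, and the bijectivity of $D\psi(0) \colon E_\lambda^0 \to E_{\lambda'}'^{0}$ just established forces $v \in E_\lambda^0$, contradicting $\lambda(v) > 0$. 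Hence $D\psi(0)(\Int(E_\lambda^+)) \subseteq \Int(E_{\lambda'}'^{+})$, and a symmetric argument applied to $\psi^{-1}$ gives the reverse inclusion, completing the proof.
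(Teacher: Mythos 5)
Your proof is correct. The paper itself gives no argument for Proposition \ref{prop:Margalef-Roig_1-6-15} --- it simply quotes the result from Margalef Roig and Outerelo Dom\'inguez --- so there is nothing to compare line by line; your route (reduce via Proposition \ref{prop:Margalef-Roig_1-6-2} to the derivative $D\psi(0)$ of the transition map $\psi=\varphi'\circ\varphi^{-1}$, use boundary invariance, Theorem \ref{thm:Margalef-Roig_1-2-12}, to show that $D\psi(0)$ carries $E_\lambda^0$ isomorphically onto $E_{\lambda'}'^{0}$ and that $\lambda'(D\psi(0)v)\geq 0$ for $\lambda(v)>0$, and then promote the inequality to a strict one by the hyperplane isomorphism) is exactly the standard argument behind the cited result, and each step is justified in the framework of Section \ref{subsubsec:Margalef-Roig_1-1} (uniqueness of the derivative on open subsets of half planes, chain rule, so $D\psi(0)$ is a linear homeomorphism with inverse $D(\psi^{-1})(0)$). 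One small remark: the expansions $\lambda'(\psi(tw))=t\,\lambda'(D\psi(0)w)+o(|t|)$ use the linearity of $\lambda'$, so as written your argument covers the substantive case $x\in\partial X$, where both $\lambda$ and $\lambda'$ are genuine nonzero functionals; if instead $x\in\Int(X)$, then Proposition \ref{prop:Margalef-Roig_1-2-13} forces both charts to be of the degenerate type $E_\lambda^+=E$ and $E_{\lambda'}'^{+}=E'$, and the assertion reduces to the trivial identity $\sO_c^x(E)=\sO_{c'}^x(E')=T_xX=(T_xX)^i$, so nothing is lost. Your explicit curve argument for the inclusion $\sO_c^x(\Int(E_\lambda^+))\subset (T_xX)^i$ is also fine, and could alternatively be obtained directly from Item (3) of Proposition \ref{prop:Margalef-Roig_1-6-12_and_13}.
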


The elements of $\sO_c^x(\Int(E_\lambda^+))$ are called \emph{strictly inner tangent vectors} at $x$ and the elements of $-\sO_c^x(\Int(E_\lambda^+))$ are called \emph{strictly outer tangent vectors} at $x$. (If $x\in\Int(X)$, then all tangent vectors in $T_xX$ are both strictly inner and strictly outer.)

\begin{prop}[Characterization if inner and strictly inner tangent vectors]
\label{prop:Margalef-Roig_1-6-16}
(See Margalef Roig and Outerelo Dom{\'\i}nguez \cite[Proposition 1.6.16]{Margalef-Roig_Outerelo-Dominguez_differential_topology}.) 
If $X$ is a $C^p$ Banach manifold ($p\geq 1$) and $x\in X$ and $c = (U,\varphi,(E,\lambda))$ is a chart for $X$ at $x$ and $v\in T_xX$, then:
\begin{enumerate}
\item $v \text{ is inner } \iff \lambda(\sO_c^x)^{-1}(v)) \geq 0$; 
\item $v \text{ is strictly inner } \iff \lambda(\sO_c^x)^{-1}(v)) > 0$.
\end{enumerate}
\end{prop}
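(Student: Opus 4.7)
The plan is to deduce both equivalences almost directly from the previously-established identifications
\[
\sO_c^x(E_\lambda^+) = (T_xX)^i
\quad\text{and}\quad
\sO_{c_0}^x(\Int(E_\lambda^+)) = \{\text{strictly inner vectors at } x\},
\]
from Propositions \ref{prop:Margalef-Roig_1-6-12_and_13}(3) and \ref{prop:Margalef-Roig_1-6-15} respectively, combined with the bijectivity of $\sO_c^x:E\to T_xX$ from Proposition \ref{prop:Margalef-Roig_1-6-2} and the set-theoretic definitions $E_\lambda^+ = \{w \in E : \lambda(w) \geq 0\}$ and $\Int(E_\lambda^+) = \{w \in E : \lambda(w) > 0\}$ from Definition \ref{defn:Margalef-Roig_1-1-1_half_plane} and \eqref{eq:Boundary_and_interior_half_plane}.

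First I would reduce to the case where the chart is centered at $x$, i.e.\ $\varphi(x) = 0$. If $x \in \partial X$, then $\lambda(\varphi(x)) = 0$, so $c' := (U, \varphi - \varphi(x), (E,\lambda))$ is again a chart for $X$ at $x$ whose image $\varphi(U) - \varphi(x)$ is open in $E_\lambda^+$; because the translation $w \mapsto w - \varphi(x)$ has the identity as its derivative, the transition diffeomorphism from $c$ to $c'$ has derivative equal to the identity on $E$, so $\sO_{c'}^x = \sO_c^x$ and $\lambda$ is unchanged, and the equivalences for $c$ and $c'$ are identical. If instead $x \in \Int(X)$, then every $v \in T_xX$ is both inner and strictly inner, and by Proposition \ref{prop:Margalef-Roig_1-2-18_and_19} one may replace $c$ by a chart inherited from the boundaryless differentiable structure on $\Int(X)$ (so that the model becomes the full Banach space $E$), at which point both equivalences collapse to tautologies.

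Assuming henceforth that $\varphi(x) = 0$, statement (1) follows immediately: Proposition \ref{prop:Margalef-Roig_1-6-12_and_13}(3) gives $\sO_c^x(E_\lambda^+) = (T_xX)^i$, and since $\sO_c^x$ is a linear bijection (Proposition \ref{prop:Margalef-Roig_1-6-2}),
\[
v \in (T_xX)^i \iff (\sO_c^x)^{-1}(v) \in E_\lambda^+ \iff \lambda\bigl((\sO_c^x)^{-1}(v)\bigr) \geq 0.
\]
For statement (2), Proposition \ref{prop:Margalef-Roig_1-6-15} shows that $\sO_{c'}^x(\Int(E_\lambda^+))$ does not depend on the choice of centered chart $c'$, and this common set is the defining set of strictly inner tangent vectors at $x$. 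Applying the bijection $(\sO_c^x)^{-1}$ to both sides and invoking \eqref{eq:Boundary_and_interior_half_plane} gives
\[
v \text{ is strictly inner} \iff (\sO_c^x)^{-1}(v) \in \Int(E_\lambda^+) \iff \lambda\bigl((\sO_c^x)^{-1}(v)\bigr) > 0.
\]

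The argument is essentially formal and presents no substantive analytic obstacle. The only bookkeeping issue, and the only step where one must pause, is justifying the reduction to a centered chart; this is handled by translation in the boundary case and by invoking the boundaryless differentiable structure of Proposition \ref{prop:Margalef-Roig_1-2-18_and_19} in the interior case. Once that reduction is in place, both (1) and (2) are pure transcriptions of prior propositions through the linear isomorphism $\sO_c^x$.
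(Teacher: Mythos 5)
The paper itself does not prove this proposition; it is quoted, with a citation, from Margalef Roig and Outerelo Dom{\'\i}nguez, and your derivation --- part (1) from Proposition \ref{prop:Margalef-Roig_1-6-12_and_13}(3), part (2) from Proposition \ref{prop:Margalef-Roig_1-6-15} together with the definition of strictly inner vectors, both transported through the linear bijection $\sO_c^x$ of Proposition \ref{prop:Margalef-Roig_1-6-2} --- is exactly the intended argument. Your reduction to a centered chart is handled correctly at boundary points: since $\varphi(x)\in\Ker\lambda$ there, translating by $-\varphi(x)$ preserves $E_\lambda^+$ and leaves both $\sO_c^x$ and $\lambda$ unchanged.

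The one soft spot is the interior case. If $x\in\Int(X)$ and the given chart has non-constant $\lambda$ with $\lambda(\varphi(x))>0$, then every $v\in T_xX$ is inner while $\lambda\bigl((\sO_c^x)^{-1}(v)\bigr)$ can be negative, so the stated equivalences fail for that chart; replacing $c$ by a chart from the boundaryless structure on $\Int(X)$ establishes the claim for the new chart's $\lambda$, not for the original one. The statement must therefore be read with the convention (as in Proposition \ref{prop:Margalef-Roig_1-6-15} and the cited monograph) that a chart ``at $x$'' satisfies $\varphi(x)=0$, in which case an interior point forces $E_\lambda^+=E$ and your tautology remark is correct; so the issue is one of phrasing the hypothesis rather than a substantive gap in the argument.
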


\begin{prop}[Smooth maps and inner tangent vectors]
\label{prop:Margalef-Roig_1-6-17}
(See Margalef Roig and Outerelo Dom{\'\i}nguez \cite[Proposition 1.6.17]{Margalef-Roig_Outerelo-Dominguez_differential_topology}.) 
If $f:X\to X'$ is a $C^p$ map ($p\geq 1$) of $C^p$ Banach manifolds and $x\in X$, then $df(x)((T_xX)^i) \subset (T_{f(x)}X')^i$.
\end{prop}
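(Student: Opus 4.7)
The plan is to argue directly from the curve-based characterization of inner tangent vectors given in the paragraph preceding Proposition \ref{prop:Margalef-Roig_1-6-12_and_13}. Let $v \in (T_xX)^i$. By the definition of inner tangent vector, there exist a real number $a>0$ and a $C^1$ map $\alpha:[0,a)\to X$ with $\alpha(0)=x$ and $\dot\alpha(0)=v$; explicitly,
\[
  v \;=\; d\alpha(0)\circ\sO_{c_0}^0(1),
\]
where $c_0 = ([0,a),\iota,(\RR,1_\RR))$.

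Next I would form the composition $\beta := f\circ\alpha:[0,a)\to X'$. Since $f$ is of class $C^p$ with $p\ge 1$ and $\alpha$ is of class $C^1$, the composition $\beta$ is a $C^1$ map (the chain rule for $C^p$ maps between Banach manifolds with corners, which underlies the functoriality of the tangent map used in Proposition \ref{prop:Margalef-Roig_1-6-3}, applies here because the differentiability in Definition \ref{defn:Margalef-Roig_1-1-6} is intrinsic to the half plane and does not require any extension of $f$ off its domain). Moreover $\beta(0) = f(\alpha(0)) = f(x)$, so $\beta$ is a $C^1$ curve starting at $f(x)$ whose parameter domain is the one-sided interval $[0,a)$. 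Therefore $\dot\beta(0) \in (T_{f(x)}X')^i$ by the very definition of inner tangent vector.

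It remains only to identify $\dot\beta(0)$ with $df(x)v$. By the chain rule applied to $f\circ\alpha$ at $0$ (which is an immediate consequence of Proposition \ref{prop:Margalef-Roig_1-6-3} together with the functoriality $d(f\circ\alpha)(0) = df(\alpha(0))\circ d\alpha(0)$), we obtain
\[
  \dot\beta(0) \;=\; d(f\circ\alpha)(0)\circ \sO_{c_0}^0(1) \;=\; df(x)\bigl(d\alpha(0)\circ\sO_{c_0}^0(1)\bigr) \;=\; df(x)v.
\]
Hence $df(x)v \in (T_{f(x)}X')^i$. Since $v \in (T_xX)^i$ was arbitrary, we conclude $df(x)((T_xX)^i) \subset (T_{f(x)}X')^i$, which is the desired inclusion.

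No step in this outline presents a genuine obstacle: the only subtlety is ensuring that the chain rule is available for compositions whose inner map has a half-line domain and whose outer map is defined on a manifold with boundary, but this is exactly the content of the framework of Section \ref{subsubsec:Margalef-Roig_1-1} together with Proposition \ref{prop:Margalef-Roig_1-6-3}, and in particular no extension of $f$ to an ambient open set is needed.
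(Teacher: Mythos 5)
Your proof is correct: since the paper's definition of inner tangent vectors is exactly the curve-based one, composing a representing curve $\alpha:[0,a)\to X$ with $f$ and applying the chain rule (valid in this framework without extending $f$, by the intrinsic Definition \ref{defn:Margalef-Roig_1-1-6} and the functoriality underlying Proposition \ref{prop:Margalef-Roig_1-6-3}) immediately gives $df(x)v=\dot{(f\circ\alpha)}(0)\in(T_{f(x)}X')^i$. The paper itself only cites Margalef Roig and Outerelo Dom\'inguez for this proposition, and your argument is the standard one consistent with that source, so there is nothing to add.
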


\subsection{Inverse mapping theorem for manifolds with boundary}
The essential ingredient that is required to extend transversality results for maps of manifolds without boundary to maps of manifolds with boundary is the inverse mapping theorem for manifolds with boundary.

\begin{thm}[Inverse mapping theorem for maps of open subsets of half planes]
\label{thm:Margalef-Roig_2-2-4}
(See Margalef Roig and Outerelo Dom{\'\i}nguez \cite[Theorem 2.2.4]{Margalef-Roig_Outerelo-Dominguez_differential_topology}.)  
Let $E$ and $F$ be real Banach spaces, $\lambda \in E^*$ and $\mu \in F^*$, and $U \subset E_\lambda^+$ be an open subset, $f:U\to F_\mu^+$ be a $C^p$ map ($p\geq 1$) such that $f(\partial_\lambda U) \subset \partial F_\mu^+$, and $x \in U$ be a point. Then the following are equivalent:
\begin{enumerate}
\item $Df(x):E\to F$ is a linear homeomorphism,
\item There exist an open neighborhood $U_1 \subset U$ of $x$ and an open neighbourhood $V\subset F_\mu^+$ of $f(x)$ such that $f$ is a $C^p$ diffeomorphism from $U_1$ onto $V$.
\end{enumerate}
\end{thm}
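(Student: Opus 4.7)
The implication (2)\,$\Rightarrow$\,(1) is routine: for $g := f^{-1}$, differentiating the identities $g\circ f = \id_{U_1}$ and $f\circ g = \id_V$ at $x$ and $f(x)$ via the chain rule of Section \ref{subsubsec:Margalef-Roig_1-1} forces $Dg(f(x)) \in \sL(F,E)$ to be the two-sided inverse of $Df(x)$, and hence a linear homeomorphism. For the substantive direction (1)\,$\Rightarrow$\,(2), the case $x\in\Int_\lambda(U)$ is immediate: $U$ contains an $E$-open neighborhood of $x$ on which the classical Banach-space inverse mapping theorem applies directly. (Note also that $f(x)\in\Int F_\mu^+$ automatically in this case, since otherwise $\mu\circ f \ge 0$ would attain an interior minimum at $x$ while having surjective derivative $\mu\circ Df(x)$, a contradiction.)

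Suppose now $x\in\partial_\lambda U$, so $\lambda(x)=0$ and $\mu(f(x))=0$. My first step would be to establish the proportionality $\mu\circ Df(x) = c\lambda$ with $c>0$. For each $v\in\Ker\lambda$ the curve $t\mapsto x+tv$ lies in $\partial_\lambda U$ for $|t|$ small, so $(\mu\circ f)(x+tv)\equiv 0$; differentiating at $t=0$ yields $\Ker\lambda\subset\Ker(\mu\circ Df(x))$. Since $Df(x)$ is a linear isomorphism and $\mu\ne 0$, the continuous linear functional $\mu\circ Df(x)$ is nonzero, its kernel is therefore a hyperplane, and equality with $\Ker\lambda$ forces $\mu\circ Df(x) = c\lambda$ for some $c\ne 0$. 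Choosing $v\in E$ with $\lambda(v)>0$, the curve $x+tv$ lies in $U$ for small $t\ge 0$, so $\mu(f(x+tv))\ge 0$; the Taylor expansion $\mu(f(x+tv)) = tc\lambda(v) + o(t)$ then forces $c>0$.

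The second step would extend $f$ to a $C^p$ map $\tilde f: W\to F$ on an $E$-open neighborhood $W$ of $x$, via a Seeley-type reflection operator in local coordinates where $E\cong\Ker\lambda\oplus\RR$ with $\lambda$ the second projection. Since $D\tilde f(x)=Df(x)$ is a linear homeomorphism, the classical Banach inverse mapping theorem produces open neighborhoods $W_1\subset W$ of $x$ and $V_1\subset F$ of $f(x)$ with $\tilde f:W_1\to V_1$ a $C^p$ diffeomorphism. Taking $W_1$ to be a small connected ball, set $\psi:=\mu\circ\tilde f:W_1\to\RR$; then $d\psi(x)=c\lambda\ne 0$, and $\psi$ vanishes on the open subset $W_1\cap\Ker\lambda$ (because $\tilde f=f$ there and $f(\partial_\lambda U)\subset\Ker\mu$). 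By the scalar implicit function theorem, $\psi^{-1}(0)\cap W_1$ is a $C^p$ hypersurface through $x$; locally it is the graph of a $C^p$ function $h:\Ker\lambda\to\RR$ with $h(0)=0$, and the containment $W_1\cap\Ker\lambda\subset\psi^{-1}(0)$ forces $h\equiv 0$ on a neighborhood of $0$, so $\psi^{-1}(0)\cap W_1 = W_1\cap\Ker\lambda$ after shrinking. The complement $W_1\setminus\Ker\lambda$ splits into two connected components on which $\lambda$ and $\psi$ each have constant sign, and the condition $c>0$ matches $\{\psi\ge 0\}\cap W_1$ with $W_1\cap E_\lambda^+$. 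Setting $U_1:=W_1\cap E_\lambda^+$ and $V:=V_1\cap F_\mu^+$, we obtain $\tilde f(U_1)=V$, and $f|_{U_1}=\tilde f|_{U_1}: U_1\to V$ is the required $C^p$ diffeomorphism.

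The principal obstacle is the local $C^p$ extension in the second step: for finite $p$ a naive Hadamard-style division of the boundary-vanishing scalar component by $\lambda$ loses a derivative and is therefore inadequate, so one must instead invoke a Seeley-type extension operator and verify that the standard construction carries over to maps into a Banach space while preserving $C^p$ regularity. Once the extension is in hand, the rest is a careful bookkeeping of connected components on either side of the $C^p$ hypersurface $\psi^{-1}(0)$, combined with the sign control extracted from the preliminary calculation that $\mu\circ Df(x) = c\lambda$ with $c>0$.
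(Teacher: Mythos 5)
The paper does not actually prove this theorem: it is quoted verbatim from Margalef Roig and Outerelo Dom{\'\i}nguez \cite[Theorem 2.2.4]{Margalef-Roig_Outerelo-Dominguez_differential_topology} and used as a black box, so there is no in-paper argument to compare against; I can only assess your proposal on its own terms, and on those terms it is essentially correct. The easy direction via the chain rule is fine, and your boundary analysis is the right skeleton: the tangential differentiation showing $\Ker\lambda\subset\Ker(\mu\circ Df(x))$, hence $\mu\circ Df(x)=c\lambda$ by comparing the two closed hyperplanes, and the one-sided Taylor expansion forcing $c>0$, are all correct (note you only need surjectivity of $Df(x)$ there); likewise the endgame is clean, since once $\widetilde f:W_1\to V_1$ is a $C^p$ diffeomorphism with $\psi=\mu\circ\widetilde f$ vanishing exactly on $W_1\cap\Ker\lambda$ and positive on $\{\lambda>0\}\cap W_1$, the identity $\widetilde f(W_1\cap E_\lambda^+)=V_1\cap F_\mu^+$ follows from bijectivity alone. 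Your interior case is also handled correctly, including the observation that $f(x)\in\Int(F_\mu^+)$ is automatic. The one piece of genuine technical debt is the one you flag: the local $C^p$ extension across $\partial E_\lambda^+$. This is not a gap in principle --- after splitting $E\cong\Ker\lambda\oplus\RR v_0$ and choosing a product neighborhood $W_0\times[0,\eps)\subset U$, the finite Hestenes reflection $\widetilde f(u,t):=\sum_{k=1}^{p+1}c_k f(u,-t/k)$ for $t<0$, with $\sum_k c_k(-1/k)^j=1$ for $j=0,\dots,p$, is a finite linear combination of compositions with affine maps, so it is manifestly valid for Banach-space-valued maps and avoids the convergence bookkeeping of the full Seeley operator; the only lemma to record is that matching, continuous one-sided derivatives up to order $p$ across the hyperplane $\{t=0\}$ yield a $C^p$ map (mean value inequality), together with uniqueness of the derivative on the half-space so that $D\widetilde f(x)=Df(x)$. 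Two small housekeeping points: arrange the extension domain so that $W\cap E_\lambda^+\subset U$ (so that $U_1\subset U$ and $\psi$ really agrees with $\mu\circ f$ on $W_1\cap\Ker\lambda$), and note that the degenerate choices of $\lambda$ or $\mu$ (the boundaryless convention of Definition \ref{defn:Margalef-Roig_1-1-1_half_plane}) either reduce to the classical inverse mapping theorem or make the boundary hypothesis vacuous, so your two cases exhaust the statement.
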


\begin{rmk}[Extension of smooth functions on arbitrary subsets]
\label{rmk:Extension_smooth_functions_arbitrary_subsets}  
If $M$ and $N$ are smooth manifolds with or without boundary, and $E \subset M$ is an arbitrary subset, we recall (see Lee \cite[p. 45]{Lee_john_smooth_manifolds}) that a map $f:E\to N$ is said to be \emph{smooth on $E$ } if it has a smooth extension in a neighborhood of each point: that is, if for every $x \in E$ there is an open subset $W \subset M$ containing $x$ and a smooth map $\tilde f:W\to N$ such that $\tilde{f} \restriction W\cap E = f\restriction W\cap E$. One can prove that that if the set $E$ is contained in the closure of its interior, then the derivatives $df(x)$, for each $x\in E$, are uniquely defined by the extensions.
Deep results due to Whitney \cite{Whitney_1934tams_b, Whitney_1936tams} provide sufficient conditions on $f$ for the existence of smooth extensions; see Fefferman \cite{Fefferman_2005} for more recent and sharper forms of Whitney's extension theorems. 
\end{rmk}

\begin{defn}[Local diffeomorphisms of manifolds with boundary]
\label{defn:Margalef-Roig_2-2-5}
(See Margalef Roig and Outerelo Dom{\'\i}nguez \cite[Definition 2.2.5]{Margalef-Roig_Outerelo-Dominguez_differential_topology}.)  
Let $X$ and $X'$ be differentiable manifolds of class $p$ and $f:X\to X'$ be a map.
\begin{enumerate}
\item $f$ is a \emph{local diffeomorphism of class $p$ at $x_0\in X$} if there are open neighbourhoods $V_{x_0}\subset X$ of $x_0$ and $V_{f(x_0)}\subset X'$ of $f(x_0)$ such that $f$ is a diffeomorphism of class $p$ from $V_{x_0}$ onto $V_{f(x_0)}$.
  
\item $f$ is a local diffeomorphism of class $p$ from $X$ to $X'$ if it is a local diffeomorphism of class $p$ at every $x\in X$.
\end{enumerate}
\end{defn}

Any local diffeomorphism of class $p$ is necessarily a map of class $p$.

\begin{thm}[Inverse mapping theorem for maps of manifolds with boundary]
\label{thm:Margalef-Roig_2-2-6}
(See Margalef Roig and Outerelo Dom{\'\i}nguez \cite[Theorem 2.2.6]{Margalef-Roig_Outerelo-Dominguez_differential_topology}.)
Let $X$ and $X'$ be differentiable manifolds of class $p$ and $f:X\to X'$ be a $C^p$ map ($p\geq 1$) and $x_0\in X$ be a point. Then the following are equivalent:
\begin{enumerate}
\item $df(x_0)$ is a linear homeomorphism and there is an open neighborhood $V_{x_0}\subset X$ of $x_0$ such that $f(V_{x_0}\cap \partial X) \subset \partial X'$.
\item $f$ is a local diffeomorphism of class $p$ at $x_0$.
\end{enumerate}
\end{thm}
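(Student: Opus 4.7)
The plan is to prove the equivalence by reducing everything to Theorem \ref{thm:Margalef-Roig_2-2-4}, the inverse mapping theorem for open subsets of half planes, via suitable charts at $x_0$ and $f(x_0)$.

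The easy direction (2) $\Rightarrow$ (1) will come almost immediately from the chain rule applied to $f^{-1}\circ f = \id$ on the neighborhood $V_{x_0}$: differentiating gives $df^{-1}(f(x_0))\circ df(x_0) = \id_{T_{x_0}X}$ and the reverse composition equals $\id_{T_{f(x_0)}X'}$, so $df(x_0)$ is a linear homeomorphism. The boundary inclusion $f(V_{x_0}\cap \partial X) \subset \partial X'$ is then a direct application of Theorem \ref{thm:Margalef-Roig_1-3-6} (boundary invariance) to the restriction of $f$ to the open set on which it is a $C^p$ diffeomorphism.

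For the substantive direction (1) $\Rightarrow$ (2), I would first pick any chart $c = (U,\varphi,(E,\lambda))$ for $X$ at $x_0$ with $\varphi(x_0) = 0$ and any chart $c' = (V,\psi,(F,\mu))$ for $X'$ at $f(x_0)$ with $\psi(f(x_0)) = 0$. After shrinking $U$ (and $V$ correspondingly), I may arrange that $U\subset V_{x_0}$ and $f(U)\subset V$; by the boundary hypothesis in item (1) combined with Definition \ref{defn:Margalef-Roig_1-2-14_and_16}, the local representative
\[
  \tilde f := \psi\circ f\circ\varphi^{-1}: \varphi(U) \to F_\mu^+
\]
then satisfies $\tilde f(\partial_\lambda \varphi(U)) \subset \partial F_\mu^+ = F_\mu^0$, which is precisely the hypothesis $\tilde f(\partial_\lambda \varphi(U))\subset \partial F_\mu^+$ required to invoke Theorem \ref{thm:Margalef-Roig_2-2-4}.

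Next I would compute that $D\tilde f(0) = d\psi(f(x_0))\circ df(x_0)\circ (d\varphi(x_0))^{-1}$, using Proposition \ref{prop:Margalef-Roig_1-6-3}; since all three factors are linear homeomorphisms (the outer two always, the middle one by hypothesis (1)), so is $D\tilde f(0)\in \sL(E,F)$. Theorem \ref{thm:Margalef-Roig_2-2-4} then yields open subsets $U_1\subset \varphi(U)$ containing $0$ and $W\subset F_\mu^+$ containing $0$ such that $\tilde f \restriction U_1: U_1 \to W$ is a $C^p$ diffeomorphism. Finally, setting $V_{x_0}' := \varphi^{-1}(U_1)$ and $V_{f(x_0)}' := \psi^{-1}(W)$ and transferring back through the charts produces the desired local $C^p$ diffeomorphism $f \restriction V_{x_0}': V_{x_0}' \to V_{f(x_0)}'$, establishing that $f$ is a local diffeomorphism of class $p$ at $x_0$ in the sense of Definition \ref{defn:Margalef-Roig_2-2-5}.

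The only genuine obstacle is verifying that the boundary-preservation hypothesis $f(V_{x_0}\cap \partial X)\subset \partial X'$ correctly translates into the half-plane hypothesis $\tilde f(\partial_\lambda\varphi(U))\subset F_\mu^0$ required by Theorem \ref{thm:Margalef-Roig_2-2-4}; this is a bookkeeping step using Proposition \ref{prop:Margalef-Roig_1-2-13} to identify the $\lambda$-boundary of $\varphi(U)$ with $\varphi(U\cap \partial X)$ and the analogous identification on the codomain side. All remaining work is routine chart-pushforward, so no further technical difficulty should arise.
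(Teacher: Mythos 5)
Your proof is correct, and since the paper does not prove this theorem at all — it simply quotes it from Margalef Roig and Outerelo Dom\'{\i}nguez \cite[Theorem 2.2.6]{Margalef-Roig_Outerelo-Dominguez_differential_topology} — there is nothing in the paper for it to diverge from. Your argument (transfer to charts, check via Proposition \ref{prop:Margalef-Roig_1-2-13} that the boundary-preservation hypothesis becomes $\tilde f(\partial_\lambda\varphi(U))\subset\partial F_\mu^+$, invoke Theorem \ref{thm:Margalef-Roig_2-2-4}, and use boundary invariance, Theorem \ref{thm:Margalef-Roig_1-3-6}, plus the chain rule for the converse) is exactly the standard reduction used in the cited monograph, and it is complete.
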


\subsection{Submanifolds, immersions, and embeddings of manifolds with boundary}

\subsubsection{Submanifolds of manifolds with boundary}

\begin{defn}[Submanifold of a manifold with boundary and adapted chart]
\label{defn:Margalef-Roig_3-1-1_and_3-1-2}
(See Margalef Roig and Outerelo Dom{\'\i}nguez \cite[Definitions 3.1.1 and 3.1.2]{Margalef-Roig_Outerelo-Dominguez_differential_topology}.)  
Let $X$ be a $C^p$ Banach manifold with boundary ($p\geq 1$) and $X'\subset X$ be a subset. Then $X'$ is a $C^p$ \emph{submanifold} of $X$ if for every $x’ \in X'$ there are a chart $c=(U,\phi,E,\lambda)$ for $X$ with $x’ \in U$ and $\phi(x’)=0$, a closed linear subspace $F\subset E$ that admits a closed complement in $E$, and $\mu \in F^*$ such that $\phi(U\cap X') = \phi(U)\cap F_\mu^+$ and is an open subset of $F_\mu^+$. Moreover, $c=(U,\phi,E,\lambda)$ is called a \emph{chart adapted to $X'$ at $x'$ through $(F,\mu)$}.
\end{defn}

\begin{prop}[Adapted charts]
\label{prop:Margalef-Roig_proposition_3-1-3}
(See Margalef Roig and Outerelo Dom{\'\i}nguez \cite[Proposition 3.1.3]{Margalef-Roig_Outerelo-Dominguez_differential_topology}.)  
Let $X$ be a $C^p$ Banach manifold with boundary ($p\geq 1$), and $X'\subset X$ be a subset and $x'\in X'$ be a point, $c=(U,\phi,E,\lambda)$ be a chart for $X$ with $\phi(x’)=0$, and $F\subset E$ be a closed linear subspace that admits a closed complement in $E$, and $\mu \in F^*$. Then the following are equivalent:
\begin{enumerate}
\item\label{item:Margalef-Roig_proposition_3-1-3_a}
 $c=(U,\phi,E,\lambda)$ is a chart adapted to $X'$ at $x'$ through $(F,\mu)$.  
\item\label{item:Margalef-Roig_proposition_3-1-3_b}
$\phi(U\cap X') = \phi(U)\cap F_\mu^+$ and $F_\mu^+ \subset E_\lambda^+$. 
\end{enumerate}  
\end{prop}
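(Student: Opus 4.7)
The plan is to show the equivalence by unwinding Definition \ref{defn:Margalef-Roig_3-1-1_and_3-1-2}: being adapted to $X'$ at $x'$ through $(F,\mu)$ means both (i) $\phi(U\cap X') = \phi(U)\cap F_\mu^+$ and (ii) this common set is open in $F_\mu^+$. Thus, given condition (i) in both items \eqref{item:Margalef-Roig_proposition_3-1-3_a} and \eqref{item:Margalef-Roig_proposition_3-1-3_b}, the whole content of the proposition reduces to the equivalence:
\[
\phi(U)\cap F_\mu^+ \text{ is open in } F_\mu^+
\quad\Longleftrightarrow\quad
F_\mu^+ \subset E_\lambda^+.
\]

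For the direction \eqref{item:Margalef-Roig_proposition_3-1-3_a}$\Rightarrow$\eqref{item:Margalef-Roig_proposition_3-1-3_b}, I would exploit the fact that $F_\mu^+$ is a convex cone with apex $0$ (i.e., closed under positive rescaling) and that $\phi(U)\cap F_\mu^+$ is an open neighborhood of $0=\phi(x')$ in $F_\mu^+$. Given any $y\in F_\mu^+$, there exists $t_0>0$ such that $ty\in \phi(U)\cap F_\mu^+\subset \phi(U)\subset E_\lambda^+$ for all $t\in(0,t_0]$. Hence $\lambda(ty)=t\lambda(y)\geq 0$, so $\lambda(y)\geq 0$, whence $y\in E_\lambda^+$. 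This shows $F_\mu^+\subset E_\lambda^+$.

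For the converse \eqref{item:Margalef-Roig_proposition_3-1-3_b}$\Rightarrow$\eqref{item:Margalef-Roig_proposition_3-1-3_a}, I would use that $\phi(U)$ is open in $E_\lambda^+$ in its subspace topology from $E$, so there exists an open subset $V\subset E$ with $\phi(U)=V\cap E_\lambda^+$. Since $F$ has the topology of a closed subspace of $E$ and $F_\mu^+$ carries the subspace topology from $F$, openness in $F_\mu^+$ is tested against sets of the form $W\cap F_\mu^+$ with $W\subset E$ open. Under the hypothesis $F_\mu^+\subset E_\lambda^+$, we compute
\[
\phi(U)\cap F_\mu^+ \;=\; V\cap E_\lambda^+\cap F_\mu^+ \;=\; V\cap F_\mu^+,
\]
which is manifestly open in $F_\mu^+$, completing the proof.

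There is no serious obstacle here; the argument is essentially topological bookkeeping. The one subtle point worth spelling out is that $F_\mu^+\subset E_\lambda^+$ is not automatic from $\phi(U)\cap F_\mu^+\subset E_\lambda^+$, but follows from the cone structure of $F_\mu^+$ together with the fact that $\phi(U)\cap F_\mu^+$ is a neighborhood of the apex $0$; this is the only place where the chart structure (specifically $\phi(x')=0$) is genuinely used.
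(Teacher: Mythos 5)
Your proof is correct. Note that the paper does not actually prove this proposition---it quotes it from Margalef Roig and Outerelo Dom\'inguez \cite[Proposition 3.1.3]{Margalef-Roig_Outerelo-Dominguez_differential_topology}---so there is no in-paper argument to compare against; your reduction to the equivalence of openness in $F_\mu^+$ with the inclusion $F_\mu^+ \subset E_\lambda^+$, using the cone structure of $F_\mu^+$ with apex $0=\phi(x')$ for one direction and the identity $\phi(U)\cap F_\mu^+ = V\cap F_\mu^+$ (with $\phi(U)=V\cap E_\lambda^+$, $V$ open in $E$) for the other, is exactly the natural argument consistent with Definition \ref{defn:Margalef-Roig_3-1-1_and_3-1-2} and the paper's subsequent use of the result.
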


We remark that the assertion $F_\mu^+ \subset E_\lambda^+$ in Item \eqref{item:Margalef-Roig_proposition_3-1-3_b} of Proposition \ref{prop:Margalef-Roig_proposition_3-1-3} is not part of the Definition \ref{defn:Margalef-Roig_3-1-1_and_3-1-2} of an adapted chart.

\begin{defn}[Neat submanifold of a manifold with boundary]
\label{defn:Margalef-Roig_3-1-10}
(See Margalef Roig and Outerelo Dom{\'\i}nguez \cite[Definition 3.1.10]{Margalef-Roig_Outerelo-Dominguez_differential_topology}.)  
If $X'$ is a $C^p$ Banach submanifold with boundary ($p\geq 1$) of a $C^p$ Banach manifold with boundary $X$, then $X'$ is a \emph{neat} submanifold of $X$ if
\begin{equation}
\label{eq:Margalef-Roig_3-1-10a}
  \partial X' = (\partial X)\cap X'.
\end{equation}
\end{defn}

It will be convenient to interpret Definition \ref{defn:Margalef-Roig_3-1-10} in the model case of half planes. 

\begin{lem}[Half plane as a neat submanifold of another half plane]
\label{lem:Half_plane_neat_submanifold_half_plane}
Let $E$ be a real Banach space, $F \subset E$ be a closed linear subspace, $\lambda \in E^*$, and $\mu \in F^*$. If $F_\mu^+ \subset E_\lambda^+$ is a neat submanifold, then there is a positive constant $c$ such that
\[
  \mu = c\lambda\circ\iota_F,
\]
where $\iota_F:F\to E$ denotes the continuous inclusion operator.
\end{lem}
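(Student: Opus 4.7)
The approach will be to translate the abstract neatness condition \eqref{eq:Margalef-Roig_3-1-10a} into an equality of kernels, deduce proportionality from elementary functional analysis, and pin down the sign of the scalar using the half-plane containment $F_\mu^+\subset E_\lambda^+$.

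By the identifications in \eqref{eq:Boundary_and_interior_half_plane}, I have $\partial F_\mu^+ = \Ker\mu$ and $\partial E_\lambda^+ = \Ker\lambda$, so the neatness condition \eqref{eq:Margalef-Roig_3-1-10a} reads
\[
  \Ker\mu \;=\; (\Ker\lambda)\cap F_\mu^+ \;=\; \{y\in F : \lambda(y)=0 \text{ and } \mu(y)\geq 0\}.
\]
The first step is to upgrade this to the kernel equality $\Ker\mu = \Ker(\lambda\circ\iota_F)$. The inclusion $\Ker\mu\subset\Ker(\lambda\circ\iota_F)$ is immediate from the display. For the reverse inclusion, given any $y\in\Ker(\lambda\circ\iota_F)$, one of $y$ or $-y$ has $\mu(\pm y)\geq 0$; that point satisfies both defining conditions of the right-hand set above and therefore lies in $\Ker\mu$, forcing $\mu(y)=0$.

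Next, because $\mu$ is non-constant (as is implicit in Definition \ref{defn:Margalef-Roig_1-1-1_half_plane} for $F_\mu^+$ to qualify as a half plane), the kernel $\Ker\mu$ is a proper closed hyperplane of $F$; the equality above then forces $\lambda\circ\iota_F$ to also be non-zero. I will then invoke the standard fact that two continuous linear functionals on a Banach space sharing the same closed hyperplane kernel are proportional by a non-zero real scalar, producing $c\in\RR\setminus\{0\}$ with $\mu = c(\lambda\circ\iota_F)$.

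Finally, to pin down the sign, I choose $y_0\in F$ with $\mu(y_0)>0$, which is possible since $\mu\neq 0$. The containment $F_\mu^+\subset E_\lambda^+$ yields $\lambda(y_0)\geq 0$, and $y_0\notin\Ker\mu = \Ker(\lambda\circ\iota_F)$ promotes this to $\lambda(y_0)>0$. The identity $\mu(y_0)=c\lambda(y_0)$ with both sides strictly positive then forces $c>0$, completing the proof. The only mildly subtle step is the reverse inclusion in the kernel equality, which hinges on exploiting that $\Ker(\lambda\circ\iota_F)$ is a linear subspace so that one can always replace $y$ by $-y$; once that is in hand, the remainder reduces to standard linear algebra.
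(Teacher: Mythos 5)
Your argument is correct in the case it treats, and it takes a genuinely different route from the paper's. The paper extracts only the inclusion $\Ker\mu \subset \Ker\lambda$ from the neatness identity \eqref{eq:Neat_sub_hyperplane}, then splits $F = \Ker\mu\oplus G$ with $G$ one-dimensional, forms $\alpha := \mu - \frac{\mu(x_0)}{\lambda(x_0)}\,\lambda\circ\iota_F$ and shows $\alpha$ vanishes on both summands, with the crucial positivity $\lambda(x_0)>0$ coming from the observation that neatness forces $\Int(F_\mu^+)\subset\Int(E_\lambda^+)$. You instead upgrade neatness to the full kernel equality $\Ker\mu = \Ker(\lambda\circ\iota_F)$ via the $\pm y$ trick, invoke the standard fact that two nonzero continuous functionals with a common closed hyperplane kernel are proportional, and then fix the sign directly from the containment $F_\mu^+\subset E_\lambda^+$ together with $y_0\notin\Ker(\lambda\circ\iota_F)$. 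Both arguments are sound; yours avoids choosing the complement $G$, while the paper's needs only the one-sided kernel inclusion.

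There is, however, a scope issue you should repair. The lemma as stated (and as proved and used in the paper) is meant to cover the degenerate conventions in which $\mu$, and correspondingly $\lambda$, is identically zero or a formal positive constant; these are exactly the conventions introduced after Definition \ref{defn:Margalef-Roig_1-1-1_half_plane} and in Remark \ref{rmk:Manifolds_with_and_without_boundary} to subsume manifolds without boundary, and they can occur for the functional $\mu\in F^*$ of an adapted chart in Definition \ref{defn:Margalef-Roig_3-1-1_and_3-1-2}, which is precisely the situation in which the lemma is applied in Remark \ref{rmk:Interpetation_definition_neat_submanifold_coordinate_charts}. Your proportionality step breaks down when $\mu\equiv 0$, since then $\Ker\mu=F$ is not a proper hyperplane. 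The fix is two lines, as in the paper: if $\mu\equiv 0$, then \eqref{eq:Margalef-Roig_3-1-10a} gives $F = (\Ker\lambda)\cap F$, so $\lambda\circ\iota_F\equiv 0$ and any $c>0$ works; if $\mu$ is a (formal) positive constant, so that $\partial F_\mu^+=\emptyset$ and $F_\mu^+=F$, then neatness forces $E_\lambda^0\cap F=\emptyset$, which can only happen if $\lambda$ is also a positive constant, and one takes $c=\mu/\lambda$. With these cases added (or with an explicit statement that you restrict to non-constant $\mu$ and $\lambda$, which your present argument silently assumes when identifying $\partial E_\lambda^+$ with $\Ker\lambda$), your proof is complete.
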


\begin{proof}
By Definition \ref{defn:Margalef-Roig_3-1-10}, we have
\begin{equation}
\label{eq:Neat_sub_hyperplane}
  \partial F_\mu^+ = (\partial E_\lambda^+)\cap F_\mu^+.
\end{equation}
First, suppose that $\mu$ is a positive constant. Then $\partial F_\mu^+ = \emptyset$ and $F_\mu^+ = F$ and the identity \eqref{eq:Neat_sub_hyperplane} yields $E_\lambda^0 \cap F = \emptyset$, which can only occur if $\partial E_\lambda^+ = \emptyset$, and hence $\lambda$ is also a positive constant. In this case, the conclusion holds with $c=\mu/\lambda$.

Second, suppose that $\mu$ is identically zero. Then $\partial F_\mu^+ = F = F_\mu^+$ and the identity \eqref{eq:Neat_sub_hyperplane} yields $F = E_\lambda^0\cap F$, so that $F \subset \Ker\lambda$ and $\lambda\circ\iota_F$ is identically zero and the conclusion holds for any positive constant $c$.

Finally, suppose that $\mu \in F^*$ is non-constant. The identity \eqref{eq:Neat_sub_hyperplane} is equivalent to $F_\mu^0 = E_\lambda^0 \cap F_\mu^+$ and thus
\begin{equation}
\label{eq:Kernel_mu_subspace_kernel_lambda}
  \Ker\mu \subset \Ker\lambda.
\end{equation}
Let $G \subset F$ be a one-dimensional closed complement of $\Ker\mu$, so $F = \Ker\mu\oplus G$, and choose $x_0\in G$ such that $\mu(x_0)>0$ and hence $x_0 \in \Int(F_\mu^+)$. The identity \eqref{eq:Neat_sub_hyperplane} implies that $\Int(F_\mu^+) \subset \Int(E_\lambda^+)$ for, otherwise, if $x_0 \in \partial E_\lambda^+$ then \eqref{eq:Neat_sub_hyperplane} and $x_0 \in F_\mu^+$ would yield $x_0 \in \partial F_\mu^+$, a contradiction. Therefore, $\lambda(x_0)>0$ and we may define $\alpha\in F^*$ by
\[
  \alpha := \mu - \frac{\mu(x_0)}{\lambda(x_0)}\lambda\circ\iota_F.
\]
But $\alpha(x_0) = 0$, so $\alpha \equiv 0$ on $G$, while $\alpha \equiv 0$ on $\Ker\mu$ by \eqref{eq:Kernel_mu_subspace_kernel_lambda}, and consequently $\alpha \equiv 0$ on $F$. The conclusion now holds with $c = \mu(x_0)/\lambda(x_0)$.
\end{proof}

\begin{rmk}[Interpretation of the definition of neat submanifold in coordinate charts]
\label{rmk:Interpetation_definition_neat_submanifold_coordinate_charts}
Suppose that $X'$ is a neat $C^p$ Banach submanifold ($p\geq 1$) of a $C^p$ Banach manifold $X$ and $x' \in X'$. Let $c=(U,\phi,E,\lambda)$ be a chart adapted to $X'$ at $x'$ through $(F,\mu)$, as provided by Definition \ref{defn:Margalef-Roig_3-1-1_and_3-1-2}. Note that $F_\mu^+ \subset E_\lambda^+$ by Item \eqref{item:Margalef-Roig_proposition_3-1-3_b} of Proposition \ref{prop:Margalef-Roig_proposition_3-1-3}. From the identity \eqref{eq:Margalef-Roig_3-1-10a}, we have
\[
  U\cap\partial X' = U\cap(\partial X)\cap X'.
\]
But $\phi(U\cap\partial X) = \phi(U)\cap\partial E_\lambda^+$ by Definition \ref{defn:Margalef-Roig_1-2-14_and_16} (which relies on Theorem \ref{thm:Margalef-Roig_1-2-12}) and $\phi(U\cap\partial X') = \phi(U)\cap\partial F_\mu^+$ by Definitions \ref{defn:Margalef-Roig_3-1-1_and_3-1-2} and \ref{defn:Margalef-Roig_1-2-14_and_16}, so applying the map $\phi$ to the preceding identity gives
\[
  \phi(U)\cap\partial F_\mu^+ = \phi(U) \cap (\partial E_\lambda^+) \cap  F_\mu^+.
\]
But this implies that
\[
  \partial F_\mu^+ = (\partial E_\lambda^+) \cap  F_\mu^+
\]
and so $F_\mu^+$ is a neat submanifold of $E_\lambda^+$ by Definition \ref{defn:Margalef-Roig_3-1-10}. Consequently, $\mu = c\lambda\circ\iota_F$ for some positive constant $c$ by Lemma \ref{lem:Half_plane_neat_submanifold_half_plane}.
\end{rmk}

\begin{rmk}[Neat and totally neat submanifolds of manifolds with corners]
\label{rmk:Neat_and_totally_submanifolds_manifolds_with_corners} 
We refer to Margalef Roig and Outerelo Dom{\'\i}nguez \cite[Definition 3.1.10]{Margalef-Roig_Outerelo-Dominguez_differential_topology} for the more general version of Definition \ref{defn:Margalef-Roig_3-1-10} for manifolds with corners, where there is a distinction between the concepts of neat and totally submanifolds of manifolds with corners. In our statement of Definition \ref{defn:Margalef-Roig_3-1-10}, we have relied on \cite[Definitions 1.2.6 and 1.2.14]{Margalef-Roig_Outerelo-Dominguez_differential_topology} to interpret the concept of the \emph{index} of a point $x$ in a manifold with corners in our specialization of \cite[Definition 3.1.10]{Margalef-Roig_Outerelo-Dominguez_differential_topology}. In particular, $\ind_{X'}(x') = 0 \iff x' \in \Int(X')$ and $\ind_{X'}(x') = 1 \iff x' \in \partial X'$ and similarly for points in $X$. If $x' \in \Int(X')$, then the equality \eqref{eq:Margalef-Roig_3-1-10a} implies that $x' \in \Int(X)$, while if $x' \in \partial X'$, then the equality \eqref{eq:Margalef-Roig_3-1-10a} implies that $x' \in \partial X$. Consequently, $\ind_{X'}(x') = \ind_X(x')$ for all $x' \in X'$ and Condition (b) (I) in \cite[Definition 3.1.10]{Margalef-Roig_Outerelo-Dominguez_differential_topology}, which defines the concept of a \emph{totally neat submanifold}, is equivalent to the condition \eqref{eq:Margalef-Roig_3-1-10a}.
\end{rmk}

\subsubsection{Immersions of manifolds with boundary}
We recall the

\begin{defn}[Immersion] 
\label{defn:Margalef-Roig_3-2-1}
(See  Margalef Roig and Outerelo Dom{\'\i}nguez \cite[Definition 3.2.1]{Margalef-Roig_Outerelo-Dominguez_differential_topology}.)
Let $X$ and $X'$ be $C^p$ Banach manifolds ($p\geq 1$), $f:X\to X'$ be a $C^p$ map, and $x\in X$ be a point. One says that $f$ is an \emph{immersion at $x$} if there are a chart $c=(U,\varphi,(E,\lambda))$ for $X$ with $\varphi(x)=0$ and a chart $c'=(U',\varphi',(E',\lambda'))$ for $X'$ with $\varphi'(f(x))=0$ such
that $f(U)\subset U'$, and $E\subset E'$ is a closed linear subspace that admits a closed complement in $E'$, and $\varphi(U) \subset \varphi'(U')$, and
\[
  \varphi' \circ f \circ \varphi^{-1}: \varphi(U) \to \varphi'(U')
\]
is the inclusion map (and thus $E_\lambda^+ \subset E_{\lambda'}'^+$ and $E_\lambda^0 \subset E_{\lambda'}'^0$). If $f$ is a $C^p$ immersion at every point $x\in X$, one says that $f$ is a $C^p$ \emph{immersion on $X$}.
\end{defn}

\begin{prop}[Openness of the immersion property] 
\label{prop:Margalef-Roig_3-2-2}
(See Margalef Roig and Outerelo Dom{\'\i}nguez \cite[Proposition 3.2.2]{Margalef-Roig_Outerelo-Dominguez_differential_topology}.)
Let $X$ and $X'$ be $C^p$ Banach manifolds ($p\geq 1$) and $f:X\to X'$ be a $C^p$ map. Then the subset $\{x\in X: f \text{ is an immersion at } x\}$ is open in $X$.
\end{prop}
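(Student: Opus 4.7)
The plan is to show that if $f$ is an immersion at $x_0 \in X$, witnessed by charts $c = (U, \varphi, (E, \lambda))$ for $X$ and $c' = (U', \varphi', (E', \lambda'))$ for $X'$ as in Definition \ref{defn:Margalef-Roig_3-2-1}, then $f$ is in fact an immersion at every $y \in U$. Since $U$ is an open neighborhood of $x_0$, this suffices for openness. The argument is purely local: the same pair of charts, after an affine translation and a possible replacement of $\lambda$ or $\lambda'$ by a constant, witnesses the immersion structure at every point of $U$.

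Fix an arbitrary $y \in U$ and set $y_0 := \varphi(y) \in E$. I construct candidate charts centered at $y$ and $f(y)$ by translating: on suitable open neighborhoods $\tilde U \subset U$ of $y$ and $\tilde U' \subset U'$ of $f(y)$ with $f(\tilde U) \subset \tilde U'$, define
\[
\tilde\varphi := \varphi - y_0 \quad\text{and}\quad \tilde\varphi' := \varphi' - \iota_E(y_0),
\]
where $\iota_E : E \hookrightarrow E'$ is the given closed inclusion. Then $\tilde\varphi(y) = 0 = \tilde\varphi'(f(y))$, and since $\varphi' \circ f \circ \varphi^{-1} = \iota_E$ on $\varphi(U)$, a direct computation gives $\tilde\varphi' \circ f \circ \tilde\varphi^{-1} = \iota_E$ on $\tilde\varphi(\tilde U)$. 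It remains to choose $\tilde\lambda \in E^*$ and $\tilde\lambda' \in (E')^*$ making $\tilde\varphi(\tilde U) \subset E_{\tilde\lambda}^+$ and $\tilde\varphi'(\tilde U') \subset E_{\tilde\lambda'}'^+$ open subsets, and satisfying $E_{\tilde\lambda}^+ \subset E_{\tilde\lambda'}'^+$ and $E_{\tilde\lambda}^0 \subset E_{\tilde\lambda'}'^0$.

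The selection rule is dichotomous: when $\lambda(y_0) > 0$, I shrink $\tilde U$ so that $\varphi(\tilde U) \subset \Int(E_\lambda^+)$, and then $\tilde\varphi(\tilde U)$ is an open neighborhood of the origin in $E$, prompting the choice $\tilde\lambda \equiv 1$ per the convention in Remark \ref{rmk:Manifolds_with_and_without_boundary}; when $\lambda(y_0) = 0$, I set $\tilde\lambda := \lambda$ unchanged, noting that $\lambda(\tilde\varphi(z)) = \lambda(\varphi(z)) \geq 0$ keeps $\tilde\varphi(\tilde U)$ inside $E_\lambda^+$. I choose $\tilde\lambda'$ analogously from the sign of $\lambda'(y_0)$.

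Only three of the four sign combinations arise: the case $\lambda(y_0) = 0$ with $\lambda'(y_0) > 0$ is ruled out because $y_0 \in E_\lambda^0 \subset E_{\lambda'}'^0$ forces $\lambda'(\iota_E(y_0)) = 0$. In the case where neither functional was replaced by a constant, the inclusions $E_{\tilde\lambda}^+ \subset E_{\tilde\lambda'}'^+$ and $E_{\tilde\lambda}^0 \subset E_{\tilde\lambda'}'^0$ are inherited verbatim from the originals. When both are constant, the corresponding boundary hyperplanes are empty and the conditions are trivial. The only delicate case is $\lambda(y_0) > 0$ together with $\lambda'(y_0) = 0$, where $\tilde\lambda \equiv 1$ but $\tilde\lambda' = \lambda'$: here a short hyperplane argument along the lines of the proof of Lemma \ref{lem:Half_plane_neat_submanifold_half_plane} combines the original containments with the incompatibility $\lambda'(y_0) = 0 < \lambda(y_0)$ to force $\lambda'|_E \equiv 0$, so $E \subset E_{\lambda'}'^0$ and hence $E_{\tilde\lambda}^+ = E \subset E_{\lambda'}'^0 \subset E_{\tilde\lambda'}'^+$ and $E_{\tilde\lambda}^0 = \emptyset \subset E_{\tilde\lambda'}'^0$. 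The construction involves no analytic machinery and no Inverse Mapping Theorem; the only obstacle is the bookkeeping of the half-plane functionals after translation, which this case analysis handles exhaustively.
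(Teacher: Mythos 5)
Your proposal is correct, and it matches the intended argument: the paper gives no proof of its own here (it simply cites Margalef Roig and Outerelo Dom{\'\i}nguez, whose proof is the same chart-recentering idea), and your translation of the witnessing charts from Definition \ref{defn:Margalef-Roig_3-2-1}, together with the dichotomy on the signs of $\lambda(y_0)$ and $\lambda'(y_0)$ and the hyperplane argument forcing $\lambda'\restriction E \equiv 0$ in the interior-to-boundary case, does show that $f$ is an immersion at every point of $U$, which gives openness. Nothing further is needed.
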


\begin{thm}[Infinitesimal characterizations of immersions at points whose images are interior] 
\label{thm:Margalef-Roig_3-2-3}
(See  Margalef Roig and Outerelo Dom{\'\i}nguez \cite[Theorem 3.2.6]{Margalef-Roig_Outerelo-Dominguez_differential_topology}.)
Let $X$ and $X'$ be $C^p$ Banach manifolds ($p\geq 1$) and $f:X\to X'$ be a $C^p$ map and $x\in X$ be a point such that $f(x)\in\Int(X')$. Require that $p<\infty$ if $x\in\partial X$ and $X$ is infinite-dimensional. Then the following are equivalent:
\begin{enumerate}
\item $f$ is an immersion at $x \in X$.
\item $df(x):T_xX\to T_{f(x)}X'$ is an injective operator and $\Ran df(x)$ admits a closed complement in $T_{f(x)}X'$.
\end{enumerate}
\end{thm}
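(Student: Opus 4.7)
The implication (1) $\Rightarrow$ (2) is immediate: in adapted charts of Definition \ref{defn:Margalef-Roig_3-2-1}, the map $f$ realizes as the inclusion $E \hookrightarrow E' = E \oplus G$ with $G$ a closed complement of $E$, so under the canonical isomorphisms of Proposition \ref{prop:Margalef-Roig_1-6-2} the derivative $df(x)$ is precisely this inclusion, which is injective with range $E$ admitting $G$ as a closed complement.

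For (2) $\Rightarrow$ (1), my plan is to construct the adapted chart at $f(x)$ by inverting an auxiliary map. First I would pick any chart $c_0 = (U_0, \varphi_0, (E, \lambda))$ at $x$ with $\varphi_0(x) = 0$, and, using $f(x) \in \Int(X')$, a chart $c_0' = (V_0, \psi_0, (E', \mu))$ at $f(x)$ whose model may be taken without boundary near $\psi_0(f(x)) = 0$ (i.e., $\mu$ is a positive constant, or equivalently $\psi_0(V_0)$ lies in $\Int(E'_\mu{}^+)$ after shrinking). Set $g := \psi_0 \circ f \circ \varphi_0^{-1}$, so $g(0) = 0$ and $u := Dg(0)$ is identified with $df(x)$. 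By hypothesis $u$ is injective with closed range $u(E)$ admitting a closed complement $G \subset E'$; by the Open Mapping Theorem, $u : E \to u(E)$ is a linear homeomorphism, so after a linear change of coordinates on $E'$ I identify $E' = E \oplus G$ with $u(a) = (a, 0)$.

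The hard part will be straightening $g$ via an inverse mapping theorem. The natural candidate is
\[
  H(a, b) := g(a) + (0, b),
\]
defined near $(0, 0)$ in $E_\lambda^+ \times G$, for which $DH(0, 0) = \id_{E \oplus G}$. The main obstacle is that Theorem \ref{thm:Margalef-Roig_2-2-4} cannot be invoked directly when $x \in \partial X$: the codomain $E'$ has no boundary while the domain has boundary $E_\lambda^0 \times G$, so the hypothesis $H(\partial(\cdot)) \subset \partial E'$ is vacuously false. I would bypass this by first extending $g$ across the boundary of $E_\lambda^+$ to a $C^p$ map $\tilde g : \tilde U \to E'$ on an open neighborhood $\tilde U \subset E$ of $0$. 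For finite-dimensional $E$ this is the classical Whitney extension (see Remark \ref{rmk:Extension_smooth_functions_arbitrary_subsets}); for infinite-dimensional $E$ a Seeley-type reflection extension works provided $p < \infty$, which is precisely the role of the finite-differentiability restriction. Then $\tilde H(a, b) := \tilde g(a) + (0, b)$ is a $C^p$ map between open subsets of Banach spaces without boundary with $D\tilde H(0, 0) = \id$, and Theorem \ref{thm:Margalef-Roig_2-2-4} (with constant positive boundary functionals) produces a $C^p$ diffeomorphism $\tilde H : O_1 \to O_2$ between open neighborhoods of the origin in $E \oplus G$.

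To finish, I set $V_1 := \psi_0^{-1}(O_2)$, define $\psi_1 := \tilde H^{-1} \circ \psi_0$, and shrink $U_0$ to $U_1$ so that $f(U_1) \subset V_1$ and $\varphi_0(U_1) \times \{0\} \subset O_1$. Since $\tilde g$ agrees with $g$ on $\tilde U \cap \varphi_0(U_0)$, for $a \in \varphi_0(U_1)$ one has
\[
  \psi_1 \circ f \circ \varphi_0^{-1}(a) = \tilde H^{-1}(g(a)) = \tilde H^{-1}(\tilde H(a, 0)) = (a, 0),
\]
so in the charts $c_0 \restriction U_1$ at $x$ and $c_1' := (V_1, \psi_1, (E \oplus G, \mu'))$ at $f(x)$ (with $\mu'$ a positive constant), $f$ is realized as the inclusion of an open subset of $E_\lambda^+$ into $E \oplus G$, with $E$ a closed subspace admitting $G$ as closed complement --- exactly the data required by Definition \ref{defn:Margalef-Roig_3-2-1}.
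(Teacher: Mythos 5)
Your proposal is correct, and there is nothing in the paper to compare it against in detail: Theorem \ref{thm:Margalef-Roig_3-2-3} is quoted from Margalef Roig and Outerelo Dom{\'\i}nguez \cite[Theorem 3.2.6]{Margalef-Roig_Outerelo-Dominguez_differential_topology} without proof, and your sketch follows what is essentially the standard (and, in spirit, their) route: read (1) $\Rightarrow$ (2) off the local model of Definition \ref{defn:Margalef-Roig_3-2-1}, and for (2) $\Rightarrow$ (1) extend the local representative $g$ in class $C^p$ across the boundary of the half plane and apply the inverse mapping theorem to $(a,b)\mapsto \tilde g(a)+(0,b)$, whose differential at the origin is the identity of $E\oplus G$. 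Your identification of the extension step as the precise source of the hypothesis that $p<\infty$ when $x\in\partial X$ and $X$ is infinite dimensional is exactly right: the finite-order Hestenes--Lions reflection formula works verbatim in a Banach space, using the splitting $E\cong E_\lambda^0\oplus\RR v$ with $\lambda(v)=1$ and a product neighborhood of $0$ in $E_\lambda^+$, whereas Whitney/Seeley $C^\infty$ extension is not available in general in infinite dimensions.

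Two points are worth making explicit to close the sketch. First, the hypothesis that $\Ran df(x)$ ``admits a closed complement'' must be read as saying that $\Ran df(x)$ is a topological direct summand, i.e.\ closed as well as complemented; closedness is what licenses your Open Mapping Theorem step (that $u:E\to u(E)$ is a linear homeomorphism), and it is the intended reading, being forced in any case by the implication (1) $\Rightarrow$ (2). Second, since $f(x)\in\Int(X')$ and $\psi_1(f(x))=0$, your final chart at $f(x)$ is necessarily modeled on $E\oplus G$ with the boundaryless convention, so the parenthetical inclusions $E_\lambda^+\subset E_{\lambda'}'^{+}$ and $E_\lambda^0\subset E_{\lambda'}'^{0}$ in Definition \ref{defn:Margalef-Roig_3-2-1} are to be understood as vacuous in that convention (they are genuine consequences only when the target model has nontrivial boundary); with that understood, the chart data you produce satisfy the definition, and only the routine shrinking of neighborhoods ($O_2\subset\psi_0(V_0)$, $f(U_1)\subset V_1$, $\varphi_0(U_1)\times\{0\}\subset O_1$) remains, as you indicate.
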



We now recall a characterization of immersions in which $f(x)$ could belong to $\partial X'$. We first have the

\begin{defn}[Index of a tangent vector] 
\label{defn:Margalef-Roig_3-2-11}
(See  Margalef Roig and Outerelo Dom{\'\i}nguez \cite[Definition 3.2.11]{Margalef-Roig_Outerelo-Dominguez_differential_topology}.)
Let $X$ be a differentiable manifold of class $p$, and $x\in X$, and $v\in (T_xX)^i$. We define the \emph{index of $v$ in $(T_xX)^i$} to be
\[
  \ind(v) = \ind\left((\sO_c^x)^{-1}(v)\right),
\]
that is, the index of the vector $(\sO_c^x)^{-1}(v)$ in $E_\lambda^+$, where $c=(U,\varphi, (E,\lambda))$ is a chart for $X$ with $x\in U$ and $\varphi(x)=0$.
\end{defn}

Recall from \cite[Definition 1.2.6]{Margalef-Roig_Outerelo-Dominguez_differential_topology} that for $w \in E_\lambda^+$, one defines $\ind(w) = 0$ if $w \in \Int(E_\lambda^+)$ and $\ind(w) = 1$ if $w \in \partial E_\lambda^+$.

\begin{thm}[Infinitesimal characterizations of immersions] 
\label{thm:Margalef-Roig_3-2-12}
(See  Margalef Roig and Outerelo Dom{\'\i}nguez \cite[Theorem 3.2.12]{Margalef-Roig_Outerelo-Dominguez_differential_topology}.)
Let $X$ and $X'$ be $C^p$ Banach manifolds ($p\geq 1$) and $f:X\to X'$ be a $C^p$ map and $x\in X$ be a point such that
\begin{itemize}
\item There is an open neighborhood $V_x$ of $x$ in $X$ with $f(V_x\cap \partial X) \subset \partial X'$,
\item $\ind(v) = \ind(df(x)v)$ for all $v\in (T_xX)^i$.  
\end{itemize}
Then the following hold:
\begin{enumerate}
\item If $df(x)$ is an injective operator and $\Ran df(x)$ is a closed subspace, then $df(x)((T_xX)^i) = (T_{f(x)}X')^i\cap df(x)(T_xX)$.

\item $f$ is an immersion at $x$ if and only if $df(x)$ is an injective operator and $\Ran df(x)$ admits a closed complement in $T_{f(x)}X'$.
\end{enumerate}
\end{thm}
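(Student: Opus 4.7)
Work in charts $c = (U,\varphi,(E,\lambda))$ for $X$ at $x$ with $\varphi(x)=0$ and $c' = (U',\varphi',(E',\lambda'))$ for $X'$ at $f(x)$ with $\varphi'(f(x))=0$; write $g := \varphi' \circ f \circ \varphi^{-1}$ and $u := Dg(0)$, the chart representation of $df(x)$. By Propositions~\ref{prop:Margalef-Roig_1-6-15} and~\ref{prop:Margalef-Roig_1-6-16}, inner (resp.\ strictly inner) tangent vectors at $x$ correspond under $\sO_c^x$ to elements $w\in E$ with $\lambda(w)\geq 0$ (resp.\ $\lambda(w)>0$), and similarly at $f(x)$ with $\lambda'$. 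For Part~(1), the inclusion $df(x)((T_xX)^i)\subset (T_{f(x)}X')^i\cap\Ran df(x)$ is Proposition~\ref{prop:Margalef-Roig_1-6-17}. For the reverse inclusion, given a nonzero $v'\in (T_{f(x)}X')^i\cap\Ran df(x)$, let $v$ be the unique preimage supplied by injectivity. Were $v\notin(T_xX)^i$, the chart vector $w$ of $v$ would satisfy $\lambda(w)<0$, so $-v$ would be strictly inner; the index hypothesis would then force $-v'= df(x)(-v)$ to be strictly inner, i.e., $v'$ strictly outer, contradicting $v'\in(T_{f(x)}X')^i$. The case $v'=0$ gives $v=0\in(T_xX)^i$ by injectivity.

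\textbf{Part~(2), $(\Rightarrow)$.} If $f$ is an immersion at $x$, Definition~\ref{defn:Margalef-Roig_3-2-1} furnishes charts in which $g$ is the inclusion of a closed subspace $E\subset E'$ that has a closed complement in $E'$; the derivative $u$ is that same inclusion, which is injective and whose range admits that same closed complement.

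\textbf{Part~(2), $(\Leftarrow)$.} Assume $u$ is injective with closed range $F := u(E)$ admitting a closed complement in $E'$. If $f(x)\in \Int(X')$, the contrapositive of the boundary-preservation hypothesis gives $x\in\Int(X)$, and Theorem~\ref{thm:Margalef-Roig_3-2-3} applies. So assume $f(x)\in\partial X'$, whence $\lambda'$ is non-constant. Differentiating $\lambda'(g(tw))=0$ at $t=0$ for $w\in E_\lambda^0$ (valid for $t\in[0,\varepsilon)$ by the boundary-preservation hypothesis) yields $u(E_\lambda^0)\subset\Ker\lambda'$. I then construct a closed complement $\tilde G\subset\Ker\lambda'$ of $F$ in $E'$: starting from any closed complement $G'\subset E'$ of $F$ and any $e\in E$ with $\lambda(e)>0$, the index hypothesis ensures $c:=\lambda'(u(e))>0$; define $\sigma: G'\to E'$ by $\sigma(y):=y-(\lambda'(y)/c)u(e)$ and set $\tilde G:=\sigma(G')$. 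Then $\tilde G\subset\Ker\lambda'$ by construction; $\sigma$ has continuous inverse given by the restriction to $\tilde G$ of the projection $\pi_{G'}:E'\to G'$ (using $u(e)\in F$), so $\tilde G$ is closed in $E'$; and a direct check gives $E'=F\oplus\tilde G$. Now define $\Phi(w,z):=g(w)+z$ on $\varphi(U)\times V_{\tilde G}$, where $V_{\tilde G}$ is a small open neighborhood of $0$ in $\tilde G$; then $D\Phi(0,0)=u\oplus\id_{\tilde G}$ is a linear homeomorphism $E\oplus\tilde G\xrightarrow{\sim}F\oplus\tilde G=E'$. Equip $E\oplus\tilde G$ with $\hat\lambda(w,z):=\lambda(w)$; for $(w,z)\in E_\lambda^0\oplus\tilde G$, boundary-preservation gives $\lambda'(g(w))=0$ and $\tilde G\subset\Ker\lambda'$ gives $\lambda'(z)=0$, so $\Phi$ sends its domain-boundary into $E'_{\lambda'}^0$. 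Theorem~\ref{thm:Margalef-Roig_2-2-4} then yields a $C^p$ diffeomorphism from a neighborhood of $(0,0)$ in $(E\oplus\tilde G)_{\hat\lambda}^+$ onto a neighborhood of $0$ in $E'_{\lambda'}^+$. Composing $\Phi^{-1}$ with $\varphi'$ produces a chart for $X'$ in which $f$ becomes $w\mapsto(w,0)$, the inclusion $E\hookrightarrow E\oplus\tilde G$ of a closed subspace with the closed complement $\tilde G$---precisely the immersion chart of Definition~\ref{defn:Margalef-Roig_3-2-1}. The main obstacle is the construction of $\tilde G\subset\Ker\lambda'$: without it, $\Phi$ fails to send boundary to boundary and Theorem~\ref{thm:Margalef-Roig_2-2-4} is inapplicable; the construction exploits both the closed-complement hypothesis on $\Ran df(x)$ and the codimension-one structure of $\Ker\lambda'$ in $E'$.
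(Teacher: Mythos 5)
The paper itself offers no proof of this statement: it is quoted as background directly from Margalef Roig and Outerelo Dom{\'\i}nguez \cite[Theorem 3.2.12]{Margalef-Roig_Outerelo-Dominguez_differential_topology}, so there is nothing in-paper to compare your argument against. Judged on its own merits, your proof is correct and follows the route one would expect from the monograph's toolkit as reproduced here: you characterize inner and strictly inner vectors in charts via Propositions \ref{prop:Margalef-Roig_1-6-16} and \ref{prop:Margalef-Roig_1-6-17}, and for the hard implication of Part (2) you build a closed complement $\tilde G$ of $F=\Ran u$ lying inside $\Ker\lambda'$ --- the shear $\sigma(y)=y-(\lambda'(y)/c)\,u(e)$ with $c=\lambda'(u(e))>0$ supplied by the index hypothesis does exactly this, and the continuity of $\pi_{G'}$ plus the open mapping theorem make $u\oplus\id_{\tilde G}$ a linear homeomorphism --- after which $\Phi(w,z)=g(w)+z$ and the boundary inverse mapping theorem (Theorem \ref{thm:Margalef-Roig_2-2-4}) manufacture precisely the adapted chart demanded by Definition \ref{defn:Margalef-Roig_3-2-1}.

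Three small points are glossed, all harmless but worth a sentence each. First, in Part (1) the contradiction ``$v'$ strictly outer contradicts $v'\in(T_{f(x)}X')^i$'' is only a contradiction when $f(x)\in\partial X'$; when $f(x)\in\Int(X')$ the first bullet forces $x\in\Int(X)$, every preimage is automatically inner, and the reverse inclusion is trivial --- make this case split explicit, as you did in Part (2). Second, Case 2 of Part (2) tacitly assumes $x\in\partial X$ (so that some $e$ with $\lambda(e)>0$ exists and $E_\lambda^0$ is meaningful); this is fine because if $x\in\Int(X)$ and $T_xX\neq 0$ the index hypothesis applied to $\pm v$ is self-contradictory, so that sub-case is vacuous, but say so, and also shrink $U$ into $V_x$ before asserting $\lambda'(g(w))=0$ for $w\in E_\lambda^0$. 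Third, you quietly strengthen the hypothesis of Part (2) to ``$\Ran df(x)$ is closed with closed complement''; this is the intended reading (the range splits, and closedness is automatic in the forward direction), and your argument genuinely uses it, so flag the convention rather than leaving it implicit.
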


\begin{prop}[Characterizations of immersions] 
\label{prop:Margalef-Roig_3-2-13}
(See  Margalef Roig and Outerelo Dom{\'\i}nguez \cite[Proposition 3.2.13]{Margalef-Roig_Outerelo-Dominguez_differential_topology}.)
Let $X$ and $X'$ be $C^p$ Banach manifolds ($p\geq 1$) and $f:X\to X'$ be a $C^p$ map and $x\in X$ be a point. Then the following are equivalent:
\begin{enumerate}
\item $f$ is an immersion at $x \in X$.
\item There is an open neighborhood $V_x$ of $x$ in $X$ such that $f(V_x)$ is a $C^p$ Banach submanifold of $X'$ and $f:V_x \to f(V_x)$ is a $C^p$ diffeomorphism.
\end{enumerate}
\end{prop}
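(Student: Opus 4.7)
The plan is to prove the two implications separately, using the definitions of immersion (Definition \ref{defn:Margalef-Roig_3-2-1}) and submanifold (Definitions \ref{defn:Margalef-Roig_3-1-1_and_3-1-2}, \ref{prop:Margalef-Roig_proposition_3-1-3}) directly, with the bulk of the work consisting of bookkeeping on coordinate charts to ensure the adapted-chart condition for submanifolds is met on honest open neighborhoods.

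For the implication $(1) \Rightarrow (2)$, I would unpack Definition \ref{defn:Margalef-Roig_3-2-1}. Since $f$ is an immersion at $x$, there exist a chart $c=(U,\varphi,(E,\lambda))$ for $X$ with $\varphi(x)=0$ and a chart $c'=(U',\varphi',(E',\lambda'))$ for $X'$ with $\varphi'(f(x))=0$ such that $f(U)\subset U'$, the space $E\subset E'$ is a closed subspace with closed complement, $E_\lambda^+\subset E'_{\lambda'}{}^+$, and $\varphi'\circ f\circ \varphi^{-1}$ is the inclusion map $\varphi(U)\hookrightarrow \varphi'(U')$. In particular $f\restriction U$ is injective, so $f:U\to f(U)$ is a bijection. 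Because $\varphi(U)$ is an open subset of $E_\lambda^+$, I may pick an open subset $W\subset E'_{\lambda'}{}^+$ with $W\cap E = \varphi(U)$ (for instance a tubular neighborhood built from the closed complement of $E$ in $E'$), and shrink $U'$ by setting $\tilde U' := \varphi'^{-1}(W\cap \varphi'(U'))$. Then $c''=(\tilde U',\varphi'\restriction \tilde U',(E',\lambda'))$ is a chart for $X'$ at $f(x)$ with
\[
\varphi'(\tilde U'\cap f(U)) \;=\; \varphi(U) \;=\; \varphi'(\tilde U')\cap E_\lambda^+,
\]
so this chart is adapted to $f(U)$ at $f(x)$ through $(E,\lambda)$ in the sense of Proposition \ref{prop:Margalef-Roig_proposition_3-1-3}. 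Setting $V_x := f^{-1}(\tilde U')\cap U$, we obtain that $f(V_x)$ is a $C^p$ Banach submanifold of $X'$ at $f(x)$; and the same construction, transported by the chart maps, works at every nearby point of $f(V_x)$. Finally, the inverse $f^{-1}:f(V_x)\to V_x$ is given in charts by $\varphi^{-1}\circ (\varphi'\restriction \tilde U'\cap f(V_x))$, which is $C^p$; hence $f:V_x\to f(V_x)$ is a $C^p$ diffeomorphism.

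For the implication $(2)\Rightarrow (1)$, I would start from an adapted chart for the submanifold $f(V_x)\subset X'$ at the point $f(x)$: by Definition \ref{defn:Margalef-Roig_3-1-1_and_3-1-2} there are a chart $c'=(U',\varphi',(E',\lambda'))$ for $X'$ with $\varphi'(f(x))=0$, a closed subspace $F\subset E'$ with closed complement, and $\mu\in F^*$, such that $\varphi'(U'\cap f(V_x))=\varphi'(U')\cap F_\mu^+$ is open in $F_\mu^+$, and $F_\mu^+\subset E'_{\lambda'}{}^+$. Restricting to $W:=f^{-1}(U')\cap V_x$, the map $\psi := \varphi'\circ f\restriction W:W\to F_\mu^+$ is a homeomorphism onto the relatively open set $\varphi'(U')\cap F_\mu^+$, because $f:V_x\to f(V_x)$ is a diffeomorphism and $\varphi'$ restricted to $U'\cap f(V_x)$ is a chart for $f(V_x)$. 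I would then declare $(W,\psi,(F,\mu))$ to be a chart for $X$ at $x$ in the atlas; compatibility with the preexisting atlas of $X$ at $x$ is immediate because $\psi$ composed with an old inverse chart equals $\varphi'\circ f\circ \varphi_{\mathrm{old}}^{-1}$, which is the composition of the $C^p$ diffeomorphism $f:V_x\to f(V_x)$ with the $C^p$ chart $\varphi'$, hence $C^p$ in both directions. With respect to $(W,\psi,(F,\mu))$ and $c'$, the map $\varphi'\circ f\circ \psi^{-1}$ is exactly the inclusion $F_\mu^+\hookrightarrow E'_{\lambda'}{}^+$, so $f$ is an immersion at $x$ by Definition \ref{defn:Margalef-Roig_3-2-1}.

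The main obstacle I anticipate is the careful shrinking in the step $(1)\Rightarrow (2)$ that upgrades ``$\varphi'(f(U))=\varphi(U)$ is open in $E_\lambda^+$'' to the stronger statement ``$\varphi'(\tilde U'\cap f(U))=\varphi'(\tilde U')\cap E_\lambda^+$'' required by the adapted-chart Definition \ref{defn:Margalef-Roig_3-1-1_and_3-1-2}; this requires choosing the tubular open set $W\subset E'_{\lambda'}{}^+$ using a closed complement of $E$ in $E'$ and verifying that the shrunken chart is still adapted at all points of a full neighborhood of $f(x)$ in $f(V_x)$. The other delicate point, minor by comparison, is checking that the chart $(W,\psi,(F,\mu))$ produced in $(2)\Rightarrow (1)$ is legitimately compatible with the given atlas of $X$, which reduces to the $C^p$ regularity of $f$ and $f^{-1}$ on $V_x$ provided in (2).
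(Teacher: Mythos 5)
The paper does not prove this proposition at all; it is quoted verbatim from Margalef Roig and Outerelo Dom\'{\i}nguez \cite[Proposition 3.2.13]{Margalef-Roig_Outerelo-Dominguez_differential_topology} as background, so there is no in-paper argument to compare against. Judged on its own, your two-implication chart argument is the standard one and is essentially sound: in $(1)\Rightarrow(2)$ you shrink the codomain chart so that the immersion chart becomes an adapted chart for $f(V_x)$, and in $(2)\Rightarrow(1)$ you promote $\varphi'\circ f$ to a chart of $X$ via the diffeomorphism hypothesis, after which the local representative is literally the inclusion $F_\mu^+\supset\psi(W)\hookrightarrow\varphi'(U')$, with the closed complement for $F$ supplied by Definition \ref{defn:Margalef-Roig_3-1-1_and_3-1-2} and the half-space inclusion $F_\mu^+\subset E_{\lambda'}'^+$ by Proposition \ref{prop:Margalef-Roig_proposition_3-1-3}.

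Two points need repair or elaboration in the first implication. First, your requirement $W\cap E=\varphi(U)$ is unattainable whenever $\varphi(U)$ meets $\partial E_\lambda^+$: any set $W$ relatively open in $E_{\lambda'}'^+$ containing a point $z$ with $\lambda(z)=0$ meets $E$ in points with $\lambda<0$. What you actually need is $W\cap E_\lambda^+=\varphi(U)$, and this follows from elementary subspace topology (write $\varphi(U)=O'\cap E_\lambda^+$ with $O'$ open in $E'$, no tubular neighborhood required); with that correction your identity $\varphi'(\tilde U'\cap f(U))=\varphi'(\tilde U')\cap E_\lambda^+$ goes through. Second, Definition \ref{defn:Margalef-Roig_3-1-1_and_3-1-2} demands an adapted chart \emph{centered at each point} of $f(V_x)$, and your sentence ``the same construction, transported by the chart maps, works at every nearby point'' hides the only genuinely fussy step in the half-space setting: recentering $\varphi$ and $\varphi'$ at a nearby point $y\in U$ is a translation by $\varphi(y)\in E$, which preserves the half-spaces only when $y$ (respectively $f(y)$) is a boundary point; at interior points one must first shrink to a ball contained in $\Int(E_\lambda^+)$ and pass to the boundaryless model. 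Equivalently, one can invoke the openness of the immersion locus (Proposition \ref{prop:Margalef-Roig_3-2-2}) together with the fact that $f\restriction U$ is a homeomorphism onto $f(U)$ with its subspace topology (immediate from the inclusion-in-charts description), so that the construction at $f(x)$ applies verbatim at each $f(y)$, $y\in V_x$. These are routine but should be stated; with them your argument is complete. Your second implication is correct as written.
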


\subsubsection{Embeddings of manifolds with boundary}

\begin{defn}[Embedding] 
\label{defn:Margalef-Roig_3-3-1}
(See  Margalef Roig and Outerelo Dom{\'\i}nguez \cite[Definition 3.3.1]{Margalef-Roig_Outerelo-Dominguez_differential_topology}.)
Let $X$ and $X'$ be $C^p$ Banach manifolds ($p\geq 1$) and $f:X\to X'$ be a $C^p$ map. One says that $f$ is a $C^p$ \emph{embedding} if $f$ is an immersion and $f:X\to f(X)$ is a homeomorphism.
\end{defn}

\begin{prop}[Characterizations of immersions] 
\label{prop:Margalef-Roig_3-3-2}
(See  Margalef Roig and Outerelo Dom{\'\i}nguez \cite[Proposition 3.3.2]{Margalef-Roig_Outerelo-Dominguez_differential_topology}.)
Let $X$ and $X'$ be $C^p$ Banach manifolds ($p\geq 1$) and $f:X\to X'$ be a $C^p$ map. Then the following are equivalent:
\begin{enumerate}
\item $f$ is an embedding.
\item $f(X)$ is a $C^p$ Banach submanifold of $X'$ and $f:X\to f(X)$ is a $C^p$ diffeomorphism.
\end{enumerate}  
\end{prop}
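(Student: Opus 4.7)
The plan is to prove Proposition \ref{prop:Margalef-Roig_3-3-2} by deducing each direction from the immediate definitions together with the already-established characterization of immersions in Proposition \ref{prop:Margalef-Roig_3-2-13}. No new transversality-theoretic input should be required; the work lies in upgrading the local submanifold/diffeomorphism statement of Proposition \ref{prop:Margalef-Roig_3-2-13} to a global one using the extra hypothesis that $f:X\to f(X)$ is a homeomorphism.

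For the direction that an embedding implies a global submanifold structure on $f(X)$ together with a $C^p$ diffeomorphism $f:X\to f(X)$, I would start by invoking Definition \ref{defn:Margalef-Roig_3-3-1} to split the embedding hypothesis into two ingredients: $f$ is an immersion and $f:X\to f(X)$ is a homeomorphism for the subspace topology on $f(X)\subset X'$. Applying Proposition \ref{prop:Margalef-Roig_3-2-13} pointwise gives for each $x\in X$ an open neighborhood $V_x\subset X$ such that $f(V_x)$ is a $C^p$ Banach submanifold of $X'$ and $f\restriction V_x: V_x\to f(V_x)$ is a $C^p$ diffeomorphism. The homeomorphism hypothesis then guarantees that $f(V_x)$ is an open subset of $f(X)$ in the subspace topology, so the family $\{f(V_x)\}_{x\in X}$ is an open cover of $f(X)$ by subsets that are each $C^p$ Banach submanifolds of $X'$. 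Since being a $C^p$ Banach submanifold is defined by the existence of an adapted chart at each point (Definition \ref{defn:Margalef-Roig_3-1-1_and_3-1-2}) and this is a local property in $X'$, the set $f(X)$ is itself a $C^p$ Banach submanifold of $X'$. The local diffeomorphisms $f\restriction V_x$ automatically agree on overlaps (they are all restrictions of the single map $f$), so by the standard gluing of diffeomorphisms they assemble to a global $C^p$ diffeomorphism $f:X\to f(X)$.

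For the converse, I would assume that $f(X)$ is a $C^p$ Banach submanifold of $X'$ and that $f:X\to f(X)$ is a $C^p$ diffeomorphism. The inclusion $\iota:f(X)\hookrightarrow X'$ is a $C^p$ map whose local expression in an adapted chart $(U,\varphi,(E,\lambda))$ for $X'$ at a point of $f(X)$ is, by Definition \ref{defn:Margalef-Roig_3-1-1_and_3-1-2}, exactly the inclusion of an open subset of $F_\mu^+$ into $\varphi(U)\subset E_\lambda^+$ for a closed subspace $F\subset E$ admitting a closed complement; this is precisely the model form in Definition \ref{defn:Margalef-Roig_3-2-1}, so $\iota$ is a $C^p$ immersion. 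Writing $f=\iota\circ\tilde f$ with $\tilde f:X\to f(X)$ the given diffeomorphism, the composition is an immersion (diffeomorphisms preserve the immersion property, which is evident from Definition \ref{defn:Margalef-Roig_3-2-1}). Moreover $f:X\to f(X)$ is a homeomorphism because it is a $C^p$ diffeomorphism, and consequently $f$ is an embedding in the sense of Definition \ref{defn:Margalef-Roig_3-3-1}.

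The main obstacle is the globalization step in the forward direction: one must check that the local submanifold charts for the various $f(V_x)$ really do combine to produce a single submanifold structure on $f(X)$ as a subset of $X'$. This is where the homeomorphism hypothesis in Definition \ref{defn:Margalef-Roig_3-3-1} is essential, since without it the sets $f(V_x)$ need not be open in $f(X)$ and the local-to-global passage can fail (a familiar phenomenon even for injective immersions of one-dimensional manifolds into the plane). Once one notes that openness of each $f(V_x)$ in $f(X)$ follows from $f$ being a homeomorphism, the conclusion is immediate from the local Definition \ref{defn:Margalef-Roig_3-1-1_and_3-1-2} of submanifold.
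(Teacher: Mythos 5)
Your argument is correct. Note that the paper itself gives no proof of this proposition — it is quoted directly from Margalef Roig and Outerelo Dom\'{\i}nguez \cite[Proposition 3.3.2]{Margalef-Roig_Outerelo-Dominguez_differential_topology} — and your derivation is exactly the standard one from the ingredients the paper does record: Definition \ref{defn:Margalef-Roig_3-3-1} splits the embedding hypothesis, Proposition \ref{prop:Margalef-Roig_3-2-13} supplies the local submanifold-plus-diffeomorphism data, and the homeomorphism-onto-image hypothesis makes each $f(V_x)$ relatively open in $f(X)$ so the local adapted charts (after the routine shrinking of chart domains by an open set of $X'$ cutting out $f(V_x)$ from $f(X)$) globalize; the converse via factoring $f=\iota\circ\tilde f$ through the inclusion of the submanifold is likewise sound.
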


\begin{cor}[Characterizations of submanifolds as images of embeddings] 
\label{cor:Margalef-Roig_3-3-3}
(See  Margalef Roig and Outerelo Dom{\'\i}nguez \cite[Corollary 3.3.3]{Margalef-Roig_Outerelo-Dominguez_differential_topology}.)
Let $X$ be a $C^p$ Banach manifold ($p\geq 1$) and $X'$ be a subset of $X$. Then the following are equivalent:
\begin{enumerate}
\item $X'$ is a $C^p$ Banach submanifold of $X$.
\item $X'$ is the image of a $C^p$ embedding.
\end{enumerate}  
\end{cor}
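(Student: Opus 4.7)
The plan is to derive this corollary almost immediately from Proposition \ref{prop:Margalef-Roig_3-3-2}, since that proposition already encodes the substantive content; the role of the corollary is to repackage it as a purely set-theoretic characterization of submanifolds.

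For the implication (2) $\Rightarrow$ (1), I would assume $X' = f(Y)$ for some $C^p$ embedding $f: Y \to X$ of some $C^p$ Banach manifold $Y$. Then Proposition \ref{prop:Margalef-Roig_3-3-2} applied to $f$ directly asserts that $f(Y)$ is a $C^p$ Banach submanifold of $X$. Hence $X'$ is a $C^p$ Banach submanifold of $X$, with no further work required.

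For the converse (1) $\Rightarrow$ (2), the natural candidate for the embedding is the inclusion map $\iota: X' \hookrightarrow X$. Here I need to first equip $X'$ with its canonical $C^p$ Banach manifold structure: for every $x' \in X'$, Definition \ref{defn:Margalef-Roig_3-1-1_and_3-1-2} supplies a chart $c = (U, \varphi, (E, \lambda))$ for $X$ adapted to $X'$ at $x'$ through some $(F, \mu)$, and the triple $(U \cap X', \varphi \restriction U \cap X', (F, \mu))$ is a chart for $X'$; the standard verification (two such charts differ by restriction of a $C^p$ diffeomorphism of open subsets of half planes, hence are $C^p$ compatible) produces the atlas. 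With this structure in place, writing out $\iota$ in the pair of charts $(U \cap X', \varphi \restriction U \cap X', (F, \mu))$ on the domain and $(U, \varphi, (E, \lambda))$ on the codomain, the local representative is precisely the inclusion $\varphi(U) \cap F_\mu^+ \hookrightarrow \varphi(U)$, which matches Definition \ref{defn:Margalef-Roig_3-2-1} verbatim. Therefore $\iota$ is a $C^p$ immersion at every point, and since it is a homeomorphism onto its image $X'$ (equipped with the subspace topology, which agrees with the manifold topology on $X'$ by Proposition \ref{prop:Margalef-Roig_1-2-18_and_19} style reasoning applied to adapted charts), $\iota$ is a $C^p$ embedding by Definition \ref{defn:Margalef-Roig_3-3-1}. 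Thus $X' = \iota(X')$ is the image of a $C^p$ embedding.

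The only step that is not purely formal is the construction of the manifold structure on $X'$ and verification that the subspace topology agrees with the topology induced by this atlas; both are routine consequences of the adapted-chart Definition \ref{defn:Margalef-Roig_3-1-1_and_3-1-2} and the fact that the transition maps restrict to transition maps between adapted charts. I expect no genuine obstacle, as the essential analytic content lies in Proposition \ref{prop:Margalef-Roig_3-3-2} and Definition \ref{defn:Margalef-Roig_3-1-1_and_3-1-2}, both already in hand.
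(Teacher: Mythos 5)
Your argument is correct, and it is the standard one: the paper itself states this corollary only as a citation to Margalef Roig and Outerelo Dom\'{\i}nguez without reproducing a proof, and your route --- deducing (2)~$\Rightarrow$~(1) immediately from Proposition \ref{prop:Margalef-Roig_3-3-2}, and (1)~$\Rightarrow$~(2) by equipping $X'$ with the canonical atlas of adapted charts from Definition \ref{defn:Margalef-Roig_3-1-1_and_3-1-2} and checking that the inclusion $\iota:X'\hookrightarrow X$ is an immersion (its local representative being exactly the inclusion $\varphi(U)\cap F_\mu^+\hookrightarrow\varphi(U)$ of Definition \ref{defn:Margalef-Roig_3-2-1}) and a homeomorphism onto its image --- is exactly how the cited result is proved. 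The only items you defer (chart compatibility for the adapted-chart atlas and agreement of the induced topology with the subspace topology) are indeed routine, so there is no gap.
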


\subsection{Submersions and preimage of a submanifold with boundary under a submersion}

\subsubsection{Submersions}

\begin{defn}[Submersion as a map with a smooth right inverse]
\label{defn:Margalef-Roig_4-1-1}
(See  Margalef Roig and Outerelo Dom{\'\i}nguez \cite[Definition 4.1.1]{Margalef-Roig_Outerelo-Dominguez_differential_topology}.)
Let $X$ and $X'$ be $C^p$ Banach manifolds ($p\geq 1$) and $f:X\to X'$ be a $C^p$ map and $x\in X$ be a point. The map $f$ is called a \emph{submersion at $x$} if there are an open neighborhood $V_{f(x)}$ of $f(x)$ in $X'$ and a map $s:V_{f(x)} \to X$ of class $p$, such that $s(f(x))=x$ and
\[
  f\circ s = \id\quad\text{on } V_{f(x)}.
\]
The map $f$ is called a $C^p$ \emph{submersion on $X$} if $f$ is submersion at every point $x \in X$.
\end{defn}

\begin{rmk}[Equivalent forms of the definition of a submersion of Banach manifolds with boundary]
\label{rmk:Equivalent_definitions_submersion_Banach_manifolds_boundary}	
The forthcoming Proposition \ref{prop:Margalef-Roig_4-1-13} (Items \eqref{item:Margalef-Roig_proposition_4-1-13_a} and \eqref{item:Margalef-Roig_proposition_4-1-13_b}) assures us that the Definitions \ref{defn:Transversality_maps_Banach_manifolds_boundary} and \ref{defn:Margalef-Roig_4-1-1} of $f$ being a submersion at a point $x \in X$ are equivalent.
\end{rmk}

\begin{prop}[Openness of a submersion]
\label{prop:Margalef-Roig_4-1-2}
(See  Margalef Roig and Outerelo Dom{\'\i}nguez \cite[Proposition 4.1.2]{Margalef-Roig_Outerelo-Dominguez_differential_topology}.)
Every submersion of class $p$ is an open map.
\end{prop}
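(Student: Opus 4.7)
The plan is to exploit directly the existence of a local smooth right inverse to $f$ guaranteed by Definition \ref{defn:Margalef-Roig_4-1-1}, without needing any linearization or chart-level argument. No appeal to the inverse mapping theorem (Theorem \ref{thm:Margalef-Roig_2-2-6}) is required here; in fact, this openness statement is a preliminary lemma that will be used alongside Theorem \ref{thm:Margalef-Roig_2-2-6} later (for instance, in establishing the equivalences of Proposition \ref{prop:Margalef-Roig_4-1-13} referred to in Remark \ref{rmk:Equivalent_definitions_submersion_Banach_manifolds_boundary}).

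First I would fix an arbitrary open subset $U \subset X$ and show $f(U) \subset X'$ is open by verifying that each of its points is interior. Given $y \in f(U)$, pick $x \in U$ with $f(x) = y$. Since $f$ is a submersion at $x$ in the sense of Definition \ref{defn:Margalef-Roig_4-1-1}, there exist an open neighborhood $V_y \subset X'$ of $y$ and a $C^p$ map $s : V_y \to X$ with $s(y) = x$ and $f \circ s = \id_{V_y}$.

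Next, because every $C^p$ map with $p \geq 1$ is continuous (as noted on \cite[p. 36]{Margalef-Roig_Outerelo-Dominguez_differential_topology} just after Definition \ref{defn:Margalef-Roig_1-3-2}) and $s(y) = x \in U$, continuity of $s$ at $y$ produces an open neighborhood $W_y \subset V_y$ of $y$ in $X'$ with $s(W_y) \subset U$. The identity $f \circ s = \id$ on $V_y$ then gives, for every $y' \in W_y$, the equality $y' = f(s(y'))$ with $s(y') \in U$, so $y' \in f(U)$. Hence $W_y \subset f(U)$, which shows $y$ is interior to $f(U)$, completing the argument.

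There is no genuine obstacle in this proof; the work has already been done in assuming the existence of the local section $s$. The only subtlety worth flagging is that the argument makes no distinction between boundary and interior points of $X$ or $X'$: the local right inverse $s$ is defined on a full open neighborhood $V_y \subset X'$ regardless of whether $y \in \partial X'$, and it maps into $X$ regardless of whether $x \in \partial X$. This is precisely why Definition \ref{defn:Margalef-Roig_4-1-1} is the cleanest formulation in the manifolds-with-boundary setting, sidestepping the condition $f(V_x \cap \partial X) \subset \partial X'$ that appears in Definition \ref{defn:Submersion_Banach_manifolds_boundary} and Theorem \ref{thm:Margalef-Roig_2-2-6}.
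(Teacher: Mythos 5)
Your argument is correct and is essentially the standard proof of this fact: the paper itself states the proposition without proof, citing \cite[Proposition 4.1.2]{Margalef-Roig_Outerelo-Dominguez_differential_topology}, and the proof there proceeds exactly as you do, using the local $C^p$ section $s$ from Definition \ref{defn:Margalef-Roig_4-1-1} together with its continuity to show every point of $f(U)$ is interior. Your closing remark is also apt: it is precisely because the submersion property is defined via local right inverses that no separate bookkeeping of boundary versus interior points is needed here.
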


\begin{prop}[Regularity of composition of a submersion with another map]
\label{prop:Margalef-Roig_4-1-3}
(See  Margalef Roig and Outerelo Dom{\'\i}nguez \cite[Proposition 4.1.3]{Margalef-Roig_Outerelo-Dominguez_differential_topology}.)
Let $X$ and $X'$ be $C^p$ Banach manifolds ($p\geq 1$) and $f:X\to X'$ be a $C^p$ submersion with $f(X)=X'$ and $g:X'\to X''$ be a map into a $C^p$ Banach manifold $X''$. Then $g$ is a $C^p$ map if and only if $g\circ f$ is a $C^p$ map.
\end{prop}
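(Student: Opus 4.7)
The direction that $g$ being $C^p$ implies $g \circ f$ is $C^p$ is immediate: $f$ is $C^p$ by hypothesis and the composition of $C^p$ maps between $C^p$ Banach manifolds is $C^p$. So the substantive content lies in the converse.

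Assume $g \circ f$ is of class $p$ and fix an arbitrary point $y' \in X'$. Because $f(X) = X'$, there exists $x \in X$ with $f(x) = y'$. Apply Definition \ref{defn:Margalef-Roig_4-1-1} at $x$: there exist an open neighborhood $V_{y'} \subset X'$ of $y' = f(x)$ and a $C^p$ map $s: V_{y'} \to X$ satisfying $s(y') = x$ and $f \circ s = \id_{V_{y'}}$. Then for every $y \in V_{y'}$,
\[
  g(y) = g(f(s(y))) = (g \circ f)(s(y)),
\]
so that $g \restriction V_{y'} = (g \circ f) \circ s$ on $V_{y'}$. The right-hand side is the composition of the $C^p$ map $s: V_{y'} \to X$ with the $C^p$ map $g \circ f : X \to X''$, hence is $C^p$ on $V_{y'}$. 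Thus $g$ is $C^p$ on an open neighborhood of $y'$.

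Since $y' \in X'$ was arbitrary, $g$ is of class $p$ on all of $X'$ (being $C^p$ is a local property, by Definition \ref{defn:Margalef-Roig_1-3-2}). This completes the proof.

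The only potential subtlety is ensuring that the local section $s$ takes values in $X$ (not merely in some chart) so that $(g \circ f) \circ s$ is well defined globally on $V_{y'}$; this is built directly into Definition \ref{defn:Margalef-Roig_4-1-1}, so no difficulty arises. The surjectivity hypothesis $f(X) = X'$ is used precisely to guarantee that every point $y' \in X'$ admits such a local section.
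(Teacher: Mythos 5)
Your proof is correct and is essentially the same argument as the one behind the cited result: the paper quotes \cite[Proposition 4.1.3]{Margalef-Roig_Outerelo-Dominguez_differential_topology} without reproducing a proof, and the section-based Definition \ref{defn:Margalef-Roig_4-1-1} makes $g = (g\circ f)\circ s$ locally, exactly as you write. This is also precisely why the paper remarks that under Lang's ``locally a projection'' definition of submersion the proposition would fail in general, whereas with the local-right-inverse definition it is immediate.
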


Margalef Roig and Outerelo Dom{\'\i}nguez note \cite[p. 159]{Margalef-Roig_Outerelo-Dominguez_differential_topology} that if they had defined a submersion as a map that is, locally, a projection map (as does Lang \cite[p. 24]{Lang_introduction_differential_topology}) then Proposition \ref{prop:Margalef-Roig_4-1-3} would not be true in general.

\begin{prop}[Composition of submersions]
\label{prop:Margalef-Roig_4-1-5}
(See  Margalef Roig and Outerelo Dom{\'\i}nguez \cite[Proposition 4.1.5]{Margalef-Roig_Outerelo-Dominguez_differential_topology}.)
Let $X$, $X'$, and $X''$ be $C^p$ Banach manifolds ($p\geq 1$) and $f:X\to X'$ and $g:X'\to X''$ be $C^p$ maps. If $f$ is a submersion at $x\in X$ and $g$ is a submersion at $f(x)\in X'$, then $g\circ f$ is a submersion at $x$.
\end{prop}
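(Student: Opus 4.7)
The plan is to construct an explicit local $C^p$ right inverse for $g\circ f$ at $x$ by composing the local right inverses furnished by the hypothesis that $f$ and $g$ are submersions, and then to invoke Definition \ref{defn:Margalef-Roig_4-1-1} directly. This approach is attractive because the section-based definition of submersion used here is preserved under composition in the most transparent way, avoiding any need to unpack coordinate charts or to invoke Theorem \ref{thm:Margalef-Roig_2-2-6}.

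First I would apply Definition \ref{defn:Margalef-Roig_4-1-1} twice to obtain an open neighborhood $V \subset X'$ of $f(x)$ and a $C^p$ map $s_f:V\to X$ with $s_f(f(x))=x$ and $f\circ s_f=\id_V$, together with an open neighborhood $W \subset X''$ of $g(f(x))$ and a $C^p$ map $s_g:W\to X'$ with $s_g(g(f(x)))=f(x)$ and $g\circ s_g=\id_W$. The natural candidate for a local section of $g\circ f$ at $x$ is $s_f\circ s_g$, but this is not a priori defined on all of $W$, since $s_g$ may not land in $V$.

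The only (minor) hurdle is arranging the domains so that the composition makes sense. Since $s_g$ is continuous and $s_g(g(f(x)))=f(x)\in V$, the set $W' := s_g^{-1}(V) \subset W$ is open and contains $g(f(x))$. I would then define $s := s_f\circ (s_g|_{W'}): W'\to X$, which is a $C^p$ map as a composition of $C^p$ maps. A direct check confirms that $s(g(f(x))) = s_f(f(x)) = x$, and for every $y\in W'$,
\[
(g\circ f)\circ s(y) = g\bigl(f(s_f(s_g(y)))\bigr) = g(s_g(y)) = y,
\]
using $f\circ s_f=\id_V$ and $g\circ s_g=\id_W$ in turn. Thus $s$ is a local $C^p$ right inverse for $g\circ f$ at $x$, and Definition \ref{defn:Margalef-Roig_4-1-1} yields the desired conclusion. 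I do not anticipate any genuine obstacle: the argument is essentially a one-line observation once the section-based viewpoint is adopted, which is precisely the notational advantage that Margalef Roig and Outerelo Dom{\'\i}nguez emphasize over the projection-based formulation used by Lang.
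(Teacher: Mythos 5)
Your proof is correct; the paper itself gives no proof of this proposition (it simply cites Margalef Roig and Outerelo Dom\'{\i}nguez, Proposition 4.1.5), and your composition-of-local-sections argument, with the restriction to $W'=s_g^{-1}(V)$ to make the composite section well defined, is exactly the standard argument that the section-based Definition \ref{defn:Margalef-Roig_4-1-1} is designed to make immediate. Nothing is missing.
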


\begin{prop}[Consequences of submersion property for the tangent map]
\label{prop:Margalef-Roig_4-1-10}
(See  Margalef Roig and Outerelo Dom{\'\i}nguez \cite[Proposition 4.1.10]{Margalef-Roig_Outerelo-Dominguez_differential_topology}.)
Let $f:X\to X'$ be a $C^p$ map ($p\geq 1$) and $x\in X$ be a point. If $f$ is a submersion at $x$, then $df(x):T_xX \to T_{f(x)} X'$ is a surjective, continuous linear operator and $\Ker df(x)$ admits a closed complement in $T_xX$.
\end{prop}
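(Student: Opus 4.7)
The plan is to extract everything from the existence of the local $C^p$ right inverse $s$ provided by Definition \ref{defn:Margalef-Roig_4-1-1} via a single application of the chain rule. By assumption, there exist an open neighborhood $V_{f(x)}$ of $f(x)$ in $X'$ and a $C^p$ map $s:V_{f(x)}\to X$ with $s(f(x))=x$ and $f\circ s=\id_{V_{f(x)}}$. First, I would differentiate this identity at $f(x)$. Since $s$ is $C^p$ and $f$ is $C^p$, Proposition \ref{prop:Margalef-Roig_1-6-3} gives continuous linear tangent maps $ds(f(x)):T_{f(x)}X'\to T_xX$ and $df(x):T_xX\to T_{f(x)}X'$, and the chain rule (which extends to the boundary case as a formal consequence of Definition \ref{defn:Margalef-Roig_1-1-6}, since the tangent maps are defined via local representatives) yields
\[
  df(x)\circ ds(f(x)) \;=\; \id_{T_{f(x)}X'}.
\]

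From this identity the surjectivity of $df(x)$ is immediate: any $v\in T_{f(x)}X'$ equals $df(x)(ds(f(x))v)$, so $v\in\Ran df(x)$. The same identity shows that $ds(f(x))$ is an injective continuous linear right inverse of $df(x)$.

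Next I would construct the closed complement. Set
\[
  P \;:=\; ds(f(x))\circ df(x): T_xX\to T_xX.
\]
Then $P$ is continuous and linear, and the identity $df(x)\circ ds(f(x))=\id$ gives $P^2=ds(f(x))\circ(df(x)\circ ds(f(x)))\circ df(x)=ds(f(x))\circ df(x)=P$, so $P$ is a continuous linear projection. I then identify its kernel and range: because $ds(f(x))$ is injective, $Pu=0\iff df(x)u=0$, hence $\Ker P=\Ker df(x)$; and since $df(x)\circ ds(f(x))=\id$ shows $ds(f(x))$ is a topological isomorphism onto its range, $\Ran P=\Ran ds(f(x))$ is a closed subspace. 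A continuous projection always induces a topological direct sum decomposition
\[
  T_xX \;=\; \Ker P \oplus \Ran P \;=\; \Ker df(x)\oplus \Ran ds(f(x)),
\]
which exhibits $\Ran ds(f(x))$ as a closed complement of $\Ker df(x)$. This completes the argument.

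The plan has no real obstacle: the only subtlety is making sure the chain rule applies at boundary points, but this is subsumed in the definitions of $C^p$ map and tangent map given in Sections \ref{subsubsec:Margalef-Roig_1-1}--\ref{subsubsec:Margalef-Roig_1-2}, since tangent maps are computed through charts whose domains are open subsets of half planes and the computation is therefore purely algebraic at the level of local representatives in Banach spaces.
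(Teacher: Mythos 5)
Your proposal is correct. The paper does not prove this proposition itself — it only cites \cite[Proposition 4.1.10]{Margalef-Roig_Outerelo-Dominguez_differential_topology} — and your argument is exactly the standard one consistent with Definition \ref{defn:Margalef-Roig_4-1-1}: differentiate $f\circ s=\id$ to get $df(x)\circ ds(f(x))=\id_{T_{f(x)}X'}$, read off surjectivity, and observe that $P=ds(f(x))\circ df(x)$ is a continuous idempotent with $\Ker P=\Ker df(x)$, so that $T_xX=\Ker df(x)\oplus\Ran P$ is a topological direct sum (indeed $\Ran P=\Ker(\id-P)$ is automatically closed, which is a slightly cleaner way to get closedness than invoking that $ds(f(x))$ is an isomorphism onto its range). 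Your remark about the chain rule at boundary points is the only subtlety, and it is correctly dispatched: in the Margalef Roig--Outerelo Dom{\'\i}nguez framework derivatives of maps on open subsets of half planes compose in the usual way, so no extension of $f$ or $s$ is needed.
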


\begin{prop}[Image of a manifold interior and boundary under a submersion]
\label{prop:Margalef-Roig_4-1-11}
(See  Margalef Roig and Outerelo Dom{\'\i}nguez \cite[Proposition 4.1.11]{Margalef-Roig_Outerelo-Dominguez_differential_topology}.)
Let $X$ and $X'$ be $C^p$ Banach manifolds ($p\geq 1$) and $f:X\to X'$ be $C^p$ maps and $x\in X$ be a point. Assume that $df(x):T_xX\to T_{f(x)}X'$ is a surjective operator and that $x \in \Int(X)$. Then $f(x) \in \Int(X')$. Therefore, if $f$ is a $C^p$ submersion, then $f(\Int(X)) \subset \Int(X')$ and $f^{-1}(\partial X') \subset \partial X$. In particular, if $f$ is a surjective $C^p$ submersion, then $\partial X' \subset f(\partial X)$ and $\partial X=\emptyset$ implies $\partial X' = \emptyset$.
\end{prop}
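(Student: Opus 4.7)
The plan is to prove the first assertion by contradiction, working entirely in local charts, and then to derive the remaining statements by purely formal manipulations. Assume $x \in \Int(X)$ with $df(x)$ surjective, but suppose for contradiction that $f(x) \in \partial X'$. First I would choose a chart $c = (U, \varphi, (E, \lambda))$ for $X$ with $\varphi(x) = 0$; because $x \in \Int(X)$, Proposition \ref{prop:Margalef-Roig_1-2-18_and_19} lets me shrink $U$ so that $\varphi(U)$ lies in $\Int(E_\lambda^+)$ and is therefore an open subset of the full Banach space $E$. Next I would pick a chart $c' = (V, \psi, (F, \mu))$ for $X'$ with $\psi(f(x)) = 0$; since $f(x) \in \partial X'$, the functional $\mu \in F^*$ is non-constant (in particular, not identically zero on $F$). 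After further shrinking $U$ so that $f(U) \subset V$, the local representative
\[
h := \psi \circ f \circ \varphi^{-1} : \varphi(U) \to F_\mu^+
\]
is a $C^p$ map in the sense of Definition \ref{defn:Margalef-Roig_1-1-6}.

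The crucial step is to extract vanishing information from the scalar test function $\eta := \mu \circ h : \varphi(U) \to \RR$. Since $h$ takes values in $F_\mu^+$ we have $\eta \geq 0$ on $\varphi(U)$, while $\eta(0) = \mu(0) = 0$, so $\eta$ attains its minimum at the interior point $0 \in E$. A Fermat-type argument in the Banach setting then forces $D\eta(0) = 0$ as an element of $\sL(E, \RR)$; by the chain rule this reads
\[
\mu \circ Dh(0) = 0 \quad \text{in } \sL(E, \RR).
\]
Under the linear homeomorphisms $\sO_c^x : E \to T_xX$ and $\sO_{c'}^{f(x)} : F \to T_{f(x)}X'$ of Proposition \ref{prop:Margalef-Roig_1-6-2}, the operator $Dh(0) : E \to F$ is conjugate to $df(x) : T_xX \to T_{f(x)}X'$, which is surjective by hypothesis, so $\Ran Dh(0) = F$. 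Combining this with the identity above yields $\mu \equiv 0$ on $F$, contradicting the fact that $\mu$ is non-constant. Hence $f(x) \in \Int(X')$, establishing the first assertion.

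The remaining claims follow formally from the first. If $f$ is a $C^p$ submersion, Proposition \ref{prop:Margalef-Roig_4-1-10} gives surjectivity of $df(x)$ at every $x \in X$, so the first assertion immediately yields $f(\Int(X)) \subset \Int(X')$; taking complements in $X$ and $X'$ and using $X \less \Int(X) = \partial X$ gives $f^{-1}(\partial X') \subset \partial X$. When $f$ is additionally surjective, every $x' \in \partial X'$ admits a preimage $x \in f^{-1}(\partial X') \subset \partial X$, whence $\partial X' \subset f(\partial X)$; and if $\partial X = \emptyset$ then trivially $f(\partial X) = \emptyset$ forces $\partial X' = \emptyset$. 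The only substantive step is the contradiction argument, and its main subtlety is the Fermat lemma in an infinite-dimensional Banach space; however, this is immediate from Definition \ref{defn:Margalef-Roig_1-1-6}, since $D\eta(0) \neq 0$ would produce some $y_0 \in E$ with $D\eta(0) y_0 < 0$, and then $\eta(t y_0) = t\, D\eta(0) y_0 + o(t) < 0$ for all sufficiently small $t > 0$, contradicting $\eta \geq 0$ on a neighborhood of $0$.
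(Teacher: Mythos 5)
Your argument is correct, and it is worth noting that the paper itself offers no proof of this proposition at all: it is quoted verbatim from Margalef Roig and Outerelo Dom\'{\i}nguez \cite[Proposition 4.1.11]{Margalef-Roig_Outerelo-Dominguez_differential_topology}, so there is no in-paper proof to compare against. Your chart-level Fermat argument is sound: since $x\in\Int(X)$ you may take $\varphi(U)$ open in $E$, the scalar function $\eta=\mu\circ\psi\circ f\circ\varphi^{-1}$ is nonnegative with an interior minimum at $0$, hence $\mu\circ Dh(0)=0$, and surjectivity of $Dh(0)$ (conjugate to $df(x)$ via Proposition \ref{prop:Margalef-Roig_1-6-3}) forces $\mu\equiv 0$, contradicting that a chart at a boundary point of $X'$ has non-constant $\mu$; the remaining inclusions are indeed formal consequences via Proposition \ref{prop:Margalef-Roig_4-1-10}. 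A slightly slicker packaging of the same first-order idea is available from machinery the paper already quotes: at an interior point $(T_xX)^i=T_xX$, and by Proposition \ref{prop:Margalef-Roig_1-6-17} one has $df(x)\bigl((T_xX)^i\bigr)\subset (T_{f(x)}X')^i$; if $f(x)\in\partial X'$, then by Proposition \ref{prop:Margalef-Roig_1-6-16} the inner cone $(T_{f(x)}X')^i$ is a proper half-space of $T_{f(x)}X'$, so $\Ran df(x)$ cannot be all of $T_{f(x)}X'$ --- which is essentially your Fermat step stated invariantly, and is presumably close to the monograph's own proof. The only points to tidy in your write-up are cosmetic: justify in one line that the local representative $h$ is differentiable at $0$ for your particular choice of compatible charts (standard $C^p$-compatibility of the atlas), and note explicitly that $\mu(0)=0$ holds simply because $\mu$ is linear, so $0\in\partial F_\mu^+$ is automatic once $\psi(f(x))=0$.
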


\begin{prop}[Characterizations of submersions]  
\label{prop:Margalef-Roig_4-1-13}
(See  Margalef Roig and Outerelo Dom{\'\i}nguez \cite[Proposition 4.1.13]{Margalef-Roig_Outerelo-Dominguez_differential_topology}.)
Let $X$ and $X'$ be $C^p$ Banach manifolds ($p\geq 1$) and $f:X\to X'$ be a $C^p$ map and $x\in X$ be a point. If there is an open neighborhood $V_x$ of $x$ in $X$ such that\footnote{If $x \in \Int(X)$, then this condition is always fulfilled.} $f (V_x\cap \partial X) \subset \partial X'$, then the following statements are equivalent:
\begin{enumerate}
\item\label{item:Margalef-Roig_proposition_4-1-13_a} $f$ is a submersion at $x$.

\item\label{item:Margalef-Roig_proposition_4-1-13_b}  $df(x):T_xX\to T_{f(x)}X'$ is a continuous, linear surjective operator and its kernel admits a closed complement in $T_xX$.

\item\label{item:Margalef-Roig_proposition_4-1-13_c} There are a chart $(U,\varphi,(E,\lambda))$ for $X$ with $x \in U$ and $\varphi(x)=0$, a chart $(U',\varphi',(E',\lambda'))$ for $X'$ with $f(x) \in U'$ and $\varphi'(f(x))=0$ and $f(U)\subset U'$, and a continuous, linear surjective operator $q:E\to E'$ such that $\Ker q$ admits a closed complement in $E$ and the following diagram commutes:
\begin{equation}
\label{eq:Submersion_commutative_diagram}    
\begin{CD}
X \supset U @> f \restriction U >> U' \subset X'
\\
@V \varphi VV @VV \varphi' V
\\
E_\lambda^+ \supset \varphi(U) @> q\restriction \varphi(U) >> \varphi'(U') \subset E_{\lambda'}'^+
\end{CD}
\end{equation}

\end{enumerate}
\end{prop}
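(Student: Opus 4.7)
The plan is to prove the equivalence via the cycle (a) $\Rightarrow$ (b) $\Rightarrow$ (c) $\Rightarrow$ (a), using the boundary Inverse Mapping Theorem \ref{thm:Margalef-Roig_2-2-6} as the central tool. The implication (a) $\Rightarrow$ (b) is essentially Proposition \ref{prop:Margalef-Roig_4-1-10}: differentiating a local right inverse $s$ with $f\circ s = \id$ at $f(x)$ yields $df(x)\circ ds(f(x)) = \id_{T_{f(x)}X'}$, so $df(x)$ is surjective and $\Ran ds(f(x))$ is a closed complement to $\Ker df(x)$.

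For the principal step (b) $\Rightarrow$ (c), I would start with arbitrary charts $(U,\varphi,(E,\lambda))$ at $x$ and $(U',\varphi',(E',\lambda'))$ at $f(x)$ that vanish at their base points and satisfy $f(U)\subset U'$ and $U\subset V_x$, and set $\tilde f := \varphi'\circ f\circ \varphi^{-1}$. Transport (b) through the charts to obtain a closed complement $M\subset E$ of $K := \Ker D\tilde f(0)$; the open mapping theorem makes $D\tilde f(0)|_M:M\to E'$ a linear homeomorphism. Let $P_K:E\to K$ denote the projection along $M$, equip $K\oplus E'$ with the functional $\mu(u,v):=\lambda'(v)$ so that $(K\oplus E')_\mu^+ = K\oplus E_{\lambda'}'^+$, and define the candidate new chart
\[
  \Phi:U \to (K\oplus E')_\mu^+, \qquad \Phi(y):=(P_K\varphi(y),\,\varphi'(f(y))).
\]
Under $T_xX\cong E$, its derivative $d\Phi(x)$ is the linear homeomorphism $v\mapsto(P_Kv,\,D\tilde f(0)v)$, and the standing hypothesis $f(V_x\cap \partial X)\subset \partial X'$ forces $\Phi(U\cap\partial X)\subset K\oplus E_{\lambda'}'^0 = \partial (K\oplus E')_\mu^+$. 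Theorem \ref{thm:Margalef-Roig_2-2-6} then produces a chart $(U,\Phi,(K\oplus E',\mu))$ after shrinking $U$, and together with $(U',\varphi',(E',\lambda'))$ the diagram \eqref{eq:Submersion_commutative_diagram} commutes with $q:(u,v)\mapsto v$, whose kernel $K\oplus\{0\}$ is closed-complemented by $\{0\}\oplus E'$.

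For (c) $\Rightarrow$ (a), choose a closed complement $M'$ of $\Ker q$; the open mapping theorem makes $q|_{M'}:M'\to E'$ a linear homeomorphism, and I would define $\sigma := (q|_{M'})^{-1}$ and $s := \varphi^{-1}\circ \sigma \circ \varphi'$ on a small $V_{f(x)}\subset U'$ chosen so that $\sigma(\varphi'(V_{f(x)}))\subset \varphi(U)$. Commutativity of \eqref{eq:Submersion_commutative_diagram} immediately yields $f\circ s = \id$ and $s(f(x))=x$. The one point requiring care is that $\sigma$ must land in $E_\lambda^+$: commutativity together with the hypothesis forces $\Ker\lambda \subset \Ker(\lambda'\circ q)$, whence $\lambda'\circ q = c\lambda$ for some $c\geq 0$; in the nontrivial case $\lambda'\not\equiv 0$, surjectivity of $q$ rules out $c=0$, giving $\lambda(\sigma(z)) = \lambda'(z)/c \geq 0$ on $E_{\lambda'}'^+$.

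The principal obstacle is the (b) $\Rightarrow$ (c) step, since constructing a chart in which $f$ becomes a linear projection hinges on the boundary version of the Inverse Mapping Theorem, and verifying its boundary hypothesis for $\Phi$ is exactly where the standing assumption $f(V_x\cap \partial X)\subset \partial X'$ is indispensable: without it the interior of $X$ could a priori map into $\partial X'$ (the situation precisely forbidden, when $df$ is surjective, by Proposition \ref{prop:Margalef-Roig_4-1-11}), and the implied chart map would no longer send boundary to boundary.
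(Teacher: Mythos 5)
Your proof is correct, and since the paper does not prove this proposition itself --- it is quoted verbatim from Margalef Roig and Outerelo Dom\'{\i}nguez \cite[Proposition 4.1.13]{Margalef-Roig_Outerelo-Dominguez_differential_topology} --- the only available comparison is with the cited source, whose argument yours essentially reproduces: Proposition \ref{prop:Margalef-Roig_4-1-10} for (a)$\Rightarrow$(b), the boundary Inverse Mapping Theorem \ref{thm:Margalef-Roig_2-2-6} applied to $\Phi=(P_K\varphi,\varphi'\circ f)$ for (b)$\Rightarrow$(c), and the linear section $\sigma=(q\restriction M')^{-1}$ for (c)$\Rightarrow$(a). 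Your treatment of the one delicate point --- that $\sigma$ lands in $E_\lambda^+$, via $\lambda'\circ q=c\lambda$ with $c>0$ when $x\in\partial X$ (the case $x\in\Int(X)$ with $\varphi(x)=0$ forcing the trivial half-space, so no positivity check is needed there) --- is the right one.
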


We can now give the

\begin{proof}[Proof of Theorem \ref{mainthm:Preimage_point_under_submersion_and_implied_embedding}]
By hypothesis, $f$ is a submersion at $x_0$ and so, by the equivalence of Items \eqref{item:Margalef-Roig_proposition_4-1-13_a} and \eqref{item:Margalef-Roig_proposition_4-1-13_c} in Proposition \ref{prop:Margalef-Roig_4-1-13}, there are coordinate charts $c = (U,\varphi,(E,\lambda))$ for $X$ with $\varphi(x_0)=0$ and $c' = (U',\varphi',(E',\lambda'))$ for $X'$ with $\varphi(x_0')=0$ and $f(U)\subset U'$, and a continuous, linear surjective operator $q : E \to E'$ such that $K :=q^{-1}(0)$ admits a closed complement in $E$ and the diagram \eqref{eq:Submersion_commutative_diagram} commutes. Consequently, we have
\begin{align*}
  q^{-1}(0)\cap\varphi(U) &= (\varphi'\circ f\restriction U\circ\varphi^{-1})^{-1}(0) \quad\text{(by \eqref{eq:Submersion_commutative_diagram})}
  \\
                          &= \varphi\left((f\restriction U)^{-1}\left((\varphi')^{-1}(0)\right)\right)
  \\
                          &= \varphi\left((f\restriction U)^{-1}(x_0')\right) \quad\text{(since $\varphi(x_0')=0$)}
  \\
                          &= \varphi\left(U\cap f^{-1}(x_0')\right).
\end{align*}
By using $K=q^{-1}(0)$ and applying the map $\varphi^{-1}$, the preceding identity yields
\[
  \varphi^{-1}(\varphi(U)\cap K) = U\cap f^{-1}(x_0'),
\]
which verifies Item \eqref{item:Maintheorem_Margalef-Roig_4-1-13_submanifold_set}.

By definition of a coordinate chart (see Section \ref{subsubsec:Margalef-Roig_1-2}), the map $\varphi^{-1}:E_\lambda^+\supset\varphi(U) \to U \subset X$ is a $C^p$ embedding (in the sense of Definition \ref{defn:Margalef-Roig_3-3-1}) of the open subset $\varphi(U) \subset E_\lambda^+$ onto the open subset $U\subset X$. Hence, the composition $g = \varphi^{-1}\circ\iota_K \restriction \varphi(U)\cap K$ in \eqref{eq:Maintheorem_preimage_point_under_submersion_and_implied_embedding} is also a $C^p$ embedding from the relatively open subset $\varphi(U)\cap K \subset E_\lambda^+$ onto the relatively open subset $\varphi^{-1}(\varphi(U)\cap K) \subset X$. According to Proposition \ref{cor:Margalef-Roig_3-3-3}, the subset $\varphi^{-1}(\varphi(U)\cap K)$ is therefore a $C^p$ Banach submanifold of $X$. 

We now verify Item \eqref{item:Maintheorem_Margalef-Roig_4-1-13_submanifold_tangent spaces}. For any $x\in U$, the commutative diagram \eqref{eq:Submersion_commutative_diagram} of smooth maps yields a commutative diagram of continuous linear operators,
\begin{equation}
  \label{eq:Derivatives_submersion_commutative_diagram}
  \begin{CD}
T_xX @> df(x) >> T_{f(x)}X'
\\
@V d\varphi(x) VV @VV d\varphi'(f(x)) V
\\
E @> q >> E'
  \end{CD}
\end{equation}
In particular,
\[
  q = d\varphi'(f(x))\circ df(x)\circ (d\varphi(x_0))^{-1},
\]
and so
\begin{align*}
  K &= q^{-1}(0)
  \\
  &= \left(d\varphi'(f(x))\circ df(x)\circ (d\varphi(x))^{-1}\right)^{-1}(0)
  \\
  &= d\varphi(x)\left((df(x))^{-1}\left((d\varphi'(f(x))^{-1}(0)\right)\right).
\end{align*}
Because $d\varphi'(f(x)):T_{f(x)}X'\to E'$ is an isomorphism of Banach spaces and consequently $(d\varphi'(f(x))^{-1}(0)=0$, we obtain
\[
  (d\varphi(x))^{-1}K = (df(x))^{-1}(0), \quad\forall\, x \in U.
\]
and hence
\begin{equation}
  \label{eq:Definition_K_x}
  K = d\varphi(x)\left((df(x))^{-1}(0)\right), \quad\forall\, x \in U.
\end{equation}
By taking $x=x_0$, this gives the alternative characterization of $K$ provided in \eqref{eq:Maintheorem_definition_K}. Given that
\[
  g=\varphi^{-1}\circ\iota_K \restriction \varphi(U)\cap K: \varphi(U)\cap K \to U\cap f^{-1}(x_0')
\]
in \eqref{eq:Maintheorem_preimage_point_under_submersion_and_implied_embedding} is a $C^p$ embedding, then for each point $x \in U\cap f^{-1}(x_0')$ the submanifold $U\cap f^{-1}(x_0') \subset X$ has tangent space
\begin{align*}
  T_x(f^{-1}(x_0')) &= (dg(\varphi(x))K \quad\text{(by \eqref{eq:Maintheorem_preimage_point_under_submersion_and_implied_embedding})}
  \\
                    &= (d\varphi^{-1}(\varphi(x))K
  \\
                    &= (d\varphi(x))^{-1}K
  \\
                    &= (df(x))^{-1}(0) \quad\text{(by \eqref{eq:Definition_K_x})}.                     
\end{align*}  
This completes the verification of Item \eqref{item:Maintheorem_Margalef-Roig_4-1-13_submanifold_tangent spaces}. Lastly, if $f$ is a submersion at every point $x \in f^{-1}(x_0')$, then $f^{-1}(x_0')$ is a $C^p$ Banach submanifold of $X$ since $U\cap f^{-1}(x_0')$ is a $C^p$ Banach submanifold by the preceding calculations when $x=x_0$ and applying that conclusion to each $x \in f^{-1}(x_0')$. This completes the proof of Theorem \ref{mainthm:Preimage_point_under_submersion_and_implied_embedding}.
\end{proof}


\subsubsection{Preimage of a submanifold with boundary under a submersion}
The following result is a simplified version of \cite[Proposition 4.2.1]{Margalef-Roig_Outerelo-Dominguez_differential_topology}, where we restrict our attention to the case of manifolds with boundary rather than manifolds with corners.

\begin{thm}[Preimage theorem for manifolds with boundary]
\label{thm:Margalef-Roig_proposition_4-2-1}
(See  Margalef Roig and Outerelo Dom{\'\i}nguez \cite[Proposition 4.2.1]{Margalef-Roig_Outerelo-Dominguez_differential_topology}.)
Let $X$ and $X'$ be $C^p$ Banach manifolds ($p\geq 1$), and $f:X\to X'$ be a $C^p$ map, and $Y'$ be a $C^p$ Banach submanifold $X'$. If for every $x \in f^{-1}(Y')$, the map $f$ is a submersion at $x$ and there is an open neighborhood $V_x\subset X$ of $x$ such that
\begin{equation}
\label{eq:Relatively_open_neighborhood_boundary_point_maps_into_boundary_codomain}
  f(V_x \cap \partial X) \subset \partial X',
\end{equation}
then the following hold:
\begin{enumerate}
\item\label{item:Margalef-Roig_proposition_4-2-1_a}
$f^{-1}(Y')$ is a $C^p$ Banach submanifold of $X$.

\item\label{item:Margalef-Roig_proposition_4-2-1_b}
$\partial(f^{-1}(Y')) = f^{-1}(\partial Y')$.

\item\label{item:Margalef-Roig_proposition_4-2-1_c}
$T_x(f^{-1}(Y')) = (df(x))^{-1}(T_{f(x)}Y')$ for every $x \in f^{-1}(Y')$.

\item\label{item:Margalef-Roig_proposition_4-2-1_d}
$\codim_x f^{-1}(Y) = \codim_{f(x)} Y'$ for every $x \in f^{-1}(Y')$.

\item\label{item:Margalef-Roig_proposition_4-2-1_e}
$f\restriction f^{-1}(Y') :f^{-1}(Y') \to Y'$ is a $C^p$ submersion.
\end{enumerate}
\end{thm}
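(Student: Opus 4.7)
The plan is to prove all five conclusions from a single local construction: producing, at each $x_0 \in f^{-1}(Y')$, a chart for $X$ that is simultaneously adapted to $f^{-1}(Y')$ in the sense of Definition \ref{defn:Margalef-Roig_3-1-1_and_3-1-2}. The starting data are a chart $c'=(U',\varphi',(E',\lambda'))$ for $X'$ adapted to $Y'$ at $y_0 := f(x_0)$ through some $(F',\mu')$, with $\varphi'(y_0)=0$, and the normal form for a submersion supplied by Item \eqref{item:Margalef-Roig_proposition_4-1-13_c} of Proposition \ref{prop:Margalef-Roig_4-1-13}: a chart $c=(U,\varphi,(E,\lambda))$ for $X$ with $\varphi(x_0)=0$, $f(U)\subset U'$, and a continuous surjective operator $q:E\to E'$ whose kernel has closed complement, making the diagram \eqref{eq:Submersion_commutative_diagram} commute. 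After shrinking $U \subset V_{x_0}$ so that $f(U\cap\partial X)\subset\partial X'$, I will show that $c$ is adapted to $f^{-1}(Y')$ at $x_0$ through $(F,\mu)$, where $F := q^{-1}(F')$ and $\mu := \mu'\circ q\restriction F$; the subspace $F$ is closed in $E$ and admits a closed complement because $\Ker q$ does (in $E$) and $F'$ does (in $E'$).

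The verification reduces via Proposition \ref{prop:Margalef-Roig_proposition_3-1-3} to checking two identities. The first, $\varphi(U\cap f^{-1}(Y')) = \varphi(U)\cap F_\mu^+$, is immediate from commutativity of \eqref{eq:Submersion_commutative_diagram}: a point $x\in U$ lies in $f^{-1}(Y')$ iff $q(\varphi(x)) = \varphi'(f(x)) \in \varphi'(U'\cap Y') = \varphi'(U')\cap F'_{\mu'^+}$, which is equivalent to $\varphi(x)\in F$ and $\mu(\varphi(x))\geq 0$. The second, $F_\mu^+\subset E_\lambda^+$, is where the boundary hypothesis \eqref{eq:Relatively_open_neighborhood_boundary_point_maps_into_boundary_codomain} does real work. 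In the interior case $x_0\in\Int(X)$ one can arrange $\lambda$ to be a positive constant and there is nothing to check. In the case $x_0\in\partial X$, the hypothesis \eqref{eq:Relatively_open_neighborhood_boundary_point_maps_into_boundary_codomain} translates to $\lambda(v)=0 \Rightarrow \lambda'(q(v))=0$ for all $v$ in the open neighborhood $\varphi(U)$ of the origin in $E_\lambda^+$; since $\Ker\lambda\cap\varphi(U)$ contains a neighborhood of $0$ in $\Ker\lambda$ and $\Ker(\lambda'\circ q)$ is a closed linear subspace, we conclude $\Ker\lambda\subset\Ker(\lambda'\circ q)$, hence $\lambda'\circ q = c\lambda$ for some $c\in\RR$. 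Positivity of $\lambda$ somewhere on $\varphi(U)$ together with $\lambda'\circ q\geq 0$ on $\varphi(U)$ and the surjectivity of $q$ (which, since $y_0\in\partial X'$, rules out $\lambda'\circ q\equiv 0$) forces $c>0$. The inclusion $F_\mu^+\subset E_\lambda^+$ is then immediate: if $v\in F_\mu^+$, then $\mu'(q(v))\geq 0$, so $\lambda'(q(v))\geq 0$ since $F'_{\mu'^+}\subset E'_{\lambda'^+}$, so $c\lambda(v)\geq 0$, so $\lambda(v)\geq 0$.

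Once the adapted chart is established, Items \eqref{item:Margalef-Roig_proposition_4-2-1_a}--\eqref{item:Margalef-Roig_proposition_4-2-1_e} follow quickly. Item \eqref{item:Margalef-Roig_proposition_4-2-1_a} is the definition of a submanifold. For \eqref{item:Margalef-Roig_proposition_4-2-1_b}: a point $x\in U\cap f^{-1}(Y')$ lies in $\partial(f^{-1}(Y'))$ iff $\mu(\varphi(x))=0$ iff $\mu'(q(\varphi(x)))=0$ iff $\varphi'(f(x))\in\partial F'_{\mu'^+} = \varphi'(U'\cap\partial Y')$ iff $f(x)\in\partial Y'$. For \eqref{item:Margalef-Roig_proposition_4-2-1_c}, differentiating \eqref{eq:Submersion_commutative_diagram} at $x\in U$ gives $d\varphi'(f(x))\circ df(x) = q\circ d\varphi(x)$, whence $(df(x))^{-1}(T_{f(x)}Y') = (d\varphi(x))^{-1}(q^{-1}(F')) = (d\varphi(x))^{-1}(F) = T_x(f^{-1}(Y'))$. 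For \eqref{item:Margalef-Roig_proposition_4-2-1_d}, surjectivity of $q$ together with $F = q^{-1}(F')$ makes $q$ induce a Banach space isomorphism $E/F\cong E'/F'$. Finally \eqref{item:Margalef-Roig_proposition_4-2-1_e} follows from the same chart: in coordinates the map $f\restriction f^{-1}(Y')$ is given by $q\restriction F:F\to F'$, which is continuous, linear, surjective (given $w\in F'$, pick $v\in E$ with $q(v)=w$; then $v\in F$ automatically), and has $\Ker(q\restriction F)=\Ker q\subset F$ admitting a closed complement; combined with \eqref{item:Margalef-Roig_proposition_4-2-1_b}, this gives the submersion property via Proposition \ref{prop:Margalef-Roig_4-1-13}.

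The main obstacle is the verification of $F_\mu^+\subset E_\lambda^+$ in the boundary case, which is the one step where the boundary hypothesis \eqref{eq:Relatively_open_neighborhood_boundary_point_maps_into_boundary_codomain} enters decisively and which explains the subtlety in the statement of the theorem. The linear-functional argument above, which parallels the proof of Lemma \ref{lem:Half_plane_neat_submanifold_half_plane} for neat submanifolds in half planes, is the cleanest route; it relies on having the linear normal form supplied by Item \eqref{item:Margalef-Roig_proposition_4-1-13_c} of Proposition \ref{prop:Margalef-Roig_4-1-13} and would be considerably harder to mimic if one worked directly from Definition \ref{defn:Margalef-Roig_4-1-1} of a submersion as a map admitting a local right inverse.
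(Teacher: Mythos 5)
The paper itself does not prove this theorem: it is quoted from Margalef Roig and Outerelo Dom{\'\i}nguez, so there is no in-paper proof to compare against. Your argument is the standard one (local linearization of the submersion plus adapted charts) and its delicate points check out: the identity $\varphi(U\cap f^{-1}(Y'))=\varphi(U)\cap F_\mu^+$ with $F=q^{-1}(F')$ and $\mu=\mu'\circ q\restriction F$, the splitting of $F$ in $E$, and above all the derivation $\lambda'\circ q=c\lambda$ with $c>0$ from hypothesis \eqref{eq:Relatively_open_neighborhood_boundary_point_maps_into_boundary_codomain}, which correctly parallels Lemma \ref{lem:Half_plane_neat_submanifold_half_plane} and is exactly where the boundary hypothesis must enter; Items \eqref{item:Margalef-Roig_proposition_4-2-1_b}--\eqref{item:Margalef-Roig_proposition_4-2-1_e} then follow from the adapted chart as you describe. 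Your route also differs from the one the paper uses for its own Theorem \ref{mainthm:Preimage_submanifold_under_transverse_map_and_implied_embedding}, where the general submanifold case is reduced to the preimage of a point by composing with a projection onto a complement of $T_{f(x_0)}Y'$ (the Guillemin--Pollack trick); your direct adapted-chart construction is what delivers the boundary identity \eqref{item:Margalef-Roig_proposition_4-2-1_b} and the submersion statement \eqref{item:Margalef-Roig_proposition_4-2-1_e}, which the projection trick alone does not give.

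The one step you assert rather than prove is that the normal form of Proposition \ref{prop:Margalef-Roig_4-1-13}, Item \eqref{item:Margalef-Roig_proposition_4-1-13_c}, can be achieved with a \emph{prescribed} target chart, namely the chart $c'$ adapted to $Y'$. As stated, that item only produces \emph{some} pair of charts; if its target chart is not adapted to $Y'$, composing with the transition function destroys the linearity of the local representative, and your argument uses that linearity with respect to $\varphi'$ essentially ($F=q^{-1}(F')$ a linear subspace, $\lambda'\circ q$ linear, $q$ inducing $E/F\cong E'/F'$). The refinement is true and short to supply: set $g:=\varphi'\circ f\circ\varphi_0^{-1}$ for any initial chart $\varphi_0$ at $x_0$, split $E=K\oplus C$ with $K=\Ker Dg(0)$, and define $h:=(\pi_K,g)$ with values in $K\times E'$ equipped with the functional $(k,w)\mapsto\lambda'(w)$; then $Dh(0)$ is a linear homeomorphism and, by \eqref{eq:Relatively_open_neighborhood_boundary_point_maps_into_boundary_codomain}, $h$ sends the $\lambda$-boundary into the boundary of the corresponding half plane, so Theorem \ref{thm:Margalef-Roig_2-2-4} shows $\varphi:=h\circ\varphi_0$ is a chart in which $\varphi'\circ f$ equals the restriction of the projection onto $E'$. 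With that supplement (or with an appeal to the proof, rather than the statement, of Proposition \ref{prop:Margalef-Roig_4-1-13}), your proof is complete.
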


\begin{exmp}[Counterexample to conclusion that preimage of boundary is boundary of preimage when codomain and target manifolds have empty boundary, so hypothesis \eqref{eq:Relatively_open_neighborhood_boundary_point_maps_into_boundary_codomain} does not hold]
\label{exmp:Counterexample_conclusion_preimage_boundary_is_boundary_preimage}  
  Suppose $f:\HH^2 \to \RR$ is the map $(x,y) \mapsto x$, where $\HH^2 = \{(x,y)\in \RR^2: y \geq 0\}$. Take $X = \HH^2$ and $X' = \RR$ and $Y' = \{0\}$ (the origin in $\RR$). Since $\partial X = \RR$ (the $x$-axis) and $\partial X' = \emptyset$, the hypothesis $f(V_x\cap \partial X) \subset \partial X'$ in \eqref{eq:Relatively_open_neighborhood_boundary_point_maps_into_boundary_codomain} does \emph{not} hold. Clearly $f$ (and $\partial f$) is a submersion. Note that $\partial Y' = \emptyset$. However, $f^{-1}(Y') = \{(x,y)\in\RR^2: x = 0, y \geq 0\}$ and thus $\partial (f^{-1}(Y')) = \{(0,0)\}$, the origin in $\HH^2$. In particular, the conclusion in Item \eqref{item:Margalef-Roig_proposition_4-2-1_b} in Theorem \ref{thm:Margalef-Roig_proposition_4-2-1}, which asserts that $\partial (f^{-1}(Y')) = f^{-1}(\partial Y')$, does \emph{not} hold.

Suppose now that we \emph{augment} our definition of $f$ and define $\tilde{f}:\HH^2\to\HH^2$ by $(x,y)\mapsto (x,y)$ and $\widetilde{X}' = \RR\times[0,\infty) = \HH^2$, so $\partial \widetilde{X}' = \RR$, and $\widetilde{Y}' = \{(x,y)\in\RR^2: x = 0, y \geq 0\}$. Observe that $\tilde{f}(\partial\widetilde{X}) = \partial\widetilde{X}'$, so the hypothesis \eqref{eq:Relatively_open_neighborhood_boundary_point_maps_into_boundary_codomain} trivially holds. Moreover, $\tilde{f}^{-1}(\widetilde{Y}') = \{(x,y)\in\RR^2: x = 0, y \geq 0\}$, while $\tilde{f}^{-1}(\partial \widetilde{Y}') = \tilde{f}^{-1}(0,0) = \{(x,y)\in\RR^2: x = 0, y \geq 0\}$. Therefore, $\partial (\tilde{f}^{-1}(\widetilde{Y}')) = \{(0,0)\} = \tilde{f}^{-1}(\partial\widetilde{Y}')$ and, as expected, the conclusion in Item \eqref{item:Margalef-Roig_proposition_4-2-1_b} holds in this case.
\end{exmp}  

\subsection{Transversality for maps of Banach manifolds with boundary}
\label{subsec:Margalef-Roig_7-1}
Margalef Roig and Outerelo Dom{\'\i}nguez provide the following definition of transversality for maps of Banach manifolds with boundary, which they show in Lemma \ref{lem:Margalef-Roig_7-1-5} is equivalent to Definition \ref{defn:Transversality_all_Banach_manifolds_without_boundary_AMR} in the special case of Banach manifolds without boundary.


\begin{defn}[Transversality for maps of Banach manifolds with boundary]
\label{defn:Margalef-Roig_7-1-1}  
(See Margalef Roig and Outerelo Dom{\'\i}nguez \cite[Definition 7.1.1]{Margalef-Roig_Outerelo-Dominguez_differential_topology}.)
Let $f: X \to X'$ be a $C^p$ map ($p\geq 1$) of $C^p$ Banach manifolds with boundary and let $X'' \subset X'$ be a $C^p$ Banach submanifold with boundary. Then $f$ is \emph{transverse to $X''$ at $x\in X$}, denoted $f\transv_x X''$, if either $f(x) \notin X''$ or if $f(x) \in X''$, then there are a chart $(U',\varphi', E')$ for $X'$ adapted to $X''$ at $f(x)$ by means of $E''$, a closed complement $F'$ of $E''$ in $E'$ and an open subset $U \subset X$ such that $x \in U$, $f(U) \subset U'$, and the map
\[
  U \xrightarrow{f\restriction U} U' \xrightarrow{\varphi'} \varphi'(U') \xrightarrow{(\sO')^{-1}} E'' \times F' \xrightarrow{\pi_2} F'
\]
is a submersion at $x \in U$, where $\sO':E'' \times F' \to E'$ is the isomorphism of Banach spaces defined by $\sO'(a,b)=a+b$ and $\pi_2(a,b)=b$. If $f\transv_x X''$ for all $x \in X$, then $f$ is \emph{transverse to $X''$}, denoted $f\transv X''$.
\end{defn}

\begin{rmk}[Equivalent forms of the definition of transversality for maps of Banach manifolds with boundary]
\label{rmk:Equivalent_definitions_transversality_maps_Banach_manifolds_boundary}	
The forthcoming Lemma \ref{lem:Margalef-Roig_7-1-5} and Proposition \ref{prop:Margalef-Roig_7-1-7} assure us that the Definitions \ref{defn:Transversality_maps_Banach_manifolds_boundary} and \ref{defn:Margalef-Roig_7-1-1} of $f\transv_x X''$ at $x \in X$ are equivalent.
\end{rmk}

\begin{prop}[Consequence of transversality for a map of Banach manifolds with boundary]
\label{prop:Margalef-Roig_7-1-3}
(See Margalef Roig and Outerelo Dom{\'\i}nguez \cite[Proposition 7.1.3]{Margalef-Roig_Outerelo-Dominguez_differential_topology}.)  
Let $f:X\to X'$ be a $C^p$ map ($p\geq 1$) of $C^p$ Banach manifolds with boundary, $X'' \subset X'$ be a $C^p$ Banach submanifold with boundary, and $x \in f^{-1}(X'')$. If $f \transv_x X''$ in the sense of Definition \ref{defn:Margalef-Roig_7-1-1}, then\footnote{We suppress explicit notation for inclusion maps and their differentials.} the following hold:
\begin{enumerate}
\item\label{item:Proposition_Margalef-Roig_7-1-3_span}
$T_{f(x)}X' = \Ran df(x) + T_{f(x)}X''$,
  
\item\label{item:Proposition_Margalef-Roig_7-1-3_complement}
$(df(x))^{-1}(T_{f(x)}X'')$ admits a closed complement in $T_xX$.
\end{enumerate}
\end{prop}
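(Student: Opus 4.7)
The plan is to unpack Definition~\ref{defn:Margalef-Roig_7-1-1} and read off the two conclusions from properties of the auxiliary submersion $g$ that appears in it, using Proposition~\ref{prop:Margalef-Roig_4-1-10} as the main technical input. By hypothesis there exist a chart $c'=(U',\varphi',E')$ for $X'$ adapted to $X''$ at $f(x)$ via the closed subspace $E''\subset E'$, a closed complement $F'$ of $E''$ in $E'$, and an open neighbourhood $U\subset X$ of $x$ with $f(U)\subset U'$, such that the composition
\[
g\;:=\;\pi_2\circ(\sO')^{-1}\circ\varphi'\circ(f\restriction U)\;:\;U\longrightarrow F'
\]
is a submersion at $x$. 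By Proposition~\ref{prop:Margalef-Roig_4-1-10}, the continuous linear operator $dg(x):T_xX\to F'$ is surjective and $\Ker dg(x)$ admits a closed complement in $T_xX$. By the chain rule,
\[
dg(x)\;=\;\pi_2\circ(\sO')^{-1}\circ d\varphi'(f(x))\circ df(x).
\]

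First I would translate the adapted-chart data into an identification of $T_{f(x)}X''$ inside $T_{f(x)}X'$. The isomorphism $d\varphi'(f(x)):T_{f(x)}X'\to E'$ restricts to a linear homeomorphism $T_{f(x)}X''\to E''$, because $\varphi'$ carries $U'\cap X''$ diffeomorphically onto an open subset of $E''_{\mu}{}^{+}$ for some $\mu\in(E'')^{*}$, by Definition~\ref{defn:Margalef-Roig_3-1-1_and_3-1-2} and Proposition~\ref{prop:Margalef-Roig_1-2-18_and_19}. Since $\Ker(\pi_2\circ(\sO')^{-1})=E''$, we obtain the identity
\[
\Ker dg(x)\;=\;(df(x))^{-1}\bigl((d\varphi'(f(x)))^{-1}(E'')\bigr)\;=\;(df(x))^{-1}\bigl(T_{f(x)}X''\bigr),
\]
which immediately yields Item~\eqref{item:Proposition_Margalef-Roig_7-1-3_complement}, since $\Ker dg(x)$ has a closed complement in $T_xX$.

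Next I would deduce Item~\eqref{item:Proposition_Margalef-Roig_7-1-3_span} from the surjectivity of $dg(x)$ by a straightforward algebraic argument: given any $w\in T_{f(x)}X'$, write $d\varphi'(f(x))w=a+b$ with $a\in E''$ and $b\in F'$; by surjectivity of $dg(x)$ choose $\xi\in T_xX$ with $dg(x)\xi=b$, so that $d\varphi'(f(x))(df(x)\xi)=a'+b$ for some $a'\in E''$; then
\[
d\varphi'(f(x))\bigl(w-df(x)\xi\bigr)\;=\;a-a'\;\in\;E'',
\]
so $w-df(x)\xi\in T_{f(x)}X''$ and hence $w\in\Ran df(x)+T_{f(x)}X''$, as desired.

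The argument is essentially bookkeeping; there is no genuine obstacle, but the one step that deserves care is the identification $d\varphi'(f(x))(T_{f(x)}X'')=E''$, which relies on the precise form of an adapted chart (Definition~\ref{defn:Margalef-Roig_3-1-1_and_3-1-2} and Proposition~\ref{prop:Margalef-Roig_proposition_3-1-3}) rather than on any deeper analytic input. Once that identification is in place, both conclusions drop out of Proposition~\ref{prop:Margalef-Roig_4-1-10} applied to $g$ via the chain-rule factorization displayed above.
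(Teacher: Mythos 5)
Your proof is correct: the paper itself offers no argument for this proposition (it simply cites Margalef Roig and Outerelo Dom{\'\i}nguez), and your unwinding of Definition~\ref{defn:Margalef-Roig_7-1-1} --- identifying $\Ker dg(x)=(df(x))^{-1}(T_{f(x)}X'')$ via the adapted chart and then invoking Proposition~\ref{prop:Margalef-Roig_4-1-10} for surjectivity and the split kernel --- is exactly the standard route taken in the cited source. The one step you flagged, $d\varphi'(f(x))(T_{f(x)}X'')=E''$, is indeed the only point needing care, and it follows as you say from the definition of an adapted chart, so no changes are needed.
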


We have the following partial converse to Proposition \ref{prop:Margalef-Roig_7-1-3}.

\begin{lem}[Characterization of transversality at interior points for a map of Banach manifolds]
\label{lem:Margalef-Roig_7-1-5}
(See Margalef Roig and Outerelo Dom{\'\i}nguez \cite[Lemma 7.1.5]{Margalef-Roig_Outerelo-Dominguez_differential_topology}.)
Let $f:X\to X'$ be a $C^p$ map ($p\geq 1$) of $C^p$ Banach manifolds with boundary, $X'' \subset X'$ be a $C^p$ Banach submanifold with boundary, and $x \in f^{-1}(X'')\cap\Int(X)$. If Properties \eqref{item:Proposition_Margalef-Roig_7-1-3_span} and \eqref{item:Proposition_Margalef-Roig_7-1-3_complement} in Proposition \ref{prop:Margalef-Roig_7-1-3} hold, then $f \transv_x X''$ in the sense of Definition \ref{defn:Margalef-Roig_7-1-1}.
\end{lem}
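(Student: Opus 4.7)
The plan is to unwind Definition \ref{defn:Margalef-Roig_7-1-1} by producing the required adapted chart on $X'$, splitting off the complement of the submanifold directly, and then invoking the infinitesimal characterization of submersions (Proposition \ref{prop:Margalef-Roig_4-1-13}) with the boundary hypothesis satisfied for free because $x$ is interior.

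First I would choose a chart $(U',\varphi',(E',\lambda'))$ for $X'$ adapted to $X''$ at $f(x)$ through $(E'',\mu'')$, normalized so that $\varphi'(f(x))=0$, as in Definition \ref{defn:Margalef-Roig_3-1-1_and_3-1-2}. Because $X''$ is a $C^p$ submanifold, $E''$ admits a closed complement $F'\subset E'$; write $\sO':E''\times F'\to E'$ for the sum isomorphism and $\pi_{F'}=\pi_2\circ(\sO')^{-1}:E'\to F'$ for the projection along $E''$. Since $x\in\Int(X)$, I can pick a chart $(U,\varphi,E)$ for $X$ at $x$ with $\varphi(x)=0$, shrunk so that $U\subset\Int(X)$ and $f(U)\subset U'$; in particular $U\cap\partial X=\emptyset$. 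Define the $C^p$ map
\[
g\ :=\ \pi_{F'}\circ\varphi'\circ f\restriction U\ :\ U\longrightarrow F'.
\]
Verifying that $g$ is a submersion at $x$ (viewing $F'$ as a Banach manifold without boundary) is exactly the requirement in Definition \ref{defn:Margalef-Roig_7-1-1}.

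To apply Proposition \ref{prop:Margalef-Roig_4-1-13}, I must check the three bullets of its Item \eqref{item:Margalef-Roig_proposition_4-1-13_b} together with the boundary hypothesis. By the chain rule,
\[
dg(x)\ =\ \pi_{F'}\circ d\varphi'(f(x))\circ df(x).
\]
Because the chart is adapted to $X''$, the restriction $\varphi'\restriction U'\cap X''$ is a chart for $X''$ modelled on (an open subset of) $(E'')^+_{\mu''}$, so its linearization identifies $T_{f(x)}X''\subset T_{f(x)}X'$ with $E''\subset E'$; equivalently, $d\varphi'(f(x))(T_{f(x)}X'')=E''$. The span hypothesis \eqref{item:Proposition_Margalef-Roig_7-1-3_span} then gives
\[
E'\ =\ d\varphi'(f(x))(\Ran df(x))\ +\ E'',
\]
and applying $\pi_{F'}$ (which annihilates $E''$) shows $\Ran dg(x)=F'$; hence $dg(x)$ is surjective. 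For the kernel, since $d\varphi'(f(x))$ is an isomorphism and $\pi_{F'}^{-1}(0)=E''$,
\[
\Ker dg(x)\ =\ (df(x))^{-1}\bigl((d\varphi'(f(x)))^{-1}(E'')\bigr)\ =\ (df(x))^{-1}(T_{f(x)}X''),
\]
which admits a closed complement in $T_xX$ by hypothesis \eqref{item:Proposition_Margalef-Roig_7-1-3_complement}. Finally, the boundary condition for Proposition \ref{prop:Margalef-Roig_4-1-13} demands an open neighborhood $V_x$ of $x$ in $U$ with $g(V_x\cap\partial U)\subset\partial F'$; since $\partial F'=\emptyset$ and, by our choice of $U\subset\Int(X)$, also $\partial U=U\cap\partial X=\emptyset$, this is vacuous. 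Proposition \ref{prop:Margalef-Roig_4-1-13} therefore yields that $g$ is a submersion at $x$, which is the content of Definition \ref{defn:Margalef-Roig_7-1-1}, so $f\transv_x X''$ in that sense.

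There is no real obstacle here: all infinite-dimensional splittings we need are granted by hypothesis (b) and by the submanifold structure of $X''$, while the boundary subtleties of Definition \ref{defn:Margalef-Roig_7-1-1} that distinguish it from Definition \ref{defn:Transversality_all_Banach_manifolds_without_boundary_AMR} are neutralized by the hypothesis $x\in\Int(X)$, which lets us shrink $U$ to avoid $\partial X$ entirely. The only point that requires care is the identification $d\varphi'(f(x))(T_{f(x)}X'')=E''$ coming from the adapted chart structure in Definition \ref{defn:Margalef-Roig_3-1-1_and_3-1-2}; once that is in hand, the surjectivity of $dg(x)$ is a direct translation of condition \eqref{item:Proposition_Margalef-Roig_7-1-3_span} and the closed-complement property of $\Ker dg(x)$ is a direct translation of condition \eqref{item:Proposition_Margalef-Roig_7-1-3_complement}.
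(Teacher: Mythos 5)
Your proof is correct: the reduction to the map $g=\pi_{F'}\circ\varphi'\circ f\restriction U$ with $U$ shrunk inside $\Int(X)$, the identification $d\varphi'(f(x))(T_{f(x)}X'')=E''$ from the adapted chart, and the two computations showing $\Ran dg(x)=F'$ and $\Ker dg(x)=(df(x))^{-1}(T_{f(x)}X'')$ are exactly what is needed to invoke Proposition \ref{prop:Margalef-Roig_4-1-13} (whose boundary hypothesis is indeed vacuous at an interior point) and hence to verify Definition \ref{defn:Margalef-Roig_7-1-1}. The paper itself does not prove this lemma but cites \cite[Lemma 7.1.5]{Margalef-Roig_Outerelo-Dominguez_differential_topology}; your argument is the standard one and coincides with the computations the paper performs in Claims \ref{claim:dh_surjective_iff_range_df_and_tangent_space_submanifold_span_tangent_space_codomain} and \ref{claim:Preimage_dfx0_Tfx0_Y_prime_equals_kernel_dhx0} within its proof of Theorem \ref{mainthm:Preimage_submanifold_under_transverse_map_and_implied_embedding}, so no gap remains.
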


Margalef Roig and Outerelo Dom{\'\i}nguez provide a generalization of Lemma \ref{lem:Margalef-Roig_7-1-5} that allows for arbitrary points $x \in f^{-1}(X'')$, without the restriction that $x \in \Int(X')$. Given Lemma \ref{lem:Margalef-Roig_7-1-5}, we shall only need the following special case of their result.

\begin{prop}[Characterization of transversality at boundary points for a map of Banach manifolds]
\label{prop:Margalef-Roig_7-1-7}
(See Margalef Roig and Outerelo Dom{\'\i}nguez \cite[Definition 1.2.16 and Proposition 7.1.7]{Margalef-Roig_Outerelo-Dominguez_differential_topology}.)
Let $f:X\to X'$ be a $C^p$ map ($p\geq 1$) of $C^p$ Banach manifolds with boundary, $X'' \subset X'$ be a $C^p$ Banach submanifold with boundary, and $x \in f^{-1}(X'')\cap\partial X$, and $\partial f \equiv f \restriction \partial X : \partial X \to X'$. Then the following are equivalent:
\begin{enumerate}
\item $T_{f(x)}X' = \Ran d(\partial f)(x) + T_{f(x)}X''$ and $(d(\partial f)(x))^{-1}(T_xX'')$ admits a closed complement in $T_xX'$.
\item $f\transv_x X''$.
\item $\partial f\transv_x X''$.  
\end{enumerate}  
\end{prop}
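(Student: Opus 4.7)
My plan is to prove the triple equivalence by establishing $(1) \iff (3)$ and $(2) \iff (3)$ separately. For the first, I will use that by Proposition~\ref{prop:Margalef-Roig_1-2-18_and_19} the manifold $\partial X$ carries a natural $C^p$ structure with empty boundary, so every point $x \in \partial X$ satisfies $x \in \Int(\partial X)$. Lemma~\ref{lem:Margalef-Roig_7-1-5}, applied to the $C^p$ map $\partial f: \partial X \to X'$ at $x \in (\partial f)^{-1}(X'')$, then shows that the algebraic span condition together with the existence of a closed complement for $(d(\partial f)(x))^{-1}(T_{f(x)}X'')$ in $T_x(\partial X)$ is equivalent to $\partial f \transv_x X''$. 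This yields $(1) \iff (3)$ at once. (I am reading condition~(1) with the complement taken in $T_x(\partial X)$, as dictated by the source-space convention in Lemma~\ref{lem:Margalef-Roig_7-1-5}.)

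For $(2) \iff (3)$, I will work directly from Definition~\ref{defn:Margalef-Roig_7-1-1}. Fix a chart $(U',\varphi',E')$ for $X'$ at $f(x)$ adapted to $X''$ through $(E'',\mu)$, with closed complement $F'$ of $E''$ in $E'$, and an open neighbourhood $U \subset X$ of $x$ with $f(U) \subset U'$. Form the composite
\[
  g := \pi_2 \circ (\sO')^{-1} \circ \varphi' \circ (f \restriction U) : U \to F',
\]
so that $f \transv_x X''$ is tantamount to $g$ being a submersion at $x$, and $\partial f \transv_x X''$ to $g \restriction U \cap \partial X$ being a submersion at $x$ as a map between the boundaryless manifolds $U \cap \partial X$ and $F'$. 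The equivalence therefore reduces to the following key assertion: \emph{for a $C^p$ map $g: U \to F'$ from a manifold with boundary into a Banach space and a point $x \in \partial X \cap U$, $g$ is a submersion at $x$ if and only if $g \restriction U \cap \partial X$ is}. The direction $(\Leftarrow)$ is immediate: any smooth right inverse $\tilde s: V_{g(x)} \to U \cap \partial X$ furnished by Definition~\ref{defn:Margalef-Roig_4-1-1}, post-composed with the inclusion $U \cap \partial X \hookrightarrow U$, is a smooth right inverse for $g$.

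The main obstacle is the converse $(\Rightarrow)$, which requires producing a right inverse landing in $\partial X$ from one that need not. Given a smooth right inverse $s: V_{g(x)} \to U$ with $s(g(x)) = x \in \partial X$, I will work in a chart $(U, \varphi, (E, \lambda))$ at $x$ with $\varphi(x) = 0$ and examine the smooth nonnegative function $\lambda \circ \varphi \circ s: V_{g(x)} \to [0,\infty)$. It attains its minimum value $0$ at the interior point $g(x)$ of the boundaryless open set $V_{g(x)}$, so its derivative there vanishes, forcing
\[
  \Ran ds(g(x)) \subset (d\varphi(x))^{-1}(E_\lambda^0) = T_x(\partial X).
\]
Choose a closed complement $E = E_\lambda^0 \oplus H$ (available because $\Ker\lambda$ has codimension one) with projection $\pi_0: E \to E_\lambda^0$, and set
\[
  s_\partial(y) := \varphi^{-1}(\pi_0(\varphi(s(y)))) \in U \cap \partial X
\]
for $y$ near $g(x)$. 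The derivative calculation above gives $ds_\partial(g(x)) = ds(g(x))$, hence $d(g \circ s_\partial)(g(x)) = \id_{F'}$. The classical Banach-space inverse mapping theorem then makes $g \circ s_\partial$ a local $C^p$ diffeomorphism at $g(x)$, and $\tilde s := s_\partial \circ (g \circ s_\partial)^{-1}$ is the required smooth right inverse for $g \restriction U \cap \partial X$. Combined with $(1) \iff (3)$, this completes the proof.
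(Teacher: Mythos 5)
Your argument is correct, but note that the paper itself gives no proof of this proposition: it is quoted verbatim from Margalef Roig and Outerelo Dom{\'\i}nguez \cite[Proposition 7.1.7]{Margalef-Roig_Outerelo-Dominguez_differential_topology}, so what you have produced is a self-contained proof of a result the paper only cites. The heart of your argument --- that for the local composite $g=\pi_2\circ(\sO')^{-1}\circ\varphi'\circ(f\restriction U)$ with values in the boundaryless Banach space $F'$, being a submersion (in the right-inverse sense of Definition~\ref{defn:Margalef-Roig_4-1-1}) at a boundary point is equivalent to its restriction to $U\cap\partial X$ being one --- is exactly the right reduction, and your proof of the hard direction is sound: since $\lambda\circ\varphi\circ s\geq 0$ attains its minimum at the interior point $g(x)$ of $V_{g(x)}$, its derivative vanishes there, so $\Ran ds(g(x))\subset T_x(\partial X)$; projecting the section into the boundary and correcting with the classical Banach-space inverse function theorem then produces a section of $g\restriction U\cap\partial X$. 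Two small tightenings are needed in the write-up. First, for $(1)\Leftrightarrow(3)$ you invoke only Lemma~\ref{lem:Margalef-Roig_7-1-5}, which as stated gives just the implication ``conditions $\Rightarrow$ $\partial f\transv_x X''$''; the converse implication is Proposition~\ref{prop:Margalef-Roig_7-1-3} applied to $\partial f$ (and, as you note, the closed complement in the printed condition (1) should be read in $T_x(\partial X)$, the ``$T_xX'$'' in the statement being a typo). Second, Definition~\ref{defn:Margalef-Roig_7-1-1} is existential over adapted charts, so rather than fixing one chart at the outset and declaring both transversality conditions ``tantamount'' to submersion of that particular $g$, you should, in each implication, run your key claim in the chart furnished by the hypothesis in force (or else prove chart-independence); with that rearrangement the argument is complete.
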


\begin{lem}[Properties of neat submanifolds]
\label{lem:Margalef-Roig_7-1-13}
(See Margalef Roig and Outerelo Dom{\'\i}nguez \cite[Lemma 7.1.13]{Margalef-Roig_Outerelo-Dominguez_differential_topology}.)
Let $X'$ be a $C^p$ Banach manifold with boundary ($p\geq 1$) and $X'' \subset X'$ be a neat $C^p$ Banach submanifold. Then the following hold:
\begin{enumerate}
\item $\Int(X'') \cap \partial X' = \emptyset$.
\item If $(U',\varphi', (E',\lambda'))$ is a chart for $X'$ adapted to $X''$ at $x'' \in X''$ by means of $(E'',\lambda'')$, then the following hold:
\begin{inparaenum}[(\itshape a\upshape)]
\item $E_{\lambda''}''^+ \subset E_{\lambda'}'^+$,
\item $\partial E_{\lambda''}''^+ \subset \partial E_{\lambda'}'^+$,
\item $\Int(E_{\lambda''}''^+) \subset \Int(E_{\lambda'}'^+)$,
\item $E'' \less E_{\lambda''}''^+ \subset E' \less E_{\lambda'}'^+$,
\item $(\varphi')^{-1}(E'') = U'\cap X''$, and
\item $E_{\lambda''}''^+ = E'' \cap E_{\lambda'}'^+$.
\end{inparaenum}
\end{enumerate}
\end{lem}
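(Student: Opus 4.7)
\medskip

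\noindent\textbf{Proof proposal.}
The plan is to handle the two items separately: Item (1) follows directly from the neatness identity \eqref{eq:Margalef-Roig_3-1-10a}, while Item (2) reduces to the model calculation for half planes in a Banach space that is already carried out in Lemma \ref{lem:Half_plane_neat_submanifold_half_plane} and Remark \ref{rmk:Interpetation_definition_neat_submanifold_coordinate_charts}. Essentially no new analytic content is needed.

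For Item (1), I would argue by contradiction. Suppose $x \in \Int(X'') \cap \partial X'$. Since $x \in X''$ and $x \in \partial X'$, the neatness condition $\partial X'' = (\partial X')\cap X''$ of Definition \ref{defn:Margalef-Roig_3-1-10} forces $x \in \partial X''$. But Proposition \ref{prop:Margalef-Roig_1-2-18_and_19} says $\Int(X'')$ and $\partial X''$ are disjoint strata of $X''$, giving the desired contradiction.

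For Item (2), I would start with a chart $(U',\varphi',(E',\lambda'))$ for $X'$ adapted to $X''$ at $x''$ by means of $(E'',\lambda'')$, as in Definition \ref{defn:Margalef-Roig_3-1-1_and_3-1-2}. Property (a) is precisely the content of Proposition \ref{prop:Margalef-Roig_proposition_3-1-3}~\eqref{item:Margalef-Roig_proposition_3-1-3_b}. The key step is then to use the neatness of $X'' \subset X'$, together with Definitions \ref{defn:Margalef-Roig_3-1-1_and_3-1-2} and \ref{defn:Margalef-Roig_1-2-14_and_16}, to show that the identity
\[
\partial E_{\lambda''}''^+ \;=\; (\partial E_{\lambda'}'^+) \cap E_{\lambda''}''^+
\]
holds for the model. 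This is exactly the argument given in Remark \ref{rmk:Interpetation_definition_neat_submanifold_coordinate_charts}: applying $\varphi'$ to $U'\cap \partial X'' = U'\cap(\partial X')\cap X''$ and using the chart descriptions of $\partial X'$ and $\partial X''$ yields the preceding equality. Hence $E_{\lambda''}''^+$ is a neat submanifold of $E_{\lambda'}'^+$ in the sense of Definition \ref{defn:Margalef-Roig_3-1-10}, and Lemma \ref{lem:Half_plane_neat_submanifold_half_plane} produces a constant $c>0$ such that
\[
\lambda'' \;=\; c\,\lambda' \circ \iota_{E''}.
\]

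From this single identity, properties (b), (c), (d), and (f) are immediate by comparing sign conditions of $\lambda''$ on $E''$ with those of $\lambda'$ on $E'$: for instance, $v \in \partial E_{\lambda''}''^+$ means $\lambda''(v)=0$, which is equivalent to $\lambda'(v)=0$, giving $v \in \partial E_{\lambda'}'^+$, and analogously for the strict and reverse inequalities. Finally, for (e), I would combine (d) with the inclusion $\varphi'(U') \subset E_{\lambda'}'^+$ from the chart definition to obtain
\[
\varphi'(U') \cap (E'' \setminus E_{\lambda''}''^+) \;\subset\; \varphi'(U') \cap (E' \setminus E_{\lambda'}'^+) \;=\; \emptyset,
\]
so that $\varphi'(U') \cap E'' = \varphi'(U') \cap E_{\lambda''}''^+ = \varphi'(U' \cap X'')$ by the adapted chart condition, and applying $(\varphi')^{-1}$ yields $(\varphi')^{-1}(E'') = U' \cap X''$. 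I expect no essential obstacle; the only subtle point is verifying that Lemma \ref{lem:Half_plane_neat_submanifold_half_plane} does supply $c>0$ in the degenerate cases where $\lambda''$ or $\lambda'$ is a positive constant (corresponding to $x'' \in \Int(X'')$ or $x'' \in \Int(X')$), but its proof explicitly covers those cases, so the argument goes through uniformly.
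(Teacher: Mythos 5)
Your argument is correct, but there is nothing in the paper to compare it against: the paper states this lemma as a quoted result from Margalef Roig and Outerelo Dom\'{\i}nguez \cite[Lemma 7.1.13]{Margalef-Roig_Outerelo-Dominguez_differential_topology} and gives no proof of its own. What your proposal does is re-derive the result from the paper's internal toolkit, and the route is the natural one: Item (1) from the neatness identity \eqref{eq:Margalef-Roig_3-1-10a} together with the disjointness of interior and boundary (for which the more precise citation is Proposition \ref{prop:Margalef-Roig_1-2-13} rather than Proposition \ref{prop:Margalef-Roig_1-2-18_and_19}, a quibble only); property (a) from Proposition \ref{prop:Margalef-Roig_proposition_3-1-3}; then the chart-level neatness identity of Remark \ref{rmk:Interpetation_definition_neat_submanifold_coordinate_charts} feeding into Lemma \ref{lem:Half_plane_neat_submanifold_half_plane} to produce $\lambda'' = c\,\lambda'\circ\iota_{E''}$ with $c>0$, after which (b), (c), (d), (f) are sign comparisons (valid only for vectors of $E''$, which is all you use) and (e) follows from (d), the containment $\varphi'(U')\subset E_{\lambda'}'^+$, the adapted-chart identity, and injectivity of $\varphi'$. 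This derivation is non-circular (neither the Remark nor the half-plane Lemma invokes the present statement), and you correctly flag that the degenerate cases where $\lambda''$ or $\lambda'$ is constant are covered by the case analysis in the proof of Lemma \ref{lem:Half_plane_neat_submanifold_half_plane}, under the paper's convention for boundaryless models. The only step whose justification is inherited rather than supplied is the passage in Remark \ref{rmk:Interpetation_definition_neat_submanifold_coordinate_charts} from an identity on $\varphi'(U')$ to the global identity $\partial E_{\lambda''}''^+ = (\partial E_{\lambda'}'^+)\cap E_{\lambda''}''^+$ (a positive-homogeneity/cone argument), but since you cite the paper's own remark for it, that is fair use, not a gap.
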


\begin{prop}[Openness of the property of transversality for a map of Banach manifolds with boundary]
\label{prop:Margalef-Roig_7-1-18}
(See Margalef Roig and Outerelo Dom{\'\i}nguez \cite[Proposition 7.1.18]{Margalef-Roig_Outerelo-Dominguez_differential_topology}.)  
Let $f:X\to X'$ be a $C^p$ map ($p\geq 1$) of $C^p$ Banach manifolds with boundary and $X'' \subset X'$ be a (topologically) closed $C^p$ Banach submanifold. Then the following hold:
\begin{enumerate}
\item If $f \transv_x X''$, then there is an open neighborhood $V_x$ of $x$ in $X$ such that $f \transv_y X''$ for all $y\in V_x$.
\item The subset $G := \{x \in X: f \transv_x X''\}$ is open in $X$.
\end{enumerate}
\end{prop}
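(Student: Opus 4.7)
Assertion (2) is immediate from (1): the set $G$ is the union of the open neighborhoods produced by (1) at each of its points. So the task is to prove (1): given $x_0 \in X$ with $f\transv_{x_0} X''$, produce an open $V_{x_0} \ni x_0$ throughout which $f \transv X''$.

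\emph{Trivial case.} If $f(x_0) \notin X''$, then because $X''$ is closed in $X'$ the preimage $V_{x_0} := f^{-1}(X' \less X'')$ is open, contains $x_0$, and every $y \in V_{x_0}$ has $f(y) \notin X''$, so $f\transv_y X''$ automatically (first clause of Definition \ref{defn:Margalef-Roig_7-1-1}).

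\emph{Main case.} Assume $f(x_0) \in X''$. By Definition \ref{defn:Margalef-Roig_7-1-1}, choose a chart $(U', \varphi', (E', \lambda'))$ for $X'$ adapted to $X''$ at $f(x_0)$ through $(E'', \lambda'')$ with closed complement $F' \subset E'$, and an open $U \ni x_0$ with $f(U) \subset U'$, so that the composition
\[
g := \pi_2 \circ (\sO')^{-1} \circ \varphi' \circ (f\restriction U) : U \to F'
\]
is a submersion at $x_0$ (in the sense of Definition \ref{defn:Margalef-Roig_4-1-1}). The plan is to prove the following \textbf{openness claim:} there is an open $V_{x_0} \subset U$ containing $x_0$ on which $g$ is a submersion at every point. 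Granting this, I would verify $f\transv_y X''$ for every $y \in V_{x_0}$ by distinguishing whether $f(y) \notin X''$ (trivial) or $f(y) \in X''$. In the latter situation, the adapted-chart property yields $\varphi'(f(y)) \in E''$, so $g(y) = 0 \in F'$; the translated chart $\tilde\varphi' := \varphi' - \varphi'(f(y))$ is again adapted to $X''$ at $f(y)$ through $(E'', \lambda'')$ (the translation lies in $E''$, and by Lemma \ref{lem:Half_plane_neat_submanifold_half_plane}-style reasoning the half-plane linear functionals are preserved since $\varphi'(f(y)) \in E'' \cap E_{\lambda'}'^0$ when $f(y) \in \partial X''$, and the relevant half-planes coincide when $f(y) \in \Int(X'')$). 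The associated composition differs from $g$ only by a constant in the $E''$-factor, which $\pi_2$ annihilates; hence it remains a submersion at $y$, and $f\transv_y X''$ follows from Definition \ref{defn:Margalef-Roig_7-1-1}.

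\emph{Proof of the openness claim.} I would split according to whether $x_0 \in \Int(U)$ or $x_0 \in \partial U$. When $x_0 \in \Int(U)$, the codomain $F'$ is a Banach space so $\partial F' = \emptyset$, and shrinking $U$ into $\Int(U)$ the hypothesis $g(V_{x_0} \cap \partial U) \subset \partial F'$ of Proposition \ref{prop:Margalef-Roig_4-1-13} holds vacuously. That proposition then equates submersivity of $g$ at $x_0$ with surjectivity of $dg(x_0) : T_{x_0}U \to F'$ together with the existence of a closed complement for $\Ker dg(x_0)$. Both conditions are stable under small perturbations of the operator (surjectivity by the open-mapping bound and existence of a splitting by the usual perturbation argument for split short-exact sequences of Banach spaces), and $dg$ is continuous because $g$ is $C^1$; this yields a neighborhood on which $g$ remains a submersion.

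\emph{Boundary point case — the main obstacle.} When $x_0 \in \partial U$, by Definition \ref{defn:Margalef-Roig_4-1-1} there is a $C^p$ right inverse $s: V_{g(x_0)} \to U$ with $s(g(x_0))=x_0$ and $g\circ s = \id$. Since $V_{g(x_0)} \subset F'$ has no boundary and $s$ maps into $U$ with $s(g(x_0)) \in \partial U$, a first-order expansion of $s$ in an adapted chart forces $\Ran ds(g(x_0)) \subset T_{x_0}(\partial U)$ (otherwise $s$ would exit $U$ immediately in some direction). Consequently $s$ factors through $\partial U$ in a neighborhood of $g(x_0)$, exhibiting $g\restriction \partial U$ as a submersion at $x_0$. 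Applying the interior case to the boundaryless manifold $\partial U$ (by Proposition \ref{prop:Margalef-Roig_1-2-18_and_19}) yields an open $W \subset \partial U$ about $x_0$ on which $g\restriction \partial U$ is a submersion; the derivative-continuity argument of the interior case, applied on a neighborhood of $x_0$ in $U$, shows $g$ is a submersion at each nearby interior point. Patching these into a single open neighborhood $V_{x_0} \subset U$ of $x_0$ and invoking Proposition \ref{prop:Margalef-Roig_4-1-13} again at each $y \in V_{x_0} \cap \partial U$ (using the section constructed above) completes the proof. The delicate step is precisely this Taylor-expansion argument confining $\Ran ds(g(x_0))$ to $T_{x_0}(\partial U)$, and the subsequent consistent patching; this is what forces the statement of Proposition \ref{prop:Margalef-Roig_7-1-18} to require $X''$ to be topologically closed (so that the trivial case is genuinely open).
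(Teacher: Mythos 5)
The paper itself does not prove this proposition; it quotes it from Margalef Roig and Outerelo Dom{\'\i}nguez, so your proposal must stand on its own. Your overall architecture is the right one (closedness of $X''$ handles the case $f(x_0)\notin X''$; otherwise reduce, via Definition \ref{defn:Margalef-Roig_7-1-1}, to openness of the submersion property for the local composition $g$, and then translate the adapted chart to verify transversality at nearby points $y$ with $f(y)\in X''$ — the observation that $\pi_2$ kills the translation, so the composition is literally unchanged, is exactly the right point). The interior case of your openness claim is also fine: surjectivity with split kernel is an open condition in $\sL(E,F')$ by the standard perturbation of the complement $C_0$ of the kernel, and Proposition \ref{prop:Margalef-Roig_4-1-13} converts this back into submersivity at interior points.

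The genuine gap is in the boundary case, in the sentence ``Consequently $s$ factors through $\partial U$ in a neighborhood of $g(x_0)$.'' This does not follow from $\Ran ds(g(x_0))\subset T_{x_0}(\partial U)$ and is false in general: the map $s:\RR\to\HH^2$, $t\mapsto(t,t^2)$, satisfies $s(0)\in\partial\HH^2$ and $\Ran ds(0)=T_0(\partial\HH^2)$, yet $s(t)\notin\partial\HH^2$ for every $t\neq 0$. Tangency of the section to the boundary at one point gives no factoring through the boundary, so your stated route to ``$g\restriction\partial U$ is a submersion at $x_0$'' breaks down as written. The conclusion itself is true and is what you need, but it must be obtained at the linear level: from $dg(x_0)\circ ds(g(x_0))=\id_{F'}$ and $\Ran ds(g(x_0))\subset T_{x_0}(\partial U)$ (which does follow, via Propositions \ref{prop:Margalef-Roig_1-6-16} and \ref{prop:Margalef-Roig_1-6-17}, since every vector of the boundaryless domain is both inner and outer), one gets that $d(g\restriction\partial U)(x_0):T_{x_0}(\partial U)\to F'$ is surjective and that its kernel is complemented by the closed subspace $\Ran ds(g(x_0))$; since $\partial U$ and $F'$ are boundaryless, Proposition \ref{prop:Margalef-Roig_4-1-13} then yields submersivity of $g\restriction\partial U$ at $x_0$, and your interior-case perturbation argument applied on $\partial U$ gives it at nearby boundary points. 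A second, related slip: at boundary points $y$ you cannot ``invoke Proposition \ref{prop:Margalef-Roig_4-1-13}'' for $g:U\to F'$ itself, because its hypothesis $g(V_y\cap\partial X)\subset\partial F'=\emptyset$ fails whenever $V_y$ meets $\partial X$; instead you must conclude directly from Definition \ref{defn:Margalef-Roig_4-1-1}, using the section of $g\restriction\partial U$ composed with the $C^p$ inclusion $\partial U\hookrightarrow U$ as a section of $g$. With these two repairs (and the chart-translation details you gesture at, which do work out), the proof closes; as written, however, the boundary step rests on a false assertion.
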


\begin{cor}[Preimage of a neat submanifold under a transverse map]
\label{cor:Margalef-Roig_7-1-20}
(See Margalef Roig and Outerelo Dom{\'\i}nguez \cite[Corollary 7.1.20]{Margalef-Roig_Outerelo-Dominguez_differential_topology}.)
Let $f:X\to X'$ be a $C^p$ map ($p\geq 1$) of $C^p$ Banach manifolds with boundary and $X'' \subset X'$ be a neat\footnote{In the more general setting of manifolds with corners, as in \cite[Corollary 7.1.20]{Margalef-Roig_Outerelo-Dominguez_differential_topology}, one would need to strengthen the hypothesis that $X''$ be a neat submanifold to $X''$ being a totally neat submanifold.} $C^p$ Banach submanifold. If $f\transv X''$, then
the following hold:
\begin{enumerate}
\item\label{item:corollary_Margalef-Roig_7-1-20_a}
  $f^{-1}(X'')$ is a neat $C^p$ Banach submanifold of $X$.
\item\label{item:corollary_Margalef-Roig_7-1-20_b}
  For every $x\in f^{-1}(X'')$,
  \[
    T_x(f^{-1}(X'')) = (df(x))^{-1}(T_{f(x)}X'').
  \]
\item\label{item:corollary_Margalef-Roig_7-1-20_c}
  $\codim f^{-1}(X'') = \codim X''$.
\end{enumerate}
\end{cor}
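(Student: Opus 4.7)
The strategy is to reduce all three assertions in Corollary \ref{cor:Margalef-Roig_7-1-20} to the pointwise conclusions of Theorem \ref{mainthm:Preimage_submanifold_under_transverse_map_and_implied_embedding} via an open cover of $f^{-1}(X'')$. Fix an arbitrary point $x_0 \in f^{-1}(X'')$. Since $f\transv_{x_0} X''$, the main theorem furnishes a chart $(V_{x_0},\psi_{x_0},(E_{x_0},\alpha_{x_0}))$ for $X$ with $\psi_{x_0}(x_0)=0$ and a closed subspace
\[
L_{x_0} := d\psi_{x_0}(x_0)\bigl((df(x_0))^{-1}(T_{f(x_0)}X'')\bigr) \subset E_{x_0}
\]
admitting a closed complement, such that $V_{x_0}\cap f^{-1}(X'')=\psi_{x_0}^{-1}(\psi_{x_0}(V_{x_0})\cap L_{x_0})$ is a neat $C^p$ Banach submanifold of $V_{x_0}$. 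The family $\{V_{x_0}\}_{x_0\in f^{-1}(X'')}$ is an open cover of $f^{-1}(X'')$ by open subsets of $X$.

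For Item \eqref{item:corollary_Margalef-Roig_7-1-20_a}, being a $C^p$ Banach submanifold of $X$ is a local condition in the sense of Definition \ref{defn:Margalef-Roig_3-1-1_and_3-1-2}: one must exhibit an adapted chart at each point. The chart $(V_{x_0},\psi_{x_0},(E_{x_0},\alpha_{x_0}))$ is adapted to $f^{-1}(X'')$ through $(L_{x_0}, \alpha_{x_0}\restriction L_{x_0})$, so $f^{-1}(X'')$ is a $C^p$ Banach submanifold of $X$. Item \eqref{item:corollary_Margalef-Roig_7-1-20_b} is exactly the pointwise tangent-space identity in the main theorem applied at each $x\in V_{x_0}\cap f^{-1}(X'')$. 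For Item \eqref{item:corollary_Margalef-Roig_7-1-20_c}, since $d\psi_{x_0}(x_0):T_{x_0}X\to E_{x_0}$ is a Banach space isomorphism carrying $(df(x_0))^{-1}(T_{f(x_0)}X'')$ onto $L_{x_0}$, one has $\codim_{E_{x_0}} L_{x_0}=\codim_{T_{x_0}X}(df(x_0))^{-1}(T_{f(x_0)}X'')$. The linear transversality identity $T_{f(x_0)}X'=\Ran df(x_0)+T_{f(x_0)}X''$ from Definition \ref{defn:Transversality_maps_Banach_manifolds_boundary} makes $df(x_0)$ descend to a topological isomorphism
\[
T_{x_0}X\bigl/(df(x_0))^{-1}(T_{f(x_0)}X'')\;\xrightarrow{\;\cong\;}\;T_{f(x_0)}X'/T_{f(x_0)}X'',
\]
so both sides have codimension equal to $\codim X''$.

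The principal obstacle is upgrading local neatness to the global identity $\partial(f^{-1}(X''))=(\partial X)\cap f^{-1}(X'')$ required by Definition \ref{defn:Margalef-Roig_3-1-10}. The main theorem supplies only $\partial(V_{x_0}\cap f^{-1}(X''))=(\partial V_{x_0})\cap f^{-1}(X'')\cap V_{x_0}$. Because $V_{x_0}$ is open in $X$, Proposition \ref{prop:Margalef-Roig_1-2-18_and_19} gives $\partial V_{x_0}=V_{x_0}\cap\partial X$, and the same proposition says that the boundary of the submanifold $f^{-1}(X'')$ is itself computed chart by chart, so $V_{x_0}\cap\partial(f^{-1}(X''))=\partial(V_{x_0}\cap f^{-1}(X''))$. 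Combining these equalities and taking the union over the cover $\{V_{x_0}\}$ yields the global neatness. The hypothesis that $X''$ is neat in $X'$ is precisely what allows the main theorem to deliver local neatness in the first place; without it, transversality alone does not suffice, as Example \ref{exmp:X_and_X_prime_half_planes_f_transv_Y_but_partial_f_not_transv_Y_and_Y_not_neat} illustrates.
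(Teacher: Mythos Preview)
The paper does not give its own proof of this corollary; it is quoted as a result of Margalef Roig and Outerelo Dom{\'\i}nguez. The closest the paper comes is Remark~\ref{rmk:Alternative_proof_corollary_Margalef-Roig_7-1-20}, which observes that the \emph{submanifold} conclusion in Item~\eqref{item:corollary_Margalef-Roig_7-1-20_a} follows by applying Theorem~\ref{mainthm:Preimage_submanifold_under_transverse_map_and_implied_embedding} at each $x_0\in f^{-1}(X'')$ and covering --- exactly your strategy. Your proposal is therefore the same approach as the paper's sketch, fleshed out to cover Items~\eqref{item:corollary_Margalef-Roig_7-1-20_b} and \eqref{item:corollary_Margalef-Roig_7-1-20_c} and the neatness assertion as well.

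There is, however, a circularity you should be aware of. In your final paragraph you write that ``the main theorem supplies $\partial(V_{x_0}\cap f^{-1}(X''))=(\partial V_{x_0})\cap f^{-1}(X'')\cap V_{x_0}$,'' i.e.\ you invoke the \emph{neatness} clause of Theorem~\ref{mainthm:Preimage_submanifold_under_transverse_map_and_implied_embedding}. But if you look at the paper's proof of Theorem~\ref{mainthm:Preimage_submanifold_under_transverse_map_and_implied_embedding}, that clause is obtained precisely by citing Corollary~\ref{cor:Margalef-Roig_7-1-20} (see the final paragraph of that proof). So as written, your argument for neatness is circular within the paper's logical structure. Fortunately, you have already done the work to break the circle: your observation that the chart $(V_{x_0},\psi_{x_0},(E_{x_0},\alpha_{x_0}))$ is adapted to $f^{-1}(X'')$ through $(L_{x_0},\,\alpha_{x_0}\restriction L_{x_0})$ directly yields local neatness, since with $\mu=\alpha_{x_0}\restriction L_{x_0}$ one has $(L_{x_0})_\mu^0 = L_{x_0}\cap (E_{x_0})_{\alpha_{x_0}}^0$, so a point of $V_{x_0}\cap f^{-1}(X'')$ lies in the boundary of the submanifold if and only if it lies in $\partial X$. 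You should rephrase the last paragraph to derive local neatness from this adapted-chart computation rather than from the neatness clause of Theorem~\ref{mainthm:Preimage_submanifold_under_transverse_map_and_implied_embedding}; the gluing over the cover $\{V_{x_0}\}$ then proceeds exactly as you wrote. The codimension argument via the induced isomorphism of quotients is correct and matches the remark following Theorem~\ref{thm:Preimage_theorem_all_manifolds_without_boundary_AMR}.
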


In more sophisticated applications than those considered in this article, we shall require an extension of Theorem \ref{mainthm:Preimage_point_under_submersion_and_implied_embedding}, which assumes that the submanifold $Y' \subset X'$ is a point and that the map $f:X\to X'$ is a submersion, to the general context of Theorem \ref{mainthm:Preimage_submanifold_under_transverse_map_and_implied_embedding}, where $Y'\subset X'$ is a neat $C^p$ Banach submanifold and $f:X\to X'$ is a map such that $f\transv_{x_0} Y'$. In our forthcoming proof of Theorem \ref{mainthm:Preimage_submanifold_under_transverse_map_and_implied_embedding}, we shall adapt a trick described by Guillemin and Pollack \cite[pp. 27--28]{Guillemin_Pollack} that allows them (in their setting of manifolds without boundary) to reduce this general case to the special case where $Y'$ is a point and $f:X\to X'$ is a submersion. Moreover, this trick also leads to an alternative proof of the main conclusion of Corollary \ref{cor:Margalef-Roig_7-1-20}, as we note in the forthcoming Remark \ref{rmk:Alternative_proof_corollary_Margalef-Roig_7-1-20}.

\begin{proof}[Proof of Theorem \ref{mainthm:Preimage_submanifold_under_transverse_map_and_implied_embedding}]
Since $Y' \subset X'$ is a submanifold and $x_0'=f(x_0)\in Y'$, there is a chart $c'=(U',\varphi',(E',\lambda'))$ for $X'$ with $\varphi'(x_0')=0$ that is adapted to $Y'$ in the sense of Definition \ref{defn:Margalef-Roig_3-1-1_and_3-1-2}. Hence, there is a closed linear subspace $F'\subset E'$ and $\mu \in F'^*$ such that $\varphi'(U'\cap Y') = \varphi'(U')\cap F_\mu'^+$ and $\varphi'(U')\cap F_\mu'^+$ is an open subset of $F_\mu'^+$. Because $Y' \subset X'$ is a neat submanifold, Remark \ref{rmk:Interpetation_definition_neat_submanifold_coordinate_charts} implies that there is a positive constant $c$ such that
\[
  \mu = c\lambda \circ \iota_{F'},
\]
where $\iota_{F'}:F'\to E'$ denotes the continuous linear inclusion operator; we may assume without loss of generality that $c=1$.

Because $F'\subset E'$ admits a closed complement by Definition \ref{defn:Margalef-Roig_3-1-1_and_3-1-2}, there is a closed linear subspace $G'\subset E'$ such that $E'=F'\oplus G'$. Let $\pi_{G'}: E' \to G'$ denote the continuous linear projection operator corresponding to the splitting $E'=F'\oplus G'$, so that $F' = \pi_{G'}^{-1}(0)$ and
\[
\varphi'(U'\cap Y') = \varphi'(U')\cap F_\mu'^+ = \varphi'(U')\cap \pi_{G'}^{-1}(0) \cap E_\lambda'^+ = \varphi'(U'\cap (\pi_{G'}\circ\varphi')^{-1}(0)) \cap E_\lambda'^+.
\]
Hence, by applying the map $(\varphi')^{-1}$ to the preceding identity we obtain
\[
  U'\cap Y' = U'\cap (\pi_{G'}\circ\varphi')^{-1}(0).
\]
If we define $\nu = \lambda \circ \iota_{G'}$, then the map $\pi_{G'}: E_\lambda'^+ \to G_\nu'^+$ is well-defined. We introduce the key

\begin{lem}[Equivalence of $f$ transverse to $Y'$ at $x_0$ and $h$ a submersion at $x_0$]
\label{lem:f_transverse_Y_prime_at_x0_iff_h_is_submersion_at_x0}  
Assume the notation of the preceding paragraphs. If
\begin{equation}
  \label{eq:Definition_h}
  h \equiv \pi_{G'} \circ \varphi' \circ f \restriction U: X \supset U \to G_\nu'^+,
\end{equation}
then the following are equivalent:
\begin{enumerate}
\item $f\transv_{x_0} Y'$.  
\item $h$ is a submersion at $x_0$.
\end{enumerate}  
\end{lem}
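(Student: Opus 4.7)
The plan is to unwind both definitions in the adapted chart $(U',\varphi',(E',\lambda'))$ and reduce the equivalence to an elementary linear algebra calculation involving the projection $\pi_{G'}$. Identifying $T_{f(x_0)}X'\cong E'$ via the isomorphism $d\varphi'(f(x_0))$, one has $T_{f(x_0)}Y'\cong F'$ and
\[
dh(x_0) = \pi_{G'}\circ d\varphi'(f(x_0))\circ df(x_0),
\quad
d(\partial h)(x_0) = \pi_{G'}\circ d\varphi'(f(x_0))\circ d(\partial f)(x_0).
\]
Moreover, the set-theoretic identity $U\cap f^{-1}(Y') = U\cap h^{-1}(0)$ follows from $F' = \pi_{G'}^{-1}(0)$ together with the neatness identity $F_\mu'^+ = F'\cap E_{\lambda'}'^+$ supplied by Lemma \ref{lem:Half_plane_neat_submanifold_half_plane}.

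The heart of the proof is a pair of linear algebra identities: for any linear subspace $R\subset E'$ and linear operator $T$ into $E'$,
\[
R + F' = E' \iff \pi_{G'}(R) = G',
\qquad
T^{-1}(F') = (\pi_{G'}\circ T)^{-1}(0).
\]
The first follows by applying $\pi_{G'}$ forward (using $\pi_{G'}(F')=0$) and, conversely, by decomposing $v = \pi_{F'}v + \pi_{G'}v$ and lifting $\pi_{G'}v\in\pi_{G'}(R)$ to $r\in R$ with $v-r\in F'$; the second is immediate from $F' = \Ker\pi_{G'}$. Applied with $R = \Ran df(x_0)$ and $T = df(x_0)$, these identities translate Items \ref{item:Transversality_linear_span_interior_point} and \ref{item:Transversality_closed_complement_interior_point} of Definition \ref{defn:Transversality_maps_Banach_manifolds_boundary} verbatim into Items \ref{item:Surjectivity_linearization} and \ref{item:Kernel_closed_complement} of Definition \ref{defn:Submersion_Banach_manifolds_boundary} for $h$. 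This settles the interior case $x_0\in\Int(X)$, since clause \ref{item:Relatively_open_neighborhood_boundary_point_maps_into_boundary} becomes vacuous after shrinking $V_{x_0}$ into $\Int(X)$.

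The boundary case $x_0\in\partial X$ is the main obstacle. Here Definition \ref{defn:Transversality_maps_Banach_manifolds_boundary} involves $d(\partial f)(x_0)$ in place of $df(x_0)$, so the two identities above applied with $R = \Ran d(\partial f)(x_0)$ and $T = d(\partial f)(x_0)$ instead produce the analogous conditions for $d(\partial h)(x_0)$; Proposition \ref{prop:Margalef-Roig_7-1-7} (which characterizes transversality at boundary points by restriction to $\partial X$) then bridges these to the surjectivity and splitting of $dh(x_0)$ required by Definition \ref{defn:Submersion_Banach_manifolds_boundary}. Finally, for the boundary-to-boundary clause $h(V_{x_0}\cap\partial X)\subset\partial G_\nu'^+$, we exploit the neatness of $Y'$ via Lemma \ref{lem:Half_plane_neat_submanifold_half_plane}, which gives $\mu = \lambda'\circ\iota_{F'}$, and select the complement $G'\subset\Ker\lambda'$ of $F'$ in $E'$---possible since either $\mu\neq 0$ (so $F'+\Ker\lambda' = E'$ and $G'$ can be chosen as a closed complement of $F'\cap\Ker\lambda'$ inside $\Ker\lambda'$) or $\mu = 0$, which by neatness forces $\lambda'\equiv 0$ near $f(x_0)$. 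With this choice $\nu = \lambda'\circ\iota_{G'} = 0$, and the boundary-to-boundary clause is automatic under the degenerate half-plane convention, completing the equivalence.
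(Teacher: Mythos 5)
Your chart identifications and your two linear-algebra identities reproduce, in content, the paper's Claims \ref{claim:dh_surjective_iff_range_df_and_tangent_space_submanifold_span_tangent_space_codomain} and \ref{claim:Preimage_dfx0_Tfx0_Y_prime_equals_kernel_dhx0}, and your interior case is fine. The genuine gap is in your boundary case, specifically the closing device of choosing $G'\subset\Ker\lambda'$ so that $\nu=\lambda'\circ\iota_{G'}\equiv 0$ and then declaring clause \eqref{item:Relatively_open_neighborhood_boundary_point_maps_into_boundary} of Definition \ref{defn:Submersion_Banach_manifolds_boundary} ``automatic under the degenerate half-plane convention.'' With $\nu\equiv 0$ the target $G_\nu'^+$ is not a half-plane in the sense of Definition \ref{defn:Margalef-Roig_1-1-1_half_plane} (which requires $\nu$ non-constant), and trivializing clause (1) discards exactly the boundary information that transversality at a boundary point encodes: clauses (2)--(3) of Definition \ref{defn:Submersion_Banach_manifolds_boundary} only constrain $dh(x_0)$, whereas Definition \ref{defn:Transversality_maps_Banach_manifolds_boundary} at $x_0\in\partial X$ constrains $d(\partial f)(x_0)$, and surjectivity of $dh(x_0)$ does not imply surjectivity of $d(\partial h)(x_0)$. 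Hence the implication (2) $\Rightarrow$ (1) of the Lemma fails under your reading. Concretely: take $X=X'=\HH^2$, $Y'=\{(0,v):v\geq 0\}$ (neat), $x_0=(0,0)$, $f(x,y)=(y,y)$, $\lambda'(u,v)=v$, $F'$ the $v$-axis; your constraint forces $G'=\Ker\lambda'$ (the $u$-axis), so $h(x,y)=(y,0)$, and $dh(x_0)$ is surjective with split kernel while your clause (1) is ``automatic''---yet $d(\partial f)(x_0)=0$, so $\Ran d(\partial f)(x_0)+T_{f(x_0)}Y'\neq T_{f(x_0)}X'$ and $f$ is not transverse to $Y'$ at $x_0$ in the sense of Definition \ref{defn:Transversality_maps_Banach_manifolds_boundary}.

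The paper does not degenerate the target: it keeps $\nu=\lambda'\circ\iota_{G'}$ and handles the boundary case by combining Proposition \ref{prop:Margalef-Roig_7-1-3}, Proposition \ref{prop:Margalef-Roig_7-1-7}, and Proposition \ref{prop:Margalef-Roig_4-1-13}, so that ``$h$ is a submersion at $x_0$'' is taken in a sense (Definition \ref{defn:Margalef-Roig_4-1-1}, equivalently Proposition \ref{prop:Margalef-Roig_4-1-13} under the boundary-to-boundary hypothesis) that retains compatibility with $\partial X$; that retained compatibility is precisely what your degenerate convention deletes, and it is what makes the converse direction work. Two smaller points: your ``bridge'' cites only Proposition \ref{prop:Margalef-Roig_7-1-7}, but to pass from the $d(\partial f)$/$d(\partial h)$ conditions to the splitting of $\Ker dh(x_0)$ inside $T_{x_0}X$ you also need Proposition \ref{prop:Margalef-Roig_7-1-3} (or an explicit argument), since splitting of $\Ker d(\partial h)(x_0)$ in $T_{x_0}(\partial X)$ does not formally give splitting of $\Ker dh(x_0)$ in $T_{x_0}X$; and in the residual case $\mu=0$, Lemma \ref{lem:Half_plane_neat_submanifold_half_plane} yields only $F'\subset\Ker\lambda'$, not $\lambda'\equiv 0$ near $f(x_0)$, so that sub-case is not disposed of as you claim.
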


By hypothesis of Theorem \ref{mainthm:Preimage_submanifold_under_transverse_map_and_implied_embedding}, we have $f \transv_{x_0} Y'$. We shall prove Lemma \ref{lem:f_transverse_Y_prime_at_x0_iff_h_is_submersion_at_x0} with the aid of the following two claims.

\begin{claim}
\label{claim:dh_surjective_iff_range_df_and_tangent_space_submanifold_span_tangent_space_codomain}  
If $x\in U\cap f^{-1}(Y')$, then the continuous linear operator $dh(x): T_xX \to G'$ is surjective if and only if
\begin{equation}
\label{eq:Range_df_and_tangent_space_submanifold_span_tangent_space_codomain}  
  \Ran df(x) + T_{f(x)}Y' = T_{f(x)}X'.
\end{equation}
\end{claim}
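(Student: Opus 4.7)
The plan is to prove Claim \ref{claim:dh_surjective_iff_range_df_and_tangent_space_submanifold_span_tangent_space_codomain} by reducing it to an elementary fact of linear algebra on Banach spaces via the adapted chart $c' = (U',\varphi',(E',\lambda'))$ and the direct sum decomposition $E' = F'\oplus G'$ with projection $\pi_{G'}: E' \to G'$ having $\Ker\pi_{G'} = F'$. The reduction rests on two observations, both routine.

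First, by the chain rule applied to $h = \pi_{G'}\circ\varphi'\circ f\restriction U$, and using that the continuous linear projection $\pi_{G'}$ equals its own derivative, I obtain
\[
dh(x) = \pi_{G'}\circ d\varphi'(f(x))\circ df(x):T_xX \to G',
\]
for every $x \in U\cap f^{-1}(Y')$. Because $\varphi'$ is a $C^p$ diffeomorphism from the open set $U'\subset X'$ onto the open set $\varphi'(U')\subset E_{\lambda'}'^+$, Proposition \ref{prop:Margalef-Roig_1-6-2} and Proposition \ref{prop:Margalef-Roig_1-6-3} imply that $d\varphi'(f(x)):T_{f(x)}X' \to E'$ is an isomorphism of Banach spaces.

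Second, because $c'=(U',\varphi',(E',\lambda'))$ is adapted to $Y'$ at $f(x)\in U'\cap Y'$ by means of $(F',\mu)$ in the sense of Definition \ref{defn:Margalef-Roig_3-1-1_and_3-1-2}, the identity $\varphi'(U'\cap Y') = \varphi'(U')\cap F_\mu'^+$ presents $U'\cap Y'$ in the chart as the open subset of the model half space $F_\mu'^+\subset F'$. Applying Proposition \ref{prop:Margalef-Roig_1-2-18_and_19} (or equivalently, computing tangent spaces of a submanifold from an adapted chart), this yields
\[
d\varphi'(f(x))\bigl(T_{f(x)}Y'\bigr) = F'.
\]
Combined with $\Ker\pi_{G'} = F'$ and $E' = F'\oplus G'$, this will allow me to transport the transversality condition from $X'$ to the model space $E'$.

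With these ingredients in place, the argument reduces to observing that for any continuous linear operator $L:V \to E'$ (taking $V = T_xX$ and $L = d\varphi'(f(x))\circ df(x)$) and any closed subspace $F'\subset E'$ with closed complement $G'$ and associated projection $\pi_{G'}$, one has $\pi_{G'}\circ L$ surjective onto $G'$ if and only if $\Ran L + F' = E'$. Indeed, the forward direction is immediate since any $e'\in E'$ splits as $e' = f' + g'$ with $f'\in F'$ and $g' = \pi_{G'}e'\in G'$, and writing $g' = \pi_{G'}Lv$ for some $v$ gives $e' = (f' + Lv - Lv) + Lv = (f' - (L v - \pi_{G'}Lv)) + Lv \in F' + \Ran L$ after noting $Lv - \pi_{G'}Lv \in F'$. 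The reverse direction is equally direct: given $g'\in G'$, write $g' = Lv + f'$ with $f'\in F'$ and apply $\pi_{G'}$ to get $g' = \pi_{G'}Lv$. Pulling this equivalence back via the isomorphism $d\varphi'(f(x))$, using $d\varphi'(f(x))(T_{f(x)}Y') = F'$ and $d\varphi'(f(x))(T_{f(x)}X') = E'$, yields
\[
\Ran dh(x) = G' \iff \Ran df(x) + T_{f(x)}Y' = T_{f(x)}X',
\]
which is precisely the assertion of the claim. There is no substantive obstacle; the only care needed is to keep track of the tangent space identifications provided by Propositions \ref{prop:Margalef-Roig_1-6-2} and \ref{prop:Margalef-Roig_1-6-3}.
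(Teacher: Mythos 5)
Your proof is correct and follows essentially the same route as the paper's: both rest on the chain rule identity $dh(x) = \pi_{G'}\circ d\varphi'(f(x))\circ df(x)$, the fact that $d\varphi'(f(x))$ is an isomorphism carrying $T_{f(x)}Y'$ onto $F' = \Ker\pi_{G'}$ and $T_{f(x)}X'$ onto $E'$, and the elementary equivalence between surjectivity of the projected composition and the span condition (the paper phrases this via preimages of $G'$ in $T_{f(x)}X'$, you phrase it as a splitting lemma in $E'$, which is only a cosmetic difference). The only nitpick is the citation of Proposition \ref{prop:Margalef-Roig_1-2-18_and_19} for the identification $d\varphi'(f(x))(T_{f(x)}Y') = F'$; the relevant fact really comes from the adapted-chart Definition \ref{defn:Margalef-Roig_3-1-1_and_3-1-2}, as your parenthetical already indicates.
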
  

\begin{proof}[Proof of Claim \ref{claim:dh_surjective_iff_range_df_and_tangent_space_submanifold_span_tangent_space_codomain}]
Following the strategy of \cite[p. 28]{Guillemin_Pollack}, we first observe that
\[
  dh(x) = \pi_{G'} \circ d\varphi'(f(x)) \circ df(x): T_xX \to G'
\]
is surjective if and only if
\begin{equation}
\label{eq:pi_G_prime_circ_dvarphi_prime_on_range_df}
  \pi_{G'} \circ d\varphi'(f(x)): \Ran df(x) \to G'
\end{equation}
is surjective. Because $d\varphi'(f(x)): T_{f(x)}X' \to E'$ is an isomorphism of vector spaces, $\pi_{G'}: E' \to G'$ has kernel $F' \subset E'$, and $d\varphi'(f(x)): T_{f(x)}Y' \to F'$ is an isomorphism of vector spaces, then
\begin{equation}
\label{eq:Kernel_pi_G_prime_circ_dvarphi_prime}
  \Ker\left( \pi_{G'} \circ d\varphi'(f(x)): T_{f(x)}X' \to G' \right) = T_{f(x)}Y'.
\end{equation}
One trivially has
\begin{equation}
\label{eq:Preimage_G_prime}
  \left( \pi_{G'} \circ d\varphi'(f(x)) \right)^{-1}(G') = T_{f(x)}X'.
\end{equation}  
If the composition of operators \eqref{eq:pi_G_prime_circ_dvarphi_prime_on_range_df} is surjective, then
\begin{equation}
\label{eq:Range_df_and_tangent_space_submanifold_span_preimage_G_prime} 
  \Ran df(x) + T_{f(x)}Y' = \left( \pi_{G'} \circ d\varphi'(f(x)) \right)^{-1}(G'),
\end{equation}
and this identity together with \eqref{eq:Preimage_G_prime} imply that
\eqref{eq:Range_df_and_tangent_space_submanifold_span_tangent_space_codomain} holds. Conversely, if \eqref{eq:Range_df_and_tangent_space_submanifold_span_tangent_space_codomain} holds, then that identity together with \eqref{eq:Preimage_G_prime} imply that \eqref{eq:Range_df_and_tangent_space_submanifold_span_preimage_G_prime} holds. But then the identification of the kernel \eqref{eq:Kernel_pi_G_prime_circ_dvarphi_prime} and the identity \eqref{eq:Range_df_and_tangent_space_submanifold_span_preimage_G_prime} imply that the composition of operators  \eqref{eq:pi_G_prime_circ_dvarphi_prime_on_range_df} is surjective. This completes the proof of Claim \ref{claim:dh_surjective_iff_range_df_and_tangent_space_submanifold_span_tangent_space_codomain}.  
\end{proof}

\begin{claim}
\label{claim:Preimage_dfx0_Tfx0_Y_prime_equals_kernel_dhx0}
Assume the notation of the preceding paragraphs. Then 
\begin{equation}
\label{eq:Preimage_dfx0_Tfx0_Y_prime_equals_kernel_dhx0}
  (df(x_0))^{-1}(T_{f(x_0)}Y') = \Ker dh(x_0).
\end{equation}
\end{claim}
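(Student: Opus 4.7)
The plan is to deduce the identity \eqref{eq:Preimage_dfx0_Tfx0_Y_prime_equals_kernel_dhx0} directly from the chain-rule expression
\[
  dh(x_0) = \pi_{G'} \circ d\varphi'(f(x_0)) \circ df(x_0): T_{x_0}X \to G',
\]
combined with the kernel identification \eqref{eq:Kernel_pi_G_prime_circ_dvarphi_prime} that was already established during the proof of Claim \ref{claim:dh_surjective_iff_range_df_and_tangent_space_submanifold_span_tangent_space_codomain}. Recall that \eqref{eq:Kernel_pi_G_prime_circ_dvarphi_prime} asserts
\[
  \Ker\left( \pi_{G'} \circ d\varphi'(f(x_0)): T_{f(x_0)}X' \to G' \right) = T_{f(x_0)}Y',
\]
which in turn relies only on the fact that $d\varphi'(f(x_0)):T_{f(x_0)}X' \to E'$ is a linear homeomorphism restricting to an isomorphism $T_{f(x_0)}Y' \to F'$, together with $\Ker\pi_{G'} = F'$.

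Granted this, I would argue the two inclusions by a single chase through the composition. Given $v \in T_{x_0}X$, one has $dh(x_0)v = 0$ if and only if $df(x_0)v$ lies in the kernel of $\pi_{G'} \circ d\varphi'(f(x_0))$, hence by \eqref{eq:Kernel_pi_G_prime_circ_dvarphi_prime} if and only if $df(x_0)v \in T_{f(x_0)}Y'$, that is, $v \in (df(x_0))^{-1}(T_{f(x_0)}Y')$. This establishes both inclusions simultaneously and yields \eqref{eq:Preimage_dfx0_Tfx0_Y_prime_equals_kernel_dhx0}.

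There is essentially no obstacle to this argument; it is a purely formal consequence of the adapted-chart construction of $h$ and of the fact that $\varphi'$ trivializes the pair $(X',Y')$ as $(E',F')$ near $f(x_0)$. Once Claim \ref{claim:Preimage_dfx0_Tfx0_Y_prime_equals_kernel_dhx0} is in place, it combines with Claim \ref{claim:dh_surjective_iff_range_df_and_tangent_space_submanifold_span_tangent_space_codomain} and the hypothesis $f\transv_{x_0} Y'$ (via Definition \ref{defn:Transversality_maps_Banach_manifolds_boundary}, which provides both the span condition needed for surjectivity of $dh(x_0)$ and the closed-complement condition on $(df(x_0))^{-1}(T_{f(x_0)}Y')$ in $T_{x_0}X$) to show that $h$ satisfies the three conditions of Definition \ref{defn:Submersion_Banach_manifolds_boundary}, completing the proof of Lemma \ref{lem:f_transverse_Y_prime_at_x0_iff_h_is_submersion_at_x0}. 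Theorem \ref{mainthm:Preimage_submanifold_under_transverse_map_and_implied_embedding} then follows by applying Theorem \ref{mainthm:Preimage_point_under_submersion_and_implied_embedding} to $h$ with $x_0' = 0 \in G_\nu'^+$, using \eqref{eq:Preimage_dfx0_Tfx0_Y_prime_equals_kernel_dhx0} to identify $L = d\varphi(x_0)((df(x_0))^{-1}(T_{f(x_0)}Y'))$ with the subspace $K = d\varphi(x_0)(\Ker dh(x_0))$ produced by Theorem \ref{mainthm:Preimage_point_under_submersion_and_implied_embedding}.
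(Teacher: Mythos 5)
Your argument is correct and is essentially the paper's own proof: both reduce to the chain-rule factorization $dh(x_0)=\pi_{G'}\circ d\varphi'(f(x_0))\circ df(x_0)$ together with $\Ker\pi_{G'}=F'$ and $(d\varphi'(f(x_0)))^{-1}(F')=T_{f(x_0)}Y'$. The only cosmetic difference is that you invoke the kernel identity \eqref{eq:Kernel_pi_G_prime_circ_dvarphi_prime} already recorded in the proof of Claim \ref{claim:dh_surjective_iff_range_df_and_tangent_space_submanifold_span_tangent_space_codomain}, whereas the paper redoes the same two-step preimage computation explicitly.
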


\begin{proof}[Proof of Claim \ref{claim:Preimage_dfx0_Tfx0_Y_prime_equals_kernel_dhx0}]
We observe that
\begin{align*}
  \Ker\left(dh(x_0):T_{x_0}X \to G'\right)
  &=
  (dh(x_0))^{-1}(0)
  \\
  &= \left(\pi_{G'} \circ d\varphi'(f(x_0)) \circ df(x_0)\right)^{-1}(0)
  \\
  &= \left(d\varphi'(f(x_0)) \circ df(x_0)\right)^{-1}(F')
  \\
  &= (df(x_0))^{-1}\left(\left(d\varphi'(f(x_0))\right)^{-1}(F')\right)
  \\
  &= (df(x_0))^{-1}\left(T_{f(x_0)}Y'\right), 
\end{align*}
using the facts that $\pi_{G'}^{-1}(0)=F'$ and $d\varphi'(f(x_0)):T_{f(x_0)}Y'\to F'$ is an isomorphism of vector spaces. This completes the proof of Claim \ref{claim:Preimage_dfx0_Tfx0_Y_prime_equals_kernel_dhx0}.
\end{proof}

We can now conclude the 

\begin{proof}[Proof of Lemma \ref{lem:f_transverse_Y_prime_at_x0_iff_h_is_submersion_at_x0}]
We seek to apply the equivalence of Items \eqref{item:Margalef-Roig_proposition_4-1-13_a} and \eqref{item:Margalef-Roig_proposition_4-1-13_b} in the equivalent characterizations of submersions provided by Proposition \ref{prop:Margalef-Roig_4-1-13}.

If $f\transv_{x_0} Y'$, then Item \eqref{item:Proposition_Margalef-Roig_7-1-3_span} in Proposition \ref{prop:Margalef-Roig_7-1-3} implies that the identity 
\eqref{eq:Range_df_and_tangent_space_submanifold_span_tangent_space_codomain} holds at $x=x_0$, that is,
\begin{equation}
\label{eq:Range_df_and_tangent_space_submanifold_span_tangent_space_codomain_x0}
  \Ran df(x_0) + T_{f(x_0)}Y' = T_{f(x_0)}X'.
\end{equation}
By Claim \ref{claim:dh_surjective_iff_range_df_and_tangent_space_submanifold_span_tangent_space_codomain} and the preceding identity, we see that the operator $dh(x_0): T_{x_0}X \to G'$ is surjective. Because Item \eqref{item:Proposition_Margalef-Roig_7-1-3_complement} in Proposition \ref{prop:Margalef-Roig_7-1-3} implies that $(df(x_0))^{-1}(T_{f(x_0)}Y')$ admits a closed complement in $T_{x_0}X$, then $\Ker dh(x_0)$ admits a closed complement in $T_{x_0}X$ by Claim \ref{claim:Preimage_dfx0_Tfx0_Y_prime_equals_kernel_dhx0}. Therefore, Proposition \ref{prop:Margalef-Roig_4-1-13} implies that $h$ is a submersion at $x_0$.

If $h$ is a submersion at $x_0$, we may apply Proposition \ref{prop:Margalef-Roig_4-1-13} and reverse the preceding argument to conclude that \eqref{eq:Range_df_and_tangent_space_submanifold_span_tangent_space_codomain_x0} holds and that $(df(x_0))^{-1}(T_{f(x_0)}Y')$ admits a closed complement in $T_{x_0}X$. If $x_0 \in f^{-1}(Y')\cap\Int(X)$, then Lemma \ref{lem:Margalef-Roig_7-1-5} implies that $f\transv_{x_0} Y'$, while if $x_0 \in f^{-1}(Y')\cap\partial X$, then Proposition \ref{prop:Margalef-Roig_7-1-7} implies that $f\transv_{x_0} Y'$. This completes the proof of Lemma \ref{lem:f_transverse_Y_prime_at_x0_iff_h_is_submersion_at_x0}.
\end{proof}  

Given our hypothesis in Theorem \ref{mainthm:Preimage_submanifold_under_transverse_map_and_implied_embedding} that $f\transv_{x_0} Y'$, we can apply Lemma \ref{lem:f_transverse_Y_prime_at_x0_iff_h_is_submersion_at_x0} to conclude that $h$ is a submersion at $x_0$. Consequently, we can apply Proposition \ref{prop:Margalef-Roig_4-1-13} and Theorem \ref{mainthm:Preimage_point_under_submersion_and_implied_embedding}, but with the map $f$ replaced by $h$ and the codomain $X'$ replaced by $G_\nu'^+$. By the equivalence of Items \eqref{item:Margalef-Roig_proposition_4-1-13_a} and \eqref{item:Margalef-Roig_proposition_4-1-13_c} in Proposition \ref{prop:Margalef-Roig_4-1-13}, there are
\begin{itemize}
\item a chart $(V,\psi,(E,\alpha))$ for $X$ with $x_0 \in V$ and $\psi(x_0)=0$ and $V\subset U$,
\item a chart $(V',\psi',(E',\alpha'))$ for $G_\nu'^+$ with $h(x_0) = 0 \in V'$ and $\psi'(0)=0$ and $h(V)\subset V'$, and
\item a continuous, linear surjective operator $q:E\to E'$ such that $\Ker q$ admits a closed complement in $E$,
\end{itemize}
and such that the following diagram commutes:
\begin{equation}
\label{eq:Guillemin_Pollack_trick_submersion_commutative_diagram}    
\begin{CD}
X \supset V @> h \restriction V >> V' \subset G_\nu'^+
\\
@V \psi VV @VV \psi' V
\\
E_\alpha^+ \supset \psi(V) @> q\restriction \psi(V) >> \psi'(V') \subset G_{\alpha'}'^+
\end{CD}
\end{equation}
We observe that
\begin{align*}
  q^{-1}(0) &= (dq(0))^{-1}(0) = \left(d\left(\psi'\circ h\restriction V\circ\psi^{-1}\right)(0)\right)^{-1}(0)
  \\
  &= \left(d\psi'(0)\circ dh(x_0)\circ d(\psi^{-1})(0)\right)^{-1}(0)
  \\
            &= \left(d\psi'(0)\circ \pi_{G'}\circ d\varphi'(f(x_0))\circ df(x_0)\circ (d\psi(x_0))^{-1}\right)^{-1}(0)
  \\
            &= \left(\pi_{G'}\circ d\varphi'(f(x_0))\circ df(x_0) \circ (d\psi(x_0))^{-1}\right)^{-1}(0)
  \\
            &=  \left(d\varphi'(f(x_0))\circ df(x_0) \circ (d\psi(x_0))^{-1}\right)^{-1}(F')
  \\
            &=  \left(df(x_0) \circ (d\psi(x_0))^{-1}\right)^{-1}(T_{f(x_0)}Y')
 \\
            &=  d\psi(x_0)\left((df(x_0)^{-1}(T_{f(x_0)}Y')\right).                 
\end{align*}
Hence, $q^{-1}(0)=L$ in \eqref{eq:Maintheorem_definition_L}. Theorem \ref{mainthm:Preimage_point_under_submersion_and_implied_embedding} implies that the composition $g$ in \eqref{eq:Maintheorem_preimage_submanifold_under_transverse_map_and_implied_embedding} gives a $C^p$ embedding from the relatively open subset $\psi(V)\cap L \subset E_\alpha^+$ onto the $C^p$ Banach submanifold $\psi^{-1}(\psi(V)\cap L) \subset X$. Moreover, noting that $x_0' := h(x_0) = 0 \in G_\nu'^+$, we see that Theorem \ref{mainthm:Preimage_point_under_submersion_and_implied_embedding} yields
\begin{enumerate}
\item $\psi^{-1}(\psi(V)\cap L) = V\cap h^{-1}(0)$, and
\item $T_x(h^{-1}(0)) = (dh(x))^{-1}(0) = (d\psi(x))^{-1}L$, for all $x \in V\cap h^{-1}(0)$.  
\end{enumerate}
But
\[
  V\cap h^{-1}(0) = \left(\pi_{G'} \circ \psi' \circ f \restriction V\right)^{-1}(0) = (f \restriction V)^{-1}(V'\cap Y') = V\cap f^{-1}(Y')
\]
and thus  $\psi^{-1}(\psi(V)\cap L) = V\cap f^{-1}(Y')$, which verifies Item \eqref{item:Maintheorem_Margalef-Roig_4-2-1_submanifold_set}, and consequently
\[
  T_x(f^{-1}(Y')) = T_x(h^{-1}(0)) = (d\psi(x))^{-1}L,
\]
which verifies Item \eqref{item:Maintheorem_Margalef-Roig_4-2-1_submanifold_tangent spaces}.

We have proved that $V\cap f^{-1}(Y')$ is a $C^p$ Banach submanifold of the open neighborhood $V\subset X$ (see Item \eqref{item:Maintheorem_Margalef-Roig_4-2-1_submanifold_set} in the conclusions of Theorem \ref{mainthm:Preimage_submanifold_under_transverse_map_and_implied_embedding}). The fact that $V\cap f^{-1}(Y')$ is also neat is given by Item \eqref{item:corollary_Margalef-Roig_7-1-20_a} in Corollary \ref{cor:Margalef-Roig_7-1-20}. This completes the proof of Theorem \ref{mainthm:Preimage_submanifold_under_transverse_map_and_implied_embedding}.
\end{proof}

\begin{rmk}[Alternative proof of main conclusion of Corollary \ref{cor:Margalef-Roig_7-1-20}]
\label{rmk:Alternative_proof_corollary_Margalef-Roig_7-1-20}
In $f\transv Y'$ as in the hypothesis of Corollary \ref{cor:Margalef-Roig_7-1-20}, then $f\transv_{x_0} Y'$ for all $x_0\in f^{-1}(Y')$ and thus Theorem \ref{mainthm:Preimage_submanifold_under_transverse_map_and_implied_embedding} implies that $f^{-1}(Y')$ is a $C^p$ Banach submanifold of $X$, since $V_{x_0}\cap f^{-1}(Y')$ is a $C^p$ Banach submanifold of the open neighborhood $V_{x_0}\subset X$ and $x_0\in f^{-1}(Y')$ is arbitrary. The fact that $f^{-1}(Y')$ is also neat is given by Item \eqref{item:corollary_Margalef-Roig_7-1-20_a} in Corollary \ref{cor:Margalef-Roig_7-1-20}. 
\end{rmk}  


\section{Splicing map for connections}
\label{sec:Splicing_map_connections}

\subsection{Mass center and scale maps on the affine space of connections over the four-dimensional sphere}
\label{subsec:Mass_center_and_scale_maps_on_space_connections_over_sphere}
In this subsection, we discuss the concept of a \emph{centered connection} over $S^4$ and the action of the subgroup of translations and dilations of the group of conformal transformations of $S^4$, namely $\RR^4\times\RR_+ \subset \Conf(S^4)$, on the affine space of connections over $S^4$.

A choice of frame $v$ in the principal $\SO(4)$-frame bundle, $\Fr(TS^4)$, for $TS^4$, over the north pole $n\in S^4 \cong \RR^4\cup\{\infty\}$ (identified with the origin in $\RR^4$), defines a conformal diffeomorphism,
\begin{equation}
\label{eq:ConformalDiffeo}
\varphi_n:\RR^4 \to S^4\less\{s\},
\end{equation}
that is inverse to a stereographic projection from the south pole $s\in S^4\subset \RR^5$ (identified with the point at infinity in $\RR^4\cup\{\infty\}$). We let $y(\,\cdot\,): S^4\less\{s\}\to\RR^4$ be the corresponding coordinate chart.

\begin{defn}[Center and scale of a connection on a principal $G$-bundle over $S^4$]
\label{defn:Mass_center_scale_connection}
(Compare \cite[Equation (3.10)]{FeehanGeometry} and Taubes \cite[Equation (4.15)]{TauPath}, \cite[Equation (3.10)]{TauFrame}.)
Let $G$ be a compact Lie group and $P$ be a principal $G$-bundle over the four-dimensional sphere $S^4$ with its standard round Riemannian metric $g_\round$ of radius one and $p\in[2,\infty)$. The \emph{center} $z=z[A]\in\RR^4$ and the \emph{scale} $\lambda=\lambda[A] \in \RR_+ = (0,\infty)$ of a non-flat $W^{1,p}$ connection $A$ on $P$ are defined by
\begin{subequations}
\label{eq:Mass_center_and_scale_connection}
\begin{align}
\label{eq:Mass_center_connection}
\Center[A] &:= \left(\int_{\RR^4}|\varphi_n^*F_A(y)|_\delta^2\,d^4y\right)^{-1}
\int_{\RR^4}y|\varphi_n^*F_A(y)|_\delta^2\,d^4y,
\\
\Scale[A]^2 &:= \left(\int_{\RR^4}|\varphi_n^*F_A(y)|_\delta^2\,d^4y\right)^{-1}
\int_{\RR^4} |y - z[A]|^2|\varphi_n^*F_A(y)|_\delta^2\,d^4y,
\label{eq:Scale_connection}
\end{align}
\end{subequations}
where $\delta$ denotes the standard Euclidean metric on $\RR^4$. The connection $A$ is \emph{centered} if $\Center[A] = 0$ and $\Scale[A] = 1$. If $A$ is flat, one defines $\Center[A] := 0$ and $\Scale[A] := 0$.
\end{defn}

\begin{rmk}[Normalization constants in the definition of mass center and scale]
\label{rmk:Normalization_constants}
The choice of normalization constant in Definition \ref{defn:Mass_center_scale_connection} is consistent with \cite[Equations (29.44) and (29.45)]{Feehan_yang_mills_gradient_flow} and Taubes \cite[Equation (4.15)]{TauPath}, but differs in general from those of \cite[Equation (3.10)]{FeehanGeometry} or Taubes \cite[pp. 343--344]{TauFrame}.
\end{rmk}

\begin{rmk}[Round versus Euclidean metrics in the definition of mass center and scale]
\label{rmk:Round_vs_Euclidean_metrics_in_definition_mass_center_and_scale}
It is possible, as in Taubes \cite[Equation (4.15)]{TauPath}, to use the pullback to $\RR^4$ of the standard round metric of radius one on $S^4$ when defining the integrals in \eqref{eq:Mass_center_and_scale_connection}. However, is then more difficult to show that it is possible to center a non-centered connection $A$ on $P$ and the relationship between the connection $A$ and the required conformal diffeomorphism of $\RR^4$ is not explicit as it is in Lemma \ref{lem:Centering_connection_over_4sphere}, which may be compared with Taubes \cite[Lemma 4.11]{TauPath}.
\end{rmk}

For any $(z,\lambda)\in\RR^4\times\RR_+$, we define a conformal diffeomorphism of $\RR^4$ by
\begin{equation}
\label{eq:CenterScaleDiffeo}
h_{z,\lambda}:\RR^4\to\RR^4,\qquad y\mapsto (y-z)/\lambda.
\end{equation}
It is convenient to view this as a composition of translation, $\tau_z(y) = y - z$, and dilation, $\delta_\lambda(y) := y/\lambda$, that is, $h_{z,\lambda} = \delta_\lambda(\tau_z(y))$. The group $\RR^4\times\RR_+$ acts on $\sA(\varphi_n^*P)$ and $\sA(P)$ by pullback and composition with $\varphi_n:\RR^4\to S^4\less\{s\}$, that is,
\begin{align*}
\sA(\varphi_n^*P) \times \RR^4\times\RR_+ \ni (\varphi_n^*A, z,\lambda)
&\mapsto
h_{z,\lambda}^*\varphi_n^*A \in \sA(\varphi_n^*P),
\\
\sA(P) \times \RR^4\times\RR_+ \ni (A,z,\lambda)
&\mapsto
\tilde h_{z,\lambda}^*A \in \sA(P),
\end{align*}
where
\[
\tilde h_{z,\lambda}^*A := \varphi_n^{-1,*} h_{z,\lambda}^* \varphi_n^*A.
\]
The group $\RR^4\times\RR_+$ also acts on $\Aut(\varphi_n^*P)$ and $\Aut(P)$ by pullback and descends to an action on the quotient spaces $\sB(\varphi_n^*P)$ and $\sB(P)$. We have the following simpler analogue of Taubes \cite[Lemma 4.11]{TauPath}.

\begin{lem}[Centering a connection over $S^4$]
\label{lem:Centering_connection_over_4sphere}
(See Feehan \cite[Lemma 4.4]{Feehan_yangmillsenergy_lojasiewicz4d_v1}.)
Let $G$ be a compact Lie group, $P$ be a smooth principal $G$-bundle over $(S^4,g_\round)$, and $A$ be a non-flat $W^{1,p}$ connection on $P$ with $p \in [2,\infty)$. If $(z,\lambda) \in \RR^4\times\RR_+$, then
\[
\Center[\tilde h_{z,\lambda}^*A] = \lambda\Center[A] + z
\quad\text{and}\quad
\Scale[\tilde h_{z,\lambda}^*A] = \lambda\Scale[A].
\]
In particular, if $z = z[A]$ and $\lambda = \lambda[A]$, then $\tilde h_{z,\lambda}^{-1,*}A = (\tilde h_{z,\lambda}^{-1})^*A$ is a centered connection on $P$.
\end{lem}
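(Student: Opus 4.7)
The plan is to reduce everything to the conformal naturality of curvature on $\RR^4$ together with a single change-of-variables computation, and to treat both formulas (center and scale) as corollaries of one master identity. I expect no serious analytic obstacle; the only care required is in bookkeeping of the conformal weights, and the main subtlety is only notational: distinguishing pullback by $h_{z,\lambda}$ from pullback by its inverse in the final ``centering'' step.

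First, I would push the entire statement down to $\RR^4$ via the stereographic coordinate $\varphi_n$. Naturality of curvature under pullback gives $F_{\tilde h_{z,\lambda}^*A} = \tilde h_{z,\lambda}^*F_A$, and hence $\varphi_n^* F_{\tilde h_{z,\lambda}^*A}(y) = h_{z,\lambda}^*(\varphi_n^* F_A)(y)$ for all $y\in\RR^4$. Since $dh_{z,\lambda} = \lambda^{-1}\id$, the pointwise norm satisfies $|h_{z,\lambda}^*\omega(y)|_\delta^2 = \lambda^{-4}|\omega((y-z)/\lambda)|_\delta^2$ for any $2$-form $\omega$, so the substitution $u = (y-z)/\lambda$ (with Jacobian $\lambda^4$) is the natural tool to apply.

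Next, I would apply this substitution to the three integrals appearing in the denominators and numerators of \eqref{eq:Mass_center_connection} and \eqref{eq:Scale_connection}. The denominator integral is simply the conformally invariant $L^2$-norm of $F_A$, so it is preserved. For the numerator of the center, the substitution splits $y = \lambda u + z$ linearly and gives $\Center[\tilde h_{z,\lambda}^*A] = \lambda\Center[A] + z$ after division by the common denominator. For the scale, the identity $y - \Center[\tilde h_{z,\lambda}^*A] = \lambda(u - \Center[A])$, which is free after the center computation, pulls out a factor of $\lambda^2$ from the numerator and yields $\Scale[\tilde h_{z,\lambda}^*A] = \lambda\Scale[A]$. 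These steps are routine changes of variables and I would not spell them out in detail.

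Finally, for the ``in particular'' clause, rather than redoing the computation with $h_{z,\lambda}^{-1}$, I would set $B := \tilde h_{z,\lambda}^{-1,*}A$ and observe $\tilde h_{z,\lambda}^*B = A$, so the transformation rules already proved give $\Center[A] = \lambda\Center[B] + z$ and $\Scale[A] = \lambda\Scale[B]$. Solving for $\Center[B]$ and $\Scale[B]$ shows they are $0$ and $1$ respectively exactly when $z = \Center[A]$ and $\lambda = \Scale[A]$. The only point deserving a brief justification is the absolute integrability of the integrands, which follows from the Sobolev embedding $W^{1,p}(S^4) \hookrightarrow L^2(S^4)$ for $p\geq 2$ applied to $F_A$; this is the one place where the non-flatness and $W^{1,p}$ hypotheses are invoked, since non-flatness guarantees that the normalizing denominator is strictly positive and hence the definitions are unambiguous.
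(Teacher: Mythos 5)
Your computation is correct, and it is the standard argument; note that the paper itself offers no proof of this lemma, deferring entirely to the citation of \cite[Lemma 4.4]{Feehan_yangmillsenergy_lojasiewicz4d_v1}, so there is no in-paper argument to compare against. The reduction to $\RR^4$ via $\varphi_n$, the naturality identity $\varphi_n^*F_{\tilde h_{z,\lambda}^*A}=h_{z,\lambda}^*(\varphi_n^*F_A)$, the weight count $|h_{z,\lambda}^*\omega(y)|_\delta^2=\lambda^{-4}|\omega((y-z)/\lambda)|_\delta^2$ for a two-form, and the affine substitution $y=\lambda u+z$ give exactly the two transformation rules, and your device of applying them to $B:=\tilde h_{z,\lambda}^{-1,*}A$ with $\tilde h_{z,\lambda}^*B=A$ is a clean way to obtain the ``in particular'' clause (here you implicitly also use $\Scale[A]>0$, which follows from non-flatness just as positivity of the denominator does). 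The one sentence I would rewrite is the integrability remark: invoking the Sobolev embedding $W^{1,p}\hookrightarrow L^2$ ``applied to $F_A$'' is not the right justification, since $F_A=dA+\frac{1}{2}[A,A]$ is only of class $L^p$ when $A$ is a $W^{1,p}$ connection; what you actually use is $F_A\in L^p(S^4)\subset L^2(S^4)$ together with conformal invariance of the $L^2$ norm of two-forms in dimension four, which gives finiteness and, by non-flatness, strict positivity of the normalizing denominator. Finiteness of the weighted numerators in \eqref{eq:Mass_center_and_scale_connection} does \emph{not} follow from $F_A\in L^2$ alone, because the weights $|y|$ and $|y-z[A]|^2$ blow up at the south pole; this convergence is implicitly built into Definition \ref{defn:Mass_center_scale_connection} (compare Remark \ref{rmk:Round_vs_Euclidean_metrics_in_definition_mass_center_and_scale}), and your change of variables at least shows it is preserved under $\tilde h_{z,\lambda}^*$, since the transformed numerator equals $\lambda$ times the original numerator plus $z$ times the finite denominator. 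With that sentence corrected or simply replaced by a reference to the standing assumptions of Definition \ref{defn:Mass_center_scale_connection}, the proof is complete.
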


\begin{lem}[Smoothness of the center and scale maps]
\label{lem:Smoothness_center_scale_maps}
(See Feehan \cite[Propositions 4.5 and 4.9]{Feehan_yangmillsenergy_lojasiewicz4d_v1} for statements and proofs of continuity and differentiability for the center and scale maps.)
Let $G$ be a compact Lie group and $P$ be a non-product smooth principal $G$-bundle over $(S^4,g_\round)$. If $p \in [2,\infty)$, then the following map is smooth: 
\begin{equation}
\label{eq:Center_scale_maps}  
\sA(P) \ni A \mapsto (\Center[A],\Scale[A]) \in \RR^4\times\RR_+.
\end{equation}
\end{lem}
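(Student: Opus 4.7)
My strategy is to establish smoothness of the three moment integrals and the denominator $\|F_A\|_{L^2(S^4)}^2$ separately, and then combine these by the smooth operations of division and positive square root. The first reduction exploits the conformal invariance of $|F|^2\,d\vol$ for $2$-forms in dimension four: since $\varphi_n^*g_\round$ is conformal to the Euclidean metric $\delta$ on $\RR^4$, each moment integral can be rewritten as
\[
\int_{\RR^4}|y|^k|\varphi_n^*F_A|_\delta^2\,d^4y=\int_{S^4\setminus\{s\}}|y(x)|^k|F_A(x)|_g^2\,d\vol_g(x),\qquad k\in\{0,1,2\},
\]
replacing unbounded-domain integrals by integrals over the compact manifold $S^4$ against weights that are smooth away from the south pole and blow up like $\dist_g(x,s)^{-k}$ at $s$.

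Second, I would fix a base connection $A_0\in\sA(P)$ and expand $A=A_0+a$ with $a\in W_{A_0}^{1,p}(T^*S^4\otimes\ad P)$. By \eqref{eq:Donaldson_Kronheimer_2-1-14}, $F_A=F_{A_0}+d_{A_0}a+\frac{1}{2}[a,a]$ is a polynomial of degree two in $a$ with continuous coefficients, since the Sobolev multiplication $W^{1,p}\times W^{1,p}\to L^p$ is continuous for $p>d/2=2$. Hence $|F_A|_g^2$ is a polynomial of degree four in $a$ valued in $L^{p/2}(S^4)$, and each moment integral is a polynomial in $a$ valued in $\RR$ once integrability is verified; continuous polynomial maps between Banach spaces are automatically smooth (indeed real-analytic). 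The denominator $\|F_A\|_{L^2(S^4)}^2$ is therefore smooth, and the non-product hypothesis on $P$ combined with the Chern--Weil inequality gives $\|F_A\|_{L^2(S^4)}^2\geq 8\pi^2|\kappa(P)|>0$ for a nonzero characteristic-class pairing $\kappa(P)$ depending on $G$ (for example $|c_2(P)|$ when $G=\SU(n)$), so $A\mapsto 1/\|F_A\|_{L^2(S^4)}^2$ is smooth.

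The main obstacle is the singular weight $|y|^k$ for $k=1,2$ near $s$: a direct H\"older bound of the form $\|F_A\|_{L^p}^2 \cdot \||y|^k\|_{L^{p/(p-2)}(S^4)}$ requires $|y|^{kp/(p-2)}$ to be integrable against $d\vol_g$ near $s$, which fails for $p$ near $2$. I would address this by introducing a second stereographic chart $\psi_s:\RR^4\to S^4\setminus\{n\}$ centered at the south pole together with a cutoff, and again invoking conformal invariance: the singular contribution becomes $\int_{|w|\leq\epsilon}|w|^{-k}|\psi_s^*F_A|_\delta^2\,d^4w$ in $\psi_s$-coordinates, which is finite and depends smoothly on $a\in W_{A_0}^{1,p}$ by Sobolev embedding $W^{1,p}(\RR^4)\hookrightarrow L^{q}$ (with $q=4p/(4-p)$ for $p<4$, or any finite $q$ for $p\geq 4$) combined with a weighted Hardy estimate controlling $|w|^{-k}$ against the gained Lebesgue regularity of $\psi_s^*F_A$. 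The limiting case $p=2$ is handled by density of smooth connections and uniform estimates inherited from $p>2$. With all three moment integrals and the denominator shown to be smooth, $\Center[A]$ is a smooth composition, $\Scale[A]^2$ is smooth and strictly positive because flat connections do not exist on the non-product bundle $P$ over the simply-connected base $S^4$, and the positive square root preserves smoothness, yielding the claim.
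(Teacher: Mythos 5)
The paper itself offers no argument for this lemma beyond the parenthetical citation to Feehan's Propositions 4.5 and 4.9, so the only question is whether your self-contained proof closes. Most of your outline is sound: conformal invariance of $|F|^2\,d\vol$ in dimension four, the fact that $a\mapsto F_{A_0+a}=F_{A_0}+d_{A_0}a+\frac12[a,a]$ is a continuous quadratic polynomial into $L^2$ (hence the denominator $\|F_A\|_{L^2}^2$ is a smooth, real-analytic function of $a$), the energy gap for a non-product bundle over $S^4$, and the strict positivity of $\Scale[A]$ are all fine.

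The genuine gap is exactly the step you flag and then wave through: finiteness and smooth dependence of the weighted moments near the south pole when $2\le p\le 4$. The proposed repair --- Sobolev embedding $W^{1,p}\hookrightarrow L^q$ together with ``a weighted Hardy estimate'' --- cannot work, because the singular weight multiplies the full curvature, whose dominant term $d_{A_0}a$ lies only in $L^p$; Hardy-type inequalities trade a singular weight for a derivative, and there is no derivative left to trade. In fact the statement you would need is false: in a trivialization near $s$ take $a=\chi\, r^{-\eps}\,dx^1\otimes\xi$ with $\xi\in\fg$ fixed, $\chi$ a cutoff, and $0<\eps<(4-p)/p$; then $a\in W^{1,p}$, yet $|F_{A_\flat+a}|\gtrsim r^{-1-\eps}$ on a set of directions of positive measure, so $\int_{S^4}\dist(\cdot,s)^{-2}|F_A|^2\,d\vol=\infty$ and $\Scale[A]=\infty$. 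Thus for $2\le p<4$ the second moment is not even finite on all of $\sA^{1,p}(P)$ (and a similar radial example with $|F_A|\sim r^{-\alpha}$, $\alpha\ge 3/2$, kills the first moment for $p<8/3$); your H\"older argument genuinely bounds the weighted bilinear forms only for $p>4$ (scale) and $p>8/3$ (center). The closing appeal to ``density of smooth connections and uniform estimates inherited from $p>2$'' at $p=2$ is circular for the same reason: there is no uniform weighted estimate to inherit. To complete a proof one must either restrict the domain (e.g.\ to the Coulomb/Uhlenbeck neighborhoods of anti-self-dual connections on which the lemma is actually used in this paper, or to connections with prescribed curvature decay near $s$), or work in the precise setting of the cited propositions of Feehan; as written, smoothness of \eqref{eq:Center_scale_maps} on all of $\sA(P)$ for every $p\in[2,\infty)$ is not established by your argument.
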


\begin{lem}[Codimension-five submanifold of centered connections]
\label{lem:Submanifold_centered_connections}
(See Feehan \cite[Section 4]{Feehan_yangmillsenergy_lojasiewicz4d_v1} for related calculations.)
Let $G$ be a compact Lie group and $P_1$ be a non-product smooth principal $G$-bundle over $(S^4,g_\round)$. If $p \in [2,\infty)$, then the map \eqref{eq:Center_scale_maps} is a smooth submersion and the subset of \emph{centered connections} on $P$,
\begin{equation}
\label{eq:Centered_connections}  
\sA^\diamond(P) := \left\{A \in \sA(P): (\Center[A],\Scale[A]) = (\bzero,1)\right\},
\end{equation}
is a smooth submanifold of $\sA(P)$ of codimension five. 
\end{lem}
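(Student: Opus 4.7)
The plan is to reduce the submersion claim to a finite-dimensional invertibility calculation, using the conformal action of $\RR^4\times\RR_+$ on $\sA(P_1)$ as a built-in right inverse. Smoothness of the map
\[
\Phi:\sA(P_1) \ni A \mapsto (\Center[A],\Scale[A]) \in \RR^4\times\RR_+
\]
is already given by Lemma \ref{lem:Smoothness_center_scale_maps}, so what remains is to show, for every $A\in\sA(P_1)$, that $d\Phi(A):W^{1,p}(T^*S^4\otimes\ad P_1)\to\RR^4\times\RR$ is surjective and that $\Ker d\Phi(A)$ admits a closed complement. The codimension-five submanifold statement then follows immediately by applying Theorem \ref{mainthm:Preimage_point_under_submersion_and_implied_embedding} to $\Phi^{-1}(\bzero,1)$.

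First I would fix any $A\in\sA(P_1)$ and consider the orbit map
\[
\alpha_A:\RR^4\times\RR_+ \to \sA(P_1), \qquad (z,\lambda)\mapsto \tilde h_{z,\lambda}^*A.
\]
Since $P_1$ is non-product the connection $A$ is not flat, so $\Scale[A]>0$. By Lemma \ref{lem:Centering_connection_over_4sphere}, the composition satisfies
\[
\Phi\circ\alpha_A:(z,\lambda)\longmapsto (\lambda\,\Center[A]+z,\ \lambda\,\Scale[A]),
\]
whose derivative at $(\bzero,1)$ has Jacobian
\[
\begin{pmatrix} I_4 & \Center[A] \\ 0 & \Scale[A] \end{pmatrix},
\]
with determinant $\Scale[A]\neq 0$. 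By the chain rule $d\Phi(A)\circ d\alpha_A(\bzero,1)$ is therefore a linear isomorphism $\RR^5\to\RR^5$; in particular $d\Phi(A)$ is surjective and $d\alpha_A(\bzero,1)$ is injective. Writing $V:=\Ran d\alpha_A(\bzero,1) \subset T_A\sA(P_1)$, this is a five-dimensional (hence automatically closed) subspace on which $d\Phi(A)$ restricts to an isomorphism onto $\RR^5$. Consequently
\[
T_A\sA(P_1) \;=\; V \oplus \Ker d\Phi(A),
\]
where $\Ker d\Phi(A)$ is closed as the kernel of a continuous linear operator. This verifies both conditions in Definition \ref{defn:Submersion_Banach_manifolds_boundary}, so $\Phi$ is a smooth submersion. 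Applying Theorem \ref{mainthm:Preimage_point_under_submersion_and_implied_embedding} with $x_0'=(\bzero,1)$ exhibits $\sA^\diamond(P_1)=\Phi^{-1}(\bzero,1)$ as a smooth Banach submanifold of codimension five.

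The only substantive obstacle is the smoothness of the orbit map $\alpha_A$ as a map of Banach manifolds. One needs $\RR^4\times\RR_+ \ni (z,\lambda) \mapsto \tilde h_{z,\lambda}^*A \in \sA(P_1)$ to be of class $C^1$ in the $W^{1,p}$ topology; since $\tilde h_{z,\lambda}$ extends to a smooth conformal diffeomorphism of $S^4$ depending smoothly on $(z,\lambda)$, this reduces to standard results on smoothness of pullback actions on Sobolev sections and should follow from the methods of \cite{Feehan_yangmillsenergy_lojasiewicz4d_v1} (to which Lemma \ref{lem:Smoothness_center_scale_maps} already refers). Once this smoothness is in hand, the rest of the argument above is purely linear algebra in the finite-dimensional quotient $\RR^5$.
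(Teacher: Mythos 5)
Your linear-algebra reduction is the right mechanism: Lemma \ref{lem:Centering_connection_over_4sphere} does make $\Phi\circ\alpha_A$ an explicit affine map of $(z,\lambda)$ whose derivative at $(\bzero,1)$ is the invertible matrix you wrote down whenever $\Scale[A]>0$, and non-flatness of every $A$ does follow from $P_1$ being non-product because $S^4$ is simply connected. The genuine gap is exactly the step you set aside at the end: the orbit map $\alpha_A(z,\lambda)=\tilde h_{z,\lambda}^*A$ is \emph{not} of class $C^1$ into $\sA(P_1)=\sA^{1,p}(P_1)$ for a general $W^{1,p}$ connection $A$, and the ``standard results on smoothness of pullback actions on Sobolev sections'' you invoke in fact say the opposite: reparameterization acts continuously on a Sobolev space of fixed regularity, but differentiability costs a derivative (the map $z\mapsto u(\cdot-z)$ is differentiable into $W^{k,p}$ only when $u\in W^{k+1,p}$, the derivative being $-\partial u$). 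Concretely, the would-be infinitesimal generators of the translation and dilation action at $A$ involve first derivatives of $A$ (equivalently, modulo infinitesimal gauge transformations, contractions of $F_A$ with conformal vector fields), and these lie only in $L^p$, not in $W^{1,p}(T^*S^4\otimes\ad P_1)=T_A\sA(P_1)$. Hence at a generic $A\in\sA^{1,p}(P_1)$ that is not in $W^{2,p}$, the operator $d\alpha_A(\bzero,1)$ does not exist as a map into $T_A\sA(P_1)$, the subspace $V=\Ran d\alpha_A(\bzero,1)$ is undefined, and the chain-rule factorization of your invertible $5\times 5$ matrix through $d\Phi(A)$ is unavailable. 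As written, your argument establishes the submersion property and the splitting $T_A\sA(P_1)=V\oplus\Ker d\Phi(A)$ only at connections with one extra derivative, which form a dense but proper subset; since surjectivity of $d\Phi(A)$ onto $\RR^5$ is an open condition in $A$ but not a closed one, density alone does not complete the proof.

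To close the gap you need admissible test directions $a\in W^{1,p}$ at an arbitrary non-flat $A$: for instance, compute $d\Center[A](a)$ and $d(\Scale^2)[A](a)$ directly (they are integrals of $\langle \varphi_n^*F_A,\varphi_n^*d_Aa\rangle$ against polynomial weights of degree at most two in the chart) and prove that the five resulting continuous linear functionals are linearly independent on $W^{1,p}(T^*S^4\otimes\ad P_1)$; or run your orbit computation at a smooth connection near $A$ and supply a quantitative estimate comparing $d\Phi(A)$ with $d\Phi$ at the smoothing while controlling the blow-up of the mollified orbit directions. Neither step is automatic, and this is precisely the analytic content that the paper itself does not prove but defers to \cite[Section 4]{Feehan_yangmillsenergy_lojasiewicz4d_v1}. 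The remainder of your argument is fine: a five-dimensional complement is automatically closed, $\Ker d\Phi(A)$ is closed, and, since the boundary condition in Definition \ref{defn:Submersion_Banach_manifolds_boundary} is vacuous here, Theorem \ref{mainthm:Preimage_point_under_submersion_and_implied_embedding} then yields that $\sA^\diamond(P_1)=\Phi^{-1}(\bzero,1)$ is a smooth Banach submanifold of codimension five once the submersion property is actually in hand at every point of the fiber.
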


\subsection{Anti-self-dual connections over the four-dimensional sphere}
\label{subsec:Anti-self-dual_connections_over_4-sphere}
While we shall need to draw on facts concerning the moduli space of anti-self-dual connections over $S^4$ until we complete our proof of Theorem \ref{mainthm:Gluing} in Section \ref{sec:Completion_proof_main_gluing_theorem}, it is convenient review them here.
For any compact Lie group $G$ and principal $G$-bundle $P$ over $S^4$ with \emph{instanton number} $\kappa(P)$ is greater than or equal to $k_G$, a positive integer determined by $G$ (see, for example, \cite[Theorem 8.4]{AHS} for the computation of $k_G$ for all compact simple Lie groups), the moduli space $M(P,g_\round)$ of anti-self-dual connections with minimal stabilizer (equal to the center of $G$) is a non-empty, smooth manifold by virtue of the construction due to Atiyah, Hitchin, Drinfel$'$d, and Manin \cite{AtiyahGeomYM, ADHM, Atiyah_Hitchin_Singer_1977, AHS}; see also Bernard, Christ, Guth, and Weinberg \cite{Bernard_Christ_Guth_Weinberg_1977}. When $G=\SU(2)$, then $k_G=1$ and $M(P,g_\round)$ is diffeomorphic to the five-dimensional, open unit ball in $\RR^5$ when $\kappa(P)=1$. More generally, $M(P,g_\round)$ has dimension $8k-3$ when $G=\SU(2)$ and $\kappa(P)=k\geq 1$.

When $G$ is one of the classical Lie groups (namely, $\SU(n)$, $\SO(n)$, or $\Sp(n)$), we recall from Donaldson \cite{DonInstGeomInvar} or Donaldson and Kronheimer \cite[Section 3.3 and Theorem 3.3.8]{DK}, that the moduli space of instantons $M(P,g_\round)$ over $S^4\cong \RR^4\cup\{\infty\}$ is most naturally viewed, due to their identification with moduli spaces of stable holomorphic vector bundles, as a \emph{framed} (or \emph{based}) moduli space $M_0(P,g_\round)$ comprising pairs of anti-self-dual connections $A$ on $P$ and fiber points $p_s \in P|_s = P|_\infty$, modulo the action of automorphisms of $P$. The moduli space $M_0(P,g_\round)$ has dimension $8k$ when $G=\SU(2)$ and $\kappa(P)=k\geq 1$.

\subsection{Definition of the splicing map}
We continue the notation of Sections \ref{subsec:Local_Kuranishi_parameterization_neighborhood_interior_point_Taubes_approach} and \ref{subsec:Local_Kuranishi_parameterization_neighborhood_boundary_point}. In order to define the splicing map $\cS$, we first choose a point $x\in X$, a positive constant $\delta$ such that $8\sqrt{\delta} < \Inj(X,g)$, that is, $\delta < \Inj(X,g)^2/64$. Here, $\Inj_x(X,g)$ denotes the injectivity radius of $(X,g)$ at a point $x\in X$ and $\Inj(X,g) = \inf_{x\in X}\Inj_x(X,g)$ denotes the injectivity radius of $(X,g)$. The injectivity radius of $(S^4,g_\round)$ is equal to $\pi$ and so, if necessary, we also shrink $\delta$ so that $\delta<\pi^2/64$. Choose points $p_0(x) \in P_0|_x$ and $p_1 \in P_1|s$. We choose smooth reference connections $A_{0\flat}$ on $P_0$ and $A_{1\flat}$ on $P_1$ and construct smooth local sections $\varsigma_0$ of $P_0 \restriction B_\varrho(x)$ and $\varsigma_1$ of $P_1\restriction S^4\less\{n\}$ by parallel translation of $p_0$ and $p_1$ along radial geodesics emanating from $x$ and $s$, respectively, where we denote $\varrho=\Inj(X,g)$ for convenience. We choose an oriented, orthonormal frame $v(x)$ for $TX|_x$ and hence define an inverse local coordinate chart
\[
  \varphi_x = \exp_{v(x)}:TX|_x \supset B_\varrho(x) \to X.
\]
We fix, once and for all, an oriented, orthonormal frame $v(s)$ for $TS^4|_s=\RR^4$, and hence define an inverse local coordinate chart
\[
  \varphi_s = \exp_{v(s)}:TS^4|_s \supset B_\pi(s) \cong S^4\less\{n\} \subset S^4.
\]
Given constants $0<r_0<r_1<\varrho$, we let
\[
  \Omega(x;r_0,r_1) = \{y\in X: r_0 < \dist_g(y,x) < r_1\} = B_{r_1}(x) \less \bar B_{r_0}(x) \subset X
\]
denote the open annulus with radii $r_0<r_1$ and center $x$ and similarly define $\Omega(s;r_0,r_1) \subset S^4$. When $X=\RR^4$ and $x_0$ is the origin, we simply write $B_r = B_r(0)$ and
\[
  \Omega(r_0,r_1) = \{y\in \RR^4: r_0 < |y| < r_1\} = B_{r_1} \less \bar B_{r_0} \subset \RR^4.
\]
This annulus is simply connected (in fact, strongly simply connected in the sense of \cite[p. 161]{DK}), so there exists a smooth trivialization
\[
  P_0 \restriction \Omega(x;r_0,r_1) \cong \Omega(x;r_0,r_1) \times G.
\]
Recall that $p \in (2,\infty)$ is a constant. It is a consequence of the proof of Uhlenbeck's local Coulomb gauge estimate \cite[Theorem 1.3 or Theorem 2.1 and Corollary 2.2]{UhlLp} for the unit ball that if $A_0$ is a $W^{1,p}$ connection on $P_0$ that obeys
\[
  \|F_{A_0}\|_{L^2(\Omega(x;r_0,r_1))} < \eps,
\]
for small enough $\eps = \eps(g,G,r_0,r_1) \in (0,1]$, then there exist a constant $C=C(g,G,p,r_0,r_1)\in[1,\infty)$ and a $W^{2,p}$ local gauge transformation $u_0:\Omega(x;r_0,r_1)\to P_0$ such that
\[
  d^*u_0(\varsigma_0^*A_0) = 0 \quad\text{and}\quad \|u_0(\varsigma_0^*A_0)\|_{W^{1,p}(\Omega(x;r_0,r_1))} \leq C\|F_{A_0}\|_{L^p(\Omega(x;r_0,r_1))}.
\]
See Marini \cite{Marini_1992, Marini_1999, Marini_2000} and Wehrheim \cite{Wehrheim_2004} for statements of this kind.
If we assume $r_1=64r_0$ then, by conformal invariance in $\RR^4$ (with its standard Euclidean metric) and estimates for the Riemannian metric $g$ in local coordinates defined by its exponential map \cite{Aubin_1998},
the constants $\eps$ and $C$ are independent of $r_0,r_1$. For convenience, we write $\sigma_0 = \varsigma_0\cdot u_0$, so that $u_0(\varsigma_0^*A_0) = \sigma_0^*A_0$. The proof of existence of $u_0$ relies on the Implicit Mapping Theorem, so $u_0 \in W^{2,p}(\Omega(x;r_0,r_1);G)$
varies smoothly with the $W^{1,p}$ connection $A_0$
The analogous remarks apply to a $W^{1,p}$ connection $A_1$ on $P_1$.

\begin{defn}[Splicing map]
\label{defn:Splicing_map_connections} 
\begin{equation}
\label{eq:Splicing_map_connections}
\cS: \sA(P_0) \times \sA(P_1) \times \Gl_{x_{0\flat}} \times B_\delta(x_{0\flat}) \times (0,\lambda_0)
\ni (A_0,A_1,\rho,x_0,\lambda) \mapsto A \in \sA(P).
\end{equation}
\end{defn}

Our definition of the splicing map \eqref{eq:Splicing_map_connections} is a combination of those described by Donaldson and Kronheimer \cite{DK} and Taubes \cite{TauSelfDual, TauIndef, TauFrame}.

Note that to define $\cS$ and compute and estimate its derivatives with respect to all parameters, we shall need to restrict our attention to open subsets of connections $A_0 \in \sA(P_0)$ and $A_1 \in \sA(P_1)$ with
\[
  \|F_{A_0}\|_{L^2(B_\delta(x_{0\flat}))} < \eps \quad\text{and}\quad \|F_{A_1}\|_{L^2(B_\delta(s))} < \eps.
\]
This involves no loss in generality for our application and avoids our having to consider annuli in our definition of the domain space, though we have to consider them in the codomain space when checking surjectivity onto an open subset.

For the purpose of computing and estimating derivatives of $\cS$, we can adopt simpler local approaches that avoid having sections $\sigma_i$ and trivializations $\tau_i$ of $P_i$ depend on the variable connections $A_i$, for $i=0,1$:
\begin{itemize}
\item Fix local sections $\sigma_i$ and restrict attention to connections $A_i$ with $\|\sigma_i^*A_i\|_{W^{1,2}(B_i)} < \zeta$, where $\zeta\in(0,1]$ is small;
\item Fix local sections $\sigma_i$ and smooth reference connections $A_i^o$ with $\|\sigma_i^*A_i^o\|_{W^{1,p}(B_i)} < \zeta$ and then only consider open balls of $W^{1,p}$ connections $A_i$ with $\|A_i-A_i^o\|_{W^{1,p}(X)} < \eta$, where $\eta\in(0,1]$ is small;
\item While we do want to ultimately consider $x\in X$ and $\lambda\in(0,\delta]$ as parameters, we are not forced to do so and could instead consider the complete family of $W^{1,p}$ connections $A_1$ on $P_1$, rather than restrict to \emph{centered} connections.   
\end{itemize}
We do not need to make these simplifications for the purpose of defining $\cS$ or proving continuity of $\cS$, only for the purpose of computing and estimating derivatives.

Note that the space $\Isom(P|_{x_0},P_1|_s)$ of maps that are equivariant with respect to the right action of $G$ on the fibers of $P_0$ and $P_1$ may be (non-canonically) identified with a copy of $G$ (via a choice of point $p_0\in P_0|_{x_0}$).

In Feehan \cite{FeehanGeometry} and Peng \cite{Peng_1996} (though apparently not in Peng \cite{Peng_1995}), we also pulled back the connections $A_1$ on $S^4\less \varphi_s(B_{2\sqrt{\lambda}})$ via conformal maps to small balls $B_{\sqrt{\lambda}/2}(x_0) \subset X$. This step was done because the goal was to compute and estimate components of the $L^2$ metric on the moduli spaces of anti-self-dual connections on $P$, which is not conformally invariant. However, since that is not our goal here, we do not perform these pullbacks.


\subsection{Connected-sum manifold}
Let $\HH$ be the four-dimensional division algebra of quaternions and $\HH\PP^1 = S^4$ be right quaternionic projective space, with coordinate patches $U_n = \{[p,q]\in\HH^2:q\neq 0\} = S^4\less\{s\}$ around the north pole $n=[0,1]$ and $U_s = \{[p,q]\in\HH^2:p\neq 0\} = S^4\less\{n\}$ around the south pole $s=[1,0]$. We identify $\HH=\RR^4$ as inner product spaces and let $\varphi_n^{-1} : S^4\less\{s\} \ni [p,q] \mapsto pq^{-1} \in \RR^4$ and $\varphi_s^{-1} : S^4\less\{n\} \ni [p,q] \mapsto qp^{-1} \in \RR^4$ denote the standard local coordinate charts. Note that $\varphi_s^{-1} \circ \varphi_n(x) = \iota(x)$ for all $x\less\HH\less\{0\}$, where the inversion map $\iota: \HH\less\{0\} \to \HH\less\{0\}$ is given by $x \mapsto x^{-1} = \bar x/|x|^2$ and $\bar x := (x_1,-x_2,-x_3,-x_4)$. 

We first assume that $g$ is flat near $x_0$ and identify the geodesic ball $B_r(x_0) \subset X$ with $B_r(0) \subset (TX)_{x_0}$ and $(TX)_{x_0}$ with $\RR^4$ via a choice of oriented, orthonormal frame $v_0$ for $(TX)_{x_0}$. Following the recipe in \cite[Section 7.2.1]{DK}, we construct a connected sum $X\# S^4$ as $X_0'\cup_{f_{x_0,\lambda}} X_1'$, where $X_0=X$, and $X_1=S^4$, and $X_0' = X \less B_{\sqrt{\lambda}/2}(x_0)$, and $X_1' = S^4 \less \varphi_s(B_{\sqrt{\lambda}/2})$. The orientation-preserving diffeomorphism,
\[
  f_{x_0,\lambda}: \Omega(x_0;\sqrt{\lambda}/2, 2\sqrt{\lambda}) \cong \varphi_s(\Omega(\sqrt{\lambda}/2, 2\sqrt{\lambda})),
\]
is defined by composition of the map $x \mapsto c_\lambda^{-1}\circ\iota(x) = \lambda/x$ with the local coordinate charts, where $c_\lambda(x) := x/\lambda$ for all $\lambda\in(0,\infty)$ and $x\in\RR^4$. The inverse local coordinate charts are $\varphi_0 = \exp_{v_0}: (TX)_{x_0} \supset B_\varrho(0) \to B_\varrho(x_0) \subset X$ and $\varphi_s: \RR^4 \to S^4 \less\{n\}$. Therefore
\[
  f_{x_0,\lambda} = \varphi_s \circ c_\lambda^{-1} \circ \iota \circ \varphi_0^{-1}: B_r(x_0) \cong S^4\less\varphi_s(\bar B_{\lambda/r}),
\]
for any $r\in (0,\Inj(X,g))$. Using $\varphi_s^{-1} \circ \varphi_n(x) = x^{-1} = \iota(x)$ and $(c_\lambda^{-1} \circ \iota)(x) = \lambda/x = (\iota \circ c_\lambda)(x)$, for all $x\in\HH\less\{0\}$, we can also write
\[
f_{x_0,\lambda} = \varphi_n \circ c_\lambda \circ \varphi_0^{-1}: B_r(x_0) \cong \varphi_n(B_{r/\lambda}).
\]
In particular, this gives an orientation-preserving diffeomorphism
\[
f_{x_0,\lambda} = \varphi_n \circ c_\lambda \circ \varphi_0^{-1}: B_{2\sqrt{\lambda}}(x_0) \cong \varphi_n(B_{2/\sqrt{\lambda}})
\]
or, equivalently,
\[
  f_{x_0,\lambda} = \varphi_s \circ c_\lambda^{-1} \circ \iota \circ \varphi_0^{-1}: B_{2\sqrt{\lambda}}(x_0)
  \cong S^4\less\varphi_s(\bar B_{\sqrt{\lambda}/2}),
\]
The preceding orientation-preserving diffeomorphism serves to define the oriented, smooth connected sum $X\# S^4$.

\subsection{Cutoff functions}
Given a point $x_0 \in X$ and a constant $r \in (0,\Inj(X,g))$, we define a smooth cutoff function $\chi_{x_0,r}:X\to [0,1]$ by setting
\begin{equation}
\label{eq:Cutoff_function_zero_on_half-ball_one_on_manifold_minus_ball}
\chi_{x_0,r}(x)
:=
\kappa(\dist_g(x,x_0)/r), \quad\forall\, x\in X,
\end{equation}
where $\kappa:\RR\to [0,1]$ is a smooth function such that $\kappa(t)=1$ for $t\geq 2$ and $\kappa(t)=0$ for $t\leq 1/2$.  Thus, we have
\[
\chi_{x_0,r}(x)
=
\begin{cases}
1 &\text{for } x\in X \less B_{2r}(x_0),
\\
0 &\text{for } x\in B_{r/2}(x_0).
\end{cases}
\]
An elementary calculation yields (see \cite[Lemma 5.8]{FLKM1} for example),
\begin{equation}
\label{eq:L4_bound_dchi}
\|d\chi_{x_0,r}\|_{L^4(X)} \leq C,
\end{equation}
where the constant $C=C(g) \in [1, \infty)$ is independent of $x_0 \in X$ and $r \in (0,\varrho)$, but rather depends only on the fixed universal choice of $\kappa$ via $|d\kappa| \leq 1$, and the injectivity radius $\varrho = \Inj(X,g)$.

\subsection{Riemannian metric on the connected sum of a four-manifold and four-sphere}
\label{subsec:Riemannian_metric_connected_sum_X_4-sphere}
One can define a metric $g_\lambda$ on $X\# S^4$ that is conformally equivalent to $g$ using Peng \cite[Equation (2.6)]{Peng_1995} when $g$ is flat near $x_0$. In Feehan \cite[Definition 3.11]{FeehanGeometry}, irrespective of whether $g$ is flat near $x_0$, we define a conformal structure on $X\# S^4$, though not an actual metric, since we do not interpolate between the almost round metric $\tilde g_\round$ on $S^4$ near $s$ and the possibly non-flat metric $g$ on $X$ near $x_0$. Recall that (see Jost \cite[Equation (1.4.39)]{Jost_riemannian_geometry_geometric_analysis_e7})
\begin{equation}
  \label{eq:Feehan_1995_3-1}
  \varphi_n^*g_\round(x) = \frac{4\delta_{\mu\nu}}{1+|x|^2}dx^\mu dx^\nu, \quad\forall\, x \in \RR^4.
\end{equation}
Following Peng \cite[Equation (2.6)]{Peng_1995}, we set
\begin{equation}
\label{eq:Peng_1995_2-6_locally_flat_metric}
g_\lambda
:=
\begin{cases}
g,  &\text{on } X \less B_{2\sqrt{\lambda}}(x_0),
\\
\chi_{x_0,\sqrt{\lambda}}\,g + (1-\chi_{x_0,\sqrt{\lambda}})\,f_{x_0,\lambda}^*g_\round,  &\text{on } \Omega(x_0; \sqrt{\lambda}/2, 2\sqrt{\lambda}),
\\
g_\round,  &\text{on } S^4 \less \varphi_s(B_{2\sqrt{\lambda}}),
\end{cases}
\end{equation}
When $\lambda\downarrow 0$, the connected sum $X\# S^4$ becomes a copy of $X$ and $S^4$ with $x_0\in X$ identified with $s \in S^4$ and $g_\lambda$ becomes a copy of $g$ on $X$ and $g_\round$ on $S^4$. When $g$ is non-flat, we set 
\begin{equation}
\label{eq:Peng_1995_2-6_non-flat_metric}
g_\lambda
:=
\begin{cases}
g,  &\text{on } X \less B_{2\sqrt{\lambda}}(x_0),
\\
\chi_{x_0,\sqrt{\lambda}}\,g + (1-\chi_{x_0,\sqrt{\lambda}})\,f_{x_0,\lambda}^*\tilde g_\round,  &\text{on } \Omega(x_0; \sqrt{\lambda}/2, 2\sqrt{\lambda}),
\\
\tilde g_\round,  &\text{on } S^4 \less \varphi_s(B_{2\sqrt{\lambda}}),
\end{cases}
\end{equation}
Here, the almost round metric on $S^4$ depends on $x_0$, $g$, and $\lambda$ by writing $\varphi_0^*g(x) = g_{\mu\nu}(x)dx^\mu dx^\nu$ on $B_\varrho(x_0)$ and setting
\begin{equation}
  \label{eq:Feehan_1995_3-33}
  \varphi_n^*\tilde g_\round(x) := \frac{4g_{\mu\nu}(\lambda x)}{1+|x|^2}dx^\mu dx^\nu, \quad\forall\, x \in B_{\varrho/\lambda},
\end{equation}
as in Feehan \cite[Definition 3.11]{FeehanGeometry} or Peng \cite[pp. 153--154]{Peng_1995}. Note that
\[
  c_\lambda^*\varphi_n^*\tilde g_\round(x) = \frac{4\lambda^{-2}g_{\mu\nu}(x)}{1+|x/\lambda|^2}dx^\mu dx^\nu
  = \frac{4g_{\mu\nu}(x)}{\lambda^2+|x|^2}dx^\mu dx^\nu, \quad\forall\, x \in B_\varrho.
\]
Thus, $f_{x_0,\lambda}^*\tilde g_\round = (\varphi_n \circ c_\lambda \circ \varphi_0^{-1})^*\tilde g_\round = (\varphi_0^{-1})^*c_\lambda^*\varphi_n^*\tilde g_\round$ is conformally equivalent, though not equal, to $g$ on $B_\varrho(x_0)$. Recall from Aubin \cite[Corollary 1.32]{Aubin_1998} or Jost \cite[Theorem 1.4.4]{Jost_riemannian_geometry_geometric_analysis_e7} that $g_{\mu\nu}(0) = \delta_{\mu\nu}$ and $(\partial g_{\mu\nu}/\partial x^\alpha)(0) = 0$ for all indices $\mu,\nu,\alpha$, and so
\begin{subequations}
  \begin{align}
  |g_{\mu\nu}(x) - \delta_{\mu\nu}| &= O(|x|^2),
  \\
  \left|\frac{\partial g_{\mu\nu}}{\partial x^\alpha}(x)\right| &= O(|x|), \quad\forall\, x \in B_\varrho,
\end{align}
\end{subequations}
since $\varphi_0^{-1}$ is a geodesic normal coordinate chart on $B_\varrho(x_0)$ for the metric $g$.

\subsection{Spliced principal $G$-bundle}
We define a spliced principal $G$-bundle $P$ over $X$ by setting
\begin{equation}
\label{eq:Spliced_principal_G-bundle_P}
P
:= 
\begin{cases}
  P_0 &\text{over } X \less B_{\sqrt{\lambda}/2}(x_0),
  \\
  P_1 &\text{over } S^4 \less \varphi_s(B_{\sqrt{\lambda}/2}).
\end{cases}
\end{equation}
The bundles $P_0$ and $P_1$ are identified over the annulus $\Omega(x_0,\sqrt{\lambda}/2,2\sqrt{\lambda})$ in $X$ via the isomorphisms of principal $G$-bundles defined by the orientation-preserving diffeomorphism $f_{x_0,\lambda}$ that identifies the annulus $\Omega(x_0,\sqrt{\lambda}/2,2\sqrt{\lambda})$ with the annulus $\varphi_s(\Omega(\sqrt{\lambda}/2,2\sqrt{\lambda}))$ in $S^4$ and the $G$-bundle map defined by the trivializations $\tau_0$ and $\tau_1$ determined by the sections $\sigma_0$ and $\sigma_1$.

\subsection{Splicing map for connections}
\label{subsec:Splicing_map}
We define a cutoff connection on $P_0$ by
\begin{equation}
\label{eq:Cutoff_connection_A0}
A_0'
:=
\begin{cases}
A_0,  &\text{over } X \less B_{2\sqrt{\lambda}}(x_0),
\\
\Theta + \chi_{x_0,\sqrt{\lambda}}\,\sigma_0^*A_0,  &\text{over } \Omega(x_0; \sqrt{\lambda}/2, 2\sqrt{\lambda}),
\\
\Theta,  &\text{over } B_{\sqrt{\lambda}/2}(x_0),
\end{cases}
\end{equation}
where $\Theta$ is the product connection on $B_\varrho(x_0)\times G$. We define a cutoff connection on $P_1$ by
\begin{equation}
\label{eq:Cutoff_connection_A1}
A_1'
:=
\begin{cases}
A_1,  &\text{over } S^4 \less \varphi_s(B_{2\sqrt{\lambda}}(s)),
\\
\Theta + (1-\chi_{s,\sqrt{\lambda}})\sigma_1^*A_1,  &\text{over } \Omega(s; \sqrt{\lambda}/2, 2\sqrt{\lambda}),
\\
\Theta,  &\text{over } B_{\sqrt{\lambda}/2}(s),
\end{cases}
\end{equation}
where $\Theta$ is the product connection on $S^4\less\{n\}\times G$. Finally, we define a spliced principal $G$-bundle $P$ and a spliced connection $A$ on $P$ by
\begin{equation}
\label{eq:Spliced_connection_A}
A
:=
\begin{cases}
A_1,  &\text{over } S^4 \less \varphi_s(B_{2\sqrt{\lambda}}(s)),
\\
\Theta + \chi_{x_0,\sqrt{\lambda}}\,\sigma_0^*A_0 + (1-\chi_{x_0,\sqrt{\lambda}})f_{x_0,\lambda}^*\sigma_1^*A_1,  &\text{over } \Omega(x_0; \sqrt{\lambda}/2, 2\sqrt{\lambda}),
\\
A_0,  &\text{over } X \less B_{\sqrt{\lambda}/2}(x_0),
\end{cases}
\end{equation}
where $\Theta$ is the product connection on $B_\varrho(x_0)\less\{x_0\}\times G$.

\subsection{Unsplicing map and surjectivity of the splicing map for connections}
\label{subsec:Surjectivity_splicing_map_connections}
We shall prove that the splicing map $\cS$ in \eqref{eq:Splicing_map_connections} is surjective by exhibiting an explicit (smooth) \emph{right inverse} $\cU$, called the \emph{unsplicing map}. Since we shall need to refer to the parameters that we fix once and for all in our definitions of splicing and unsplicing maps, we collect these choices in the following data set:

\begin{data}[Fixed auxiliary parameters for the definition of splicing and unsplicing maps]
\label{data:Fixed_parameters_for_definition_splicing_and_unsplicing_maps}
Let $(X,g)$ be a closed, connected, four-dimensional, oriented, smooth Riemannian manifold, $G$ be a compact Lie group, $P_0$ be a smooth principal $G$-bundle over $X$, and $P_1$ be a smooth principal $G$-bundle over $S^4$, where $S^4 = \{x\in\RR^5:|x|=1\}$ is the four-dimensional sphere with its standard round metric $g_\round$ of radius one. Let $x_0 \in X$ be a point, $v$ be an oriented, orthonormal frame for $TX|_{x_0}$, and $p_0 \in P_0|_{x_0}$ and $p_1 \in P_1|_s$ be fiber points (where $s\in S^4$ denotes the south pole), and $A_{0\flat}$ on $P_0$ and $A_{1\flat}$ on $P_1$ be smooth connections, and $\varrho_0 \in (0,1]$ be a constant such that the Riemannian metric $g$ is flat on the geodesic ball $B_{\varrho_0}(x_{0\flat})$.
\end{data}

In applications to topology, as discussed for example by the authors in \cite{FL5}, one needs to allow some of the parameters recorded in Data \ref{data:Fixed_parameters_for_definition_splicing_and_unsplicing_maps} to vary. For now, we note that given $p_0 \in P_0|_{x_0}$, we obtain different points $\rho(p_0) \in P_1|_s$ by varying $\rho \in \Gl_{x_{0\flat}}$; conversely, given $p_1 \in P_1|_s$, we obtain different points $\rho^{-1}(p_1) \in P_0|_{x_0}$ varying $\rho \in \Gl_{x_{0\flat}}$.

In our definition of the splicing map $\cS$ in Section \ref{subsec:Splicing_map}, we used the facts that $P_0 \restriction B_\varrho(x_0) \cong B_\varrho(x_0) \times G$ and $P_1 \restriction S^4 \less\{n\} \cong S^4 \less\{n\}\times G$, where the local trivializations of the principal $G$-bundles are defined by the data $(p_0,p_1,A_{0\flat},A_{1\flat})$. The choice of oriented, orthonormal frame $v$ and resulting geodesic normal coordinate chart and the standard coordinate chart for $S^4$ yield the principal $G$-bundle $P$ over $X\# S^4$ by identifying $P_0$ and $P_1$ over annuli in $X$ and $S^4$, respectively, with $\Omega\times G$, where $\Omega\subset\RR^4$ is an open annulus.

Consider $A\in \sA(P)$. After suppressing notation for the local coordinate charts and local trivializations, we may write
\[
A = \chi_{x_0,\sqrt{\lambda}}\,A + (1-\chi_{x_0,\sqrt{\lambda}})A \quad\text{on } B_\varrho(x_0)\less\{x_0\} \times G,
\]
where $\chi_{x_0,\sqrt{\lambda}}\,A \in \sA(P_0)$ and $(1-\chi_{x_0,\sqrt{\lambda}})\,A \in \sA(P_1)$. Now $\chi_{x_0,\sqrt{\lambda}/4} = 1$ on $X\less B_{\sqrt{\lambda}/2}(x_0)$ (the support of $\chi_{x_0,\sqrt{\lambda}}$) and $\chi_{x_0,\sqrt{\lambda}/4} = 0$ on $B_{\sqrt{\lambda}/8}(x_0)$. On the other hand, $1-\chi_{x_0,4\sqrt{\lambda}} = 1$ on $B_{2\sqrt{\lambda}}(x_0)$ (the support of $1-\chi_{x_0,\sqrt{\lambda}}$) and $1-\chi_{x_0,4\sqrt{\lambda}} = 0$ on $X\less B_{8\sqrt{\lambda}}(x_0)$. Hence, we can write
\begin{align*}
  A &= \chi_{x_0,\sqrt{\lambda}}\,\chi_{x_0,\sqrt{\lambda}/4} A + (1-\chi_{x_0,\sqrt{\lambda}})(1-\chi_{x_0,4\sqrt{\lambda}})A
  \\
  &= \chi_{x_0,\sqrt{\lambda}}\,A_0 + (1-\chi_{x_0,\sqrt{\lambda}})A_1,
\end{align*}
where
\[
  A_0 := \chi_{x_0,\sqrt{\lambda}/4} A \quad\text{and}\quad A_1 := (1-\chi_{x_0,4\sqrt{\lambda}})A.
\]
In particular, $A = \cS(A_0,A_1,\rho,x_0,\lambda)$ and $\cS$ is surjective by construction.

Since these pairs of cutoff functions arise frequently in this article, we abbreviate them as
\begin{equation}
  \label{eq:Partition_unity_and_covering_cutoff_functions}
  \chi_0 := \chi_{x_0,\sqrt{\lambda}}, \quad \chi_1 := 1-\chi_{x_0,\sqrt{\lambda}}, \quad \psi_0 := \chi_{x_0,\sqrt{\lambda}/4}, \quad\text{and}\quad \psi_1 := \chi_{x_0,\sqrt{\lambda}/4}.
\end{equation}
Note that $\chi_0+\chi_1=1$ on $X$ while $\psi_0=1$ on $\supp\chi_0$ and $\psi_1=1$ on $\supp\chi_1$. The assertion that (as a map of sets) $\cS$ is surjective is equivalent to the assertion that it has a right inverse $\cU$:
\[
  \cS\circ\cU = \id \quad\text{on } \sA(P) \times \Gl_{x_{0\flat}}\times B_\delta(x_{0\flat}) \times (0,\lambda_0).
\]
We define a candidate for this right inverse by 
\begin{multline}
\label{eq:Unsplicing_map_connections}
\cU: \sA(P) \times \Gl_{x_{0\flat}}\times B_\delta(x_{0\flat}) \times (0,\lambda_0) \ni (A,\rho,x_0,\lambda)
\\
\mapsto (\psi_0A,\psi_1A,\rho,x_0,\lambda) \in \sA(P_0) \times \sA(P_1) \times \Gl_{x_{0\flat}}\times B_\delta(x_{0\flat}) \times (0,\lambda_0).
\end{multline}
Plainly, $\cU$ is smooth map. It is worth noting that the connection $A_1=\psi_1A\in\sA(P_1)$ need \emph{not} be centered with respect to the north pole in $S^4$. When we eventually apply the Inverse Function Theorem for $C^1$ maps of smooth Banach manifolds with boundary (in the form of Theorem \ref{mainthm:Preimage_submanifold_under_transverse_map_and_implied_embedding}), however, we shall obtain a gluing embedding map whose domain contains a factor corresponding to \emph{centered} anti-self-dual connections on $P_1$ over $S^4$.

\begin{lem}[Right inverse and surjectivity of the splicing map]
\label{lem:Surjectivity_splicing_map}
The map $\cU$ in \eqref{eq:Unsplicing_map_connections} is a right inverse for the splicing map $\cS$ in \eqref{eq:Splicing_map_connections} and thus $\cS$ is surjective.
\end{lem}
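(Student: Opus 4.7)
The plan is to verify the identity $\cS\circ\cU = \id$ pointwise on $\sA(P) \times \Gl_{x_{0\flat}}\times B_\delta(x_{0\flat}) \times (0,\lambda_0)$ by checking, for a fixed tuple $(A,\rho,x_0,\lambda)$, that the spliced connection $\cS(\psi_0 A, \psi_1 A, \rho, x_0, \lambda)$ agrees with $A$ on each of the three regions appearing in the definition \eqref{eq:Spliced_connection_A} of the splicing. Surjectivity of $\cS$ will then be a formal consequence, since every $A\in \sA(P)$ lies in the image via $\cU(A,\rho,x_0,\lambda)$ for any choice of the remaining parameters. The calculation is essentially the one already carried out in the paragraphs preceding the lemma; the proof only needs to package that calculation and check compatibility of the trivializations used to define $\cS$ and $\cU$.

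First, I will unfold the definitions: by \eqref{eq:Unsplicing_map_connections}, $\cU(A,\rho,x_0,\lambda) = (A_0, A_1, \rho, x_0, \lambda)$ with $A_0 := \psi_0 A$ and $A_1 := \psi_1 A$, where (correcting the notational shorthand in \eqref{eq:Partition_unity_and_covering_cutoff_functions}) $\psi_0 = \chi_{x_0,\sqrt{\lambda}/4}$ and $\psi_1 = 1 - \chi_{x_0,4\sqrt{\lambda}}$. The support properties of the cutoffs give $\psi_0 \equiv 1$ on $X\less B_{\sqrt{\lambda}/2}(x_0) \supset \supp\chi_0$ and $\psi_1\equiv 1$ on $B_{2\sqrt{\lambda}}(x_0) \supset \supp\chi_1$, so $A_0$ and $A_1$ really lie in $\sA(P_0)$ and $\sA(P_1)$ respectively (being connections that equal the product connection $\Theta$ near the puncture points that must be filled in for the connected summing).

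Second, I will verify the identity region by region, using the three-case formula \eqref{eq:Spliced_connection_A}. On $X\less B_{2\sqrt{\lambda}}(x_0)$ the spliced connection reduces to $A_0 = \psi_0 A$, and since $\psi_0\equiv 1$ on this set, this equals $A$. Dually, on $B_{\sqrt{\lambda}/2}(x_0) \cong S^4\less\varphi_s(B_{2\sqrt{\lambda}}(s))$ the spliced connection reduces to $A_1 = \psi_1 A$ and equals $A$ since $\psi_1\equiv 1$ there. On the annulus $\Omega(x_0;\sqrt{\lambda}/2, 2\sqrt{\lambda})$ both $\psi_0$ and $\psi_1$ are identically $1$, so after reading the spliced connection in the common trivialization determined by $\sigma_0$, $\sigma_1$ and the identification $f_{x_0,\lambda}$, the spliced middle formula collapses to
\[
\Theta + \chi_0\,\sigma_0^*A + \chi_1\, f_{x_0,\lambda}^*\sigma_1^*A = (\chi_0 + \chi_1)\,\sigma_0^*A = \sigma_0^*A,
\]
which is the local expression for $A$.

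The only non-formal step is the middle equality on the annulus, which requires the bundle identifications in Data \ref{data:Fixed_parameters_for_definition_splicing_and_unsplicing_maps} to be used consistently in the definitions of both $\cS$ and $\cU$: the trivialization of $P\restriction \Omega(x_0;\sqrt{\lambda}/2, 2\sqrt{\lambda})$ used to cut $A$ into the pair $(\psi_0 A, \psi_1 A)$ in $\sA(P_0)\times\sA(P_1)$ is the same one through which $\cS$ glues $P_0$ and $P_1$ via $f_{x_0,\lambda}$, so that $\sigma_0^*A$ and $f_{x_0,\lambda}^*\sigma_1^*A$ are literally the same local $\ad P$-valued one-form in the overlap. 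Once this compatibility is recorded, smoothness of $\cU$ is immediate from smoothness of multiplication by the fixed cutoffs $\psi_0,\psi_1$, and surjectivity of $\cS$ follows from $\cS\circ\cU = \id$. I do not anticipate a serious obstacle; the sole thing to be careful about is that the local trivializations used in Section \ref{subsec:Splicing_map} agree with those used in \eqref{eq:Unsplicing_map_connections}, which is a matter of bookkeeping rather than analysis.
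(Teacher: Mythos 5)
Your proposal is correct and follows essentially the same route as the paper: the paper's proof is the one-line computation $\cS\circ\cU(A,\rho,x_0,\lambda)=\chi_0\psi_0A+\chi_1\psi_1A=\chi_0A+\chi_1A=A$, which is exactly your region-by-region verification packaged via the support properties $\psi_i\equiv 1$ on $\supp\chi_i$. Your explicit attention to the trivialization bookkeeping on the annulus (and your correction of the evident typo in \eqref{eq:Partition_unity_and_covering_cutoff_functions}, where $\psi_1$ should read $1-\chi_{x_0,4\sqrt{\lambda}}$) only makes explicit what the paper leaves implicit.
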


\begin{proof}
We observe that
\begin{align*}
  \cS\circ\cU(A) &= (\psi_0A,\psi_1A,\rho_0,x_0,\lambda_0)
  \\
                 &= \chi_0\psi_0A + \chi_1\psi_1A
  \\
  &= \chi_0A + \chi_1A = A,
\end{align*}
and so $\cU$ is a right inverse for $\cS$, as claimed.
\end{proof}

\subsection{Splicing map is a smooth submersion}
\label{subsect:Splicing_map_smooth_submersion}
We have the

\begin{prop}[$C^k$ submersion property of the splicing map]
\label{prop:Smooth_submersion_property_splicing_map}
For any integer $k\in\NN$, the splicing map $\cS$ in \eqref{eq:Splicing_map_connections} is a $C^k$ submersion.
\end{prop}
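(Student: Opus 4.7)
The plan is to verify the $C^k$ regularity and the submersion property separately, both by direct examination of the explicit formula for $\cS$ given in \eqref{eq:Spliced_connection_A}. For the regularity, after choosing smooth reference connections and local trivializations $\sigma_0, \sigma_1$ as in Section \ref{subsec:Splicing_map}, the splicing map becomes, on the annular region, the affine assignment
\[
(A_0,A_1,\rho,x_0,\lambda)\;\longmapsto\; \Theta + \chi_{x_0,\sqrt{\lambda}}\,\sigma_0^*A_0 + (1-\chi_{x_0,\sqrt{\lambda}})\,f_{x_0,\lambda}^*\sigma_1^*A_1,
\]
and is simply $A_0$ or $A_1$ on the complementary regions. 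Dependence on $(A_0,A_1)$ is $\RR$-affine, dependence on $\rho\in\Gl_{x_{0\flat}}\cong G$ is $C^\infty$ because it enters only through the finite-dimensional action on fibers which fixes the identification of $\sigma_0^*$ and $\sigma_1^*$, and dependence on $(x_0,\lambda)\in B_\delta(x_{0\flat})\times(0,\lambda_0)$ enters only through the cutoff $\chi_{x_0,\sqrt{\lambda}}$ and the conformal diffeomorphism $f_{x_0,\lambda}=\varphi_n\circ c_\lambda\circ\varphi_0^{-1}$. Both are $C^\infty$ as families of multiplication or pullback operators between the relevant Sobolev spaces $W^{1,p}$ and $L^p$, since $\lambda>0$ keeps $f_{x_0,\lambda}$ away from degeneration and all derivatives of the cutoff in $(x_0,\lambda)$ remain uniformly bounded on the compactly supported overlap region. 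Composing these $C^\infty$ building blocks yields a $C^k$ (in fact $C^\infty$) map for every $k\in\NN$.

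For the submersion property, the strategy is to promote the global right inverse $\cU$ of \eqref{eq:Unsplicing_map_connections} to a local section of $\cS$ through any prescribed point. Fix $(A_0,A_1,\rho,x_0,\lambda)$ in the domain with image $A=\cS(A_0,A_1,\rho,x_0,\lambda)\in\sA(P)$, and for $B$ in an open neighbourhood of $A$ in $\sA(P)$ set
\[
s(B) := \bigl(A_0 + \psi_0(B-A),\; A_1 + \psi_1(B-A),\; \rho,\; x_0,\; \lambda\bigr),
\]
where $B-A\in W^{1,p}(T^*X\otimes\ad P)$ is formed using the identifications of Section \ref{subsec:Splicing_map} and $\psi_0,\psi_1$ are the covering cutoffs from \eqref{eq:Partition_unity_and_covering_cutoff_functions}. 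The key algebraic identity is that $\psi_i\equiv 1$ on $\supp\chi_i$, so that, by linearity of the splicing construction in the perturbation directions $(B-A)\cdot\psi_0$ and $(B-A)\cdot\psi_1$,
\[
\cS\circ s(B) \;=\; A + (\chi_0\psi_0 + \chi_1\psi_1)(B-A) \;=\; A + (\chi_0+\chi_1)(B-A) \;=\; B,
\]
using $\chi_0+\chi_1=1$ on all of $X$. Clearly $s(A)=(A_0,A_1,\rho,x_0,\lambda)$, and $s$ is $C^\infty$ for the same reasons as $\cU$. Thus by Definition \ref{defn:Margalef-Roig_4-1-1}, $\cS$ is a $C^k$ submersion at the prescribed point, and since that point was arbitrary, $\cS$ is a $C^k$ submersion everywhere.

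The main obstacle I expect is not the algebraic identity above, which is transparent, but the bookkeeping needed to justify that $B-A$ and the associated multiplications and pullbacks are well-defined morphisms between the correct Banach spaces once the bundle $P=P_0\#_{(x_0,\lambda)}P_1$ and the trivialisations of $P_0,P_1$ over the splicing annulus are allowed to depend on $(\rho,x_0,\lambda)$. This requires trivialising the family of codomains $\sA(P)$ locally over the finite-dimensional parameter space, which can be accomplished via the family of conformal maps $f_{x_0,\lambda}$ and parallel translation of the fiber points $p_0,p_1$ fixed in Data \ref{data:Fixed_parameters_for_definition_splicing_and_unsplicing_maps}, and then verifying that the resulting identifications are themselves $C^\infty$ in the parameters. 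Once this local trivialisation is in place, all computations reduce to the transparent identity $\chi_0\psi_0+\chi_1\psi_1=\chi_0+\chi_1=1$, and the proposition follows.
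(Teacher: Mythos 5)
Your proof is correct, and it reaches the conclusion by a slightly different (though closely related) route than the paper. The paper verifies the submersion property at the linearized level: it computes $\frac{\partial\cS}{\partial A_0}a_0=\chi_0 a_0$ and $\frac{\partial\cS}{\partial A_1}a_1=\chi_1 a_1$ and shows $d\cS$ is surjective by writing any $a\in W^{1,p}(T^*X\otimes\ad P)$ as $\chi_0\psi_0 a+\chi_1\psi_1 a=a$, then appeals to smoothness of $\cS$; your argument instead exhibits, through an arbitrary point of the domain, an explicit $C^\infty$ local (in fact globally defined, by affineness) section $s(B)=(A_0+\psi_0(B-A),A_1+\psi_1(B-A),\rho,x_0,\lambda)$ and invokes Definition \ref{defn:Margalef-Roig_4-1-1} directly — essentially promoting the unsplicing map $\cU$ of \eqref{eq:Unsplicing_map_connections} from a right inverse of $\cS$ to a section through a prescribed point. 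The underlying algebra ($\psi_i\equiv 1$ on $\supp\chi_i$, so $\chi_0\psi_0+\chi_1\psi_1=1$) is identical in both proofs. What your packaging buys is that it sidesteps the splitting condition: Definition \ref{defn:Submersion_Banach_manifolds_boundary} and the derivative criterion of Proposition \ref{prop:Margalef-Roig_4-1-13} require not only surjectivity of $d\cS$ but also that $\Ker d\cS$ admit a closed complement, a point the paper's computation leaves implicit (though it follows easily, e.g.\ from the continuous right inverse $a\mapsto(\psi_0 a,\psi_1 a,0,0,0)$ of $d\cS$), whereas with the section definition the splitting comes for free via Proposition \ref{prop:Margalef-Roig_4-1-10}. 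Your concern about the bundle identifications over the splicing annulus is legitimate but is handled exactly as in the paper's definitions of $\cU$ and of $a_0=\psi_0 a$, $a_1=\psi_1 a$: the fixed trivializations from Data \ref{data:Fixed_parameters_for_definition_splicing_and_unsplicing_maps} make multiplication by $\psi_0,\psi_1$ well-defined bounded maps into $W^{1,p}(T^*X\otimes\ad P_0)$ and $W^{1,p}(T^*S^4\otimes\ad P_1)$ for each fixed $(\rho,x_0,\lambda)$ with $\lambda>0$, which is all the submersion argument at a fixed point requires.
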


\begin{proof}  
It suffices to consider partial derivatives of $\cS$ with respect to the factors $A_0$ and $A_1$ for any fixed triple $(\rho,x_0,\lambda) \in \Gl_{x_{0\flat}}\times B_\delta(x_{0\flat})\times (0,\lambda_0)$. Consider directions $a_0 \in W^{1,2}(T^*X\otimes\ad P_0)$ and $a_1 \in W^{1,2}(T^*S^4\otimes\ad P_1)$ and compute the partial derivatives:
\[
  \frac{\partial\cS}{\partial A_0}(A_0,A_1,\rho,x_0,\lambda)a_0 = \chi_0a_0
  \quad\text{and}\quad
  \frac{\partial\cS}{\partial A_1}(A_0,A_1,\rho,x_0,\lambda)a_1 = \chi_1a_1.
\]
Therefore,
\[
  \frac{\partial\cS}{\partial A_0}(A_0,A_1,\rho,x_0,\lambda)a_0 + \frac{\partial\cS}{\partial A_1}(A_0,A_1,\rho,x_0,\lambda)a_1
  =
  \chi_0a_0 + \chi_1a_1.
\]
Observe that $T_A\sA(P) = W^{1,p}(T^*X\otimes\ad P)$, where $A := \cS(A_0,A_1,\rho,x_0,\lambda)$, while $T_{A_0}\sA(P_0) = W^{1,p}(T^*X\otimes\ad P_0)$ and $T_{A_1}\sA(P_1) = W^{1,p}(T^*S^4\otimes\ad P_1)$. If $a \in W^{1,p}(T^*X\otimes\ad P)$, then we may write
\[
  a = \chi_0a_0 + \chi_1a_1,
\]
for $a_0 := \psi_0a \in W^{1,p}(T^*X\otimes\ad P_0)$ and $a_1 := \psi_1a \in W^{1,p}(T^*S^4\otimes\ad P_1)$. Hence, the derivative
\begin{equation}
\label{eq:Derivative_splicing_map_connections}
d\cS:T_{(A_0,A_1,\rho,x_0,\lambda)}\left(\sA(P_0)\times \sA(P_1) \times \Gl_{x_{0\flat}}\times B_\delta(x_{0\flat})\times (0,\lambda_0)\right) \to T_A\sA(P)
\end{equation}
is surjective and $\cS$ is a $C^1$ submersion at $(A_0,A_1,\rho,x_0,\lambda)$. Since the point $(A_0,A_1,\rho,x_0,\lambda)$ is arbitrary and $\cS$ is clearly $C^k$ smooth for any $k\in\NN$, the conclusion follows.
\end{proof}

\subsection{Restriction of the splicing map to the codimension-five submanifold defined by centered connections}
We have seen thus far that the splicing map $\cS$ in \eqref{eq:Splicing_map_connections} is a surjective submersion (Lemma \ref{lem:Surjectivity_splicing_map} and Proposition \ref{prop:Smooth_submersion_property_splicing_map}). However, as we have chosen $(X_1,g_1) = (S^4,g_\round)$, readers familiar with the development by Donaldson and Kronheimer (see \cite[Section 8.2]{DK}) will recognize that there is a redundancy in the domain of $\cS$ provided in \eqref{eq:Splicing_map_connections}: An arbitrary connection $A_1$ on $P_1$ over $S^4$ has an intrinsic center of mass $z_1\in\RR^4$ and intrinsic scale $\nu\in(0,\infty)$ in addition to the extrinsic center $x_0 \in B_\delta(x_{0\flat})$ and intrinsic scale $\lambda\in(0,\lambda_0)$ prescribed in the domain for $\cS$ in \eqref{eq:Splicing_map_connections}. In this subsection, we observe that if we replace the Banach affine space $\sA(P_1)$ in \eqref{eq:Splicing_map_connections} by the codimension-five Banach submanifold $\sA^\diamond(P_1)$ of \emph{centered connections}, then the resulting restriction of $\cS$ remains a surjective submersion. We first prove surjectivity.

\begin{lem}[Surjectivity of the centered splicing map]
\label{lem:Surjectivity_splicing_map_centered_connections_4-sphere}
Continue the notation of this section. Then the \emph{centered splicing map}, given by composition of the splicing map $\cS$ in \eqref{eq:Splicing_map_connections}, restricted by replacing $\sA(P_1)$ with $\sA^\diamond(P_1)$, and the conformal diffeomorphism of $(S^4,g_\round)$ defined by translation and dilation of $\RR^4$ and stereographic projection from the south pole of $S^4$,
\begin{equation}
\label{eq:Splicing_map_connections_centered_connections_4-sphere}
\cS\circ(\cR\times\id): \sA(P_0) \times \sA^\diamond(P_1) \times \Gl_{x_{0\flat}} \times B_\delta(x_{0\flat}) \times (0,\lambda_0)
\ni (A_0,A_1^\diamond,\rho,x_0,\lambda) \mapsto A \in \sA(P),
\end{equation}
is a smooth surjective map, where
\[
  A := \cS(A_0,\cR(A_1^\diamond,x_0,\lambda),\rho,x_0,\lambda) = \chi_0A_0 + \chi_1A_1 \in \sA(P),
\]
and $\chi_0,\chi_1$ are determined by $(x_0,\lambda)$ as in \eqref{eq:Partition_unity_and_covering_cutoff_functions}, and
\[
  \cR: \sA^\diamond(P_1) \times \RR^4 \times (0,\infty) \ni (A_1^\diamond,x_0,\lambda)
  \mapsto A_1 = \tilde{h}_{x_0,\lambda}^*A_1^\diamond \in \sA(P_1).
\]
\end{lem}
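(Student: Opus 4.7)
The map $\cS$ is $C^k$ for every $k\in\NN$ by Proposition \ref{prop:Smooth_submersion_property_splicing_map}. For $\cR$, observe that the conformal diffeomorphism $\tilde h_{x_0,\lambda}=\varphi_n\circ h_{x_0,\lambda}\circ\varphi_n^{-1}$ of $S^4\less\{n\}$ is jointly smooth in $(x_0,\lambda)\in\RR^4\times(0,\infty)$ since $h_{x_0,\lambda}(y)=(y-x_0)/\lambda$ is $C^\infty$ in all three arguments. Pulling back a $W^{1,p}$ connection by a smoothly varying diffeomorphism defines a smooth map into $\sA(P_1)$, and its restriction to the submanifold $\sA^\diamond(P_1)\subset\sA(P_1)$ from Lemma \ref{lem:Submanifold_centered_connections} remains smooth. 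Hence the composition $\cS\circ(\cR\times\id)$ is smooth.

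\textbf{Strategy for surjectivity.} Given $A\in\sA(P)$, my plan is to produce a preimage by leveraging the right inverse established in Lemma \ref{lem:Surjectivity_splicing_map}. For a choice of $(\rho,x_0,\lambda)\in\Gl_{x_{0\flat}}\times B_\delta(x_{0\flat})\times(0,\lambda_0)$, set $A_0:=\psi_0 A\in\sA(P_0)$ and $A_1:=\psi_1 A\in\sA(P_1)$, so that the identities $\chi_i\psi_i=\chi_i$ from \eqref{eq:Partition_unity_and_covering_cutoff_functions} yield $\cS(A_0,A_1,\rho,x_0,\lambda)=\chi_0 A_0+\chi_1 A_1=A$. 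To realize $A$ through the centered splicing as $\cS\circ(\cR\times\id)(A_0,A_1^\diamond,\rho,x_0,\lambda)$, I need $\tilde h_{x_0,\lambda}^*A_1^\diamond=A_1$ for some centered $A_1^\diamond$. The unique candidate is $A_1^\diamond:=\tilde h_{x_0,\lambda}^{-1,*}A_1\in\sA(P_1)$, and by Lemma \ref{lem:Centering_connection_over_4sphere} this is centered precisely when $(x_0,\lambda)=(\Center[A_1],\Scale[A_1])$.

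\textbf{Main obstacle: the fixed-point equation.} Because $A_1=\psi_1^{(x_0,\lambda)}A$ itself depends on $(x_0,\lambda)$ through the cutoff, the centeredness condition becomes the fixed-point equation
\[
(x_0,\lambda) \;=\; \Phi_A(x_0,\lambda) \;:=\; \bigl(\Center[\psi_1^{(x_0,\lambda)} A],\ \Scale[\psi_1^{(x_0,\lambda)} A]\bigr),
\]
and solving this for each $A$ is the hardest step of the proof. I would apply the Banach fixed-point theorem (or equivalently the implicit function theorem to $(x_0,\lambda)\mapsto(x_0,\lambda)-\Phi_A(x_0,\lambda)$), using the $C^\infty$ smoothness of the center and scale maps from Lemma \ref{lem:Smoothness_center_scale_maps}. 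The quantitative input I expect to use is that for small $\lambda$, the cutoff $\psi_1^{(x_0,\lambda)}$ sharply localizes $A$ into a ball of radius $O(\sqrt\lambda)$ about $x_0$, so that $\Center[\psi_1^{(x_0,\lambda)}A]$ and $\Scale[\psi_1^{(x_0,\lambda)}A]$ track $(x_0,\lambda)$ up to controllable error, making $\Phi_A$ a contraction on an appropriate neighborhood of the prescribed parameters and yielding a unique fixed point that depends smoothly on $A$. Once $(x_0,\lambda)$ is taken to be this fixed point and $A_1^\diamond:=\tilde h_{x_0,\lambda}^{-1,*}(\psi_1^{(x_0,\lambda)}A)\in\sA^\diamond(P_1)$, the composition unwinds to
\[
\cS\circ(\cR\times\id)(A_0,A_1^\diamond,\rho,x_0,\lambda)=\chi_0\psi_0 A+\chi_1\tilde h_{x_0,\lambda}^* A_1^\diamond=\chi_0 A+\chi_1 A=A,
\]
completing the surjectivity argument.
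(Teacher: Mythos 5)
Your smoothness argument and the final algebraic unwinding $\chi_0\psi_0 A+\chi_1\tilde h_{x_0,\lambda}^*A_1^\diamond = A$ are fine, but the step you yourself flag as the main obstacle is exactly where the argument has a genuine gap, and it is a step the paper never has to take. You reduce surjectivity to solving the fixed-point equation $(x_0,\lambda)=\Phi_A(x_0,\lambda)$ with $\Phi_A(x_0,\lambda)=(\Center[\psi_1^{(x_0,\lambda)}A],\Scale[\psi_1^{(x_0,\lambda)}A])$, and you assert a contraction on the strength of the heuristic that the cutoff localizes $A$ in a ball of radius $O(\sqrt\lambda)$ about $x_0$, so that center and scale ``track'' $(x_0,\lambda)$. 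That heuristic is not correct: the center and, especially, the scale of $\psi_1^{(x_0,\lambda)}A$ are governed by the intrinsic concentration behavior of $F_A$ near $x_0$ (and by the $(x_0,\lambda)$-dependent identification of the cut-off connection with a connection on $P_1$ over $S^4$, which rescales distances by $\lambda$ --- a dependence your $\Phi_A$ ignores), not by the prescribed $\lambda$. If $F_A$ is sharply concentrated at a point other than $x_0$, or is nearly flat on $B_{8\sqrt\lambda}(x_0)$ (in which case $\Scale$ is tiny or zero and $\Phi_A$ is wildly unstable), there is no contraction and no reason the fixed point, if it exists, lies in $B_\delta(x_{0\flat})\times(0,\lambda_0)$. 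No quantitative estimate on $D\Phi_A$ is offered, and none is available without restricting $A$ to the concentration regime, a hypothesis you never impose.

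The paper avoids the fixed-point problem entirely by decoupling the cutoff scale from the splicing scale. Given $A$ (assumed $L^q$-close to a connection $A_0$ outside a small ball, i.e. in the concentration regime), it first cuts off with $\zeta_1$ at a \emph{fixed} auxiliary scale $\eta$, producing $A_1:=\zeta_1 A\in\sA(P_1)$ that does \emph{not} depend on the unknown parameters; then it simply \emph{defines} $z:=\Center[A_1]$, $\nu:=\Scale[A_1]$, sets $x_0:=\exp_v(z)$, $\lambda:=\nu$, and takes $A_1^\diamond:=(\tilde h_{z,\nu}^{-1})^*A_1$, which is centered by Lemma \ref{lem:Centering_connection_over_4sphere}. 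The only thing left to check is cutoff compatibility, $\supp\chi_1\subset\{\zeta_1=1\}$ and $\supp\chi_0\subset\{\zeta_0=1\}$, which holds because $\sqrt\nu\ll\eta$ in the regime considered; then $A=\chi_0 A_0+\chi_1(\tilde h_{z,\nu}^{-1})^*A_1^\diamond$ directly. If you want to salvage your route, you would either need to prove a contraction estimate for $\Phi_A$ under an explicit concentration hypothesis on $A$, or, more simply, adopt the two-scale device: compute center and scale from a cutoff at a fixed scale, so that the splicing parameters are determined by $A$ rather than solved for self-consistently.
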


\begin{proof}
Let $q = p^* = 4p/(4-p)\in [4,\infty)$ when $p\in[2,4)$ or $q \in [4,\infty)$ when $p\geq 4$ and $A \in \sA(P)$ be a connection that is $L^q$-close, in the sense of \cite[Section 7.3.1]{DK}, to a connection $A_0$ over an open subset $X''(\eta) := X\less\bar B_\eta(x_{0\flat}) \subset X$ for a small positive constant $\eta \in (0,\delta]$. Construct a cutoff function $\zeta_1 \in C^\infty(X;[0,1])$ such that $\zeta_1 = 1$ on $B_{\eta/2}(x_{0\flat})$ and $\zeta_1 = 0$ on $X\less B_{2\eta}(x_{0\flat})$. Recall that the smooth principal $G$-bundle $P$ is constructed (up to an isomorphism of smooth principal $G$-bundles) using the Data \ref{data:Fixed_parameters_for_definition_splicing_and_unsplicing_maps} and a triple $(\rho,x_{0\flat},\eta) \in \Gl_{x_{0\flat}}\times B_\delta(x_{0\flat})\times (0,1]$. We obtain a connection $A_1 := \zeta_1A$ on the principal $G$-bundle $P_1$ over $S^4$ with the aid of the trivialization $P \restriction \Omega(x_{0\flat};\eta/2, 2\eta)\times G$ and cutting off over $A$ over the annulus $\Omega(x_{0\flat};\eta/2, 2\eta) \subset X$. We next define
\[
  z := \Center[A_1] \in \RR^4 \quad\text{and}\quad \nu := \Scale[A_1] \in (0,\infty)
\]
and obtained a \emph{centered} connection $A_1^\diamond := (\tilde{h}_{z,\nu}^{-1})^*A_1$ on $P_1$ by pulling back $A_1$ via the inverse of the conformal diffeomorphism $\tilde{h}_{z,\nu}$ of $S^4$ defined by $h_{z,\nu}(y)=(y-z)/\nu$, for $y\in\RR^4$. We may assume without loss of generality that $z\in B_\delta$ and $\nu \in (0,1]$ and, in particular that $z$ is close to the origin in $T_{x_{0\flat}}X \cong \RR^4$ (isometric isomorphism), which is in turn identified with the point $x_{0\flat} \in X$, and that $\sqrt{\nu} \ll \eta$. We now write $x_0 := \exp_v(z) \in X$ and $\chi_0 := \chi_{x_0,\nu} \in C^\infty([0,1];X)$ (as in \eqref{eq:Partition_unity_and_covering_cutoff_functions} but with $\lambda$ replaced by $\nu$) and $\chi_1 := 1 - \chi_0 \in C^\infty([0,1];X)$ and unsplice $A$ by writing
\[
  A = \chi_0A + \chi_1A.
\]
But $\supp\chi_1 \subset \{\zeta_1=1\}$ and so we have $\chi_1A = \chi_1A_1$; similarly, we may choose $\zeta_0 \in C^\infty([0,1];X)$ such that $\supp\zeta_0 \Subset X\less\{x_{0\flat}\}$ but $\supp\chi_0 \subset \{\zeta_0=1\}$, define $A_0 := \zeta_0A$, and hence obtain
\[
  A = \chi_0A_0 + \chi_1A_1 = \chi_0A_0 + \chi_1(\tilde{h}_{z,\nu}^{-1})^*A_1^\diamond,
\]
as claimed.
\end{proof}

We next prove that the centered splicing map is a smooth submersion.

\begin{prop}[$C^k$ submersion property of the centered splicing map]
\label{prop:Smooth_submersion_property_splicing_map_centered_connections_4-sphere}
For any integer $k\in\NN$, the centered splicing map $\cS_\diamond$ in \eqref{eq:Splicing_map_connections_centered_connections_4-sphere} is a $C^k$ submersion.
\end{prop}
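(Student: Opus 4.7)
The plan is to factor the centered splicing map as $\cS_\diamond := \cS \circ (\cR \times \id)$, where the second factor denotes the identity on $\Gl_{x_{0\flat}}\times B_\delta(x_{0\flat}) \times (0,\lambda_0)$, and then exhibit $\cR \times \id$ as a $C^\infty$ diffeomorphism onto an open subset of the domain of $\cS$. Since $\cS$ is already a $C^k$ submersion by Proposition \ref{prop:Smooth_submersion_property_splicing_map}, the conclusion will follow from Proposition \ref{prop:Margalef-Roig_4-1-5} (composition of submersions), since the submersion property is local and is preserved under postcomposition with the inverse of a diffeomorphism.

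First I would verify that $\cR:\sA^\diamond(P_1)\times B_\delta(x_{0\flat})\times(0,\lambda_0)\to \sA(P_1)$ is $C^\infty$. This reduces to smooth dependence of the pullback operator on its underlying diffeomorphism, together with smoothness of the two-parameter family $(z,\lambda)\mapsto \tilde h_{z,\lambda}$ of conformal transformations of $(S^4,g_\round)$ generated by translations and dilations of $\RR^4$. The implicit identification of $x_0 \in B_\delta(x_{0\flat})\subset X$ with $z = \exp_{x_{0\flat},v}^{-1}(x_0) \in \RR^4$ using the fixed frame $v$ of Data \ref{data:Fixed_parameters_for_definition_splicing_and_unsplicing_maps} is a smooth diffeomorphism, exploiting that $g$ is locally flat near $x_{0\flat}$.

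Next I would show that $\cR$ is a diffeomorphism onto the open subset $\sA^*(P_1)\subset\sA(P_1)$ of non-flat connections. By Definition \ref{defn:Mass_center_scale_connection}, every element of $\sA^\diamond(P_1)$ has $\Scale = 1$ and is therefore non-flat, while by Lemma \ref{lem:Centering_connection_over_4sphere} one has $\Center[\tilde h_{z,\lambda}^*A_1^\diamond] = z$ and $\Scale[\tilde h_{z,\lambda}^* A_1^\diamond] = \lambda$ whenever $A_1^\diamond$ is centered. This yields the explicit two-sided inverse
\[
  \cR^{-1}:\sA^*(P_1) \ni A_1 \mapsto \bigl((\tilde h_{\Center[A_1],\Scale[A_1]}^{-1})^* A_1,\ \Center[A_1],\ \Scale[A_1]\bigr),
\]
whose smoothness reduces to smoothness of the center and scale maps (Lemma \ref{lem:Smoothness_center_scale_maps}) combined with smoothness of pullback. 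In particular, $\cR\times\id$ is a $C^\infty$ diffeomorphism from the domain of $\cS_\diamond$ onto an open subset of the domain of $\cS$ in \eqref{eq:Splicing_map_connections}, namely the subset where the $\sA(P_1)$-factor lies in $\sA^*(P_1)$.

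The main subtlety, which I expect to be the only nontrivial point, is verifying that the linearization of $\cR$ is a Banach-space isomorphism at every base point; equivalently, one must check that the five-dimensional $(\RR^4\times\RR_+)$-orbit through each centered $A_1^\diamond$ is transverse to $\sA^\diamond(P_1)$ in $\sA(P_1)$. But this is automatic from Lemma \ref{lem:Submanifold_centered_connections}, which realizes $\sA^\diamond(P_1)$ as the $(0,1)$-level set of the submersion $(\Center,\Scale):\sA^*(P_1)\to\RR^4\times\RR_+$: the infinitesimal action of the translation-dilation group projects isomorphically onto $T_{(0,1)}(\RR^4\times\RR_+)\cong\RR^5$. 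With $\cR\times\id$ identified as a $C^\infty$ diffeomorphism onto an open subset of the domain of $\cS$, the composition $\cS_\diamond = \cS\circ(\cR\times\id)$ inherits the $C^k$ submersion property from $\cS$.
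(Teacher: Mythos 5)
There is a genuine gap, and it sits exactly where the real content of the proposition lies. Your reduction rests on the claim that $\cR\times\id$ is a $C^\infty$ diffeomorphism from the domain of $\cS_\diamond$ onto an \emph{open} subset of the domain of $\cS$ in \eqref{eq:Splicing_map_connections}, so that Proposition \ref{prop:Margalef-Roig_4-1-5} applies. That claim cannot hold: in the definition \eqref{eq:Splicing_map_connections_centered_connections_4-sphere} the parameters $(x_0,\lambda)$ are used \emph{twice} --- once inside $\cR$ to un-center $A_1^\diamond$, and again as the last two arguments of $\cS$ itself, where they determine the cutoff functions $\chi_0,\chi_1$, the gluing annulus, and the bundle identification. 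Thus the map you would need is $(A_0,A_1^\diamond,\rho,x_0,\lambda)\mapsto(A_0,\tilde h_{x_0,\lambda}^*A_1^\diamond,\rho,x_0,\lambda)$, and by Lemma \ref{lem:Centering_connection_over_4sphere} its image lies in the codimension-five ``matching'' locus where $(\Center[A_1],\Scale[A_1])$ equals the external pair determined by $(x_0,\lambda)$; this is a proper submanifold of the domain of $\cS$, never an open subset (equivalently, the domain of $\cS_\diamond$ replaces $\sA(P_1)$ by the codimension-five submanifold $\sA^\diamond(P_1)$ while retaining the same five parameters, so no injection into the domain of $\cS$ can have open image). If instead you let $\cR$ ``consume'' $(x_0,\lambda)$, the outer map is no longer the $\cS$ of \eqref{eq:Splicing_map_connections} but a modified splicing map whose center and scale are slaved to $A_1$, and its submersion property is not supplied by Proposition \ref{prop:Smooth_submersion_property_splicing_map}. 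Either way, ``submersion composed with a diffeomorphism onto an open set'' is unavailable, and the restriction of a submersion to a submanifold of its domain need not be a submersion --- which is precisely what the proposition asserts survives the centering constraint.

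Consequently your argument never addresses the actual point: after the five redundant directions are removed by passing to $\sA^\diamond(P_1)$, one must check that they are recovered by varying $(x_0,\lambda)$, even though by the chain rule such variations contribute not only $\frac{\partial\cS}{\partial A_1}\circ d\cR$ but also the direct terms $\frac{\partial\cS}{\partial x_0}$ and $\frac{\partial\cS}{\partial\lambda}$. Your ``main subtlety'' paragraph (transversality of the translation--dilation orbit to $\sA^\diamond(P_1)$, i.e., invertibility of $d\cR$) only re-establishes that $\cR$ is a diffeomorphism, which is already contained in Lemmas \ref{lem:Centering_connection_over_4sphere}--\ref{lem:Submanifold_centered_connections}; it does not control these cross terms. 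The paper's proof proceeds differently: it works at the level of the derivative, recalling from the proof of Proposition \ref{prop:Smooth_submersion_property_splicing_map} that surjectivity of $d\cS$ uses only the partial derivatives in the $A_0$- and $A_1$-slots, and then invoking that $\cR$ is a diffeomorphism in the variables $(A_1^\diamond,z,\nu)$ so that every direction $a_1\in T_{A_1}\sA(P_1)$ is reachable through $d\cR$. To repair your write-up you must either reproduce such a derivative-level argument, or show directly that the subspace $\{\chi_0 a_0+\chi_1\tilde h_{x_0,\lambda}^*a_1^\diamond : a_0\in W^{1,p}(T^*X\otimes\ad P_0),\ a_1^\diamond\in T_{A_1^\diamond}\sA^\diamond(P_1)\}$, together with the five vectors produced by $(\dot x_0,\dot\lambda)$ (including their $\frac{\partial\cS}{\partial x_0}$ and $\frac{\partial\cS}{\partial\lambda}$ contributions), spans $W^{1,p}(T^*X\otimes\ad P)$.
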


\begin{proof}
Let $(A_0,A_1^\diamond,\rho,y_0,\nu)$ be a point in the domain of $\cS_\diamond$ in \eqref{eq:Splicing_map_connections_centered_connections_4-sphere} and let $A_1 := \tilde{h}_{y_0,\nu}^*A_1^\diamond$. Recall from the proof of Proposition \ref{prop:Smooth_submersion_property_splicing_map} that in order to verify that the derivative map $d\cS(A_0,A_1,\rho,x_0,\lambda)$ in \eqref{eq:Derivative_splicing_map_connections} is surjective, it sufficed to consider partial derivatives of $\cS$ with respect to
\begin{enumerate}
\item $A_0$ in directions $a_0 \in T_{A_0}\sA(P_0) = W^{1,p}(T^*X\otimes\ad P_0)$, and
\item $A_1$ in directions $a_1 \in T_{A_1}\sA(P_1) = W^{1,p}(T^*S^4\otimes\ad P_1)$.  
\end{enumerate}
Because the map
\[
  \sA^\diamond(P_1) \times \RR^4 \times (0,\infty) \ni (A_1^\diamond,z,\nu)
  \mapsto \tilde{h}_{z,\nu}^*A_1^\diamond \in \sA(P_1)
\]
is a diffeomorphism, the conclusion follows from Proposition \ref{prop:Smooth_submersion_property_splicing_map}.
\end{proof}

\subsection{Injectivity of restrictions of splicing maps to finite-dimensional submanifolds}
\label{subsec:Injectivity_restrictions_splicing_maps_finite-dimensional_submanifolds}
It is important to note that the splicing map $\cS$ for connections in \eqref{eq:Splicing_map_connections} cannot be injective, even after restricting it to the smaller (but still infinite-dimensional) domain in \eqref{eq:Splicing_map_connections_centered_connections_4-sphere}. On the other hand, we shall see below, the splicing maps do become injective when restricted to finite-dimensional submanifolds their domains.

\begin{prop}[Injectivity of restrictions of splicing maps to finite-dimensional submanifolds]
\label{prop:Injectivity_restrictions_splicing_maps_finite-dimensional_submanifolds}  
Let $G$ be a compact Lie group, $P_i$ be smooth principal $G$-bundles over admissible Riemannian four-manifolds $(X_i,g_i)$, and $M_i \subset \sA(P_i)$ be finite-dimensional smooth submanifolds. Assume given the splicing data in Data \ref{data:Splicing_parameters_connected_sum_G-bundle_connected_sum_Riemannian_4-manifold} and that the metrics $g_i$ are conformally flat near the points $x_{i\flat} \in X_i$ for $i=0,1$. Then the following hold.
\begin{enumerate}
\item\label{item:Injectivity_Donaldson_splicing_map}
If $\lambda_0$ is small enough and $\lambda \in (0,\lambda_0)$ is a fixed scale parameter and $g_\lambda$ is the smooth Riemannian metric defined on the connected sum $X=X_0\# X_1$ by $\lambda$ and Data \ref{data:Splicing_parameters_connected_sum_G-bundle_connected_sum_Riemannian_4-manifold}, then the \emph{Donaldson splicing map}
\[
\cS: \sA(P_0)\times \sA(P_1) \times \Gl_{x_{0\flat},x_{1\flat}} \to \sA(P)
\]  
is not injective but its restriction to a finite-dimensional submanifold is injective:
\[
\cS: M_0 \times M_1 \times \Gl_{x_{0\flat},x_{1\flat}} \to \sA(P).
\]
\item\label{item:Injectivity_Taubes_splicing_map}
If $(X_0,g_0) = (X,g)$ and $(X_1,g_1) = (S^4,g_\round)$ and $x_{1\flat}$ is the south pole in $S^4$ and $M_1^\diamond \subset \sA^\diamond(P_1)$ is a finite-dimensional smooth submanifold, then the \emph{Taubes' splicing map}
\[
\cS: \sA(P_0)\times \sA^\diamond(P_1) \times \Gl_{x_{0\flat}} \times B_\delta(x_{0\flat}) \times (0,\lambda_0) \to \sA(P)
\]  
is not injective but its restriction to a finite-dimensional submanifold is injective:
\[
\cS: M_0 \times M_1^\diamond \times \Gl_{x_{0\flat}} \times B_\delta(x_{0\flat}) \times (0,\lambda_0) \to \sA(P).
\]
\end{enumerate}  
\end{prop}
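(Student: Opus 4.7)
The plan is to address each item in two stages: first non-injectivity of the full splicing map, then injectivity after restriction to a finite-dimensional submanifold.

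\textbf{Non-injectivity of the unrestricted map.} For both Items \eqref{item:Injectivity_Donaldson_splicing_map} and \eqref{item:Injectivity_Taubes_splicing_map}, I would exhibit an infinite-dimensional family of pairs with identical image under $\cS$. Pick any nonzero $a_0 \in C_c^\infty(B_{\sqrt{\lambda}/4}(x_{0\flat}); T^*X_0\otimes\ad P_0)$; since the cutoff $\chi_0 = \chi_{x_{0\flat},\sqrt{\lambda}}$ vanishes identically on $B_{\sqrt{\lambda}/2}(x_{0\flat}) \supset \supp(a_0)$, the splicing formula \eqref{eq:Spliced_connection_A} gives
\[
  \cS(A_0 + a_0, A_1, \rho, x_0, \lambda) = \chi_0 A_0 + \chi_1 A_1 = \cS(A_0, A_1, \rho, x_0, \lambda).
\]
An analogous deformation supported inside the small ball near $x_{1\flat}$ handles the $A_1$ direction, and neither construction interacts with the restricted finite-dimensional submanifolds when $M_i$ is chosen (as it is in applications) from anti-self-dual or Yang--Mills solutions, which cannot have compactly supported variations of this type.

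\textbf{Injectivity on finite-dimensional submanifolds: Donaldson case.} Suppose $\cS(A_0, A_1, \rho) = \cS(A_0', A_1', \rho')$ for two tuples in $M_0 \times M_1 \times \Gl_{x_{0\flat},x_{1\flat}}$. On the open set $X_0 \less B_{2\sqrt{\lambda}}(x_{0\flat})$, where $\chi_0 \equiv 1$ and $\chi_1 \equiv 0$, the equality of images forces $A_0 \equiv A_0'$. The submanifold $M_0$, in the intended applications, consists of anti-self-dual connections on $P_0$, so $A_0$ and $A_0'$ both satisfy the elliptic equation $F^{+,g_0}(A)=0$. Applying Aronszajn's unique continuation theorem to their difference (a solution of the elliptic linearized ASD system relative to either connection) extends the coincidence to all of $X_0$, giving $A_0 = A_0'$; the same reasoning yields $A_1 = A_1'$. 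With $A_0, A_1$ now fixed, equality on the overlap annulus $\Omega(x_{0\flat}; \sqrt{\lambda}/2, 2\sqrt{\lambda})$ compares the pullback of $A_1$ via the identifications of $P_0|_{x_{0\flat}}$ with $P_1|_{x_{1\flat}}$ induced by $\rho$ versus $\rho'$; these differ by $\rho'\rho^{-1} \in G$, and so long as $A_1$ is not the product connection on any neighborhood of $x_{1\flat}$ (true whenever $A_1$ has nontrivial curvature, which holds for elements of the ASD moduli spaces involved), this forces $\rho = \rho'$.

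\textbf{Injectivity on finite-dimensional submanifolds: Taubes case.} Here I must additionally recover the parameters $(x_0, \lambda)$ from the image $A$. The key is that the cutoff $\chi_1$ localizes the ``bubble'' at scale $\sqrt{\lambda}$ around $x_0$, so the center-of-mass and scale of the curvature density $|F_A|^2_g$ on $B_{2\sqrt{\lambda}}(x_{0\flat})$, computed in normal coordinates at $x_{0\flat}$ via Definition \ref{defn:Mass_center_scale_connection}, recover $(x_0, \lambda)$ up to controlled errors that tend to zero with $\lambda_0$. Because $A_1^\diamond \in \sA^\diamond(P_1)$ is centered with $\Scale[A_1^\diamond] = 1$, the contribution of the spliced bubble dominates and a quantitative fixed-point argument identifies $(x_0, \lambda) = (x_0', \lambda')$ once $\lambda_0$ is sufficiently small. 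Injectivity in the remaining variables then reduces to the Donaldson case argument above, applied on the common splicing data.

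\textbf{Main obstacle.} The essential difficulty is making the unique continuation step rigorous: it is not sufficient that $M_i$ be an arbitrary finite-dimensional smooth submanifold of $\sA(P_i)$, and indeed the statement would fail without a further hypothesis (as the non-injectivity argument illustrates, since one can manufacture finite-dimensional families differing only in a small ball). The proof must therefore invoke the hypothesis --- implicit from the gauge-theoretic setting of the paper --- that $M_i$ consists of solutions to an elliptic equation possessing Aronszajn-type unique continuation, e.g., the ASD equation. Secondarily, for the Taubes case, quantifying the center-of-mass/scale recovery in the presence of the $O(\lambda)$ perturbation introduced by the spliced background $A_0$ requires careful estimates of the type developed in Section \ref{subsec:Mass_center_and_scale_maps_on_space_connections_over_sphere}.
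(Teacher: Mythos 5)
There is a genuine gap: your proof establishes a different (weaker) statement than the one asserted, and your justification for why the extra hypothesis is needed rests on a misreading of the quantifiers. In Proposition \ref{prop:Injectivity_restrictions_splicing_maps_finite-dimensional_submanifolds} the submanifolds $M_i$ are fixed \emph{first} and the constant $\lambda_0$ is then allowed to depend on them; the paper's proof exploits exactly this. Your claimed counterexample --- a finite-dimensional family obtained by adding perturbations supported in a ball of radius comparable to $\sqrt{\lambda}/4$ --- is defeated once $\lambda_0$ is shrunk after $M_0$ is given: a fixed nonzero $W^{1,p}$ one-form cannot have support contained in $B_{\sqrt{\lambda}/2}(x_0)$ for all $\lambda<\lambda_0$ with $\lambda_0$ sufficiently small (and all $x_0\in B_\delta(x_{0\flat})$), so the cutoff no longer annihilates it. Consequently the proposition really is about \emph{arbitrary} finite-dimensional smooth submanifolds, and your argument, which needs $M_i$ to consist of anti-self-dual connections so that Aronszajn-type unique continuation applies, does not prove it. (Even for that special case, propagating $A_0=A_0'$ from $X_0\less B_{2\sqrt{\lambda}}(x_{0\flat})$ to all of $X_0$ requires a unique continuation statement for the ASD equation modulo gauge, which needs more care than an application of Aronszajn to ``the difference''.)

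The paper's route is more elementary and makes no solvability assumption on $M_i$: by the easy Whitney embedding argument, each $M_i$ embeds in a finite-dimensional affine subspace $A_{i\flat}+V_i\subset\sA(P_i)$; one then shows the linearization of the cutting-off map \eqref{eq:Linear_cutting_off_map_Mi_affine_space_all_connections} has zero kernel for all $\lambda\in(0,\lambda_0]$ with $\lambda_0=\lambda_0(V_0,V_1)$ small, because cutting off an $L^2$-orthonormal basis $\{a_{ik}\}$ of $V_i$ produces an approximately $L^2$-orthonormal, hence linearly independent, family $\{\chi_i a_{ik}\}$; injectivity in the gluing parameter $\rho$ for fixed $(A_0,A_1)$ is quoted from the proof of \cite[Proposition 7.2.9]{DK}. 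Your treatment of the Taubes parameters $(x_0,\lambda)$ via recovery of the center and scale of the curvature density is a reasonable idea and goes beyond what the paper spells out for Item \eqref{item:Injectivity_Taubes_splicing_map}, but as written it again leans on $M_1^\diamond$ consisting of (nearly) ASD connections with dominant bubble energy, which is not among the hypotheses. To repair your proposal you would either have to delete the claim that the statement is false as stated and rework the injectivity step along the finite-dimensional/approximate-orthonormality lines above, or else explicitly reformulate the proposition with the additional hypotheses you use.
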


\begin{proof}
When $\lambda>0$, it is an immediate consequence of their construction that neither the Donaldson nor Taubes splicing maps $\cS$ is injective. By the proof in Guillemin and Pollack \cite[pp. 51--54] {Guillemin_Pollack} of the easy version of the \emph{Whitney Embedding Theorem} (that a smooth submanifold in $\RR^N$ of dimension $m\geq 1$ embeds into $\RR^{2m+1} \subset \RR^N$), there are finite-dimensional vector subspaces
\[
  V_i \subset W^{1,p}(T^*X_i\otimes\ad P_i) \quad\text{with}\quad A_{i\flat} + V_i \subset \sA(P_i), \quad i=0,1,
\]
with inclusions $M_i\subset A_{i\flat} + V_i$ that are smooth embeddings for $i=0,1$.

Consider Item \eqref{item:Injectivity_Donaldson_splicing_map}. If the following restriction of the Donaldson splicing map $\cS$ is injective,
\[
  \cS:(A_{0\flat} + V_0) \times (A_{1\flat} + V_1) \times \Gl_{x_{0\flat},x_{1\flat}} \to \sA(P),
\]
then its further restriction $M_0 \times M_1 \times \Gl_{x_{0\flat},x_{1\flat}}$ will also be injective. When $\lambda_0$ is a small enough constant (depending on the subspaces $V_i$ and the injectivity radii of $(X_i,g_i)$ for $i=0,1$) and $p_i \in P_i|_{x_{i\flat}}$ are fixed fiber points, then the affine maps
\begin{equation}
\label{eq:Linear_cutting_off_map_Mi_affine_space_all_connections}  
 A_{i\flat} + V_i \ni A_i \mapsto \chi_iA_i \in \sA(P_i)
\end{equation}
are injective for $i=0,1$. To see this, one shows that the kernel of the linear map obtained from \eqref{eq:Linear_cutting_off_map_Mi_affine_space_all_connections} by subtracting the reference connection $A_{i\flat}$ is zero for small enough $\lambda_0 \in (0,1]$ and any $\lambda \in (0,\lambda_0]$. An argument of this kind is provided by the authors in \cite[Section 6.4]{FLKM1}. Briefly, if $\{a_{ik}\}_{k=1}^{v_i}$ with $\dim V_i = v_i$ is an $L^2$-orthonormal basis for $V_i$, then $\{\chi_i a_{ik}\}_{k=1}^{v_i}$ will be an approximately $L^2$-orthonormal (hence linearly independent) set of $v_i$ vectors in $\sA(P)$ and thus \eqref{eq:Linear_cutting_off_map_Mi_affine_space_all_connections} must be injective for $i=0,1$.

It follows from the proof of \cite[Proposition 7.2.9]{DK} that the splicing map
\[
\Gl_{x_{0\flat},x_{1\flat}} \ni \rho \mapsto \cS(A_0,A_1,\rho) = \chi_0A_0 + \chi_1A_1^\rho \in \sA(P)
\]
is injective for any fixed pair of connections $(A_0,A_1)$.
Consequently, it follows that the map
\[
\sA(P_0)\times \sA(P_1) \times \Gl_{x_{0\flat},x_{1\flat}} \ni (A_0,A_1,\rho) \mapsto \cS(A_0,A_1,\rho) = \chi_0A_0 + \chi_1A_1^\rho \in \sA(P)
\]
is injective.

Consider Item \eqref{item:Injectivity_Taubes_splicing_map}.  It is an immediate consequence of its construction that the map $\cS$ cannot be injective. However, when the Banach affine spaces $\sA(P_i)$ are replaced by finite-dimensional affine subspaces $M_i$, and $\lambda_0$ is a small enough constant, then $\cS$ is injective.
\end{proof}

\begin{rmk}[Injectivity for polyfold splicing maps]
\label{rmk:Injectivity_polyfold_splicing_maps}
While the splicing map $\cS$ is not injective, when we restrict it to a finite-dimensional subdomain defined by choosing finite-dimensional submanifolds $M_0 \subset \sA(P_0)$ and $M_1^\diamond \subset \sA^\natural(P_1$ (for example, anti-self-dual or extended  anti-self-dual connections in Coulomb gauge with respect to reference anti-self-dual connections), as in Proposition \ref{prop:Injectivity_restrictions_splicing_maps_finite-dimensional_submanifolds}, then $\cS$ becomes an embedding. In the theory of polyfolds \cite{Fabert_Fish_Golovko_Wehrheim_2016}, the definition of the corresponding splicing map is augmented so that it becomes injective.
\end{rmk}

\section{Smooth extension of splicing map for connections}
\label{sec:Smooth_extension_splicing_map_connections}
In order to analyze the continuity and smoothness properties of the splicing map $\cS$ in \eqref{eq:Splicing_map_connections} up to the boundary $\lambda=0$ of the factor $(0,\lambda_0)$ of its domain, we enlarge the codomain $\sA(P)$. This shift in perspective allows us to prove the

\begin{thm}[Smooth extension of the splicing map to the boundary of its domain]
\label{thm:Smoothness_splicing_map_manifold_with_boundary}
Let $(X,g)$ be a closed, connected, four-dimensional, oriented, smooth Riemannian manifold, $G$ be a compact Lie group, $P_0$ be a smooth principal $G$-bundle over $X$, and $P_1$ be a smooth principal $G$-bundle over $S^4$, and $p \in [1,\infty)$ be a constant. Let $P\cong P_0\#_{(\rho,x_0,\lambda)} P_1$ denote the smooth principal $G$-bundle over the connected sum $X\#_{(x_0,\lambda)} S^4 \cong X$ defined by the fixed parameters in Data \ref{data:Fixed_parameters_for_definition_splicing_and_unsplicing_maps} and the parameters $(\rho,x_0,\lambda) \in \Gl_{x_{0\flat}}\times B_{\delta/2}(x_{0\flat})\times (0,\lambda_0)$. The splicing map $\cS$ for connections in \eqref{eq:Splicing_map_connections} with codomain $\sA(P)$ can be equivalently viewed as a smooth section of a product affine bundle:
\begin{equation}
\label{eq:Splicing_map_connections_factored_codomain}  
\begin{tikzcd}
  &\sA(P_0)\times \sA(P_1) \times \Gl_{x_{0\flat}}\times B_\delta(x_{0\flat}) \times (0,\lambda_0) \times \sA(P)
  \arrow[d, "\pi"', shift right=1.0ex]
  \\
  & \arrow[u,"\cS"',  shift right=1.0ex]
  \sA(P_0)\times \sA(P_1) \times \Gl_{x_{0\flat}}\times B_\delta(x_{0\flat}) \times (0,\lambda_0)
\end{tikzcd}
\end{equation}
The preceding smooth section extends to a $C^1$ section of a $C^1$ affine bundle over a smooth Banach manifold with boundary,
\begin{equation}
\label{eq:Splicing_map_connections_factored_codomain_extended}
\begin{tikzcd}
  &\sA^{1,2}(P_0)\times \sA^{1,2}(P_1) \times \Gl_{x_{0\flat}}\times B_\delta(x_{0\flat}) \times \AAA
  \arrow[d, "\pi"', shift right=1.0ex]
  \\
  & \arrow[u,"\cS"',  shift right=1.0ex]
  \sA^{1,2}(P_0)\times \sA^{1,2}(P_1) \times \Gl_{x_{0\flat}}\times B_\delta(x_{0\flat}) \times [0,\lambda_0)
\end{tikzcd}
\end{equation}
where the fibers $\AAA|_{\{\lambda\}}$ are isomorphic to one another as Hilbert affine spaces for each $\lambda\in[0,\lambda_0)$, with
\begin{gather*}
  \AAA|_{\{0\}} := \{0\}\times \left(A_{0\flat}+W_{A_{0\flat}}^{1,2}(T^*X\otimes\ad P_0)\right) \times \left(A_{1\flat}+W_{A_{1\flat}}^{1,2}(T^*S^4\otimes\ad P_1)\right),
  \\
  \AAA|_{\{\lambda\}} := \{\lambda\}\times \left(A_\flat+W_{A_\flat}^{1,2}(T^*X\otimes\ad P)\right), \quad\forall\, \lambda \in (0,\lambda_0).
\end{gather*}
Moreover, we have
\[
  \cS(A_0,A_1,\rho,x_0,0) = \left((A_0,A_1,\rho,x_0,0), (0,A_0,A_1)\right)
\]
upon restriction to the boundary face
\[
  \sA^{1,2}(P_0)\times \sA^{1,2}(P_1) \times \Gl_{x_{0\flat}}\times B_\delta(x_{0\flat}) \times \{0\}.
\]
Finally, the map $\cS$ in \eqref{eq:Splicing_map_connections} extends to a continuous map,
\begin{multline}
\label{eq:Splicing_map_connections_extended}
\cS: \sA(P_0)\times \sA(P_1) \times \Gl_{x_{0\flat}}\times B_\delta(x_{0\flat}) \times [0,\lambda_0)
\\
\to \sA(P)\sqcup\left(\sA(P_0)\times \sA(P_1) \times \Gl_{x_{0\flat}}\times B_\delta(x_{0\flat})\right),
\end{multline}
when the codomain has the Uhlenbeck topology \cite[Section 4.4.1]{DK}.
\end{thm}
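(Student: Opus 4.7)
The plan is to construct the affine bundle $\AAA$ by means of the unsplicing construction of Section \ref{subsec:Surjectivity_splicing_map_connections} and then to show that the section $\cS$ in the factored form \eqref{eq:Splicing_map_connections_factored_codomain} extends continuously to $\lambda = 0$ with a $C^1$ extension. First, I would define a global trivialization of the would-be affine bundle $\AAA$ over $[0, \lambda_0)$. Using the fixed reference connections $A_{0\flat}$ and $A_{1\flat}$ from Data \ref{data:Fixed_parameters_for_definition_splicing_and_unsplicing_maps} and the partition of unity $\chi_0 = \chi_{x_0, \sqrt{\lambda}}$ and $\chi_1 = 1-\chi_0$ of \eqref{eq:Partition_unity_and_covering_cutoff_functions}, the assignment
\[
 (\lambda, a_0, a_1) \longmapsto \bigl(\lambda,\; A_\flat + \chi_0 a_0 + \chi_1 a_1\bigr), \qquad \lambda \in (0,\lambda_0),
\]
where $a_i \in W^{1,2}_{A_{i\flat}}(T^*X_i \otimes \ad P_i)$ for $i=0,1$ and $A_\flat := \cS(A_{0\flat}, A_{1\flat}, \rho, x_0, \lambda)$, identifies the product bundle
\[
  \bigl(A_{0\flat} + W^{1,2}_{A_{0\flat}}(T^*X \otimes \ad P_0)\bigr) \times \bigl(A_{1\flat} + W^{1,2}_{A_{1\flat}}(T^*S^4 \otimes \ad P_1)\bigr) \times [0, \lambda_0)
\]
with a model for $\AAA$ whose fiber over $\lambda > 0$ is naturally a Hilbert affine space structure on $A_\flat + W^{1,2}_{A_\flat}(T^*X \otimes \ad P)$ (this step uses that the cutoff-by-$\chi_0$ plus cutoff-by-$\chi_1$ map gives a bounded linear isomorphism onto this Sobolev space for each fixed $\lambda > 0$, as in Lemma \ref{lem:Surjectivity_splicing_map}) and whose fiber over $\lambda = 0$ is the described product. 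This construction declares $\AAA$ to be a trivial $C^\infty$ affine bundle over the smooth Banach manifold with boundary $\sA^{1,2}(P_0) \times \sA^{1,2}(P_1) \times \Gl_{x_{0\flat}} \times B_\delta(x_{0\flat}) \times [0,\lambda_0)$.

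Second, I would verify that $\cS$, rewritten in these trivializing coordinates, becomes simply
\[
  (A_0, A_1, \rho, x_0, \lambda) \longmapsto \bigl((A_0, A_1, \rho, x_0, \lambda),\; (\lambda, A_0 - A_{0\flat}, A_1 - A_{1\flat})\bigr),
\]
at least after the gauge transformations absorbing $\rho$ into local trivializations of $P_0$ at $x_0$ and $P_1$ at $s$. This is trivially smooth (linear in the fiber variables $A_0, A_1$ and constant in $\lambda$), which yields the $C^1$ property of the extension up to and including $\lambda = 0$ and gives the stated restriction to the boundary face at $\lambda = 0$. The key point, and the one that requires care, is that the affine trivialization of $\AAA$ depends on $(\rho, x_0)$ through the construction of the bundle $P$ and its reference connection $A_\flat$; I would handle this by arguing (as in \cite[Section 7.2.1]{DK}) that different choices of $(\rho, x_0)$ near $(\rho_\flat, x_{0\flat})$ are related by gauge transformations and diffeomorphisms supported near the neck, giving smooth dependence of the trivializations on these parameters.

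Third, for the final continuity assertion for \eqref{eq:Splicing_map_connections_extended} in the Uhlenbeck topology, I would show that as $(A_0^k, A_1^k, \rho_k, x_0^k, \lambda_k) \to (A_0, A_1, \rho, x_0, 0)$ in the natural topology, the spliced connections $\cS(A_0^k, A_1^k, \rho_k, x_0^k, \lambda_k)$ Uhlenbeck-converge to the limiting bubble-tree data $(A_0, A_1, \rho, x_0) \in \sA(P_0) \times \sA(P_1) \times \Gl_{x_{0\flat}} \times B_\delta(x_{0\flat})$. For any compact subset $K \subset X \less \{x_0\}$ and any compact subset $K' \subset S^4 \less \{s\}$, once $\lambda_k$ is sufficiently small the spliced connection restricted to $K$ (respectively to a neighborhood of $K'$ pulled back via $f_{x_0^k, \lambda_k}$) agrees with $A_0^k$ (respectively $A_1^k$), so convergence on compact subsets away from the bubble point is immediate from the convergence $A_i^k \to A_i$ in $\sA(P_i)$. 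The energy $\int_{B_{2\sqrt{\lambda_k}}(x_0^k)} |F|^2 \, d\vol$ is bounded and concentrates in the shrinking neck, which is exactly what the bubble-tree/Uhlenbeck topology requires for convergence to the limit $(A_0, A_1, \rho, x_0)$.

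The principal technical obstacle is the second step: making precise the $C^1$ dependence of the trivialization of $\AAA$ on the parameters $(\rho, x_0, \lambda)$. The cutoff functions $\chi_0, \chi_1$ and the splicing identifications both depend non-trivially on $(x_0, \lambda)$, and one must show these dependencies define $C^1$ (not merely $C^0$) affine bundle charts up to $\lambda = 0$. This amounts to verifying that the maps $\lambda \mapsto \chi_{x_0, \sqrt{\lambda}}$ and the attendant bundle identification data are $C^1$ from $[0, \lambda_0)$ into the relevant operator spaces; the delicate point is that the derivative in $\lambda$ of these cutoffs, when combined with the change in the bundle $P$ itself, remains bounded in operator norm as $\lambda \downarrow 0$, because the $L^4$ bound \eqref{eq:L4_bound_dchi} is scale-invariant. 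Granting this, the extension to a $C^1$ section on a smooth Banach manifold with boundary follows from the formulas recorded above.
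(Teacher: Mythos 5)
Your proposal contains a genuine gap at its foundation. In your first step you assert that for each fixed $\lambda>0$ the cutoff map $(a_0,a_1)\mapsto \chi_0 a_0+\chi_1 a_1$ ``gives a bounded linear isomorphism'' onto $W_{A_\flat}^{1,2}(T^*X\otimes\ad P)$, citing Lemma \ref{lem:Surjectivity_splicing_map}. That lemma only gives surjectivity (via the right inverse built from $\psi_0,\psi_1$); the map is far from injective, since any $a_0$ supported where $\chi_0=0$ (and likewise any $a_1$ supported where $\chi_1=0$) lies in its kernel. Consequently your proposed trivialization of $\AAA$, and with it the claim that $\cS$ ``becomes simply'' a linear projection in trivializing coordinates, does not exist as stated. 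The paper identifies the fibers $\AAA|_{\{\lambda\}}$ by an entirely different and more delicate device (Theorem \ref{thm:Comparison_Hilbert_spaces_bundle-valued_one-forms}): one splices $L^2$-orthonormal eigenvector bases of the covariant Laplacians at a \emph{sequence} of scales governed by a sequential scale function, applies Gram--Schmidt, and matches the resulting basis against an eigenbasis for the spliced connection; the isomorphism is not the naive cutoff map.

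The second, related gap is that the actual analytic content of the theorem is deferred rather than proved. You write ``Granting this, the extension to a $C^1$ section \ldots follows,'' where ``this'' is the $C^1$ dependence of the splicing identifications on $(\rho,x_0,\lambda)$ up to $\lambda=0$ in operator norm. But that is precisely what must be established: the paper's proof consists of estimating all five partial derivatives $\partial\cS/\partial A_0$, $\partial\cS/\partial A_1$, $\partial\cS/\partial\rho$, $\partial\cS/\partial x_0$, $\partial\cS/\partial\lambda$ in $L^2$ and $W^{1,2}$ norms (importing bounds of the form $\lambda^{2/p-1/2}$, $\lambda^{2/p^*-1/2}$, etc., from \cite[Section 3]{FeehanGeometry} and \cite{Peng_1995}), then invoking uniform continuity of these derivatives in $\lambda$ to extend them uniquely to $\lambda=0$. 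The scale-invariance of the $L^4$ bound \eqref{eq:L4_bound_dchi} on $d\chi$ alone does not suffice: the $\lambda$-derivative also acts on the pullback $f_{x_0,\lambda}^*\sigma_1^*A_1$ through the $\lambda$-dependent conformal map, and it is these terms that produce the critical $\lambda^{1/2}$-type factors and force the restriction to $p\leq 2$. Your third step (Uhlenbeck continuity of \eqref{eq:Splicing_map_connections_extended}) is essentially correct and matches the paper's viewpoint, but the first two steps would need to be replaced by the eigenbasis construction of Theorem \ref{thm:Comparison_Hilbert_spaces_bundle-valued_one-forms} together with the derivative estimates to yield a complete proof.
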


In preparation for our proof of Theorem \ref{thm:Smoothness_splicing_map_manifold_with_boundary}, we shall in the forthcoming Theorem \ref{thm:Comparison_Hilbert_spaces_bundle-valued_one-forms} establish a natural isomorphism between two Hilbert spaces.

Recall that the $L^2$ norm on $v \in \Omega^2(X;\ad P)$ and the $L^4$ norm on $a\in \Omega^1(X;\ad P)$ depend only on the \emph{conformal class} $[g]$ of the Riemannian metric $g$ on $X$ and not on the actual metric. With that in mind, we now consider the definition of Sobolev norms for sections of $T^*S^4\otimes\ad P_1$ that are equivalent to the standard norm, $\|\cdot\|_{W_{A_1,g_\round}^{1,2}(S^4)}$, but have more easily described conformal invariance properties.

Let $A_1$ be a smooth connection on a principal $G$-bundle $P_1$ over $S^4$ with its standard round metric $g_\round$ of radius one. Let $\delta = g_{\euclid}$ be the flat metric on $S^4\less\{s\} \cong \RR^4$ obtained by pullback of the standard Euclidean metric on $\RR^4$ via the conformal diffeomorphism $\varphi_n^{-1}: S^4\less\{s\}\to \RR^4$. Let $\nabla_{A_1}^{g_\round}$ denote the covariant derivative on $T^*S^4\otimes\ad P_1$ defined by the connection $A_1$ and metric $g_\round$, while $\nabla_{A_1}^\delta$ denotes the covariant derivative on $T^*S^4\otimes\ad P_1 \restriction S^4\less\{s\}$ defined by $A_1$ and $\delta$. We define the usual $W^{1,2}$ norm on $C^\infty$ sections $a_1$ of $T^*S^4\otimes\ad P_1$ by
\[
\|a_1\|_{W_{A_1,g_\round}^{1,2}(S^4)} := \|\nabla_{A_1}^{g_\round} a_1\|_{L^2(S^4,g_\round)} + \|a_1\|_{L^2(S^4,g_\round)}.
\]
Similarly, if $a_1$ is a $C^\infty$ section of $T^*S^4\otimes\ad P_1$ and has compact support in $S^4\less\{s\}$, define
\begin{align*}
|a_1|_{A_1} &:=  \|\nabla_{A_1}^\delta a_1\|_{L^2(S^4,\delta)},
\\
\|a_1\|_{W_{A_1,\delta}^{1,2}(S^4)}
&:=
\|\nabla_{A_1}^\delta a_1\|_{L^2(S^4,\delta)} + \|a_1\|_{L^2(S^4,\delta)}.
\end{align*}
For an integer $d\geq 2$, let $\Conf(S^d)$ denote the group of conformal transformations of $(S^d,g_\round)$. For any $\lambda\in(0,\infty)$, recall that $\delta_\lambda$ is the dilation of $\RR^d$ given by $x\mapsto x/\lambda$ and for any $z\in\RR^d$, recall that $\tau_z$ is the translation of $\RR^d$ defined by $x\mapsto x-z$. If $\delta_\lambda$ and $\tau_z$ again denote the conformal diffeomorphisms of $S^d$ induced by the chart $\varphi_n^{-1}:S^d\less\{s\}\cong\RR^d$, then the group $\SO(d)\times \RR_+ \times \RR^d$ of rotations, dilations, and translations of $\RR^d$ is identified with the subgroup $\Conf_s(S^d) \subset \Conf(S^d)$ of diffeomorphisms which fix the south pole $s\in S^d$ \cite[p. 346]{TauPath}. Indeed, the finite generators of $\Conf(S^d)$ are dilations, translations, rotations and special conformal transformations, where the latter can be understood as an inversion, followed by a translation, and followed again by an inversion \cite[Section 2.1 and Table 2.1]{Blumenhagen_Plauschinn_2009}, \cite[Theorem 1.9]{Schottenloher_2008}. The conformal and quasi-conformal invariance properties of $|\cdot|_{A_1}$ and $\|\cdot\|_{W_{A_1,\delta}^{1,2}(S^4)}$ are described by two lemmata of Taubes \cite{TauPath, TauFrame}.

\begin{lem}
\label{lem:Taubes_1984_path_proposition_2-4}
(See Taubes \cite[Proposition 2.4]{TauPath}.)
There is a universal constant $z \in [1,\infty)$ with the following significance. If $A$ is a smooth connection on a principal $G$-bundle $P_1$ over $S^4$ with its standard round metric of radius one, then the following hold:
\begin{enumerate}
\item $|\cdot|_{A_1}$ extends to a continuous norm on $\Omega^1(S^4; \ad P_1) = C^\infty(S^4; T^*S^4\otimes\ad P_1)$.

\item The norm $|\cdot|_{A_1}$ is $\Conf_s(S^4)$-invariant:
    \[
    |h^*a_1|_{h^*A_1}=|a|_{A_1}, \quad\forall\, h\in \Conf_s(S^4) \text{ and } a_1 \in \Omega^1(S^4; \ad P_1).
    \]

\item If $a_1 \in \Omega^1(S^4;\ad P_1)$, then
\begin{align*}
z^{-1}\|a_1\|_{W_{A_1,g_\round}^{1,2}(S^4)}
&\leq
|a_1|_{A_1}
\leq
z\|a_1\|_{W_{A_1,g_\round}^{1,2}(S^4)},
\\
z^{-1}\|a_1\|_{W_{A_1,g_\round}^{1,2}(S^4)}
&\leq
\|a_1\|_{W_{A_1,\delta}^{1,2}(S^4)}
\leq
z\|a_1\|_{W_{A_1,g_\round}^{1,2}(S^4)}.
\end{align*}
\end{enumerate}
\end{lem}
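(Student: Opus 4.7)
My plan is to prove Item~(3) first on the dense subspace of forms supported in $S^4\less\{s\}$, then use the resulting norm equivalence to deduce the continuous extension in Item~(1), and finally derive the conformal invariance in Item~(2) by direct computation on $\RR^4$ followed by a density argument.

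\textbf{Norm equivalences.} Identifying $\RR^4$ with $S^4\less\{s\}$ via $\varphi_n$ gives $\varphi_n^*g_\round = e^{2f}\delta$ with $f(x)=\log\bigl(2/(1+|x|^2)\bigr)$. For a 1-form $a_1$ compactly supported in $S^4\less\{s\}$, the pointwise identities $|a_1|_{g_\round}^2 = e^{-2f}|a_1|_\delta^2$ and $d\vol_{g_\round}=e^{4f}\,d^4x$ give
\[
  \|a_1\|_{L^2(S^4,g_\round)}^2 = \int_{\RR^4} e^{2f}|a_1|_\delta^2\,d^4x.
\]
H\"older's inequality against the weight $e^{2f}=4/(1+|x|^2)^2\in L^2(\RR^4)$, combined with Kato's inequality $|d|a_1|_\delta|\leq|\nabla_{A_1}^\delta a_1|_\delta$ and the Sobolev embedding $W^{1,2}_0(\RR^4)\embed L^4(\RR^4)$, then yields $\|a_1\|_{L^2(g_\round)}\leq C|a_1|_{A_1}$. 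For the gradient comparison, the conformal change of Christoffel symbols produces $\nabla_{A_1}^{g_\round}a_1 = \nabla_{A_1}^\delta a_1 + T(df,a_1)$ for a zeroth-order bundle tensor $T$ with $|T|_\delta\leq C|df|_\delta|a_1|_\delta$; the conformal scaling of pointwise $(0,2)$-tensor norms together with $d\vol_{g_\round}=e^{4f}\,d^4x$ gives
\[
  \|\nabla_{A_1}^{g_\round}a_1\|_{L^2(g_\round)}^2 = \int_{\RR^4}\bigl|\nabla_{A_1}^\delta a_1 + T(df,a_1)\bigr|_\delta^2\,d^4x.
\]
The pointwise bound $|df|_\delta^2 = 4|x|^2/(1+|x|^2)^2$ is bounded for $|x|\leq 1$ and majorized by $4|x|^{-2}$ for $|x|\geq 1$, so the cross term is absorbed by Hardy's inequality $\int_{\RR^4}|u|^2/|x|^2\,d^4x\leq C\|\nabla u\|_{L^2}^2$ applied to $u=|a_1|_\delta$ together with Kato. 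This gives the two-sided equivalence of $|a_1|_{A_1}$ with $\|a_1\|_{W_{A_1,g_\round}^{1,2}(S^4)}$, with constants independent of $A_1$; the equivalence $\|a_1\|_{W_{A_1,g_\round}^{1,2}(S^4)}\asymp\|a_1\|_{W_{A_1,\delta}^{1,2}(S^4)}$ follows by the same mechanism.

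\textbf{Continuous extension and conformal invariance.} With the estimate of Item~(3) in hand on compactly supported forms, I would construct logarithmic cutoffs $\beta_\epsilon\in C^\infty(S^4;[0,1])$ vanishing on the $g_\round$-geodesic ball $B_\epsilon(s)$, equal to $1$ outside $B_{2\epsilon}(s)$, with $\|d\beta_\epsilon\|_{L^4(S^4)}\leq C$ independent of $\epsilon$ in the spirit of \eqref{eq:L4_bound_dchi}. Then $\beta_\epsilon a_1\to a_1$ in $W_{A_1,g_\round}^{1,2}(S^4)$ and $\{|\beta_\epsilon a_1|_{A_1}\}$ is Cauchy by the norm equivalence, yielding the continuous extension claimed in Item~(1). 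For Item~(2) it suffices to verify invariance on forms supported in $S^4\less\{s\}$ and then pass to the limit by density: under a rotation or translation of $\RR^4$, $\delta$ and the integrand of $|a_1|_{A_1}^2$ are preserved as isometries; under a dilation $h(x)=\lambda x$ the transformation laws $(h^*a_1)_j(x)=\lambda\,a_{1,j}(\lambda x)$ and $(h^*A_1)_i(x)=\lambda\,A_{1,i}(\lambda x)$ yield $\nabla_{h^*A_1}^\delta(h^*a_1)(x)=\lambda^2(\nabla_{A_1}^\delta a_1)(\lambda x)$, and the Jacobian $d^4x=\lambda^{-4}d^4y$ cancels the $\lambda^4$ from squaring.

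\textbf{Main obstacle.} The delicate point is the $L^2$-equivalence of the two gradient norms: the $L^2$ norm of the covariant derivative of a 1-form on a 4-manifold is not conformally invariant (in contrast with the $L^2$ norm of a 2-form or the $L^4$ norm of a 1-form), and the cross term generated by the conformal change of Christoffel symbols must be absorbed. The weight $|df|_\delta^2$ fails to lie in $L^2(\RR^4)$, so a na\"ive H\"older split is insufficient; the argument really requires Hardy's inequality on $\RR^4$ together with Kato's inequality to reduce the bundle-valued estimate to a scalar one. Ensuring that the universal constant $z$ is independent of $A_1$ requires that all cross terms introduced by the difference of $\nabla^{g_\round}$ and $\nabla^\delta$ involve only $df$ and $a_1$, not $F_{A_1}$, which is precisely why the computation closes.
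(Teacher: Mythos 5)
The paper does not prove this lemma at all: it is quoted from Taubes \cite[Proposition 2.4]{TauPath}, so your self-contained argument --- the conformal factor $\varphi_n^*g_\round=e^{2f}\delta$, Kato's inequality, the Sobolev bound $\|u\|_{L^4(\RR^4)}\lesssim\|\nabla u\|_{L^2(\RR^4)}$, and Hardy's inequality --- is a genuinely different route from the paper's (which is a citation), and its virtues are that it is explicit and that it isolates the correct reason why $z$ is universal: every correction term involves only $df$ and $a_1$, never $A_1$ or $F_{A_1}$. Your bound $\|a_1\|_{L^2(g_\round)}\lesssim|a_1|_{A_1}$, your proof of Item (2) by scaling on $\RR^4$, and your extension argument for Item (1) via cutoffs with uniformly $L^4$-bounded differentials (as in \eqref{eq:L4_bound_dchi}) are all correct.

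Two steps do not stand as written. First, in the gradient comparison, the direction $\|\nabla^\delta_{A_1}a_1\|_{L^2(\delta)}\lesssim\|a_1\|_{W_{A_1,g_\round}^{1,2}(S^4)}$ is not obtained by ``absorbing'' the cross term with the flat Hardy inequality applied to $u=|a_1|_\delta$: that route bounds $\int_{\RR^4}|df|_\delta^2|a_1|_\delta^2\,d^4x$ by a multiple of $\|\nabla^\delta_{A_1}a_1\|_{L^2(\delta)}^2$, the very quantity being estimated, and the constant (the $\RR^4$ Hardy constant is $1$, multiplied by the pointwise norm of the conformal correction tensor, which is not small) does not permit absorption; your mechanism settles only the opposite direction cleanly. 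To repair it, rewrite the weight in round terms, $\int_{\RR^4}|df|_\delta^2|a_1|_\delta^2\,d^4x=\int_{S^4}e^{-2f}|df|_\delta^2\,|a_1|_{g_\round}^2\,d\vol_{g_\round}$ with $e^{-2f}|df|_\delta^2=|x|^2\sim 4\,d_{g_\round}(\cdot,s)^{-2}$ near $s$, and use a Hardy inequality on $(S^4,g_\round)$ centered at the south pole together with Kato for $|a_1|_{g_\round}$ (equivalently, run your flat argument after the inversion exchanging the two poles). Second, the claim that the equivalence of $\|\cdot\|_{W_{A_1,\delta}^{1,2}(S^4)}$ with the round norm ``follows by the same mechanism'' is where the argument genuinely breaks: the zeroth-order flat term carries the weight $\|a_1\|_{L^2(S^4,\delta)}^2=\int_{S^4}e^{-2f}|a_1|_{g_\round}^2\,d\vol_{g_\round}$ with $e^{-2f}\sim 4\,d_{g_\round}(\cdot,s)^{-4}$, and a bump form of round radius $\eps$ supported at round distance about $2\eps$ from $s$ has flat $L^2$ norm of order one while its round $W_{A_1}^{1,2}$ norm is $O(\eps)$, so that term is not dominated by the round norm for forms concentrating at $s$. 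For that chain you must either restrict to forms supported away from a fixed neighborhood of $s$, or replace the zeroth-order term by the conformally invariant $L^4$ norm (compare Remark \ref{rmk:Taubes_1988_3-1}) and follow Taubes's own normalization of the flat norm; the mechanism that proves the first chain does not extend to it.
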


\begin{lem}
\label{lem:Taubes_1988_3-1}
(See Taubes \cite[Lemma 3.1]{TauFrame}.)
Assume the hypotheses and notation of Lemma \ref{lem:Taubes_1984_path_proposition_2-4}. If $h \in \Conf(S^4)$ and $a_1\in\Omega^1(S^4;\ad P_1)$, then
\[
z^{-1}\|a_1\|_{W_{A_1,g_\round}^{1,2}(S^4)}
\leq
\|h^*a_1\|_{W_{h^*A_1,g_\round}^{1,2}(S^4)}
\leq
z\|a_1\|_{W_{A_1,g_\round}^{1,2}(S^4)}.
\]
\end{lem}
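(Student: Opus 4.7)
The plan is to reduce the case of an arbitrary conformal transformation $h \in \Conf(S^4)$ to the case $h \in \Conf_s(S^4)$ already handled by Lemma \ref{lem:Taubes_1984_path_proposition_2-4}, by factoring $h$ through an isometry. The key observation is that $\SO(5) \subset \Conf(S^4)$ acts transitively on $S^4$, and so for any $h \in \Conf(S^4)$ the point $h(s) \in S^4$ can be moved back to $s$ by some $r \in \SO(5)$; then $r\circ h \in \Conf_s(S^4)$ and hence $h = r^{-1}\circ(r\circ h)$ factors as an isometry composed with a conformal transformation fixing the south pole.

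First I would establish the isometry step: for any $r \in \SO(5)$ and smooth $b\in\Omega^1(S^4;\ad P_1)$, the pullback $r^*:C^\infty(S^4;T^*S^4\otimes\ad P_1) \to C^\infty(S^4;T^*S^4\otimes r^*\!\ad P_1)$ is an isometry of $W^{1,2}$ spaces, that is
\[
\|r^* b\|_{W_{r^*B,g_\round}^{1,2}(S^4)} = \|b\|_{W_{B,g_\round}^{1,2}(S^4)}
\]
for any smooth connection $B$ on $P_1$. This is immediate because $r^*g_\round = g_\round$, so the volume form, pointwise norms, and the covariant derivative $\nabla^{g_\round}$ are all preserved by $r^*$, together with the naturality identity $r^*\nabla_B^{g_\round}b = \nabla_{r^*B}^{g_\round}(r^*b)$.

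Second, I would invoke Lemma \ref{lem:Taubes_1984_path_proposition_2-4} to handle the factor $h_2 := r\circ h \in \Conf_s(S^4)$. Setting $B := r^*A_1$ and $b := r^*a_1$, the chain of inequalities
\[
\|h_2^*b\|_{W_{h_2^*B,g_\round}^{1,2}(S^4)} \leq z\,|h_2^*b|_{h_2^*B} = z\,|b|_B \leq z^2\,\|b\|_{W_{B,g_\round}^{1,2}(S^4)}
\]
follows from Lemma \ref{lem:Taubes_1984_path_proposition_2-4}, where the equality uses the $\Conf_s(S^4)$-invariance of $|\cdot|$. Combining this with the isometry step (applied to both $r$ and $r^{-1}$) and using $h^*a_1 = h_2^*(r^*a_1)$ and $h^*A_1 = h_2^*(r^*A_1)$ yields
\[
\|h^*a_1\|_{W_{h^*A_1,g_\round}^{1,2}(S^4)} \leq z^2\,\|a_1\|_{W_{A_1,g_\round}^{1,2}(S^4)}.
\]
The reverse inequality follows identically by interchanging the roles of $a_1$ and $h^*a_1$, applying the argument to $h^{-1} \in \Conf(S^4)$. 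Absorbing the constant $z^2$ into a new universal constant (still denoted $z$) gives the claim.

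The only potential obstacle is verifying the factorization $h = r^{-1}\circ h_2$ with $r\in\SO(5)$ isometric and $h_2 \in \Conf_s(S^4)$, but this is elementary: transitivity of the $\SO(5)$-action on $S^4$ produces $r \in \SO(5)$ with $r(h(s)) = s$, so $h_2 := r\circ h$ fixes $s$ and belongs to $\Conf_s(S^4)$. Everything else is formal manipulation together with Lemma \ref{lem:Taubes_1984_path_proposition_2-4}.
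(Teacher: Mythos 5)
The paper does not actually prove this lemma: it is quoted verbatim from Taubes \cite[Lemma 3.1]{TauFrame}, so there is no in-paper proof to compare against. Judged on its own merits, your reduction is sound and is essentially the standard way to deduce the statement from Lemma \ref{lem:Taubes_1984_path_proposition_2-4}: decompose $h$ as an isometry of $(S^4,g_\round)$ composed with an element of $\Conf_s(S^4)$, use that isometric pullback preserves $\|\cdot\|_{W_{\cdot,g_\round}^{1,2}(S^4)}$ exactly, and invoke the $\Conf_s(S^4)$-invariance of $|\cdot|_{A_1}$ together with its equivalence to the $W^{1,2}$ norm. Two small points to fix. First, a bookkeeping slip: with $h_2 := r\circ h$ and $h = r^{-1}\circ h_2$, the composition rule $(f\circ g)^* = g^*\circ f^*$ gives $h^*a_1 = h_2^*\bigl((r^{-1})^*a_1\bigr)$, not $h_2^*(r^*a_1)$; this is harmless, since $r^{-1}\in\SO(5)$ is again an isometry, so you should simply take $b := (r^{-1})^*a_1$ and $B := (r^{-1})^*A_1$ in your second step. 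Second, your argument yields the constant $z^2$ rather than $z$; since $z$ is only asserted to be a universal constant, one may enlarge it once and for all (e.g.\ replace $z$ by $\max\{z,z^2\}$ in both lemmata), as you note, but you should say this explicitly rather than silently reuse the same symbol. One could also remark, for completeness, that the stabilizer of $s$ in the (orientation-preserving) conformal group is exactly the subgroup $\Conf_s(S^4)$ described before Lemma \ref{lem:Taubes_1984_path_proposition_2-4} (similarities of $\RR^4$ under stereographic projection), which is the fact guaranteeing $h_2\in\Conf_s(S^4)$; with these clarifications the proof is complete.
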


\begin{rmk}[Variant of the $W^{1,2}$ norm on sections of $T^*S^4\otimes\ad P_1$]
\label{rmk:Taubes_1988_3-1}
A combination of the \emph{Kato Inequality} \cite[Equation (6.20)]{FU} and the Sobolev embedding $W^{1,2}(S^4;\RR) \subset L^4(S^4;\RR)$ given by \cite[Theorem 4.12]{AdamsFournier} yields a universal constant $z_0 \in [1,\infty)$ such that
\[
\|a_1\|_{L^4(S^4,g_\round)} \leq z_0\|a_1\|_{W_{A_1,g_\round}^{1,2}(S^4)},
\]
and thus, since $\|a_1\|_{L^2(S^4,g_\round)} \leq \vol(S^4)^{1/4}\|a\|_{L^4(S^4,g_\round)}$, the norm $\|a\|_{W_{A_1,g_\round}^{1,2}(S^4)}$ may be replaced by the equivalent norm,
\[
\|a_1\|_{\widehat{W}_{A_1,g_\round}^{1,2}(S^4)} := \|\nabla_{A_1}^{g_\round} a_1\|_{L^2(S^4,g_\round)} + \|a_1\|_{L^4(S^4,g_\round)},
\]
in the statements of Lemmata \ref{lem:Taubes_1984_path_proposition_2-4} and \ref{lem:Taubes_1988_3-1}.
\end{rmk}

We say that two Hilbert spaces $\sG$ and $\sH$ are \emph{isomorphic} if there exists a bounded linear map $T:\sG\to\sH$ with bounded linear inverse $T^{-1}:\sH\to\sG$ and in addition that $\sG$ and $\sH$ are \emph{isometrically isomorphic} if $T$ is also an isometry (and thus preserves inner products by the parallelogram identity). Recall that a sequence $\{a_k\}_{k\in\NN} \subset \sH$ is an \emph{orthonormal basis for $\sH$} \cite[Section 2.3]{Reed_Simon_v1} if the sequence is $\sH$-orthonormal and the subspace of finite linear combinations of basis elements is dense in $\sH$. 

\begin{thm}[Isomorphisms of Hilbert spaces of bundle-valued one-forms for small scale parameters]
\label{thm:Comparison_Hilbert_spaces_bundle-valued_one-forms}
Assume the hypotheses of Theorem \ref{thm:Smoothness_splicing_map_manifold_with_boundary} and, in particular, that $g$ is conformally flat on $B_{x_{0\flat}}(2\delta)$, so hypothesis \eqref{eq:Riemg_zero} from Theorem \ref{mainthm:Gluing} holds. Let $g_{x_0,\lambda}$ be the smooth Riemannian metric on the connected sum $X \cong X\#_{x_0,\lambda}S^4$ defined in Section \ref{subsec:Riemannian_metric_connected_sum_X_4-sphere} by the metric $g$ on $X$ and the metric $g_\round$ on $S^4$. Then the Hilbert spaces $W_{A_\flat}^{1,2}(T^*X\otimes\ad P)$ defined by $g_{x_0,\lambda}$ are isomorphic to one another, with uniformly equivalent norms for all $\lambda \in (0,\lambda_0]$, where $A_\flat := \cS(A_{0\flat},A_{1\flat},\rho_\flat,x_{0\flat},\lambda)$ on $P$.
Moreover, there is an isometric isomorphism of Hilbert spaces (depending on $\lambda \in (0,\lambda_0]$ and the preceding choices) 
\begin{equation}
\label{eq:Isomorphism_direct_sum_one-forms_P0_and_one-forms_P1_with_one-forms_P}
\Phi:  W_{A_{0\flat}}^{1,2}(T^*X\otimes\ad P_0)\oplus W_{A_{1\flat}}^{1,2}(T^*S^4\otimes\ad P_1) \cong  W_{A_\flat}^{1,2}(T^*X\otimes\ad P)
\end{equation}
defined by choices of $W^{1,2}$-orthonormal bases for the three $W^{1,2}$ spaces appearing in \eqref{eq:Isomorphism_direct_sum_one-forms_P0_and_one-forms_P1_with_one-forms_P}. Let 
\[
  \{a_{0j}\}_{j\in\NN} \subset W_{A_{0\flat}}^{1,2}(T^*X\otimes\ad P_0)
   \quad\text{and}\quad
  \{a_{1k}\}_{k\in\NN} \subset W_{A_{1\flat}}^{1,2}(T^*S^4\otimes\ad P_1)
\]
be $W^{1,2}$-orthonormal bases defined by sequences of $L^2$-orthonormal eigenvectors of the following second-order elliptic partial differential operators for $i=0,1$:
\[
  \Delta_{A_{i\flat}}^1 := d_{A_{i\flat}}^*d_{A_{i\flat}} + d_{A_{i\flat}}d_{A_{i\flat}}^* \quad\text{on }\Omega^1(\ad P_i).
\]  
There are a non-increasing \emph{sequential scale function}
\begin{equation}
\label{eq:Sequential_scale_function}
\bs:\NN\times\NN\times(0,\lambda_0] \ni (m,n,\lambda) \mapsto  \lambda_{m,n} := \bs(m,n,\lambda) \in (0,\lambda_0]  
\end{equation}
such that $\bs(1,1,\lambda) = \lambda$ and $\bs(m,n,\lambda) \to 0$ as $m$ or $n\to\infty$ and, for each $\lambda\in (0,\lambda_0]$, a sequence of approximate eigenvectors for $\Delta_{A_{i\flat}}^1$ on $\Omega^1(\ad P)$, 
\[
  \{a_{j,k}(\lambda_{jk})\}_{j,k\in\NN} \subset W_{A_\flat}^{1,2}(T^*X\otimes\ad P),
\]
given by the splicing map for bundle-valued one-forms in Section \ref{sec:Splicing_map_connections}:
\[
  a_{j,k}'(\lambda_{jk}) := \cS(a_{0j},a_{1k},\rho_\flat,x_{0\flat},\lambda_{jk}), \quad\forall\, (j,k) \in \NN\times\NN.
\]
Application of the \emph{Gram--Schmidt orthonormalization process} to $\{a_{j,k}'(\lambda_{jk})\}_{j,k\in\NN}$ yields a $W^{1,2}$-orthonormal basis $\{\bar a_{j,k}(\lambda_{jk})\}_{j,k\in\NN}$ for $W_{A_\flat}^{1,2}(T^*X\otimes\ad P)$ and if
\[
  \{a_{j,k}(\lambda_{jk})\}_{j,k\in\NN} \subset W_{A_\flat}^{1,2}(T^*X\otimes\ad P)
\]
is a suitably enumerated $W^{1,2}$-orthonormal basis for $W_{A_\flat}^{1,2}(T^*X\otimes\ad P)$ defined by a sequence of $L^2$-orthonormal eigenvectors of $\Delta_{A_\flat}^1$ on $\Omega^1(\ad P)$, then the assignment
\[
  \bar a_{j,k}(\lambda_{jk}) \mapsto a_{j,k}(\lambda_{jk}), \quad\forall\, (j,k) \in \NN\times\NN
\]
extends to an isometric isomorphism of the Hilbert space $W_{A_\flat}^{1,2}(T^*X\otimes\ad P)$ onto itself. Finally, the assignment
\[
  (a_{0j}, a_{1k}) \mapsto a_{j,k}(\lambda_{jk}), \quad\forall\, (j,k) \in \NN\times\NN
\]
extends to an isometric isomorphism $\Phi(\lambda)$ in \eqref{eq:Isomorphism_direct_sum_one-forms_P0_and_one-forms_P1_with_one-forms_P} of Hilbert spaces (depending on $\lambda \in (0,\lambda_0]$ and the preceding choices). If $\{\lambda_n\}_{n\in\NN} \subset (0,\lambda_0]$ is any sequence that converges to zero as $n\to\infty$, then the corresponding sequence $\{\Phi(\lambda_n)\}_{n\in\NN}$ of isomorphisms \eqref{eq:Isomorphism_direct_sum_one-forms_P0_and_one-forms_P1_with_one-forms_P} is Cauchy with respect to the operator norm.
\end{thm}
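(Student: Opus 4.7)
[Proof plan for Theorem \ref{thm:Comparison_Hilbert_spaces_bundle-valued_one-forms}]
The plan is to first establish the uniform norm equivalence for the family $\{W_{A_\flat}^{1,2}(T^*X\otimes\ad P)\}_{\lambda\in(0,\lambda_0]}$ and then promote that equivalence to an honest isometric isomorphism via a spectral-theoretic construction of compatible orthonormal bases. The uniform equivalence follows from the conformal structure of the metrics $g_{x_0,\lambda}$ defined in \eqref{eq:Peng_1995_2-6_locally_flat_metric}: on $X\less B_{2\sqrt{\lambda}}(x_0)$ the metric is $g$, on $S^4\less\varphi_s(B_{2\sqrt{\lambda}})$ it is $g_\round$, and the neck region is controlled conformally by the diffeomorphism $f_{x_0,\lambda}$. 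Combining the conformal invariance of $\|F\|_{L^2}$ and $\|a\|_{L^4}$ in dimension four with the Kato inequality and the Sobolev embedding $W^{1,2}\subset L^4$ on each piece (in the form recorded in Lemmata \ref{lem:Taubes_1984_path_proposition_2-4} and \ref{lem:Taubes_1988_3-1} and Remark \ref{rmk:Taubes_1988_3-1}) shows that all the $W_{A_\flat}^{1,2}(T^*X\otimes\ad P)$ norms are equivalent to one another with constants independent of $\lambda\in(0,\lambda_0]$. Up to these equivalences, the underlying Banach spaces are even identified as sets, since the connected-sum $X\#_{(x_0,\lambda)}S^4$ is identified with $X$ in the description of Section \ref{subsec:Riemannian_metric_connected_sum_X_4-sphere}.

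Next I would construct the $W^{1,2}$-orthonormal bases. The operators $\Delta_{A_{0\flat}}^1$, $\Delta_{A_{1\flat}}^1$, and $\Delta_{A_\flat}^1$ are elliptic, $L^2$-self-adjoint, and non-negative with compact resolvent on their respective closed base manifolds, so their $L^2$-orthonormal eigenbases, after rescaling each eigenvector $a$ by $\sqrt{1+\mu}$ where $\mu$ is its eigenvalue, give $W^{1,2}$-orthonormal bases of the form $\{a_{0j}\}$, $\{a_{1k}\}$, and $\{a_{j,k}(\lambda_{jk})\}$ (the last indexed in a single sequence by any bijection $\NN\times\NN\cong\NN$). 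The key analytic step is to construct the sequential scale function $\bs$ in \eqref{eq:Sequential_scale_function} so that the spliced one-forms
\[
a_{j,k}'(\lambda_{jk}) := \cS(a_{0j},a_{1k},\rho_\flat,x_{0\flat},\lambda_{jk})
\]
form a sequence that, as $(j,k)$ runs over $\NN\times\NN$, is \emph{approximately} $W^{1,2}$-orthonormal with errors that are summable in $(j,k)$. The pairwise inner products split into contributions from the three pieces of the splicing construction; away from the neck the spliced sections agree with $a_{0j}$ or $a_{1k}$, and on the neck $\Omega(x_{0\flat};\sqrt{\lambda_{jk}}/2,2\sqrt{\lambda_{jk}})$ the $L^2$ and $L^4$ norms are controlled by the conformal invariance together with the $L^\infty$ estimates $\|a_{0j}\|_{L^\infty(B_{2\sqrt{\lambda}}(x_{0\flat}))} \leq C \lambda^{\alpha}\|a_{0j}\|_{W^{2,2}}$ that follow from local elliptic regularity for $\Delta^1_{A_{0\flat}}$ on the (conformally flat) region near $x_{0\flat}$, and similarly near $s\in S^4$ for $a_{1k}$. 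Choosing $\bs(m,n,\lambda)$ so that the neck contributions from any pair of basis vectors with labels $\leq (m,n)$ are much smaller than $2^{-m-n}$, and taking $\bs(1,1,\lambda)=\lambda$ with $\bs$ non-increasing in each of $m,n$, gives a diagonal scale assignment for which the spliced family is Cauchy-summably close to orthonormality.

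The third step is Gram--Schmidt. The estimates just described guarantee that the infinite Gram matrix of $\{a_{j,k}'(\lambda_{jk})\}$ differs from the identity by a Hilbert--Schmidt operator whose norm can be made $<1$ by shrinking $\lambda_0$, hence the family is a Riesz basis and Gram--Schmidt produces a genuine $W^{1,2}$-orthonormal basis $\{\bar a_{j,k}(\lambda_{jk})\}$ of $W_{A_\flat}^{1,2}(T^*X\otimes\ad P)$. After choosing an enumeration of the eigenbasis $\{a_{j,k}(\lambda_{jk})\}$ that matches the labels used for the spliced family (for instance, by ordering in lexicographic order of $(j,k)$ rather than by increasing eigenvalue), the assignment $\bar a_{j,k}(\lambda_{jk})\mapsto a_{j,k}(\lambda_{jk})$ extends uniquely to a surjective linear isometry of $W_{A_\flat}^{1,2}(T^*X\otimes\ad P)$ onto itself by sending one orthonormal basis to another. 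Composing with the elementary isometry that sends $(a_{0j},a_{1k})$ of the Hilbert direct sum to the labelled vector $\bar a_{j,k}(\lambda_{jk})$ then produces the desired isometric isomorphism $\Phi(\lambda)$ of \eqref{eq:Isomorphism_direct_sum_one-forms_P0_and_one-forms_P1_with_one-forms_P}.

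Finally, for the Cauchy property as $\lambda\downarrow 0$, I would show that if $\lambda<\lambda'$ are both small then $\Phi(\lambda)-\Phi(\lambda')$, expressed in the fixed orthonormal basis $\{(a_{0j},a_{1k})\}$ of the domain, has matrix entries that are dominated by the neck-region contributions evaluated at $\bs(j,k,\lambda)$ and $\bs(j,k,\lambda')$. Since those contributions tend to zero uniformly on bounded index ranges as the scale parameter shrinks, and are summable globally by construction of $\bs$, the operator norm of $\Phi(\lambda)-\Phi(\lambda')$ tends to zero as $\lambda,\lambda'\downarrow 0$, yielding the Cauchy conclusion. The hard part will be the quantitative neck estimate that forces $\bs$ to decay fast enough: it requires uniform $L^\infty$ and $L^4$ control of eigenvectors of $\Delta^1_{A_{i\flat}}$ in small balls around the splicing points, with decay rates in the ball radius that beat the polynomial growth of Weyl-type eigenvalue bounds, so that a genuinely summable diagonal choice of $\bs(m,n,\lambda)$ exists.
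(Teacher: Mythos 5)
Your plan follows the same route as the paper's own argument, to the extent the paper gives one: the printed proof of Theorem \ref{thm:Comparison_Hilbert_spaces_bundle-valued_one-forms} is a short sketch that defers to the proof of Theorem \ref{thm:Comparison_Hilbert_spaces_bundle-valued_self-dual_two-forms}, which in turn cites Taubes-style splicing estimates from earlier papers, records the observation that the $L^2$-eigenbasis of $\Delta^1_{A_{0\flat}}$ is orthogonal for the $W^{1,2}$ inner product $\left(a,(\Delta^1_{A_{0\flat}}+1)b\right)_{L^2}$, and stresses exactly your main structural point: one small scale suffices to compare finite-dimensional spans of eigenvectors, but the full Hilbert-space statement forces the sequence of scales $\lambda_{jk}$. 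Your steps (uniform norm equivalence via the conformal structure and Lemmata \ref{lem:Taubes_1984_path_proposition_2-4}--\ref{lem:Taubes_1988_3-1}, spliced eigenvectors at a diagonal sequence of scales with summable errors, Gram--Schmidt, comparison with the eigenbasis of $\Delta^1_{A_\flat}$, Cauchy property) reconstruct what the paper outsources, so the architecture is the same.

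Two steps in your write-up do not go through as stated. First, the inference ``the Gram matrix differs from the identity by an operator of norm $<1$, hence the family is a Riesz basis'' is false as a deduction: that hypothesis only makes $\{a'_{j,k}(\lambda_{jk})\}$ a Riesz sequence, i.e.\ a Riesz basis of its \emph{closed span} (the family $\{e_n\}_{n\geq 2}$ in $\ell^2$ has Gram matrix exactly the identity and is not complete). Gram--Schmidt then yields an orthonormal basis of that span, not automatically of all of $W_{A_\flat}^{1,2}(T^*X\otimes\ad P)$, and completeness is precisely what your final identification $\bar a_{j,k}(\lambda_{jk})\mapsto a_{j,k}(\lambda_{jk})$ requires in order to be surjective. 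You need a separate density argument, e.g.\ a scale-by-scale comparison showing every true eigenvector of $\Delta^1_{A_\flat}$ is $W^{1,2}$-approximated by spliced vectors, or invertibility of the biorthogonality matrix between the spliced family and the eigenbasis; this is how the references the paper cites handle the corresponding step. Second, your key neck estimate is mis-stated: there is no reason for $\|a_{0j}\|_{L^\infty(B_{2\sqrt{\lambda}}(x_{0\flat}))}$ to decay in $\lambda$, since eigenvectors need not vanish at $x_{0\flat}$. The smallness of the neck contributions comes from the volume factor, $\|a_{0j}\|_{L^2(B_{2\sqrt{\lambda}})}\lesssim \lambda\|a_{0j}\|_{L^\infty}$ and $\|a_{0j}\|_{L^4(B_{2\sqrt{\lambda}})}\lesssim \sqrt{\lambda}\,\|a_{0j}\|_{L^\infty}$, combined with the scale-invariant bound \eqref{eq:L4_bound_dchi} on $d\chi$, while $\|a_{0j}\|_{L^\infty}$ and $\|\nabla_{A_{0\flat}}a_{0j}\|_{L^\infty}$ grow only polynomially in the eigenvalue by elliptic estimates; phrased this way the summable diagonal choice of $\bs$ still exists, so the mechanism survives, but the estimate you invoked would fail. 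Finally, be aware that the ``elementary isometry'' sending $(a_{0j},a_{1k})$ to $\bar a_{j,k}(\lambda_{jk})$ cannot literally exist as a linear map, because the vectors $(a_{0j},a_{1k})$ are linearly dependent in the direct sum (for example $(a_{01},a_{11})-(a_{01},a_{12})-(a_{02},a_{11})+(a_{02},a_{12})=0$) while the $\bar a_{j,k}$ are orthonormal; this looseness is inherited from the theorem statement itself, but a correct proof must define $\Phi(\lambda)$ by matching genuine orthonormal bases (indexed by $\NN\sqcup\NN$ on the domain side) rather than by the displayed assignment.
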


\begin{proof}
The construction of the preceding isomorphisms and verification of their properties are very similar to those in the proof of the forthcoming Theorem \ref{thm:Comparison_Hilbert_spaces_bundle-valued_self-dual_two-forms}. Note that if $\{a_{0j}\}_{j\in\NN}$ is an $L^2$-orthonormal basis for $W_{A_{0\flat}}^{1,2}(T^*X\otimes\ad P_0)$ defined by the eigenvectors of $\Delta_{A_{0\flat}}^1$ and we define the norm on $W_{A_{0\flat}}^{1,2}(T^*X\otimes\ad P_0)$ by
\[
  \|a\|_{W_{A_{0\flat}}^{1,2}(X)} := \left\|\left(\Delta_{A_{0\flat}}^1 + 1\right)^{1/2}a\right\|_{L^2(X)},
\]
then the basis $\{a_{0j}\}_{j\in\NN}$ is also orthonormal with respect to the corresponding $W^{1,2}$ inner product,
\[
  (a,b)_{W_{A_{0\flat}}^{1,2}(X)} = \left(a,\left(\Delta_{A_{0\flat}}^1 + 1\right)b\right)_{L^2(X)}
\]
for all $a,b \in W_{A_{0\flat}}^{1,2}(T^*X\otimes\ad P_0)$.  
\end{proof}  

In the inequalities that appear in the proofs of Theorem \ref{thm:Smoothness_splicing_map_manifold_with_boundary} and subsequent results, we write $a \lesssim b$ if $a\leq Cb$ for some constant $C\in[1,\infty)$ that is independent of the parameters. We write $a \sim b$ if both $a \lesssim b$ and $b\lesssim a$.

\begin{proof}[Proof of Theorem \ref{thm:Smoothness_splicing_map_manifold_with_boundary}]
We need to analyze the boundedness and uniform continuity properties of the partial derivatives of $\cS$ on its domain in \eqref{eq:Splicing_map_connections_factored_codomain} when $p=2$: 
\begin{equation}
\label{eq:Splicing_map_connections_partial_derivatives}
\frac{\partial\cS}{\partial A_0}, \quad \frac{\partial\cS}{\partial A_1}, \quad \frac{\partial\cS}{\partial\rho}, \quad \frac{\partial\cS}{\partial x_0}, \quad\text{and}\quad \frac{\partial\cS}{\partial\lambda}.
\end{equation}
Calculations of this kind were previously done by Feehan \cite[Theorem 3.33]{FeehanGeometry}, Groisser \cite{Groisser_1993, Groisser_1998}, Groisser and Parker \cite[Section 3]{GroisserParkerGeometryDefinite}, and Peng \cite{Peng_1995, Peng_1996}. The analysis by Feehan in \cite[Section 3]{FeehanGeometry} is closest to our needs here. Peng assumes initially that the Riemannian metric $g$ is flat near $x_{0\flat}$ (in other words, that hypothesis \eqref{eq:Riemg_zero} holds in Theorem \ref{mainthm:Gluing}) and then makes adjustments in \cite[Section 5]{Peng_1995} to allow for non-flat Riemannian metrics. Moreover, in \cite{Peng_1995, Peng_1996}, Peng only estimates the $L^2$ norms of the partial derivatives in \eqref{eq:Splicing_map_connections_partial_derivatives}, whereas in \cite[Section 3]{FeehanGeometry}, Feehan estimates their $L^p$ norms for any $p\in [1,\infty)$. The latter calculations extend to give estimates of their $L^{p^*}$ and $W^{1,p}$ norms when $p \in [1,4)$ and $p^*=4p/(4-p)\in[4/3,\infty)$. We only need the case $p=2$ and $p^*=4$.

\begin{step}[Derivatives with respect to the connections $A_0$ and $A_1$]
See the proof by Feehan of his \cite[Proposition 3.30]{FeehanGeometry} for details and see the proof by Peng of his \cite[Lemma 4.7]{Peng_1995} for related calculations. Suppose that $A_0(t_0)$ is a smooth curve in $\sA(P_0)$ parameterized by arclength $t_0\in(-\eps,\eps)$ for some $\eps\in(0,1]$. By \cite[Proposition 3.30]{FeehanGeometry}, we have
\[
\left\|\frac{\partial\cS}{\partial t_0}-\frac{\partial A_0}{\partial t_0}\right\|_{L^p(X_0'')} \lesssim \lambda^{2/p}
\quad\text{and}\quad
\left\|\frac{\partial\cS}{\partial t_0}\right\|_{L^p(X)} \lesssim 1.
\]
Similar calculations yield the estimates
\[
\left\|\frac{\partial\cS}{\partial t_0}-\frac{\partial A_0}{\partial t_0}\right\|_{W^{1,p}(X_0'')} \lesssim \lambda^{2/p^*}
\quad\text{and}\quad
\left\|\frac{\partial\cS}{\partial t_0}\right\|_{W^{1,p}(X)} \lesssim 1.
\]
Aside from obvious notational changes, the calculations and conclusions for $\partial\cS/\partial A_1$ are identical. This completes our estimates of the partial derivatives of $\cS(A_0,A_1,\rho,x_0,\lambda)$ with respect to the connections $A_0$ and $A_1$.
\end{step}

\begin{step}[Derivative with respect to the gluing parameter $\rho$]
See the proof by Feehan of his \cite[Proposition 3.28]{FeehanGeometry} for details and see the proof by Peng of his \cite[Lemma 4.2]{Peng_1995} for related calculations. Suppose that $\rho(t)$ is a smooth curve in $\Gl_{x_{0\flat}}\cong G$ parameterized by arclength $t\in(-\eps,\eps)$ for some $\eps\in(0,1]$. By \cite[Proposition 3.28]{FeehanGeometry}, we have
\[
\left\|\frac{\partial\cS}{\partial t}\right\|_{L^{p^*}(X)} \sim |t|\lambda^{2/p^* - 1/2}.
\]
Similar calculations yield the estimates
\[
\left\|\frac{\partial\cS}{\partial t}\right\|_{W^{1,p}(X)} \sim |t|\lambda^{2/p - 1}.
\]
This completes our estimate of the partial derivative of $\cS(A_0,A_1,\rho,x_0,\lambda)$ with respect to the gluing parameter $\rho$.
\end{step}

\begin{step}[Derivative with respect to the center point $x_0$]
See the proof by Feehan of his \cite[Proposition 3.25]{FeehanGeometry} for details and see the proof by Peng of his \cite[Lemma 4.10]{Peng_1995} for related calculations. Suppose that $x_0(t)$ is a smooth curve in $X$ parameterized by arclength $t\in(-\eps,\eps)$ for some $\eps\in(0,1]$. By \cite[Proposition 3.25 (b)]{FeehanGeometry}, we have
\[
\left\|\frac{\partial\cS}{\partial t}\right\|_{L^p)} \lesssim \lambda^{2/p}.
\]
Similar calculations yield the estimate
\[
\left\|\frac{\partial\cS}{\partial t}\right\|_{W^{1,p}(X)} \lesssim \lambda^{2/p^*}.
\]
This completes our estimate of the partial derivative of $\cS(A_0,A_1,\rho,x_0,\lambda)$ with respect to the center point $x_0$.  
\end{step}

\begin{step}[Derivative with respect to the scale $\lambda$]
See the proof by Feehan of his \cite[Proposition 3.25]{FeehanGeometry} for details and see the proof by Peng of his \cite[Lemma 4.5]{Peng_1995} for related calculations. By \cite[Proposition 3.25 (a)]{FeehanGeometry}, we have
\[
\left\|\frac{\partial\cS}{\partial\lambda}\right\|_{L^p)} \lesssim \lambda^{2/p-1/2}.
\]
Similar calculations yield the estimate
\[
\left\|\frac{\partial\cS}{\partial t}\right\|_{W^{1,p}(X)} \lesssim \lambda^{2/p^*-1/2}.
\]
This completes our estimate of the partial derivative of $\cS(A_0,A_1,\rho,x_0,\lambda)$ with respect to the scale parameter $\lambda$.    
\end{step}

By the preceding analysis, we conclude that the partial derivatives in \eqref{eq:Splicing_map_connections_partial_derivatives} are \emph{bounded} on the domain of $\cS$ in \eqref{eq:Splicing_map_connections_factored_codomain} when $p=2$. Recall now that if $S$ is a subset of metric space $M$ and $N$ is complete metric space and $f:S\to N$ is a uniformly continuous map, then $f$ has a unique extension to a uniformly continuous map $f:\bar S\to N$, where $\bar S$ is the closure of $S$ in $M$ \cite[Theorem 13.D]{Simmons_introduction_topology_modern_analysis}.

By employing further calculations provided by Feehan in \cite[Sections 3.6--3.9]{FeehanGeometry} (see also Peng \cite{Peng_1995, Peng_1996} for similar calculations), one finds moreover that the partial derivatives in \eqref{eq:Splicing_map_connections_partial_derivatives} are \emph{uniformly continuous} on the domain of $\cS$ in \eqref{eq:Splicing_map_connections_factored_codomain} when $p=2$ and, in particular, uniformly continuous with respect to $\lambda\in(0,\lambda_0)$. Hence, these partial derivatives have unique uniformly continuous extensions to the domain of $\cS$ in \eqref{eq:Splicing_map_connections_factored_codomain_extended}. Consequently, $\cS$ is a $C^1$ map of smooth Banach manifolds with boundary, as claimed.
\end{proof}

\section{Composition of self-dual curvature and splicing maps}
\label{sec:Composition_self-dual_curvature_splicing_maps}
We now consider the composition of the self-dual curvature map,
\begin{equation}
  \label{eq:Self-dual_curvature_map}
  F^+:\sA(P) \ni A \mapsto F_A^+ \in L^p(\wedge^+(T^*X)\otimes\ad P),
\end{equation}
and the splicing map $\cS$ in \eqref{eq:Splicing_map_connections}. Observe that we cannot extend $F^+\circ\,\cS$ to the boundary $\{\lambda=0\}$ of the domain of $\cS$ without also replacing the codomain $L^p(\wedge^+(T^*X)\otimes\ad P)$ by one that extends from $\lambda\in(0,\lambda_0)$ to $\lambda\in[0,\lambda_0)$. We now consider a suitable choice of codomain.

We begin by explaining how to define $\ad P$-valued self-dual two-forms as splicings of $\ad P_0$ and $\ad P_1$-valued self-dual two-forms. Given the auxiliary fixed splicing parameters in Data \ref{data:Fixed_parameters_for_definition_splicing_and_unsplicing_maps}, let
\begin{gather*}
  (\rho,x_0,\lambda) \in \Gl_{x_{0\flat}} \times B_\delta(x_{0\flat}) \times (0,\lambda_0),
  \\
  \omega_0 \in L^p(\wedge^+(T^*X)\otimes\ad P_0), \quad\text{and}\quad \omega_1 \in L^p(\wedge^+(T^*S^4)\otimes\ad P_1).
\end{gather*}
Using the splicing data, we can construct the smooth principal $G$-bundle $P = P_0\# P_1$ over the connected sum $X \cong X\# S^4$ (conformal diffeomorphism) and define 
\begin{equation}
\label{eq:Spliced_bundle-valued_self-dual_two-forms}
  \omega := \psi_0\omega_0 + \psi_1\omega_1 \in L^p(\wedge^+(T^*X)\otimes\ad P),
\end{equation}
where we recall from \eqref{eq:Partition_unity_and_covering_cutoff_functions} that $\psi_0, \psi_1 \in C^\infty(X;[0,1])$ with $\supp\psi_0 \subset X\less\{x_0\}$ and $\supp\psi_1 \subset B_{\varrho_0}(x_{0\flat})$ and
\[
  \psi_0 \equiv 1 \quad \text{on } \supp \chi_0 \quad\text{and}\quad \psi_1 \equiv 1 \quad \text{on } \supp\chi_1.
\]
The partition of unity $\chi_0,\chi_1\in C^\infty(X;[0,1])$ with $\chi_0+\chi_1=1$ on $X$ was also defined in \eqref{eq:Partition_unity_and_covering_cutoff_functions}. The sum \eqref{eq:Spliced_bundle-valued_self-dual_two-forms} is defined by using the trivializations of $P_0$ and $P_1$ implied by the fixed choices in Data \ref{data:Fixed_parameters_for_definition_splicing_and_unsplicing_maps}. Hence, we obtain a \emph{splicing map for bundle-valued self-dual two-forms} by analogy with our definition \eqref{eq:Splicing_map_connections} of the splicing map $\cS$ for connections:
\begin{multline}
\label{eq:Splicing_map_bundle-valued_self-dual_two-forms}
\sS: L^p(\wedge^+(T^*X)\otimes\ad P_0) \times L^p(\wedge^+(T^*S^4)\otimes\ad P_1) \times \Gl_{x_{0\flat}} \times B_\delta(x_{0\flat}) \times (0,\lambda_0)
\\
\ni (\omega_0,\omega_1,\rho,x,\lambda) \mapsto \omega \in L^p(\wedge^+(T^*X)\otimes\ad P).
\end{multline}
By analogy with our definition \eqref{eq:Unsplicing_map_connections} of the unsplicing map $\cU$ for connections, we may also define an \emph{unsplicing map for bundle-valued self-dual two-forms}:
\begin{multline}
\label{eq:Unsplicing_map_bundle-valued_self-dual_two-forms}
\sU: L^p(\wedge^+(T^*X)\otimes\ad P) \times \Gl_{x_{0\flat}} \times B_\delta(x_{0\flat}) \times (0,\lambda_0) \ni (\omega,\rho,x_0,\lambda) \mapsto (\omega_0,\omega_1,\rho,x_0,\lambda) 
\\
\in L^p(\wedge^+(T^*X)\otimes\ad P_0) \times L^p(\wedge^+(T^*S^4)\otimes\ad P_1) \times \Gl_{x_{0\flat}} \times B_\delta(x_{0\flat}) \times (0,\lambda_0),
\end{multline}
via the assignments
\[
  \omega_0 = \psi_0\omega \quad\text{and}\quad \omega_1 = \psi_1\omega,
\]
and choice of parameters $(\rho,x_0,\lambda)$, just as in the definition \eqref{eq:Unsplicing_map_connections} of $\cU$. (Recall that the cutoff functions $\chi_0,\chi_1,\psi_0,\psi_1$ depend on the choices of centers $x_0\in B_\delta(x_{0\flat})$ and scales $\lambda\in (0,\lambda_0)$.)

Given $A = \cS(A_0,A_1,\rho,x_0,\lambda)$, the composition
\[
  F^+(A) = (F^+\circ\,\cS)(A_0,A_1,\rho,x_0,\lambda) \in L^p(\wedge^+(T^*X)\otimes\ad P)
\]
naturally factors to give
\begin{equation}
\label{eq:Self-dual_curvature_map_factored_codomain}
\widehat{F}^+(\chi_0A_0,\chi_1A_1) := (\omega_0,\omega_1) \in L^p(\wedge^+(T^*X)\otimes\ad P_0) \times L^p(\wedge^+(T^*S^4)\otimes\ad P_1)  
\end{equation}
by noting that
\[
  A = \chi_0A_0 + \chi_1A_1 \quad\text{over } X.
\]
Indeed, we have $A = A_0$ over $\{\chi_0 = 1\}$ and $A = A_1$ over $\{\chi_1=1\}$, while over $\{0<\chi_0< 1\}=\{0<\chi_1<1\}$, where $P=\Omega_{x_{0\flat},\lambda}\times G$, we have
\begin{multline*}
  F^+(A) = (d\chi_0\wedge A_0)^+ + \chi_0 d^+A_0 + (d\chi_1\wedge A_1)^+ + \chi_1d^+A_1
  \\
  + \frac{1}{2}\chi_0^2[A_0,A_0] + \frac{1}{2}\chi_0\chi_1[A_0,A_1] + \frac{1}{2}\chi_1^2[A_1,A_1].
\end{multline*}
We can construct $\widehat{F}^+(\chi_0A_0,\chi_1A_1) = (\omega_0,\omega_1)$ in \eqref{eq:Self-dual_curvature_map_factored_codomain} by defining $\omega_0\in L^p(\wedge^+(T^*X)\otimes\ad P_0)$ as
\[
  \omega_0
  :=
  \begin{cases}
    F^+(A_0) &\text{over } \{\chi = 1\},
    \\
    (d\chi\wedge A_0)^+ + \chi d^+A_0 + \frac{1}{2}\chi^2[A_0,A_0] + \frac{1}{4}\chi(1-\chi)[A_0,A_1]
    &\text{over } \{0<\chi<1\},
    \\
    0 &\text{over } \{\chi=0\},
  \end{cases}  
\]
and by defining $\omega_1 \in L^p(\wedge^+(T^*S^4)\otimes\ad P_1)$ as
\[
  \omega_1
  :=
  \begin{cases}
    F^+(A_1) &\text{over } \{\chi = 0\},
    \\
    -(d\chi\wedge A_1)^+ + (1-\chi) d^+A_1 + \frac{1}{4}\chi(1-\chi)[A_0,A_1] + \frac{1}{2}(1-\chi)^2[A_1,A_1]
    &\text{over } \{0<\chi<1\},
    \\
    0 &\text{over } \{\chi=1\},
  \end{cases}  
\]
and observing that when $\lambda\in(0,\lambda_0)$ we have
\[
  F_A^+ = \omega_0 + \omega_1 \in L^p(\wedge^+(T^*X)\otimes\ad P).
\]
Lastly, we prove that the splicing map $\sS$ in \eqref{eq:Splicing_map_bundle-valued_self-dual_two-forms} is \emph{surjective} by observing that the unsplicing map for bundle-valued self-dual two-forms $\sU$ in \eqref{eq:Unsplicing_map_bundle-valued_self-dual_two-forms} is an explicit (smooth) \emph{right inverse}, just as we did in Section \ref{subsec:Surjectivity_splicing_map_connections} for the splicing and unsplicing maps for connections. Indeed, for any $\omega \in L^p(\wedge^+(T^*X)\otimes\ad P)$, we have
\begin{align*}
  \sS\circ\sU(\omega,\rho,x_0,\lambda) &= \sS(\psi_0\omega,\psi_1\omega,\rho,x_0,\lambda)
  \\
                 &= \chi_0\psi_0\omega + \chi_1\psi_1\omega
  \\
  &= \chi_0\omega + \chi_1\omega = \omega,
\end{align*}
and so $\sU$ is a right inverse for $\sS$,
\[
  \sS\circ\sU = \id \quad\text{on } L^p(\wedge^+(T^*X)\otimes\ad P),
\]
and $\sS$ is surjective, as we had claimed.

\section{Smooth extension of composition of self-dual curvature and splicing maps}
\label{sec:Smooth_extension_composition_self-dual_curvature_and_splicing_maps}
Our main goal in this section is to prove 

\begin{thm}[Smooth extension of the composition of the self-dual curvature and splicing maps to the boundary]
\label{thm:Smooth_extension_composition_self-dual_curvature_splicing_maps}
Let $(X,g)$ be a closed, connected, four-dimensional, oriented, smooth Riemannian manifold, $G$ be a compact Lie group, $P_0$ be a smooth principal $G$-bundle over $X$, and $P_1$ be a smooth principal $G$-bundle over $S^4$, and $p \in [2,\infty)$ be a constant. Let $P\cong P_0\#_{(\rho,x_0,\lambda)} P_1$ denote the smooth principal $G$-bundle over the connected sum $X\#_{(x_0,\lambda)} S^4 \cong X$ defined by the fixed parameters in Data \ref{data:Fixed_parameters_for_definition_splicing_and_unsplicing_maps} and the parameters $(\rho,x_0,\lambda) \in \Gl_{x_{0\flat}}\times B_\delta(x_{0\flat})\times (0,\lambda_0)$. The smooth composition $F^+\circ\,\cS$ of the splicing map for connections in \eqref{eq:Splicing_map_connections} and the self-dual curvature map in \eqref{eq:Self-dual_curvature_map},
\begin{multline}
\label{eq:Composition_splicing_map_connections_self-dual_curvature_map}
F^+\circ\,\cS:\sA(P_0)\times \sA(P_1) \times \Gl_{x_{0\flat}}\times B_\delta(x_{0\flat}) \times (0,\lambda_0)
\ni (A_0,A_1,\rho,x_0,\lambda)
\\
\mapsto F_A^+ \in L^p(\wedge^+(T^*X)\otimes\ad P),
\end{multline}
where $A := \cS(A_0,A_1,\rho,x_0,\lambda) \in \sA(P)$, can be equivalently viewed as a smooth section of a product vector bundle:
\begin{equation}
\label{eq:Composition_splicing_map_connections_self-dual_curvature_map_factored_codomain}  
\begin{tikzcd}
  &\sA(P_0)\times \sA(P_1) \times \Gl_{x_{0\flat}}\times B_\delta(x_{0\flat}) \times (0,\lambda_0) \times L^p(\wedge^+(T^*X)\otimes\ad P)
  \arrow[d, "\pi"', shift right=1.0ex]
  \\
  & \arrow[u,"F^+\circ\,\cS"',  shift right=1.0ex]
  \sA(P_0)\times \sA(P_1) \times \Gl_{x_{0\flat}}\times B_\delta(x_{0\flat}) \times (0,\lambda_0)
\end{tikzcd}
\end{equation}
The preceding smooth section extends to a $C^1$ section of a $C^1$ vector bundle over a smooth Banach manifold with boundary,
\begin{equation}
\label{eq:Composition_splicing_map_connections_self-dual_curvature_map_unsplicing_map_self-dual_two-forms}  
\begin{tikzcd}
  &\sA^{1,2}(P_0)\times \sA^{1,2}(P_1) \times \Gl_{x_{0\flat}}\times B_\delta(x_{0\flat}) \times \VV 
  \arrow[d, "\pi"', shift right=1.0ex]
  \\
  & \arrow[u,"F^+\circ\,\cS"',  shift right=1.0ex]
  \sA^{1,2}(P_0)\times \sA^{1,2}(P_1) \times \Gl_{x_{0\flat}}\times B_\delta(x_{0\flat}) \times [0,\lambda_0)
\end{tikzcd}
\end{equation}
where the fibers $\VV|_{\{\lambda\}}$ are isometrically isomorphic to one another as Hilbert spaces for each $\lambda\in[0,\lambda_0)$, with
\begin{gather*}
  \VV|_{\{0\}} := \{0\}\times L^2(\wedge^+(T^*X)\otimes\ad P_0)\times L^2(\wedge^+(T^*S^4)\otimes\ad P_1),
  \\
  \VV|_{\{\lambda\}} := \{\lambda\}\times L^2(\wedge^+(T^*X)\otimes\ad P), \quad\forall\, \lambda \in (0,\lambda_0).
\end{gather*}
Moreover, we have
\[
  F^+\circ\,\cS(A_0,A_1,\rho,x_0,0) = \left((A_0,A_1,\rho,x_0,0), (0,F_{A_0}^+,F_{A_1}^+)\right)
\]
upon restriction to the boundary face
\[
  \sA^{1,2}(P_0)\times \sA^{1,2}(P_1) \times \Gl_{x_{0\flat}}\times B_\delta(x_{0\flat}) \times \{0\}.
\]
\end{thm}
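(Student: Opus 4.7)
The plan is to combine Theorem \ref{thm:Smoothness_splicing_map_manifold_with_boundary}, which provides the $C^1$ extension of the splicing map $\cS$ with factored codomain, together with the factorization of the self-dual curvature described in Section \ref{sec:Composition_self-dual_curvature_splicing_maps} via the splicing map $\sS$ for bundle-valued self-dual two-forms in \eqref{eq:Splicing_map_bundle-valued_self-dual_two-forms}. Recall that for $\lambda\in(0,\lambda_0)$ and $A=\cS(A_0,A_1,\rho,x_0,\lambda)=\chi_0A_0+\chi_1A_1$, one has the algebraic identity
\[
F_A^+ \;=\; \sS\bigl(\widehat F^+(\chi_0A_0,\chi_1A_1),\rho,x_0,\lambda\bigr),
\]
where $\widehat F^+(\chi_0A_0,\chi_1A_1)=(\omega_0,\omega_1)$ is the pair defined in \eqref{eq:Self-dual_curvature_map_factored_codomain}. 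The natural candidate for the $C^1$ extension with factored codomain is therefore the map $(A_0,A_1,\rho,x_0,\lambda)\mapsto (\omega_0,\omega_1)$, which makes sense for all $\lambda\in[0,\lambda_0)$ because the cutoff functions $\chi_0,\chi_1$ and the formulas defining $\omega_0,\omega_1$ remain meaningful (with $\omega_0\to F_{A_0}^+$ and $\omega_1\to F_{A_1}^+$ as $\lambda\downarrow 0$, since $\chi_0\to 1$ away from $x_{0\flat}$ and $\chi_1\to 1$ on $S^4\less\{s\}$ after rescaling).

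The first step is to construct $\VV$ as a $C^1$ Hilbert vector bundle over the half-open interval $[0,\lambda_0)$ by applying the isometric isomorphism construction of Theorem \ref{thm:Comparison_Hilbert_spaces_bundle-valued_one-forms}, but for $L^2$ sections of $\wedge^+(T^*X)\otimes \ad P$ rather than for $W^{1,2}$ sections of $T^*X\otimes \ad P$; the forthcoming Theorem \ref{thm:Comparison_Hilbert_spaces_bundle-valued_self-dual_two-forms} (referenced within the proof of Theorem \ref{thm:Comparison_Hilbert_spaces_bundle-valued_one-forms}) is designed precisely to furnish the isometric isomorphisms
\[
L^2(\wedge^+(T^*X)\otimes\ad P_0)\oplus L^2(\wedge^+(T^*S^4)\otimes\ad P_1)\cong L^2(\wedge^+(T^*X)\otimes\ad P),\qquad \lambda\in(0,\lambda_0),
\]
that vary in a Cauchy fashion as $\lambda\downarrow 0$. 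This trivializes the bundle $\VV$ (or at least presents it as the limit of its $\lambda>0$ fibers together with a naturally isomorphic fiber at $\lambda=0$) and lets us test the $C^1$ property of the candidate section in terms of the pair $(\omega_0,\omega_1)$.

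The second and main analytic step is to verify that all five partial derivatives of $(A_0,A_1,\rho,x_0,\lambda)\mapsto(\omega_0,\omega_1)$ are bounded and uniformly continuous up to $\lambda=0$, measured in $L^p(\wedge^+(T^*X)\otimes\ad P_0)\oplus L^p(\wedge^+(T^*S^4)\otimes\ad P_1)$. Since $\omega_i$ is an algebraic expression in $\chi_0,\chi_1,d\chi_0,d\chi_1,A_0,A_1$ together with the quadratic bracket terms, one can differentiate it term by term in exactly the same manner as in the proof of Theorem \ref{thm:Smoothness_splicing_map_manifold_with_boundary}; the key new ingredients are the pointwise estimates and conformal rescaling bounds recorded by Feehan in \cite[Section 3]{FeehanGeometry} (compare the estimates $\|d\chi_{x_0,r}\|_{L^4} \leq C$ of \eqref{eq:L4_bound_dchi} and the quasiconformal norm-equivalences of Lemmata \ref{lem:Taubes_1984_path_proposition_2-4} and \ref{lem:Taubes_1988_3-1}). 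Differentiation with respect to $A_0,A_1$ is linear, hence trivially bounded; differentiation with respect to $\rho$ only affects the overlap annulus where the bracket $[A_0,A_1]$ lives, on which scale-invariant estimates apply; differentiation with respect to $x_0$ and $\lambda$ acts on $\chi_0,\chi_1$ and produces terms supported on the neck $\Omega(x_0;\sqrt\lambda/2,2\sqrt\lambda)$, where the $L^p$ mass is controlled by a positive power of $\lambda$ by direct computation in geodesic normal coordinates.

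The main obstacle, and the reason a full proof is lengthy, is the uniform-continuity-up-to-$\lambda=0$ clause: bounded partial derivatives do not automatically extend continuously to the boundary face, and one must check that as $\lambda\downarrow 0$ the Fr\'echet differentials of $(A_0,A_1,\rho,x_0,\lambda)\mapsto(\omega_0,\omega_1)$ converge (in the appropriate operator norm) to the partial differentials of the boundary value $(A_0,A_1,\rho,x_0,0)\mapsto (F_{A_0}^+,F_{A_1}^+)$. This requires the pointwise cutoff/bracket estimates to be packaged as modulus-of-continuity bounds in $\lambda$, which is exactly the content of the further calculations cited in the closing paragraph of the proof of Theorem \ref{thm:Smoothness_splicing_map_manifold_with_boundary} (Feehan \cite[Sections 3.6--3.9]{FeehanGeometry}). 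Once that uniform continuity is in hand, the extension theorem for uniformly continuous maps on dense subsets of metric spaces produces the unique $C^1$ extension, and evaluation at $\lambda=0$ yields $(0,F_{A_0}^+,F_{A_1}^+)$ as claimed, completing the proof.
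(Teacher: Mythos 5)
Your proposal follows essentially the same route as the paper's proof: set up the fiber identifications of $\VV$ via the isometric isomorphisms of Theorem \ref{thm:Comparison_Hilbert_spaces_bundle-valued_self-dual_two-forms}, then verify boundedness and uniform continuity up to $\lambda=0$ of the five partial derivatives of the factored map $(A_0,A_1,\rho,x_0,\lambda)\mapsto(\omega_0,\omega_1)$ using the cutoff and rescaling estimates of Feehan \cite[Sections 3 and 3.6--3.9]{FeehanGeometry}, and conclude by the unique extension of uniformly continuous maps with boundary value $(0,F_{A_0}^+,F_{A_1}^+)$. The only refinement worth noting is that the derivative estimates (for example $\lambda^{2/p-1}$ for the $\lambda$-derivative) remain bounded only when one specializes to $p=2$, which is exactly why the extended section lives over $\sA^{1,2}(P_0)\times\sA^{1,2}(P_1)$ with $L^2$ fibers, as in the paper.
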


In preparation for our proof of Theorem \ref{thm:Smooth_extension_composition_self-dual_curvature_splicing_maps}, we shall establish the following natural isomorphism between two Hilbert spaces.

\begin{thm}[Isomorphisms of Hilbert spaces of bundle-valued self-dual two-forms for small scale parameters]
\label{thm:Comparison_Hilbert_spaces_bundle-valued_self-dual_two-forms}
Assume the hypotheses of Theorem \ref{thm:Smooth_extension_composition_self-dual_curvature_splicing_maps} and, in particular, that $g$ is conformally flat on $B_{x_{0\flat}}(2\delta)$, so hypothesis \eqref{eq:Riemg_zero} from Theorem \ref{mainthm:Gluing} holds. Let $g_{x_0,\lambda}$ be the smooth Riemannian metric on the connected sum $X \cong X\#_{x_0,\lambda}S^4$ defined in Section \ref{subsec:Riemannian_metric_connected_sum_X_4-sphere} by the metric $g$ on $X$ and the metric $g_\round$ on $S^4$. Then the Hilbert spaces $L^2(\wedge^+(T^*X)\otimes\ad P)$ defined by $g_{x_0,\lambda}$ are isometrically isomorphic to one another for all $\lambda \in (0,\lambda_0]$, that is, they are independent of $\lambda\in (0,\lambda_0]$.
Moreover, there is an isometric isomorphism of Hilbert spaces
\begin{equation}
\label{eq:Isomorphism_direct_sum_SD2forms_P0_and_SD2forms_P1_with_SD2forms_P}  
\Psi:  L^2(\wedge^+(T^*X)\otimes\ad P_0)\oplus L^2(\wedge^+(T^*S^4)\otimes\ad P_1) \cong  L^2(\wedge^+(T^*X)\otimes\ad P)
\end{equation}
defined by choices of orthonormal bases for the three $L^2$ spaces appearing in \eqref{eq:Isomorphism_direct_sum_SD2forms_P0_and_SD2forms_P1_with_SD2forms_P}. Let 
\[
  \{v_{0j}\}_{j\in\NN} \subset L^2(\wedge^+(T^*X)\otimes\ad P_0)
   \quad\text{and}\quad
  \{v_{1k}\}_{k\in\NN} \subset L^2(\wedge^+(T^*S^4)\otimes\ad P_1)
\]
be orthonormal bases defined by sequences of $L^2$-orthonormal eigenvectors of the second-order elliptic partial differential operators $d_{A_{i\flat}}^+d_{A_{i\flat}}^{+,*}$ on $\Omega^+(\ad P_i)$ for $i=0,1$. There are a non-increasing \emph{sequential scale function}
\begin{equation}
\label{eq:Sequential_scale_function}
\bs:\NN\times\NN\times(0,\lambda_0] \ni (m,n,\lambda) \mapsto  \lambda_{m,n} := \bs(m,n,\lambda) \in (0,\lambda_0]  
\end{equation}
such that $\bs(1,1,\lambda) = \lambda$ and $\bs(m,n,\lambda) \to 0$ as $m$ or $n\to\infty$ and, for each $\lambda\in (0,\lambda_0]$, a smooth connection $A_\flat := \cS(A_{0\flat},A_{1\flat},\rho_\flat,x_{0\flat},\lambda)$ on $P$ and a sequence of approximate eigenvectors for $d_{A_\flat}^+d_{A_\flat}^{+,*}$ on $\Omega^+(\ad P)$,
\[
  \{v_{j,k}'(\lambda_{jk})\}_{j,k\in\NN} \subset L^2(\wedge^+(T^*X)\otimes\ad P),
\]
given by the splicing map for bundle-valued, self-dual two-forms in Section \ref{sec:Composition_self-dual_curvature_splicing_maps}:
\[
  v_{j,k}'(\lambda_{jk}) := \sS(v_{0j},v_{1k},\rho_\flat,x_{0\flat},\lambda_{jk}), \quad\forall\, (j,k) \in \NN\times\NN.
\]
Application of the \emph{Gram--Schmidt orthonormalization process} to $\{v_{j,k}'(\lambda_{jk})\}_{j,k\in\NN}$ yields an orthonormal basis $\{\bar v_{j,k}(\lambda_{jk})\}_{j,k\in\NN}$ for $L^2(\wedge^+(T^*X)\otimes\ad P)$ and if
\[
  \{v_{j,k}(\lambda_{jk})\}_{j,k\in\NN} \subset L^2(\wedge^+(T^*X)\otimes\ad P)
\]
is a suitably enumerated orthonormal basis for $L^2(\wedge^+(T^*X)\otimes\ad P)$ defined by a sequence of $L^2$-orthonormal eigenvectors of $d_{A_\flat}^+d_{A_\flat}^{+,*}$ on $\Omega^+(\ad P)$, then the assignment
\[
  \bar v_{j,k}(\lambda_{jk}) \mapsto v_{j,k}(\lambda_{jk}), \quad\forall\, (j,k) \in \NN\times\NN
\]
extends to an isometric isomorphism of the Hilbert space $L^2(\wedge^+(T^*X)\otimes\ad P)$ onto itself. Finally, the assignment
\[
  (v_{0j}, v_{1k}) \mapsto v_{j,k}(\lambda_{jk}), \quad\forall\, (j,k) \in \NN\times\NN
\]
extends to an isometric isomorphism  \eqref{eq:Isomorphism_direct_sum_SD2forms_P0_and_SD2forms_P1_with_SD2forms_P} of Hilbert spaces for all $\lambda\in(0,\lambda_0]$.
\end{thm}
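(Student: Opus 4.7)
The plan is to prove the theorem in two stages, first establishing the uniform (in $\lambda$) identification of the codomain Hilbert spaces, and then building the splicing isomorphism $\Psi$ from matched orthonormal bases.

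For the first assertion, that the spaces $L^2(\wedge^+(T^*X)\otimes\ad P, g_{x_0,\lambda})$ are isometrically isomorphic for all $\lambda\in(0,\lambda_0]$, I would exploit the fact that in dimension four, the Hodge star on two-forms depends only on the conformal class of the metric, and the $L^2$ inner product on self-dual two-forms can be written as $\langle v,w\rangle = \int_X v\wedge w$ (for $v$ self-dual), which is manifestly conformally invariant. Because the hypothesis \eqref{eq:Riemg_zero} provides local flatness of $g$ near $x_{0\flat}$, the metrics $g_{x_0,\lambda}$ and $g_{x_0,\lambda'}$ on the diffeomorphic manifolds $X\#_{x_0,\lambda}S^4$ and $X\#_{x_0,\lambda'}S^4$ are related, after identifying the underlying smooth manifolds by a dilation $\delta_{\lambda'/\lambda}$ on the $\RR^4$ chart around $s$, by a conformal factor. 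This produces an explicit isometry of the two $L^2$ spaces of $\ad P$-valued self-dual two-forms.

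For the second assertion, I would first fix $\lambda\in(0,\lambda_0]$ and $A_\flat := \cS(A_{0\flat},A_{1\flat},\rho_\flat,x_{0\flat},\lambda)$, and apply spectral theory for the $L^2$-self-adjoint, non-negative, elliptic operators $d_{A_{i\flat}}^+ d_{A_{i\flat}}^{+,*}$ on $\Omega^+(\ad P_i)$ for $i=0,1$ and for $d_{A_\flat}^+ d_{A_\flat}^{+,*}$ on $\Omega^+(\ad P)$, producing the orthonormal bases $\{v_{0j}\}_{j\in\NN}$, $\{v_{1k}\}_{k\in\NN}$, and $\{v_{j,k}(\lambda_{jk})\}_{j,k\in\NN}$ as countable orthonormal systems of smooth eigenvectors. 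The key estimates are then: (i) for any fixed pair $(j,k)$, the spliced two-form $v'_{j,k}(\mu) = \psi_0 v_{0j} + \psi_1 v_{1k}$ (with $\psi_0,\psi_1$ depending on the scale $\mu$) satisfies $\|v'_{j,k}(\mu)\|_{L^2} \to 1$ as $\mu\downarrow 0$, since the interpolation region $\Omega(x_{0\flat};\sqrt\mu/2,2\sqrt\mu)$ shrinks and the smooth forms $v_{0j},v_{1k}$ make only an $o(1)$ contribution there; and (ii) distinct pairs $(j,k)\neq(j',k')$ yield $\langle v'_{j,k}(\mu), v'_{j',k'}(\mu)\rangle_{L^2} \to \delta_{jj'}\delta_{kk'}$ as $\mu\downarrow 0$, because on the overlap region both $\psi_0 v_{0j}$ and $\psi_1 v_{1k}$ concentrate in $L^2$-norm at most $O(\mu)$. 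I would then construct the sequential scale function $\bs(m,n,\lambda)$ by choosing $\lambda_{m,n}$ small enough (and non-increasing in each of $m,n$) so that the collection $\{v'_{j,k}(\lambda_{jk}): j\leq m, k\leq n\}$ is approximately orthonormal with error less than $1/(m+n)$, say; setting $\bs(1,1,\lambda):=\lambda$ and shrinking from there.

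Next, I would apply the Gram--Schmidt process to the countable family $\{v'_{j,k}(\lambda_{jk})\}_{j,k\in\NN}$ (in any enumeration compatible with the diagonal ordering) to produce an orthonormal sequence $\{\bar v_{j,k}(\lambda_{jk})\}$, which is well-defined because the approximate orthonormality ensures linear independence. Completeness of this orthonormal sequence in $L^2(\wedge^+(T^*X)\otimes\ad P)$ follows from the observation that compactly supported smooth sections supported away from $x_{0\flat}$ can be approximated by finite sums of the $\psi_0 v_{0j}$, while those supported inside $B_{\sqrt{\lambda_0}}(x_{0\flat})$ can be approximated, after pullback via $f_{x_0,\lambda}$ for small $\lambda$, by finite sums of the $\psi_1 v_{1k}$; combining these via a partition of unity gives density. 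Then the assignment $\bar v_{j,k}(\lambda_{jk}) \mapsto v_{j,k}(\lambda_{jk})$ is an isometry on an orthonormal basis, extending uniquely to an isometric isomorphism of the Hilbert space onto itself. Composing with the isometry $(v_{0j},v_{1k}) \mapsto \bar v_{j,k}(\lambda_{jk})$ from the direct sum yields the desired $\Psi$ in \eqref{eq:Isomorphism_direct_sum_SD2forms_P0_and_SD2forms_P1_with_SD2forms_P}.

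The main obstacle I anticipate is the quantitative control needed to make the Gram--Schmidt step well-defined and to verify completeness. Specifically, I must choose the scale function $\bs$ so that for every pair $(j,k)$ the Gram matrix of the already-constructed finite subfamily $\{v'_{j',k'}(\lambda_{j'k'}) : (j',k') \preceq (j,k)\}$ remains uniformly non-degenerate --- this requires a recursive choice of $\lambda_{jk}$ depending on both the previously chosen scales and on pointwise bounds for $v_{0j}$ and $v_{1k}$ (which grow with the corresponding eigenvalues via Weyl-type estimates). A separate subtlety is that the completeness argument only works because each $v_{0j}$ and $v_{1k}$ is smooth, so the cutoff errors $(1-\psi_0)v_{0j}$ and $(1-\psi_1)v_{1k}$ are controlled pointwise, but this control degenerates as $j,k\to\infty$; the sequential scale function must decrease rapidly enough along the diagonal to absorb this degeneration while still recovering the whole of $L^2(\wedge^+(T^*X)\otimes\ad P)$ in the closure of the span.
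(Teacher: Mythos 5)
Your first stage --- deducing the $\lambda$-independence of $L^2(\wedge^+(T^*X)\otimes\ad P)$ from the conformal invariance of the $L^2$ pairing on two-forms in dimension four, together with the observation that under \eqref{eq:Riemg_zero} each interpolated metric in \eqref{eq:Peng_1995_2-6_locally_flat_metric} is a pointwise conformal rescaling of $g$ --- is sound, and it is consistent with what the paper intends; the paper's own proof of this theorem is only a sketch deferring to \cite{Feehan_yangmillsenergygap, FLKM1} and to Taubes \cite{TauIndef, TauFrame}, with the one substantive remark that finitely many eigenvectors need a single small scale while the full Hilbert-space statement needs the sequence of scales. The problem lies in your second stage.

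The two estimates on which your construction rests are false. By \eqref{eq:Spliced_bundle-valued_self-dual_two-forms} one has $v'_{j,k}(\mu)=\sS(v_{0j},v_{1k},\rho_\flat,x_{0\flat},\mu)=\psi_0 v_{0j}+\psi_1 v_{1k}$, and as $\mu\downarrow 0$ each summand retains essentially its full $L^2$ norm while the cross term vanishes, so $\|v'_{j,k}(\mu)\|_{L^2}\to\sqrt{2}$, not $1$; worse, for $k\neq k'$ one has $\langle v'_{j,k},v'_{j,k'}\rangle_{L^2}\to\|v_{0j}\|_{L^2}^2=1$, not $0$, because the two spliced forms share the factor $\psi_0 v_{0j}$. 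In general the Gram matrix converges to $\delta_{jj'}+\delta_{kk'}$, which is not close to the identity for any choice of scales (it is not even a bounded operator on $\ell^2(\NN\times\NN)$), so no sequential scale function, however rapidly decreasing, can make $\{v'_{j,k}(\lambda_{jk})\}$ approximately orthonormal with error $1/(m+n)$; the recursive selection of $\bs$ you describe cannot be carried out. The same count defeats your last step: $\{(v_{0j},v_{1k})\}_{j,k\in\NN}$ is not an orthonormal family in the direct sum (for instance $\langle(v_{01},v_{11}),(v_{01},v_{12})\rangle=1$), so no isometry can carry it to the orthonormal family $\{\bar v_{j,k}\}$, since isometries preserve inner products. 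The construction has to be run instead on the singly-indexed spliced families $\psi_0 v_{0j}=\sS(v_{0j},0,\cdot)$ and $\psi_1 v_{1k}=\sS(0,v_{1k},\cdot)$ at scales shrinking along the sequence: these have almost unit norms and almost disjoint supports, hence are jointly approximately orthonormal, they are in bijection with the genuine orthonormal basis $\{(v_{0j},0)\}\cup\{(0,v_{1k})\}$ of the left-hand side of \eqref{eq:Isomorphism_direct_sum_SD2forms_P0_and_SD2forms_P1_with_SD2forms_P}, and after Gram--Schmidt, a completeness argument, and the comparison with the eigenbasis of $d_{A_\flat}^+d_{A_\flat}^{+,*}$ they yield $\Psi$, with the doubly-indexed $v'_{j,k}$ entering only as the sums $\psi_0v_{0j}+\psi_1v_{1k}$. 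Your completeness sketch needs repair for the same reason: the span of the $v'_{j,k}$ reaches the individual pieces $\psi_0v_{0j}$ and $\psi_1v_{1k}$ only through differences, a point you do not address and which is handled automatically once you orthonormalize the singly-indexed families.
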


\begin{proof}
The construction of the preceding isomorphisms and verification of their properties are obtained by extending proofs of related results in \cite{Feehan_yangmillsenergygap, FLKM1}, inspired in turn by constructions due to Taubes \cite{TauIndef, TauFrame}. For comparison of \emph{finite}-dimensional vector subspaces defined by the linear spans of \emph{finitely} many eigenvectors of the operators $d_{A_{i\flat}}^+d_{A_{i\flat}}^{+,*}$ for $i=0,1$ and $d_{A_\flat}^+d_{A_\flat}^{+,*}$, then a choice of \emph{one} small-enough scale parameter $\lambda$ is sufficient. However, in order to construct isomorphisms of the full infinite-dimensional Hilbert spaces, we must consider sequences of scales as indicated in the statement of the theorem.
\end{proof}  




We now turn to the

\begin{proof}[Proof of Theorem \ref{thm:Smooth_extension_composition_self-dual_curvature_splicing_maps}]
Our proof is similar to that of Theorem \ref{thm:Smoothness_splicing_map_manifold_with_boundary}. We analyze the boundedness and uniform continuity properties of the partial derivatives of $F^+\circ\,\cS$ on its domain in \eqref{eq:Composition_splicing_map_connections_self-dual_curvature_map} when $p=2$: 
\begin{equation}
\label{eq:Composition_splicing_map_connections_self-dual_curvature_map_partial_derivatives}
\frac{\partial(F^+\circ\,\cS)}{\partial A_0}, \quad \frac{\partial(F^+\circ\,\cS)}{\partial A_1}, \quad \frac{\partial(F^+\circ\,\cS)}{\partial\rho}, \quad \frac{\partial(F^+\circ\,\cS)}{\partial x_0}, \quad\text{and}\quad \frac{\partial(F^+\circ\,\cS)}{\partial\lambda}.
\end{equation}
As before, calculations of this kind were previously done by Feehan \cite[Theorem 3.33]{FeehanGeometry}  and Peng \cite{Peng_1995, Peng_1996}, with the analysis by Feehan in \cite[Section 3]{FeehanGeometry} again being closest to our current needs. Peng assumes initially that the Riemannian metric $g$ is flat near $x_{0\flat}$ and makes adjustments in \cite[Section 5]{Peng_1995} to allow for non-flat Riemannian metrics and makes some corrections and adjustments in \cite[Section 2]{Peng_1996} to his earlier paper \cite{Peng_1995}. Moreover, in \cite{Peng_1995, Peng_1996}, Peng only estimates the $L^{4/3}$ norms of the partial derivatives in \eqref{eq:Composition_splicing_map_connections_self-dual_curvature_map_partial_derivatives}, whereas in \cite[Section 3]{FeehanGeometry}, Feehan estimates their $L^p$ norms for any $p\in [1,\infty)$; we only need the case $p=2$.

\setcounter{step}{0}
\begin{step}[Derivatives with respect to the connections $A_0$ and $A_1$]
See the proof by Feehan of his \cite[Proposition 3.31]{FeehanGeometry} for details and see the proof by Peng of his estimate in \cite[Equation (4.48)]{Peng_1995} for related calculations. Suppose that $A_0(t_0)$ is a smooth curve in $\sA(P_0)$ parameterized by arclength $t_0\in(-\eps,\eps)$ for some $\eps\in(0,1]$. By \cite[Proposition 3.30]{FeehanGeometry}, we have
\[
\left\|\frac{\partial(F^+\circ\,\cS)}{\partial t_0}\right\|_{L^p(X)} \lesssim \lambda^{2/p-1/2}
\]
Aside from obvious notational changes, the calculations and conclusions for $\partial\cS/\partial A_1$ are identical. This completes our estimates of the partial derivatives of $F^+\circ\,\cS(A_0,A_1,\rho,x_0,\lambda)$ with respect to the connections $A_0$ and $A_1$.
\end{step}

\begin{step}[Derivative with respect to the gluing parameter $\rho$]
  See the proof by Feehan of his \cite[Proposition 3.28]{FeehanGeometry} for the derivation of the related estimate for $\partial\cS/\partial\rho$ and see the proof by Peng of his \cite[Lemma 4.1]{Peng_1995}. Suppose that $\rho(t)$ is a smooth curve in $\Gl_{x_{0\flat}}\cong G$ parameterized by arclength $t\in(-\eps,\eps)$ for some $\eps\in(0,1]$. By adapting the calculation by Peng in \cite[Lemma 4.1]{Peng_1995} for $p=4/3$ to $p\in [2,\infty)$, we obtain
\[
\left\|\frac{\partial(F^+\circ\,\cS)}{\partial t}\right\|_{L^{4/3}(X)} \sim |t|\lambda^{2/p+1/2}.
\]
This completes our estimates of the partial derivative of $F^+\circ\,\cS(A_0,A_1,\rho,x_0,\lambda)$ with respect to the gluing parameter $\rho$.
\end{step}

\begin{step}[Derivative with respect to the center point $x_0$]
See the proof by Feehan of his \cite[Proposition 3.26 (b)]{FeehanGeometry} for details and see Peng \cite[Section 4.4]{Peng_1995} for related calculations. Suppose that $x_0(t)$ is a smooth curve in $X$ parameterized by arclength $t\in(-\eps,\eps)$ for some $\eps\in(0,1]$. By \cite[Proposition 3.26 (b)]{FeehanGeometry}, we have
\[
\left\|\frac{\partial(F^+\circ\,\cS)}{\partial t}\right\|_{L^p)} \lesssim \lambda^{2/p-1/2}.
\]
This completes our estimates of the partial derivative of $F^+\circ\,\cS(A_0,A_1,\rho,x_0,\lambda)$ with respect to the center point $x_0$.  
\end{step}

\begin{step}[Derivative with respect to the scale $\lambda$]
See the proof by Feehan of his \cite[Proposition 3.26 (a)]{FeehanGeometry} for details and see the proof by Peng of his \cite[Displayed equation prior to Equation (4.48)]{Peng_1995}. By \cite[Proposition 3.26 (a)]{FeehanGeometry}, we have
\[
\left\|\frac{\partial(F^+\circ\,\cS)}{\partial\lambda}\right\|_{L^p)} \lesssim \lambda^{2/p-1}.
\]
This completes our estimate of the partial derivative of $F^+\circ\,\cS(A_0,A_1,\rho,x_0,\lambda)$ with respect to the scale parameter $\lambda$.    
\end{step}

By the preceding analysis, we conclude that the partial derivatives in \eqref{eq:Composition_splicing_map_connections_self-dual_curvature_map_partial_derivatives} are \emph{bounded} on the domain of $F^+\circ\,\cS$ in \eqref{eq:Composition_splicing_map_connections_self-dual_curvature_map} when $p=2$. By employing further calculations provided by Feehan in \cite[Sections 3.6--3.9]{FeehanGeometry} (see also Peng \cite{Peng_1995, Peng_1996} for similar calculations), one finds moreover that the partial derivatives in \eqref{eq:Composition_splicing_map_connections_self-dual_curvature_map_partial_derivatives} are \emph{uniformly continuous} on the domain of $F^+\circ\,\cS$ in \eqref{eq:Composition_splicing_map_connections_self-dual_curvature_map} when $p=2$ and, in particular, uniformly continuous with respect to $\lambda\in(0,\lambda_0)$. Hence, these partial derivatives have unique uniformly continuous extensions to the domain of $F^+\circ\,\cS$ in \eqref{eq:Composition_splicing_map_connections_self-dual_curvature_map_unsplicing_map_self-dual_two-forms}. Consequently, $F^+\circ\,\cS$ extends to a $C^1$ section of a $C^1$ vector bundle over a smooth Banach manifold with boundary, as claimed.
\end{proof}

\section{Completion of proof of  main gluing theorem}
\label{sec:Completion_proof_main_gluing_theorem}
We can give the relatively short

\begin{proof}[Proof of Theorem \ref{mainthm:Gluing}]
We proceed by verifying that the hypotheses of Theorem \ref{mainthm:Preimage_submanifold_under_transverse_map_and_implied_embedding} hold and hence produce the desired gluing map, where we choose
\begin{align*}
  \sX &= \sA^{1,2}(P_0)\times \sA^{1,2}(P_1) \times \Gl_{x_{0\flat}}\times B_\delta(x_{0\flat}) \times [0,\lambda_0),
  \\
  x_0 &= (A_{0\flat}, A_{1\flat},\rho_0,x_{0\flat},0)
  \\
  &\in \partial\sX = \sA^{1,2}(P_0)\times \sA^{1,2}(P_1) \times \Gl_{x_{0\flat}}\times B_\delta(x_{0\flat}) \times \{0\},
  \\
  \sX' &= L^2(\wedge^+(T^*X)\otimes\ad P_0) \times L^2(\wedge^+(T^*S^4)\otimes\ad P_1),
  \\
  \sX'' &= \{0\},
\end{align*}          
where $\rho_0 \in \Gl_{x_{0\flat}}$ is any fixed bundle gluing parameter, and as a candidate for the map $f:\sX\to\sX'$ we consider
\begin{multline*}
  \widehat{F}^+\circ\widehat{\cS}: \sA^{1,2}(P_0)\times \sA^{1,2}(P_1) \times \Gl_{x_{0\flat}}\times B_\delta(x_{0\flat}) \times [0,\lambda_0)
  \\
  \to L^2(\wedge^+(T^*X)\otimes\ad P_0) \times L^2(\wedge^+(T^*S^4)\otimes\ad P_1).
\end{multline*}
When we restrict to $\lambda \in (0,\lambda_0)$, we obtain the smooth map
\[
  F^+\circ\cS: \sA^{1,2}(P_0)\times \sA^{1,2}(P_1) \times \Gl_{x_{0\flat}}\times B_\delta(x_{0\flat}) \times (0,\lambda_0)
  \to L^2(\wedge^+(T^*X)\otimes\ad P)
\]
as the composition of $\widehat{F}^+\circ\widehat{\cS}$ and addition of pairs of sections in $L^2(\wedge^+(T^*X)\otimes\ad P_0) \times L^2(\wedge^+(T^*S^4)\otimes\ad P_1)$ over the annuli $\Omega_{x_0,\lambda} \subset X$.

We have already seen in Theorem \ref{thm:Smooth_extension_composition_self-dual_curvature_splicing_maps} that $\widehat{F}^+\circ\widehat{\cS}$ is a $C^1$ map of smooth Banach manifolds with boundary. Since $\sX'$ is without boundary and $\sX''$ is a point, the hypothesis on $\sX''$ being a neat submanifold of $\sX'$ is trivially obeyed. 

We now check that the remaining hypothesis of Theorem \ref{mainthm:Preimage_submanifold_under_transverse_map_and_implied_embedding} that $\widehat{F}^+\circ\widehat{\cS}$ is transverse to zero at the boundary point $(A_{0\flat}, A_{1\flat},\rho_0,x_{0\flat},0)$, in other words, that $\widehat{F}^+\circ\widehat{\cS}$ is a submersion at $(A_{0\flat}, A_{1\flat},\rho_0,x_{0\flat},0)$. By hypothesis of Theorem \ref{mainthm:Gluing}, $F^+(A_{0\flat})=0$ and $F^+(A_{1\flat})=0$ and moreover the smooth maps
\begin{align*}
  F^+:\sA^{1,2}(P_0) &\to L^2(\wedge^+(T^*X)\otimes\ad P_0) \quad\text{and}
  \\
  F^+:\sA^{1,2}(P_1) &\to  L^2(\wedge^+(T^*S^4)\otimes\ad P_1)
\end{align*}
vanish transversely at $A_{0\flat}$ and $A_{1\flat}$, respectively. Therefore, the boundary map
\begin{multline*}
  \partial(\widehat{F}^+\circ\widehat{\cS}): \sA^{1,2}(P_0)\times \sA^{1,2}(P_1) \times \Gl_{x_{0\flat}}\times B_\delta(x_{0\flat}) \times \{0\}
  \\
  \to L^2(\wedge^+(T^*X)\otimes\ad P_0) \times L^2(\wedge^+(T^*S^4)\otimes\ad P_1)
\end{multline*}
vanishes transversely at $(A_{0\flat}, A_{1\flat},\rho,x_0,0)$, for all $(\rho,x_0) \in \Gl_{x_{0\flat}}\times B_\delta(x_{0\flat})$.

We can thus apply Theorem \ref{mainthm:Preimage_submanifold_under_transverse_map_and_implied_embedding}, noting that $T_{f(x_0)}\sX'' = \{0\}$ and so $(df(x_0))^{-1}(T_{f(x_0)}\sX'') = \Ker df(x_0)$.
Observe that
\begin{multline*}
  \Ker d(\widehat{F}^+\circ\widehat{\cS})(A_{0\flat}, A_{1\flat},\rho_0,x_{0\flat},0)
  \\
  =
  \Ker d_{A_0}^+\cap W^{1,2}(T^*X\otimes\ad P_0) \oplus \Ker d_{A_1}^+\cap W^{1,2}(T^*S^4\otimes\ad P_1) \oplus \fg_{x_{0\flat}} \oplus T_{x_{0\flat}}X\oplus\RR,
\end{multline*}
where $\fg_{x_{0\flat}} = T_{\rho_0}\Gl_{x_{0\flat}}$ is isomorphic to the Lie algebra $\fg$ of $G$.
\end{proof}

\section{Non-regular boundary points}
\label{sec:Non-regular_boundary_points}
We now allow for the possibility that $H_{A_{0\flat}}^2(X;\ad P_0) \neq 0$ and prove Corollary \ref{maincor:Gluing_HA2_non-zero}. Following the paradigm in Section \ref{subsec:Local_Kuranishi_parameterization_neighborhood_interior_point_Donaldson_approach}, we extend our previous definition of the splicing map in \eqref{eq:Splicing_map_connections} to read
\begin{multline}
\label{eq:Splicing_map_connections_stabilized}
\cS: \sA_\delta^{1,p}(A_{0\flat})\times \bH_{A_{0\flat}}^2(X;\ad P_0) \times \sA_\delta^{1,p}(A_{1\flat}) \times \Gl_{x_{0\flat}}\times B_\delta(x_{0\flat}) \times (0,\lambda_0)
\\
\to \sA(P)\times \bH_{A_{0\flat}}^2(X;\ad P_0),
\end{multline}
where, if we relabel our previous definition \eqref{eq:Splicing_map_connections} of the splicing map as $\cS_0$, then
\[
  \cS(A_0,v,A_1,\rho,x_0,\lambda) = (\cS_0(A_0,A_1,\rho,x_0,\lambda),v),
\]
so $\cS$ restricts to the identity on the factor $\bH_{A_{0\flat}}^2(X;\ad P_0)$ of the domain. We stabilize the self-dual curvature map \eqref{eq:Self-dual_curvature_map} so that the pullback $(F^++L_{A_{0\flat}})\circ\cS$ will be a submersion on an open neighborhood of boundary points $(A_{0\flat},v,A_1,\rho,x_0,0)$ in the domain (corresponding to $\lambda=0$) by defining
\begin{equation}
  \label{eq:Self-dual_curvature_map_stabilized}
  F^++L_{A_{0\flat}}:\sA(P)\times \bH_{A_{0\flat}}^2(X;\ad P_0) \ni (A,v) \mapsto F_A^++v \in L^p(\wedge^+(T^*X)\otimes\ad P)
\end{equation}
We can now give the

\begin{proof}
Corollary \ref{maincor:Gluing_HA2_non-zero} follows from the proof of Theorem \ref{mainthm:Gluing} by replacing the role of $F^+$ in \eqref{eq:Self-dual_curvature_map} by that of $F^++L_{A_{0\flat}}$ in \eqref{eq:Self-dual_curvature_map_stabilized}.
\end{proof}

\section{Splicing map for gauge transformations}
\label{sec:Splicing_gauge_transformations}

\subsection{Based gauge transformations, slice theorems, and connections with non-trivial stabilizer}
Given a point $x_0 \in X$, consider the subgroup of $\Aut(P)$ defined by the set of \emph{based gauge transformations},
\[
  \Aut_0(P) := \left\{u \in \Aut(P): u \restriction P_{x_0} \text{ is the identity on } P_{x_0}\right\}.
\]
See Atiyah and Bott \cite[p. 605]{Atiyah_Bott_1983}, Cohen and Milgram \cite[p. 18]{Cohen_Milgram_1994}, Marathe \cite[p. 190]{Marathe_topics_physical_mathematics} for a discussion of topological and group-theoretic issues concerning $\Aut_0(P)$ and Groisser and Parker \cite[Section 1]{GroisserParkerGeometryDefinite} for a discussion of analytical aspects of $\Aut_0(P)$, including the proof of a slice theorem for the action of $\Aut_0(P)$ on $\sA(P)$. Recall from Marathe \cite[Proposition 6.6]{Marathe_topics_physical_mathematics} that $\Aut_0(P)$ is a normal subgroup of $\Aut(P)$ and that $\Aut(P)/\Aut_0(P) \cong G$.

For brevity, denote $\Gamma_A = \Stab(A) \subset G$ and $H_A = \Hol(A) \subset G$, as in \cite[p. 132]{DK} for a connected base manifold $X$. Then $\Gamma_A \cong C_G(H_A)$ by \cite[Lemma 4.2.8]{DK}, where $C_G(H_A) = \{g\in G: gh=hg, \forall\, h\in H_A\}$ is the centralizer of $H_A$ of $G$.

We first recall a well-known result concerning the existence of slice neighborhoods for a smooth action of a compact Lie group on a smooth manifold. For background, we refer to Warner \cite{Warner}. Let $G$ be a  Lie group with a smooth left action on a smooth manifold $W$ (so $W$ is a smooth $G$-manifold) and, for each point $x\in W$, let $G\cdot x := \{y \in W: y = g\cdot x \text{ for some } g \in G\} \subset W$ denote the orbit of $x$ in $W$ under the action of $G$ and $G_x := \{g\in G: g\cdot x=x\}$ denote isotropy subgroup (stabilizer) of $G$ defined by the point $x$. If $V$ is another smooth $G$-manifold and $\varphi:V\to W$ is a smooth map, then $\varphi$ is $G$-equivariant if $\varphi(g\cdot x) = g\cdot\varphi(x)$ for all $x\in V$ and $g\in G$. For $x\in W$, let
\[
  N_x := T_xW/T_x(G\cdot x)
\]
denote the \emph{normal space at $x$} with respect to the orbit $G\cdot x$ and $N := \cup_{g \in G}N_{g\cdot x}$ is the \emph{normal bundle} for the orbit $G\cdot x \subset W$. When $G$ is compact, we recall from Meinrenken \cite[Proposition 1.20]{Meinrenken_group_actions_manifolds_lecture_notes} (in the case of Lie groups) or Montgomery and Zippin \cite[Theorem 2.13]{Montgomery_Zippin_topological_transformation_groups} (in the case of topological groups) that for each $x\in W$, the orbit $G\cdot x$ is an closed, embedded submanifold of $W$. When $W$ is a Riemannian manifold and $G$ acts isometrically, then there is a canonical isomorphism \cite[p. 306]{Bredon},
\[
  N_x \cong T_x(G\cdot x)^\perp,
\]
where $T_x(G\cdot x)^\perp$ denotes the orthogonal complement of $T_x(G\cdot x)$ in $T_xW$. When $G$ is compact, such an invariant metric always exists \cite[Theorem 1.2]{Koszul_lectures_groups_transformations}. For each $g\in G_x$, the derivative $dg(x):T_xW \to T_{g\cdot x}W = T_xW$ of the diffeomorphism $g:W\to W$ defines an automorphism of the vector space $T_xW$. Moreover, the smooth map $g:G\cdot x \to G\cdot x$ is a diffeomorphism and its derivative $dg(x):T_x(G\cdot x) \to T_x(G\cdot x)$ defines an automorphism of the vector space $T_x(G\cdot x)$ and thus
\[
  dg(x) \in \Aut(N_x).
\]
Hence, the stabilizer $G_x$ acts on $G \times N_x$, by left multiplication on $G$ and by the preceding linear representation on $N_x$.

\begin{thm}[Existence of smooth slices for the action of a compact Lie group on a smooth manifold]
\label{thm:Existence_smooth_slices}  
(See Audin \cite[Theorem I.2.1]{Audin_torus_actions_symplectic_manifolds}, Bredon \cite[Theorem 6.2.2]{Bredon}, tom Dieck \cite[Section 1.5]{tomDieck}, Groisser and Parker \cite[p. 514]{GroisserParkerGeometryDefinite}, Kankaanrinta \cite[Theorem 4.4]{Kankaanrinta_2007}, and \cite{Koszul_1953}, Koszul \cite[Lemma 2.4 and Theorem 2.1]{Koszul_lectures_groups_transformations}.)
There exists a $G$-equivariant diffeomorphism $\varphi$ from a $G$-invariant open neighborhood of the zero section $G/G_x$ in $G \times_{G_x} N_x$ onto an open neighborhood of $G\cdot x$ in $W$, which sends the zero section onto the orbit $G\cdot x$ by the natural map $f_x:G \ni g \mapsto g\cdot x \in W$.
\end{thm}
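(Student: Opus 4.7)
The plan is to reduce the statement to a standard application of the equivariant tubular neighborhood construction. The essential tool is the existence of a $G$-invariant Riemannian metric on $W$ together with the associated $G$-equivariant exponential map; compactness of $G$ enters through the averaging argument that produces this metric and through compactness of the orbit $G\cdot x$, which is needed to ensure that the exponential map is a diffeomorphism on a uniform neighborhood of the zero section of the normal bundle.

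First I would fix, once and for all, a smooth $G$-invariant Riemannian metric $h$ on $W$, obtained by averaging any smooth Riemannian metric with respect to the normalized Haar measure on the compact group $G$; this yields an isometric action of $G$ on $(W,h)$, so the associated exponential map $\exp:TW\to W$ is $G$-equivariant in the sense that $\exp_{g\cdot y}\circ dg(y) = g\circ\exp_y$ for all $g\in G$ and $y\in W$. With this invariant metric, the tangent space splits $G_x$-equivariantly as $T_xW = T_x(G\cdot x)\oplus T_x(G\cdot x)^\perp$, and we identify $N_x = T_x(G\cdot x)^\perp$ canonically. The isotropy group $G_x$ is a closed, hence compact, Lie subgroup of $G$, so the associated bundle $G\times_{G_x} N_x$ is a smooth $G$-manifold (with $G$ acting by left multiplication on the first factor), and the zero section is $G$-equivariantly diffeomorphic to $G/G_x$.

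Next, define the candidate map
\begin{equation*}
  \Phi: G\times_{G_x} N_x \longrightarrow W, \qquad [g,v] \longmapsto g\cdot \exp_x(v).
\end{equation*}
It is straightforward to check that $\Phi$ is well-defined (using the fact that $G_x$ preserves $N_x$ and commutes with $\exp_x$ via the isometric action), smooth, and $G$-equivariant, and that it restricts on the zero section to the natural map $f_x$ in the statement. The key analytic step is to verify that $d\Phi$ is an isomorphism at each point $[g,0]$ of the zero section: at the point $[e,0]$ the derivative is identified with the sum of the infinitesimal orbit map $\fg/\fg_x \to T_x(G\cdot x)$ and the identity on $N_x$, which is an isomorphism by construction of the splitting; $G$-equivariance then propagates this to all points of the zero section.

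The main remaining point, which I regard as the principal obstacle, is upgrading this pointwise submersivity to the existence of a $G$-invariant open neighborhood on which $\Phi$ is a diffeomorphism onto its image. Applying the inverse function theorem at each point of the zero section produces a local diffeomorphism; to produce a single neighborhood of the whole zero section I would use compactness of the orbit $G\cdot x\cong G/G_x$ to extract a uniform radius $\varepsilon>0$ such that $\Phi$ is a local diffeomorphism on the open disc subbundle $\{[g,v] : \|v\|_h<\varepsilon\}$, and then separate points of the orbit by shrinking $\varepsilon$ further to ensure injectivity: if injectivity failed for every $\varepsilon>0$ one would extract sequences $[g_n,v_n]\neq [g_n',v_n']$ with $\|v_n\|,\|v_n'\|\to 0$ and $\Phi[g_n,v_n]=\Phi[g_n',v_n']$, and after passing to subsequences, using compactness of $G/G_x$ and continuity, obtain two distinct tangent vectors at a common point of the orbit with equal image under $d\Phi$, contradicting the pointwise isomorphism. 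The resulting open set can finally be replaced by its $G$-saturation, which is automatically $G$-invariant because $\Phi$ is $G$-equivariant, yielding the desired tubular neighborhood.
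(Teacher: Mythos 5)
The paper does not actually prove this theorem --- it quotes it with references to Bredon, Audin, Koszul, tom Dieck, and Kankaanrinta --- and your argument is precisely the standard equivariant tubular neighborhood proof used in those sources (average a metric over the compact group, use the equivariant exponential map to define $\Phi[g,v]=g\cdot\exp_x(v)$ on $G\times_{G_x}N_x$, check $d\Phi$ is an isomorphism along the zero section, and use compactness of $G/G_x$ to get a uniform injectivity radius), so it is correct in substance and matches the intended route; note it is a finite-dimensional argument, which is consistent with the setting of the cited references. Two small repairs to the endgame: in the contradiction step, since the zero section maps bijectively onto $G\cdot x$ the two subsequential limits coincide, and the contradiction is with local injectivity of $\Phi$ near that single point supplied by the inverse function theorem --- the sequences do not literally produce ``two distinct tangent vectors with equal image under $d\Phi$'' --- and the disc subbundle $\{[g,v]:\|v\|_h<\varepsilon\}$ is already $G$-invariant (the $G$-action does not move the $N_x$-factor), so the final saturation step is superfluous; saturating a general open set on which $\Phi$ is injective would not obviously preserve injectivity, so it is in fact important that you work with disc bundles throughout.
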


When $W$ is Riemannian and the $G$-equivariant diffeomorphism $\varphi$ is the exponential map, Theorem \ref{thm:Existence_smooth_slices} can be used to describe the geometry of the quotient $W/G$ near the point $[x]$ and this is how Groisser and Parker use this slice result in the context of the action of $G$ on the smooth Banach manifold $\sB_0(P) = \sA(P)/\Aut(P)$ \cite[Section 2]{GroisserParkerGeometryDefinite}, by virtue of their slice result \cite[Theorem 1.1]{GroisserParkerGeometryDefinite} for the action of the Banach Lie group $\Aut_0(P)$ on the Banach affine space $\sA(P)$. In particular, if $G_{(A)}$ is the stabilizer of $(A) := \Aut_0(P)\cdot A$ in $G$, with respect to the smooth action $G\times \sB_0(P) \to \sB_0(P)$, and $\Stab(A)$ is the stabilizer of $A$ in $\Aut(P)$, with respect to the smooth action $\Aut(P)\times \sA(P) \to \sA(P)$, then
\[
  \Stab(A) \cong G_{(A)}
\]
is a canonical isomorphism. 

To apply Theorem \ref{thm:Existence_smooth_slices}, we choose $W = \sB_0(P)$ and $x = (A) \in \sB_0(P)$, so $G_x = \Stab(A)$ and $N_x = T_{(A)}\sB_0(P)/T_{(A)}(G\cdot(A)) = N_{(A)}$. Observe that the quotient $G/\Stab(A)$ may be viewed as the zero section of the vector bundle $G\times_{\Stab(A)} N_{(A)}$. Theorem \ref{thm:Existence_smooth_slices} yields a map
\begin{equation}
  \label{eq:Equivariant_tubular_neighborhood_smooth_embedding}
  \varphi:G\times_{\Stab(A)} N_{(A)} \to \sB_0(P)
\end{equation}
that is a $G$-equivariant diffeomorphism onto an open neighborhood of $G\cdot(A) \subset \sB_0(P)$ such that
\[
  \varphi(G/\Stab(A)) = G\cdot(A).
\]
The induced map obtained by taking quotients by $G$ yields a homeomorphism onto an open neighborhood of $[A] := \Aut(P)\cdot A\in\sB(P)$,
\begin{equation}
  \label{eq:Equivariant_tubular_neighborhood_induced_homeomorphism}
  \bar\varphi: N_{(A)}/\Stab(A) \to \sB(P),
\end{equation}
where $\sB_0(P)/G = \sB(P)$ and $[A] = G\cdot(A)$. Compare \cite[Proposition 4.2.29]{DK}, which asserts (in our notation) that there is a homeomorphism
\[
  \phi:\sN_{(A)}/\Stab(A) \to \sB(P)
\]
onto an open neighborhood of $[A]$ in $\sB(P)$, where
\[
  \sN_A := T_A\sA(P)/T_A(\Aut(P)\cdot A) = T_A(\Aut(P)\cdot A)^\perp = \Ker d_A^* \cap E(P)
\]
and $\sA(P)=A+E(P)$ and we abbreviate $E(P) := W^{1,p}(X;T^*X\otimes\ad P)$.

\subsection{Slice theorem for the action of based gauge transformations on the affine space of Sobolev connections}
As Groisser and Parker note, slice theorems for the action of (Sobolev completions of) $\Aut(P)$ on $\sA(P)$ are standard and can be found in \cite[Proposition 4.2.9 and discussion, pp. 132--133]{DK}, \cite[Theorem 3.2 and Corollary, p. 50]{FU} for the case of $W^{2,k+1}$ gauge transformations acting on $W^{2,k}$ connections over a four-dimensional base manifold $X$ for integers $k\geq 2$. The first author and Maridakis prove sharper versions of these slice theorems in \cite[Section 1.5]{Feehan_Maridakis_Lojasiewicz-Simon_coupled_Yang-Mills} that allow for $W^{2,p}$ gauge transformations acting on $W^{1,p}$ connections over a base manifold $X$ of arbitrary dimension $d\geq 2$ and constant $p\in(d/2,\infty)$.

However, when analyzing neighborhoods of points $[A]\in\sB(P)$ defined by connections $A$ with non-trivial stabilizer in $\Aut(P)$, it is convenient to first consider the quotient of $\sA(P)$ by $\sB_0(P)$ and then consider $\sB(P)$ as the quotient of $\sB_0(P)$ by the finite-dimensional Lie group $G$, as in Austin and Braam \cite{Austin_Braam_1995, Austin_Braam_1996} or Groisser and Parker \cite{GroisserParkerGeometryDefinite}. For this purpose, however, one needs a slice theorem for the action of $\sB_0(P)$ on $\sA(P)$. Groisser and Parker prove the following result for $W^{s+1,2}$ gauge transformations acting on $W^{s,2}$  connections when $X$ has dimension four and fractional Sobolev exponent $s>1$. A combination of their methods and those of the first author and Maridakis \cite{Feehan_Maridakis_Lojasiewicz-Simon_coupled_Yang-Mills} yields the following $L^p$ analogues of \cite[Theorem 1.1]{GroisserParkerGeometryDefinite} due to Groisser and Parker and \cite[Theorem 14 and Corollary 18]{Feehan_Maridakis_Lojasiewicz-Simon_coupled_Yang-Mills} due to the first author and Maridakis. A result similar to a combination of Theorem \ref{thm:Feehan_Maridakis_14_based} and Corollary \ref{cor:Feehan_Maridakis_18_based} is proved by Wilkins \cite[Theorem 7.2]{Wilkins_1989}.

\begin{thm}[Existence of $W^{2,p}$ based Coulomb gauge transformations for $W^{1,p}$ connections]
\label{thm:Feehan_Maridakis_14_based}
Let $(X,g)$ be a closed, smooth Riemannian manifold of dimension $d \geq 2$, and $G$ be a compact Lie group, $P$ be a smooth principal $G$-bundle over $X$, and $x_0\in X$ be a point. If $A_\flat$ is a $C^\infty$ connection on $P$, and $A_0$ is a $W^{1,p}$ connection on $P$ with $p \in (d/2,\infty)$, then there exists a constant $\zeta = \zeta(A_0,A_\flat,g,G,p) \in (0,1]$ with the following significance. If $A$ is a $W^{1,p}$ connection on $P$ that obeys
\begin{equation}
\label{eq:Feehan_Maridakis_14_based_A_minus_Ab_W1p_close}
\|A - A_0\|_{W_{A_\flat}^{1,p}(X)} < \zeta,
\end{equation}
then there exists a $W^{2,p}$ based gauge transformation $u \in \Aut_0(P)$ such that
\[
u(A) - A_0 \perp \Ran\left(d_{A_0}:T_{\id}\Aut_0(P) \to W^{1,p}(T^*X\otimes\ad P)\right),
\]
where $\perp$ denotes $L^2$-orthogonal, $T_{\id}\Aut_0(P) = \{\xi\in W^{2,p}(\ad P): \xi(x_0)=0\}$, and
\[
\|u(A) - A_0\|_{W_{A_\flat}^{1,p}(X)} \leq C\|A - A_0\|_{W_{A_\flat}^{1,p}(X)},
\]
where $C = C(A_0,A_\flat,g,G,p) \in [1,\infty)$ is a constant.
\end{thm}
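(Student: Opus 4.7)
The plan is to apply the Implicit Function Theorem, in the same spirit as the proofs of the unbased slice theorems referenced in \cite{Feehan_Maridakis_Lojasiewicz-Simon_coupled_Yang-Mills}, but adapted to the based Lie algebra $\mathfrak{g}_0:=T_{\id}\Aut_0(P)=\{\xi\in W^{2,p}(\ad P):\xi(x_0)=0\}$. The object of study is the real-analytic map
\[
  \Phi:\mathcal{V}_\zeta\times\mathcal{W}_\zeta\to L^p(X;\ad P),\qquad
  \Phi(A,\xi):=d_{A_0}^{*}\bigl(e^{\xi}(A)-A_0\bigr),
\]
defined on a small ball $\mathcal{V}_\zeta$ around $A_0$ in $\sA^{1,p}(P)$ and $\mathcal{W}_\zeta$ around $0$ in $\mathfrak{g}_0$; here $\Phi(A_0,0)=0$ and the smoothness and standard multiplicative estimates follow from $W^{2,p}\hookrightarrow C^0$ ($p>d/2$) exactly as in \cite{Feehan_Maridakis_Lojasiewicz-Simon_coupled_Yang-Mills}. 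The based version of the gauge action preserves $\Aut_0(P)$ because $\xi(x_0)=0$ implies $e^{\xi}(x_0)=\id$.

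Next I would linearize in $\xi$: a short calculation yields $\partial_\xi\Phi(A_0,0)\cdot\eta=d_{A_0}^{*}d_{A_0}\eta$, so the key analytic point is to show that the restricted Laplace-type operator
\[
  L := d_{A_0}^{*}d_{A_0}\restriction \mathfrak{g}_0 \longrightarrow L^{p}(X;\ad P)
\]
is a Banach-space isomorphism onto its image. Injectivity is clean: if $L\eta=0$, pairing with $\eta$ in $L^2$ gives $\|d_{A_0}\eta\|_{L^2}=0$, so $\eta$ is $A_0$-parallel, and $\eta(x_0)=0$ combined with parallel transport on the connected $X$ forces $\eta=0$. Closedness of the image and continuity of $L^{-1}$ onto the image follow from the standard elliptic estimate for $d_{A_0}^{*}d_{A_0}$ on $W^{2,p}(\ad P)$ combined with the closed-range theorem. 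Once $L$ is inverted, the IFT yields a real-analytic map $A\mapsto\xi(A)\in\mathfrak{g}_0$ with $\xi(A_0)=0$, $\Phi(A,\xi(A))=0$, and a Lipschitz bound $\|\xi(A)\|_{W^{2,p}_{A_\flat}}\lesssim\|A-A_0\|_{W^{1,p}_{A_\flat}}$. Setting $u:=e^{\xi(A)}\in\Aut_0^{2,p}(P)$, the desired orthogonality and norm bound then follow: for $p>d/2$ the inclusion $C^{\infty}_{c}(X\setminus\{x_0\})\subset\mathfrak{g}_0$ together with the density of $C^{\infty}_{c}(X\setminus\{x_0\})$ in $L^{p'}(\ad P)$ show that the $L^p$-equation $d_{A_0}^{*}(u(A)-A_0)=0$ is equivalent to $u(A)-A_0\perp_{L^2}\Ran(d_{A_0}\restriction\mathfrak{g}_0)$, and the Lipschitz estimate for $\xi(A)$ combined with the standard multiplicative Sobolev estimates for the gauge action transfers into the stated bound on $\|u(A)-A_0\|_{W^{1,p}_{A_\flat}}$.

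The main obstacle I expect is the second step, namely the precise Fredholm analysis of $L$ on $\mathfrak{g}_0$. Restricting the domain of $d_{A_0}^{*}d_{A_0}$ by the finite-codimension condition $\xi(x_0)=0$ removes the parallel modes $\bH^{0}_{A_0}$ from the kernel (giving injectivity) but also shrinks the image; identifying this image exactly and confirming that $\Phi(A,\xi)$ genuinely lands there for $A$ close to $A_0$ requires balancing the point-evaluation functional $\ev_{x_0}$ against the evaluation $\ev_{x_0}\restriction\bH^{0}_{A_0}$. In practice I would handle this either by first applying the unbased slice theorem of \cite{Feehan_Maridakis_Lojasiewicz-Simon_coupled_Yang-Mills} to obtain $v\in\Aut^{2,p}(P)$ with $d_{A_0}^{*}(v(A)-A_0)=0$ and then composing with a uniquely determined finite-dimensional correction in the stabilizer $\Stab(A_0)$ to enforce the base-point condition $u(x_0)=\id$, or by working directly on the a priori constrained target space $\Ran L\subset L^p$; in either variant, the matching between $\ev_{x_0}(\bH^{0}_{A_0})$ and the algebraic complement of $\mathfrak{g}_0$ in $W^{2,p}(\ad P)$ is the technical heart of the proof.
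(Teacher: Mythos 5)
A preliminary remark: the paper contains no proof of Theorem \ref{thm:Feehan_Maridakis_14_based}. It is quoted as an $L^p$ analogue of Groisser--Parker and of Feehan--Maridakis, with a further pointer to Wilkins, and the surrounding text only asserts that ``a combination of their methods'' yields it. So your proposal cannot be matched against an in-paper argument; it has to stand on its own, and it does not, because the obstacle you flag in your final paragraph is not a technical loose end but the essential point, and neither of your proposed repairs closes it.

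Your own density observation (smooth sections supported away from $x_0$ lie in $\mathfrak{g}_0$ and are dense in $L^{p'}$) shows that $L^2$-orthogonality to $\Ran\bigl(d_{A_0}\restriction\mathfrak{g}_0\bigr)$ is \emph{identical} to the full Coulomb condition $d_{A_0}^*(u(A)-A_0)=0$: a point has zero capacity, so the base-point constraint is invisible to the $L^2$ pairing. Consequently the linearization $L=d_{A_0}^*d_{A_0}\restriction\mathfrak{g}_0$, while injective, has range of codimension $\dim\mathfrak{g}-\dim\bH_{A_0}^0(X;\ad P)$ inside the natural target $(\bH_{A_0}^0)^{\perp}\cap L^p$ (which is where $\Phi(A,\xi)=d_{A_0}^*(e^\xi(A)-A_0)$ automatically takes its values, since $\langle d_{A_0}^*a,\sigma\rangle_{L^2}=\langle a,d_{A_0}\sigma\rangle_{L^2}=0$ for $\sigma\in\bH_{A_0}^0$), so the Implicit Function Theorem cannot be applied as written. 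Repair (a) --- run the unbased slice theorem and then correct by an element of $\Stab(A_0)$ --- only works when $\ev_{x_0}:\Stab(A_0)\to G$ is surjective; for an irreducible connection with nonabelian $G$ the stabilizer is the center, and no stabilizer correction can move $v(x_0)$ to the identity. Repair (b) --- constrain the target to $\Ran L$ --- fails because nothing forces $\Phi(A,\xi)$ to lie in that strictly smaller subspace.

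Indeed the defect cannot be argued away within this formulation: take $A_0$ irreducible (say $G=\SU(2)$), $\gamma\in\Aut^{2,p}(P)$ close to the identity with $\gamma(x_0)\notin\{\pm 1\}$, and $A:=\gamma(A_0)$, which is $W^{1,p}$-arbitrarily close to $A_0$. If $u(A)$ lies in the Coulomb slice through $A_0$ with $\|u(A)-A_0\|\leq C\|A-A_0\|$ small, the local uniqueness of the Coulomb slice modulo $\Stab(A_0)$ forces $u\gamma\in\Stab(A_0)$, hence $u(x_0)=\sigma(x_0)\gamma(x_0)^{-1}\neq\id$; so no \emph{based} gauge transformation achieves the stated ($L^2$-interpreted) orthogonality together with the stated estimate. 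Any viable based slice theorem must therefore either enlarge the slice by a finite-dimensional space of dimension $\dim G-\dim\Stab(A_0)$, or equivalently take orthogonality to $d_{A_0}\mathfrak{g}_0$ in an inner product in which evaluation at $x_0$ is continuous (a Sobolev pairing), so that $d_{A_0}\mathfrak{g}_0$ becomes a proper closed complemented subspace of the orbit directions; the ``matching of $\ev_{x_0}(\bH_{A_0}^0)$ against the complement of $\mathfrak{g}_0$'' that you call the technical heart is exactly where this finite-dimensional discrepancy lives, and it must show up in the statement of the slice condition, not merely in the proof.
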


\begin{cor}[Real analytic Banach manifold structure on the based quotient space of $W^{1,p}$ connections]
\label{cor:Feehan_Maridakis_18_based}  
Let $(X,g)$ be a closed, smooth Riemannian manifold of dimension $d \geq 2$, and $G$ be a compact Lie group, $P$ be a smooth principal $G$-bundle over $X$, and $x_0\in X$ be a point, and $p\in(d/2,\infty)$ be a constant. If $A_\flat$ is a $C^\infty$ connection on $P$ and $[A] \in \sB_0(P)$, then there is a constant $\eps = \eps(A_\flat,[A],g,G,p) \in (0,1]$ with the following significance. If
\[
\bB_A(\eps) := \left\{a \in W_{A_\flat}^{1,p}(X;T^*X\otimes\ad P): d_A^*a = 0 \text{ and } \|a\|_{W_{A_\flat}^{1,p}(X)} < \eps\right\},
\]
then the map,
\[
\pi_A: \bB_A(\eps) \ni [a] \mapsto [A+a] \in \sB_0(P),
\]
is a homeomorphism onto an open neighborhood of $[A]\in \sB_0(P)$. In particular, the inverse coordinate charts $\pi_A$ determine real analytic transition functions for $\sB_0(P)$, giving it the structure of a real analytic Banach manifold.
\end{cor}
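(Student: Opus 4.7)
The plan is to derive Corollary \ref{cor:Feehan_Maridakis_18_based} from the slice Theorem \ref{thm:Feehan_Maridakis_14_based}, following the standard strategy used for the action of $\Aut(P)$ on $\sA(P)$ (as in \cite[Proposition 4.2.9]{DK} or \cite[Theorem 3.2]{FU}), but taking advantage of the simplification that the action of $\Aut_0(P)$ on $\sA(P)$ is \emph{free}. First I would record this freeness: if $u\in\Aut_0(P)$ fixes a $W^{1,p}$ connection $A$, then the section $u\in W^{2,p}(\Ad P)$ satisfies $d_Au=0$, so $u$ is parallel along every piecewise smooth path from $x_0$; since $u(x_0)=\id_{P|_{x_0}}$, it follows that $u$ is the identity on all of $P$. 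Hence the stabilizer $\Stab_{\Aut_0(P)}(A)$ is trivial for every $A\in\sA(P)$, which is what allows the based slice to serve directly as a coordinate chart rather than as a chart for a quotient by a finite group.

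Next I would establish the chart property at a fixed class $[A_0]\in\sB_0(P)$ with representative $A_0$. Injectivity of $\pi_{A_0}$ on $\bB_{A_0}(\eps)$ for small $\eps$ is the key local point: if $a,a'\in \Ker d_{A_0}^*$ are small in $W^{1,p}$ and $u(A_0+a)=A_0+a'$ for some $u\in\Aut_0(P)$, then $a'-a = u^{-1}d_{A_0}u + (\Ad u^{-1}-1)(A_0+a)$, and on this equation one can apply $d_{A_0}^*$ and use both Coulomb conditions to obtain an elliptic equation for $u$ whose only small solution is $u=\id$ (since the stabilizer is trivial and the linearization $d_{A_0}^*d_{A_0}:T_{\id}\Aut_0(P)\to W^{-1,p}(\ad P)$ is an isomorphism because $\ker d_{A_0}\cap\{\xi(x_0)=0\}=0$). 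Surjectivity onto a neighborhood of $[A_0]$ comes directly from Theorem \ref{thm:Feehan_Maridakis_14_based} applied with reference connection $A_0$: any $[A]$ represented by $A$ close to $A_0$ in $W_{A_\flat}^{1,p}$ is of the form $[A_0+a]$ with $a\in\bB_{A_0}(\eps)$. Continuity of $\pi_{A_0}$ is automatic from continuity of $A\mapsto[A]$; its inverse, the Coulomb projection $[A]\mapsto a$, is continuous by the quantitative estimate in Theorem \ref{thm:Feehan_Maridakis_14_based} together with the freeness argument above, which rules out nearby competing gauge representatives.

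To obtain the real analytic Banach manifold structure, I would verify that any two such charts are related by a real analytic transition map. On the overlap of $\bB_{A_0}(\eps_0)$ and $\bB_{A_1}(\eps_1)$, the transition $a\mapsto a'$ is characterized implicitly by the existence of $u\in\Aut_0(P)$ with $u(A_0+a) = A_1+a'$ and $d_{A_1}^*a'=0$. Viewing the pair $(a',u)$ as the solution of the system
\begin{align*}
F(a,a',u) &:= u(A_0+a) - (A_1+a') = 0,\\
G(a') &:= d_{A_1}^*a' = 0,
\end{align*}
in the real analytic Banach manifolds $W_{A_\flat}^{1,p}(T^*X\otimes\ad P)\times\Aut_0(P)$, the linearization in $(a',u)$ at a solution is exactly the isomorphism obtained by combining the Coulomb splitting with the freeness of the $\Aut_0(P)$-action. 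The real analytic Implicit Mapping Theorem on Banach manifolds (applicable because the gauge action is real analytic in both variables) then yields a real analytic local solution $(a',u)(a)$, and in particular a real analytic transition map $a\mapsto a'$. Finally, the quotient topology on $\sB_0(P)$ agrees with the one induced by these charts, because $\pi_{A_0}$ is an open map (by the slice construction) onto an open neighborhood of $[A_0]$.

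The main obstacle will be the injectivity step: one must convert the Coulomb conditions $d_{A_0}^*a=0=d_{A_0}^*a'$ plus a gauge relation into a fixed-point or elliptic uniqueness statement controlled purely by $\|a\|_{W^{1,p}}+\|a'\|_{W^{1,p}}$, using multiplication estimates $W^{1,p}\cdot W^{1,p}\hookrightarrow L^p$ that require $p>d/2$. Once this is in hand, everything else is a clean application of the based slice Theorem \ref{thm:Feehan_Maridakis_14_based} and the real analytic Implicit Mapping Theorem, following the template of \cite[Corollary 18]{Feehan_Maridakis_Lojasiewicz-Simon_coupled_Yang-Mills} but in the simpler based setting where no finite isotropy group intervenes.
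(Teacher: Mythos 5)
The paper itself contains no proof of Corollary \ref{cor:Feehan_Maridakis_18_based}: it is imported as an $L^p$ analogue of the based slice theorems of Groisser--Parker \cite{GroisserParkerGeometryDefinite} and Wilkins \cite{Wilkins_1989} and of \cite[Corollary 18]{Feehan_Maridakis_Lojasiewicz-Simon_coupled_Yang-Mills}, so your argument must stand on its own. Your architecture --- freeness of the $\Aut_0(P)$-action (note this needs $X$ connected), a slice theorem for surjectivity, elliptic uniqueness for injectivity, and the real analytic implicit mapping theorem for transitions --- is indeed the route those references take, but there is a genuine gap in the way you use the Coulomb slice $\Ker d_A^*$.

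The gap is a codimension count. $\Ker d_A^*$ is a complement of $\Ran\bigl(d_A:W^{2,p}(\ad P)\to W^{1,p}(T^*X\otimes\ad P)\bigr)$, i.e.\ of the tangent space to the \emph{full} $\Aut(P)$-orbit, whereas the tangent space to the $\Aut_0(P)$-orbit is only $d_A\bigl(T_{\id}\Aut_0(P)\bigr)$; since $d_A\xi$ lies in the latter exactly when $\xi(x_0)$ lies in the Lie algebra of $\Stab(A)$, the based orbit directions have codimension $\dim\fg-\dim\Stab(A)$ inside $\Ran d_A$, so $\Ker d_A^*\oplus d_A(T_{\id}\Aut_0(P))$ is a \emph{proper} closed subspace of $W^{1,p}(T^*X\otimes\ad P)$ whenever $\Stab(A)\subsetneq G$ (in particular for irreducible $A$). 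This breaks three of your steps: the operator $d_{A_0}^*d_{A_0}$ on $T_{\id}\Aut_0(P)$ is injective but \emph{not} surjective (its range has codimension $\dim\fg$ in $L^p(\ad P)$), so it is not an isomorphism; the linearization of your system $(F,G)$ for the transition maps is likewise injective but not onto, so the real analytic implicit mapping theorem does not apply as stated; and, decisively, the image of $\bB_{A_0}(\eps)$ cannot be open in $\sB_0(P)$, because $\sB_0(P)\to\sB(P)$ is locally a fibration with fiber $G/\Stab(A_0)$ and the Coulomb ball only charts the $\sB(P)$-directions. The surjectivity step fails concretely: let $A=v(A_0)$ with $v=\exp(\xi)$, $\xi$ small, and $\exp(\xi(x_0))$ not in the subgroup of $G$ obtained by evaluating $\Stab(A_0)$ at the fiber over $x_0$ (for irreducible $A_0$ and $G=\SU(2)$, any small $\xi$ with $\xi(x_0)\neq 0$). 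If some $u\in\Aut_0(P)$ had $u(A)-A_0\in\Ker d_{A_0}^*$ with small norm, the composite gauge transformation $w$ carrying $A_0$ to $u(A)$ would place the orbit of $A_0$ back in the slice near $A_0$, forcing $w\in\Stab(A_0)$ by the standard unbased slice theorem, while $w(x_0)=\exp(\xi(x_0))$ does not lie in the evaluation of $\Stab(A_0)$ --- a contradiction; and since the $L^2$-orthogonality condition in Theorem \ref{thm:Feehan_Maridakis_14_based} is equivalent, by density of based sections in $L^{p'}$, to $d_{A_0}^*(u(A)-A_0)=0$, you cannot outsource this step to that theorem without first addressing the point. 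The repair --- and what the cited based slice theorems actually provide --- is to enlarge the slice by the missing $(\dim\fg-\dim\Stab(A))$-dimensional block of directions $d_A\xi$ with $\xi(x_0)$ ranging over a complement of the stabilizer algebra in $\fg$ (equivalently, use $\sB_0(P)\cong(\sA(P)\times P|_{x_0})/\Aut(P)$ and include the fiber directions); with that larger slice your freeness observation, uniqueness argument, and analytic implicit-function strategy do go through. As written, your proof yields at best the unbased chart for $\sB(P)$ modulo $\Stab(A)$, not the claimed chart for $\sB_0(P)$.
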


\subsection{Splicing based gauge transformations}
We construct a splicing map $\fS$ for based gauge transformations that is analogous to our previously constructed splicing maps $\cS$ for connections in \eqref{eq:Splicing_map_connections} and $\sS$ for bundle-valued self-dual two-forms in \eqref{eq:Splicing_map_bundle-valued_self-dual_two-forms}:
\begin{equation}
\label{eq:Splicing_map_based_gauge_transformations}
\fS: \Aut(P_0)\times \Aut_0(P_1) \ni (u_0,u_1) \mapsto u \in \Aut(P).
\end{equation}
Indeed, because $u_0(x)$ is close to $\id_x \in \Aut(P_0|_x)$ when $x$ is close to $x_0\in X$ (and equal to $\id_{x_0}$ when $x=x_0$), we may write
\[
  p_0\cdot u_0(x) = p_0\cdot\exp_G(\xi_0(x)), \quad\forall\, x \in B_{\delta(u_0)}(x_0) \text{ and } p_0 \in P_0|_x, 
\]
where $\exp_G:\fg \to G$ is the exponential map for the Lie group $G$ and $\fg$ is the Lie algebra of $G$ and $\xi_0:B_{\delta_0}(x_0)\to\fg$ is a smooth map and $\delta_0=\delta_0(u_0)\in(0,1]$. Similarly, because $u_1(x)$ is close to $\id_x \in \Aut(P_1|_x)$ when $x$ is close to $s\in S^4$ (and equal to $\id_s$ when $x=s$), we may also write
\[
  p_1\cdot u_1(x) = p_1\cdot\exp_G(\xi_1(x)), \quad\forall\, x \in B_{\delta_1}(s) \text{ and } p_1 \in P_1|_x, 
\]
and $\xi_1:B_{\delta_1}(s)\to\fg$ is a smooth map and $\delta_1=\delta_1(u_1)\in(0,1]$. We can now construct $u \in \Aut_0(P)$ by setting
\[
  u :=
  \begin{cases}
    u_0 &\text{on }X\less B_{\delta_0}(x_0),
    \\
    \exp_G(\chi_0\xi_0 + \chi_1\xi_1) &\text{on } \Omega(x_0;\delta_0, \delta_1),
    \\
    u_1 &\text{on }S^4\less B_{\delta_1}(s).
  \end{cases}
\]
Conversely, every $u\in \Aut_0(P)$ arises in this way, since such gauge transformations are close to the identity map on fibers $P_x$, for all $x$ in an small open annulus $\Omega \subset X$ that is a neighborhood of the copy of $S^3$ joining $X$ and $S^4$. Because
\[
  u(x) = \exp_G(\xi(x)), \quad\forall\, x \in \Omega,
\]
we may define $u_0 = \exp_G(\psi_0\xi)$ near $x_0\in X$ and $u_1 = \exp_G(\psi_1\xi)$ near $s \in S^4$, while $u_0 = u$ on $X \less B_{\delta_0}(x_0)$ and $u_1 = u$ on $S^4 \less B_{\delta_1}(s)$. Therefore, the splicing map $\fS$ in \eqref{eq:Splicing_map_based_gauge_transformations} has a smooth right inverse
\begin{equation}
\label{eq:Unsplicing_map_based_gauge_transformations}
\fU: \Aut_0(P) \ni u \mapsto (u_0,u_1) \in \Aut_0(P_0)\times \Aut_0(P_1)
\end{equation}
such that
\[
  \fS\circ\fU = \id \quad\text{on } \Aut_0(P).
\]

\subsection{Gauge equivariance}
We now observe that the splicing map $\cS$ for connections in \eqref{eq:Splicing_map_connections} is equivariant with respect to the map $\fS$ and the action of $\Aut_0(P_0)\times \Aut_0(P_1)$ on the domain of $\cS$ and action of $\Aut_0(P)$ on the codomain of $\cS$:
\[
  \cS(u_0(A_0),u_1(A_1),\rho,x_0,\lambda) = u(\cS(A_0,A_1,\rho,x_0,\lambda)),
\]
where $u = \fS(u_0,u_1)$. The map $\cS$ in \eqref{eq:Splicing_map_connections} thus descends to a $G$-equivariant submersion on the quotient spaces:
\begin{equation}
\label{eq:Splicing_map_connections_mod_based_gauge_transformations}
\cS: \sB_0(P_0)\times \sB_0(P_1) \times \Gl_{x_{0\flat}}\times B_\delta(x_{0\flat}) \times (0,\lambda_0) \to \sB_0(P).
\end{equation}
The map \eqref{eq:Splicing_map_connections_mod_based_gauge_transformations} can be composed with $G$-equivariant smooth embeddings of the form \eqref{eq:Equivariant_tubular_neighborhood_smooth_embedding} that define equivariant tubular neighborhoods of orbits $G\cdot(A_0) \subset \sB_0(P_0)$ and $G\cdot(A_1) \subset \sB_0(P_1)$:
\[
  \varphi_0:G\times_{\Stab(A_0)} N_{(A_0)} \to \sB_0(P_0)
  \quad\text{and}\quad
  \varphi_1:G\times_{\Stab(A_1)} N_{(A_1)} \to \sB_0(P_1),
\]
where
\[
  N_{(A_0)} := T_{(A_0)}\sB_0(P_0)/T_{(A_0)}(G\cdot(A_0))
  \quad\text{and}\quad
  N_{(A_1)} := T_{(A_1)}\sB_0(P_1)/T_{(A_1)}(G\cdot(A_1)).
\]
The resulting composition is a $G$-equivariant submersion onto an open subset of $\sB_0(P)$:
\[
  \cS\circ(\varphi_0\times\varphi_1): G\times_{\Stab(A_0)} N_{(A_0)} \times G\times_{\Stab(A_1)} N_{(A_1)} \times \Gl_{x_{0\flat}}\times B_\delta(x_{0\flat}) \times (0,\lambda_0) \to \sB_0(P).
\]
By analogy with the definition of the topological embedding \eqref{eq:Equivariant_tubular_neighborhood_induced_homeomorphism}, the preceding $G$-equivariant submersion descends to a topological submersion \cite[p. 27]{Lang_fundamentals_differential_geometry} onto an open subset of $\sB(P)$:
\[
  \cS\circ(\bar\varphi_0\times\bar\varphi_1):N_{(A_0)}/\Stab(A_0) \times N_{(A_1)}/\Stab(A_1) \times \Gl_{x_{0\flat}}\times B_\delta(x_{0\flat}) \times (0,\lambda_0) \to \sB(P).
\]

\section{Boundary points with non-trivial isotropy groups}
\label{sec:Boundary_points_with_non-trivial_isotropy_groups}
In this section, we complete the

\begin{proof}[Proof of Corollary \ref{maincor:Gluing_HA2_and_HA0_non-zero}]
For the sake of clarity, we first consider the simpler case where $H_{A_{0\flat}}^2(X;\ad P)$ is zero but allow the connections $A_{i\flat}$ to have non-trivial isotropy groups in $\Aut(P_i)$ for $i=0,1$. Recall that our splicing map $\cS$ and the composition $F^+\circ\cS$ of the splicing and curvature maps are equivariant with respect to the action of the Banach Lie groups
\[
  \Aut_0(P_0) \times \Aut_0(P_1) \lhd \Aut(P_0) \times \Aut(P_1)
\]
on the domain and the Banach Lie group $\Aut(P)$ on the codomain, with the action of $(u_0,u_1)$ on the domain implying the action of $u=\fS(u_0,u_1)$ on the codomain.
  
We must address the complication that when we consider the quotient of affine spaces of $W^{1,p}$ connections by the Banach Lie groups of $W^{2,p}$ gauge transformations, we must choose $p>2$, whereas the derivatives of splicing maps and derivatives of compositions of splicing and curvature maps only extend continuously from domains involving $\lambda\in(0,\lambda_0)$ to $\lambda=0$ when $p\leq 2$. For this purpose, we first restrict $\cS$ and $F^+\circ\cS$ to Coulomb-gauge slices in $\sA(P_0)$ and $\sA(P_1)$ provided by the Groisser--Parker Slice Theorem \ref{thm:Feehan_Maridakis_14_based} for based gauge transformations:
\begin{multline*}
  \cS: \left(A_{0\flat}+\Ker d_{A_{0\flat}}^*\cap W^{1,p}(T^*X\otimes\ad P_0) \right)
  \\
  \times \left(A_{1\flat}+\Ker d_{A_{1\flat}}^*\cap W^{1,p}(T^*S^4\otimes\ad P_1) \right)\cap\sA^\diamond(P_1)
  \\
  \times \Gl_{x_{0\flat}}\times B_\delta(x_{0\flat}) \times (0,\lambda_0) \to \sA(P)
\end{multline*}
where we choose $A_{1\flat}$ be a centered smooth anti-self-dual connection on $P_1$, and
\begin{multline*}
  F^+\circ\cS: \left(A_{0\flat}+\Ker d_{A_{0\flat}}^*\cap W^{1,p}(T^*X\otimes\ad P_0) \right)
  \\
  \times \left(A_{1\flat}+\Ker d_{A_{1\flat}}^*\cap W^{1,p}(T^*S^4\otimes\ad P_1) \right)\cap\sA^\diamond(P_1)
  \\
  \times \Gl_{x_{0\flat}}\times B_\delta(x_{0\flat}) \times (0,\lambda_0) \to L^p(\wedge^+(T^*X)\otimes\ad P).
\end{multline*}
These maps are $G$-equivariant. We can now apply our proof of Theorem \ref{mainthm:Gluing} \mutatis to yield Corollary \ref{maincor:Gluing_HA2_and_HA0_non-zero} in the special case where $H_{A_{0\flat}}^2(X;\ad P)$ is zero.

Lastly, we can extend our argument to allow for non-zero $H_{A_{0\flat}}^2(X;\ad P)$ in almost exactly the same way as we did in our proof of Corollary \ref{maincor:Gluing_HA2_non-zero}.
\end{proof}

\section{Riemannian metrics that are not locally conformally flat}
\label{sec:Riemannian_metric_not_locally_flat}
We have assumed that the Riemannian metric $g$ on the four-dimensional manifold $X$ is conformally flat near the point $x_{0\flat} \in X$, in other words, that assumption \eqref{eq:Riemg_zero} in Theorem \ref{mainthm:Gluing} holds. This assumption ensures that exponential map $\exp_v:T_{x_{0\flat}}X \supset B_\varrho(x_{0\flat}) \to X$ is a smooth inverse coordinate chart that is isometric, leading to simplifications implicit in our calculations thus far:
\begin{itemize}
\item The Riemannian metric $g_{v,\lambda}$ on the connected sum $X\# S^4$ defined by an oriented, orthonormal frame $v$ for $T_{x_0}X$ (obtained by parallel transport of an oriented, orthonormal frame $v_0$ for $T_{x_{0\flat}}X$ with respect to the Levi--Civita connection on $TX$ along the geodesic curve joining $x_{0\flat}$ to $x_0$) and scale $\lambda \in (0,\lambda_0]$ is conformally equivalent to $g$ on $X$ and (when near $x_0$) to the standard round metric $g_\round$ of radius one on $S^4$.

\item The calculation and estimation of derivatives of the map $F^+\circ\cS$ with respect to the center and scale parameters $(x_0,\lambda) \in B_\delta(x_{0\flat}) \times [0,\lambda_0)$ is as simple as possible. 
\end{itemize}

When $g$ is not flat near $x_{0\flat}$, all of the preceding calculations can still be carried out, but they become slightly more involved. Fortunately, calculations of this kind were performed by the first author in \cite[Section 3]{FeehanGeometry} and by Peng in \cite[Section 5]{Peng_1995}, with some additional corrections and adjustments to \cite{Peng_1995} in \cite[Section 2]{Peng_1996}. We summarize the main changes below:
\begin{itemize}
\item In order to ensure that the Riemannian metric $g_{v,\lambda}$ on the connected sum $X\# S^4$ remains conformally equivalent to the fixed Riemannian metric $g$ as the parameters $(x_0,\lambda)$ vary, we must allow for smooth Riemannian metrics on $S^4$ that are $C^1$ close to $g_\round$ but only become equal to $g_\round$ when $\lambda=0$. Connected sum metrics with this property were constructed by the first author in \cite[Section 3.5]{FeehanGeometry} and \cite[Section 6]{Feehan_yangmillsenergy_lojasiewicz4d}.

\item The calculation of and estimation of derivatives of the map $F^+\circ\cS$ with respect to the center and scale parameters $(x_0,\lambda) \in B_\delta(x_{0\flat}) \times [0,\lambda_0)$ must be modified following the calculations and estimates due to the first author in \cite[Section 3.5]{FeehanGeometry} and Peng in \cite[Section 5]{Peng_1995} and \cite[Section 2]{Peng_1996}. While Groisser and Parker exclusively use Donaldson's parameterization \cite{DonApplic} of the collar neighborhood in $M(P,g)$ (in the special case that $G=\SU(2)$, $c_2(P)=1$, $\pi_1(X)=\{1\}$, and $b^+(X)=0$) in their articles \cite{Groisser_1993, Groisser_1998, GroisserParkerGeometryDefinite}, their methods are also relevant here since their calculations rely heavily on estimates for derivatives of the $L^2$ metric on $M(P,g)$ with respect to the center and scale parameters.
\end{itemize}

Given the preceding comments, we can conclude the

\begin{proof}[Proof of Corollary \ref{maincor:Gluing_HA2_and_HA0_non-zero_and_non-flat_metric}]
The required adjustments to the proofs of Theorem \ref{mainthm:Gluing} and Corollaries \ref{maincor:Gluing_smooth}, \ref{maincor:Gluing_HA2_and_HA0_non-zero}, and \ref{maincor:Gluing_HA2_and_HA0_non-zero} follow from (and are considerably simpler than) the calculations in \cite[Section 3.5]{FeehanGeometry}, \cite[Section 5]{Peng_1995}, and \cite[Section 2]{Peng_1996}.
\end{proof}

%
%

\bibliography{/Users/pfeehan/Dropbox/LATEX/Bibinputs/master,/Users/pfeehan/Dropbox/LATEX/Bibinputs/mfpde}

\def\cprime{$'$} \def\cprime{$'$}
  \def\ocirc#1{\ifmmode\setbox0=\hbox{$#1$}\dimen0=\ht0 \advance\dimen0
  by1pt\rlap{\hbox to\wd0{\hss\raise\dimen0
  \hbox{\hskip.2em$\scriptscriptstyle\circ$}\hss}}#1\else {\accent"17 #1}\fi}
  \def\cprime{$'$} \def\cprime{$'$} \def\cprime{$'$} \def\cprime{$'$}
  \def\polhk#1{\setbox0=\hbox{#1}{\ooalign{\hidewidth
  \lower1.5ex\hbox{`}\hidewidth\crcr\unhbox0}}} \def\cprime{$'$}
  \def\cprime{$'$} \def\cprime{$'$}
  \def\lfhook#1{\setbox0=\hbox{#1}{\ooalign{\hidewidth
  \lower1.5ex\hbox{'}\hidewidth\crcr\unhbox0}}} \def\cprime{$'$}
  \def\cprime{$'$} \def\cprime{$'$} \def\cprime{$'$} \def\cprime{$'$}
\providecommand{\bysame}{\leavevmode\hbox to3em{\hrulefill}\thinspace}
\providecommand{\MR}{\relax\ifhmode\unskip\space\fi MR }
\providecommand{\MRhref}[2]{%
  \href{http://www.ams.org/mathscinet-getitem?mr=#1}{#2}
}
\providecommand{\href}[2]{#2}
\begin{thebibliography}{100}

\bibitem{AMR}
Ralph~H. Abraham, Jerrold~E. Marsden, and Tudor~S. Ratiu, \emph{Manifolds,
  tensor analysis, and applications}, second ed., Springer, New York, 1988.
  \MR{960687 (89f:58001)}

\bibitem{AdamsFournier}
Robert~A. Adams and John J.~F. Fournier, \emph{Sobolev spaces}, second ed.,
  Elsevier/Academic Press, Amsterdam, 2003. \MR{2424078 (2009e:46025)}

\bibitem{Arezzo_Pacard_Singer_2011}
C.~Arezzo, F.~Pacard, and M.~Singer, \emph{Extremal metrics on blowups}, Duke
  Math. J. \textbf{157} (2011), no.~1, 1--51. \MR{2783927}

\bibitem{AtiyahGeomYM}
Michael~F. Atiyah, \emph{Geometry of {Y}ang--{M}ills fields}, Scuola Normale
  Superiore Pisa, Pisa, 1979. \MR{554924 (81a:81047)}

\bibitem{Atiyah_Bott_1983}
Michael~F. Atiyah and Raoul~H. Bott, \emph{The {Y}ang--{M}ills equations over
  {R}iemann surfaces}, Philos. Trans. Roy. Soc. London Ser. A \textbf{308}
  (1983), 523--615. \MR{702806 (85k:14006)}

\bibitem{ADHM}
Michael~F. Atiyah, Nigel~J. Hitchin, Vladimir~G. Drinfel{\cprime}d, and Yuri~I.
  Manin, \emph{Construction of instantons}, Phys. Lett. A \textbf{65} (1978),
  185--187. \MR{598562 (82g:81049)}

\bibitem{Atiyah_Hitchin_Singer_1977}
Michael~F. Atiyah, Nigel~J. Hitchin, and Isadore~M. Singer, \emph{Deformations
  of instantons}, Proc. Nat. Acad. Sci. U.S.A. \textbf{74} (1977), no.~7,
  2662--2663. \MR{0458424}

\bibitem{AHS}
Michael~F. Atiyah, Nigel~J. Hitchin, and Isadore~M. Singer, \emph{Self-duality
  in four-dimensional {R}iemannian geometry}, Proc. Roy. Soc. London Ser. A
  \textbf{362} (1978), no.~1711, 425--461. \MR{506229 (80d:53023)}

\bibitem{AS4}
Michael~F. Atiyah and Isadore~M. Singer, \emph{The index of elliptic operators.
  {IV}}, Ann. of Math. (2) \textbf{93} (1971), 119--138. \MR{279833}

\bibitem{Aubin_1998}
Thierry Aubin, \emph{Some nonlinear problems in {R}iemannian geometry},
  Springer, Berlin, 1998. \MR{1636569 (99i:58001)}

\bibitem{Audin_torus_actions_symplectic_manifolds}
Mich\`ele Audin, \emph{Torus actions on symplectic manifolds}, revised ed.,
  Progress in Mathematics, vol.~93, Birkh\"{a}user Verlag, Basel, 2004.
  \MR{2091310}

\bibitem{Austin_Braam_1995}
David~M. Austin and Peter~J. Braam, \emph{Morse--{B}ott theory and equivariant
  cohomology}, The {F}loer memorial volume, Progr. Math., vol. 133,
  Birkh\"auser, Basel, 1995, pp.~123--183. \MR{1362827 (96i:57037)}

\bibitem{Austin_Braam_1996}
David~M. Austin and Peter~J. Braam, \emph{Equivariant {F}loer theory and gluing
  {D}onaldson polynomials}, Topology \textbf{35} (1996), no.~1, 167--200.
  \MR{1367280}

\bibitem{Bernard_Christ_Guth_Weinberg_1977}
Claude~W. Bernard, Norman~H. Christ, Alan~H. Guth, and Erick~J. Weinberg,
  \emph{Pseudoparticle parameters for arbitrary gauge groups}, Phys. Rev. D (3)
  \textbf{16} (1977), no.~10, 2967--2977. \MR{459426}

\bibitem{Bleecker_1981}
David~D. Bleecker, \emph{Gauge theory and variational principles}, Global
  Analysis Pure and Applied Series A, vol.~1, Addison-Wesley Publishing Co.,
  Reading, Mass., 1981. \MR{643361 (83h:53049)}

\bibitem{Blumenhagen_Plauschinn_2009}
R.~Blumenhagen and E.~Plauschinn, \emph{Introduction to conformal field theory:
  with applications to string theory}, Lecture Notes in Physics, vol. 779,
  Springer--Verlag, Berlin, 2009. \MR{2492295 (2011a:81219)}

\bibitem{Bourguignon_Lawson_1981}
Jean-Pierre Bourguignon and H.~Blaine Lawson, Jr., \emph{Stability and
  isolation phenomena for {Y}ang--{M}ills fields}, Comm. Math. Phys.
  \textbf{79} (1981), 189--230. \MR{612248 (82g:58026)}

\bibitem{Bredon}
Glen~E. Bredon, \emph{Introduction to compact transformation groups}, Academic
  Press, New York-London, 1972, Pure and Applied Mathematics, Vol. 46.
  \MR{0413144}

\bibitem{BrockertomDieck}
Theodor Br{\"o}cker and Tammo tom Dieck, \emph{Representations of compact {L}ie
  groups}, Graduate Texts in Mathematics, vol.~98, Springer, New York, 1995.
  \MR{1410059 (97i:22005)}

\bibitem{Cohen_Milgram_1994}
Ralph~L. Cohen and R.~James Milgram, \emph{The homotopy type of gauge-theoretic
  moduli spaces}, Algebraic topology and its applications, Math. Sci. Res.
  Inst. Publ., vol.~27, Springer, New York, 1994, pp.~15--55. \MR{1268186}

\bibitem{DonApplic}
Simon~K. Donaldson, \emph{An application of gauge theory to four-dimensional
  topology}, J. Differential Geom. \textbf{18} (1983), 279--315. \MR{710056
  (85c:57015)}

\bibitem{DonInstGeomInvar}
Simon~K. Donaldson, \emph{Instantons and geometric invariant theory}, Comm.
  Math. Phys. \textbf{93} (1984), 453--460. \MR{763753 (86m:32043)}

\bibitem{DonConn}
Simon~K. Donaldson, \emph{Connections, cohomology and the intersection forms of
  {$4$}-manifolds}, J. Differential Geom. \textbf{24} (1986), no.~3, 275--341.
  \MR{868974}

\bibitem{DK}
Simon~K. Donaldson and Peter~B. Kronheimer, \emph{The geometry of
  four-manifolds}, Oxford University Press, New York, 1990.

\bibitem{Donaldson_Sullivan_1990}
Simon~K. Donaldson and D.~P. Sullivan, \emph{Quasiconformal {$4$}-manifolds},
  Acta Math. \textbf{163} (1989), 181--252. \MR{1032074 (91d:57012)}

\bibitem{Fabert_Fish_Golovko_Wehrheim_2016}
Oliver Fabert, Joel~W. Fish, Roman Golovko, and Katrin Wehrheim,
  \emph{Polyfolds: a first and second look}, EMS Surv. Math. Sci. \textbf{3}
  (2016), no.~2, 131--208. \MR{3576532}

\bibitem{Feehan_yangmillsenergy_lojasiewicz4d_v1}
Paul M.~N. Feehan, \emph{Discreteness for energies of {Y}ang--{M}ills
  connections over four-dimensional manifolds}, arXiv:1505.06995v1.

\bibitem{Feehan_yangmillsenergy_lojasiewicz4d}
Paul M.~N. Feehan, \emph{Discreteness for energies of {Y}ang--{M}ills
  connections over four-dimensional manifolds}, arXiv:1505.06995.

\bibitem{Feehan_yang_mills_gradient_flow}
Paul M.~N. Feehan, \emph{Global existence and convergence of solutions to
  gradient systems and applications to {Y}ang--{M}ills gradient flow},
  arXiv:1409.1525, xx+475 pages.

\bibitem{FeehanGeometry}
Paul M.~N. Feehan, \emph{Geometry of the ends of the moduli space of
  anti-self-dual connections}, J. Differential Geom. \textbf{42} (1995),
  465--553, arXiv:1504.05741. \MR{1367401 (97d:58034)}

\bibitem{Feehan_yangmillsenergygap}
Paul M.~N. Feehan, \emph{Energy gap for {Y}ang--{M}ills connections, {I}:
  Four-dimensional closed {R}iemannian manifolds}, Adv. Math. \textbf{296}
  (2016), 55--84, arXiv:1412.4114. \MR{3490762}

\bibitem{FLKM1}
Paul M.~N. Feehan and Thomas~G. Leness, \emph{{D}onaldson invariants and
  wall-crossing formulas. {I}: Continuity of gluing and obstruction maps},
  arXiv:math/9812060.

\bibitem{FL5}
Paul M.~N. Feehan and Thomas~G. Leness, \emph{An {$\rm SO(3)$}-monopole
  cobordism formula relating {D}onaldson and {S}eiberg--{W}itten invariants},
  Mem. Amer. Math. Soc. \textbf{256} (2018), no.~1226, arXiv:math/0203047.
  \MR{3897982}

\bibitem{FL8}
Paul M.~N. Feehan and Thomas~G. Leness, \emph{The $\mathrm{SO}(3)$ monopole
  cobordism and superconformal simple type}, Adv. Math. \textbf{356} (2019),
  https://doi.org/10.1016/j.aim.2019.106817, in press, arXiv:1408.5307.

\bibitem{FL7}
Paul M.~N. Feehan and Thomas~G. Leness, \emph{Superconformal simple type and
  {W}itten's conjecture}, Adv. Math. \textbf{356} (2019),
  https://doi.org/10.1016/j.aim.2019.106821, in press, arXiv:1408.5085.

\bibitem{Feehan_Maridakis_Lojasiewicz-Simon_coupled_Yang-Mills}
Paul M.~N. Feehan and M.~Maridakis, \emph{{{\L}}ojasiewicz--{S}imon gradient
  inequalities for coupled {Y}ang--{M}ills energy functions}, arXiv:1510.03815.

\bibitem{Fefferman_2005}
Charles~L. Fefferman, \emph{A sharp form of {W}hitney's extension theorem},
  Ann. of Math. (2) \textbf{161} (2005), no.~1, 509--577. \MR{2150391}

\bibitem{FU}
Daniel~S. Freed and Karen~K. Uhlenbeck, \emph{Instantons and four-manifolds},
  second ed., Mathematical Sciences Research Institute Publications, vol.~1,
  Springer, New York, 1991. \MR{1081321 (91i:57019)}

\bibitem{FrM}
Robert Friedman and John~W. Morgan, \emph{Smooth four-manifolds and complex
  surfaces}, Ergebnisse der Mathematik und ihrer Grenzgebiete (3) [Results in
  Mathematics and Related Areas (3)], vol.~27, Springer--Verlag, Berlin, 1994.
  \MR{1288304}

\bibitem{Gilkey2}
Peter~B. Gilkey, \emph{Invariance theory, the heat equation, and the
  {A}tiyah--{S}inger index theorem}, second ed., Studies in Advanced
  Mathematics, CRC Press, Boca Raton, FL, 1995. \MR{1396308 (98b:58156)}

\bibitem{Groisser_1993}
David Groisser, \emph{Curvature of {Y}ang--{M}ills moduli spaces near the
  boundary. {I}}, Comm. Anal. Geom. \textbf{1} (1993), 139--215. \MR{1243522
  (95h:58025)}

\bibitem{Groisser_1998}
David Groisser, \emph{Totally geodesic boundaries of {Y}ang--{M}ills moduli
  spaces}, Houston J. Math. \textbf{24} (1998), no.~2, 221--276. \MR{1690417
  (2000e:58013)}

\bibitem{GroisserParkerSphere}
David Groisser and Thomas~H. Parker, \emph{The {R}iemannian geometry of the
  {Y}ang--{M}ills moduli space}, Comm. Math. Phys. \textbf{112} (1987),
  663--689. \MR{910586 (89b:58024)}

\bibitem{GroisserParkerGeometryDefinite}
David Groisser and Thomas~H. Parker, \emph{The geometry of the {Y}ang--{M}ills
  moduli space for definite manifolds}, J. Differential Geom. \textbf{29}
  (1989), 499--544. \MR{992329 (90f:58021)}

\bibitem{Guillemin_Pollack}
Victor~W. Guillemin and Alan Pollack, \emph{Differential topology}, AMS Chelsea
  Publishing, Providence, RI, 2010, Reprint of the 1974 original. \MR{2680546}

\bibitem{Hilgert_Neeb_structure_geometry_lie_groups}
Joachim Hilgert and Karl-Hermann Neeb, \emph{Structure and geometry of {L}ie
  groups}, Springer Monographs in Mathematics, Springer, New York, 2012.
  \MR{3025417}

\bibitem{Hirsch}
Morris~W. Hirsch, \emph{Differential topology}, Graduate Texts in Mathematics,
  vol.~33, Springer--Verlag, New York, 1994, Corrected reprint of the 1976
  original. \MR{1336822 (96c:57001)}

\bibitem{Hofer_2006}
Helmut H.~W. Hofer, \emph{A general {F}redholm theory and applications},
  Current developments in mathematics, 2004, Int. Press, Somerville, MA, 2006,
  pp.~1--71. \MR{2459290 (2009j:53121)}

\bibitem{Hofer_Wysocki_Zehnder_1995_ppcs2}
Helmut H.~W. Hofer, Krzysztof Wysocki, and Eduard~J. Zehnder, \emph{Properties
  of pseudo-holomorphic curves in symplectisations. {II}. {E}mbedding controls
  and algebraic invariants}, Geom. Funct. Anal. \textbf{5} (1995), no.~2,
  270--328. \MR{1334869}

\bibitem{Hofer_Wysocki_Zehnder_1996_ppcs4}
Helmut H.~W. Hofer, Krzysztof Wysocki, and Eduard~J. Zehnder, \emph{Properties
  of pseudoholomorphic curves in symplectisation. {IV}. {A}symptotics with
  degeneracies}, Contact and symplectic geometry ({C}ambridge, 1994), Publ.
  Newton Inst., vol.~8, Cambridge Univ. Press, Cambridge, 1996, pp.~78--117.
  \MR{1432460}

\bibitem{Hofer_Wysocki_Zehnder_1996_ppcs1}
Helmut H.~W. Hofer, Krzysztof Wysocki, and Eduard~J. Zehnder, \emph{Properties
  of pseudoholomorphic curves in symplectisations. {I}. {A}symptotics}, Ann.
  Inst. H. Poincar\'{e} Anal. Non Lin\'{e}aire \textbf{13} (1996), no.~3,
  337--379. \MR{1395676}

\bibitem{Hofer_Wysocki_Zehnder_1998_ppcs1_correction}
Helmut H.~W. Hofer, Krzysztof Wysocki, and Eduard~J. Zehnder, \emph{Correction
  to: ``{P}roperties of pseudoholomorphic curves in symplectisations. {I}.
  {A}symptotics'' [{A}nn. {I}nst. {H}. {P}oincar\'{e} {A}nal. {N}on
  {L}in\'{e}aire {\bf 13} (1996), no. 3, 337--379; {MR}1395676 (97e:58029)]},
  Ann. Inst. H. Poincar\'{e} Anal. Non Lin\'{e}aire \textbf{15} (1998), no.~4,
  535--538. \MR{1632925}

\bibitem{Hofer_Wysocki_Zehnder_1999_ppcs3}
Helmut H.~W. Hofer, Krzysztof Wysocki, and Eduard~J. Zehnder, \emph{Properties
  of pseudoholomorphic curves in symplectizations. {III}. {F}redholm theory},
  Topics in nonlinear analysis, Progr. Nonlinear Differential Equations Appl.,
  vol.~35, Birkh\"{a}user, Basel, 1999, pp.~381--475. \MR{1725579}

\bibitem{Hofer_Wysocki_Zehnder_2007}
Helmut H.~W. Hofer, Krzysztof Wysocki, and Eduard~J. Zehnder, \emph{A general
  {F}redholm theory. {I}. {A} splicing-based differential geometry}, J. Eur.
  Math. Soc. (JEMS) \textbf{9} (2007), 841--876. \MR{2341834 (2008m:53202)}

\bibitem{Hofer_Wysocki_Zehnder_2009gafa}
Helmut H.~W. Hofer, Krzysztof Wysocki, and Eduard~J. Zehnder, \emph{A general
  {F}redholm theory. {II}. {I}mplicit function theorems}, Geom. Funct. Anal.
  \textbf{19} (2009), 206--293. \MR{2507223 (2010g:53174)}

\bibitem{Hofer_Wysocki_Zehnder_2009gt}
Helmut H.~W. Hofer, Krzysztof Wysocki, and Eduard~J. Zehnder, \emph{A general
  {F}redholm theory. {III}. {F}redholm functors and polyfolds}, Geom. Topol.
  \textbf{13} (2009), 2279--2387. \MR{2515707 (2010h:53138)}

\bibitem{Hofer_Wysocki_Zehnder_2010ma}
Helmut H.~W. Hofer, Krzysztof Wysocki, and Eduard~J. Zehnder, \emph{Integration
  theory on the zero sets of polyfold {F}redholm sections}, Math. Ann.
  \textbf{346} (2010), 139--198. \MR{2558891 (2011c:53222)}

\bibitem{Hofer_Wysocki_Zehnder_2010dcds}
Helmut H.~W. Hofer, Krzysztof Wysocki, and Eduard~J. Zehnder,
  \emph{sc-smoothness, retractions and new models for smooth spaces}, Discrete
  Contin. Dyn. Syst. \textbf{28} (2010), 665--788. \MR{2644764 (2011k:58006)}

\bibitem{Hofer_Wysocki_Zehnder_2017}
Helmut H.~W. Hofer, Krzysztof Wysocki, and Eduard~J. Zehnder,
  \emph{Applications of polyfold theory {I}: the polyfolds of {G}romov-{W}itten
  theory}, Mem. Amer. Math. Soc. \textbf{248} (2017), no.~1179,
  arXiv:1107.2097. \MR{3683060}

\bibitem{JaffeTaubes}
Arthur~M. Jaffe and Clifford~H. Taubes, \emph{Vortices and monopoles}, Progress
  in Physics, vol.~2, Birkh\"auser, Boston, Mass., 1980, Structure of static
  gauge theories. \MR{614447}

\bibitem{Jost_riemannian_geometry_geometric_analysis_e7}
J{\"u}rgen Jost, \emph{Riemannian geometry and geometric analysis}, seventh
  ed., Universitext, Springer, Cham, 2017. \MR{3726907}

\bibitem{Joyce_2016arxiv}
Dominic~D. Joyce, \emph{Manifolds with analytic corners}, arXiv:1605.05913.

\bibitem{Joyce_2012}
Dominic~D. Joyce, \emph{On manifolds with corners}, Advances in geometric
  analysis, Adv. Lect. Math. (ALM), vol.~21, Int. Press, Somerville, MA, 2012,
  arXiv:arXiv:0910.3518, pp.~225--258. \MR{3077259}

\bibitem{Joyce_2016}
Dominic~D. Joyce, \emph{A generalization of manifolds with corners}, Adv. Math.
  \textbf{299} (2016), 760--862, arXiv:arXiv:1501.00401. \MR{3519481}

\bibitem{Kankaanrinta_2007}
Marja Kankaanrinta, \emph{Equivariant collaring, tubular neighbourhood and
  gluing theorems for proper {L}ie group actions}, Algebr. Geom. Topol.
  \textbf{7} (2007), 1--27. \MR{2289802}

\bibitem{Klingenberg_riemannian_geometry}
W.~P.~A. Klingenberg, \emph{Riemannian geometry}, second ed., de Gruyter
  Studies in Mathematics, vol.~1, Walter de Gruyter \& Co., Berlin, 1995.
  \MR{1330918 (95m:53003)}

\bibitem{Knapp_1986}
Anthony~W. Knapp, \emph{Lie groups beyond an introduction}, second ed.,
  Progress in Mathematics, vol. 140, Birkh\"auser Boston, Inc., Boston, MA,
  2002. \MR{1920389 (2003c:22001)}

\bibitem{Kobayashi}
Shoshichi Kobayashi, \emph{Differential geometry of complex vector bundles},
  Publications of the Mathematical Society of Japan, vol.~15, Princeton
  University Press, Princeton, NJ, 1987, Kan{\^o} Memorial Lectures, 5.
  \MR{909698 (89e:53100)}

\bibitem{Kobayashi_Nomizu_v1}
Shoshichi Kobayashi and Katsumi Nomizu, \emph{Foundations of differential
  geometry. {V}ol {I}}, Interscience Publishers, a division of John Wiley \&
  Sons, New York-London, 1963. \MR{0152974 (27 \#2945)}

\bibitem{Koszul_1953}
Jean-Louis Koszul, \emph{Sur certains groupes de transformations de {L}ie},
  G\'{e}om\'{e}trie diff\'{e}rentielle. {C}olloques {I}nternationaux du
  {C}entre {N}ational de la {R}echerche {S}cientifique, {S}trasbourg, 1953,
  Centre National de la Recherche Scientifique, Paris, 1953, pp.~137--141.
  \MR{0059919}

\bibitem{Koszul_lectures_groups_transformations}
Jean-Louis Koszul, \emph{Lectures on groups of transformations}, Notes by R. R.
  Simha and R. Sridharan. Tata Institute of Fundamental Research Lectures on
  Mathematics, No. 32, Tata Institute of Fundamental Research, Bombay, 1965,
  \url{http://www.math.tifr.res.in/~publ/ln/tifr32.pdf}. \MR{0218485}

\bibitem{KMBook}
Peter~B. Kronheimer and Tomasz~S. Mrowka, \emph{Monopoles and three-manifolds},
  Cambridge University Press, Cambridge, 2007. \MR{2388043 (2009f:57049)}

\bibitem{Kuranishi}
Masatake Kuranishi, \emph{New proof for the existence of locally complete
  families of complex structures}, Proc. {C}onf. {C}omplex {A}nalysis
  ({M}inneapolis, 1964) (A.~Aeppli, E.~Calabi, and H.~R{\"o}hrl, eds.),
  Springer, Berlin, 1965, pp.~142--154. \MR{0176496 (31 \#768)}

\bibitem{Lang_fundamentals_differential_geometry}
Serge Lang, \emph{Fundamentals of differential geometry}, Graduate Texts in
  Mathematics, vol. 191, Springer--Verlag, New York, 1999. \MR{1666820}

\bibitem{Lang_introduction_differential_topology}
Serge Lang, \emph{Introduction to differentiable manifolds}, second ed.,
  Universitext, Springer-Verlag, New York, 2002. \MR{1931083}

\bibitem{Lee_john_smooth_manifolds}
John~M. Lee, \emph{Introduction to smooth manifolds}, second ed., Graduate
  Texts in Mathematics, vol. 218, Springer, New York, 2013. \MR{2954043}

\bibitem{Marathe_topics_physical_mathematics}
Kishore Marathe, \emph{Topics in physical mathematics}, Springer-Verlag London,
  Ltd., London, 2010. \MR{2722692}

\bibitem{Margalef-Roig_Outerelo-Dominguez_differential_topology}
J.~Margalef~Roig and E.~Outerelo~Dom{\'\i}nguez, \emph{Differential topology},
  North-Holland Mathematics Studies, vol. 173, North-Holland Publishing Co.,
  Amsterdam, 1992, With a preface by Peter W. Michor. \MR{1173211}

\bibitem{Marini_1992}
Antonella Marini, \emph{Dirichlet and {N}eumann boundary value problems for
  {Y}ang--{M}ills connections}, Comm. Pure Appl. Math. \textbf{45} (1992),
  1015--1050. \MR{1168118 (93k:58059)}

\bibitem{Marini_1999}
Antonella Marini, \emph{The generalized {N}eumann problem for {Y}ang--{M}ills
  connections}, Comm. Partial Differential Equations \textbf{24} (1999),
  no.~3-4, 665--681. \MR{1683053}

\bibitem{Marini_2000}
Antonella Marini, \emph{Regularity theory for the generalized {N}eumann problem
  for {Y}ang--{M}ills connections---non-trivial examples in dimensions 3 and
  4}, Math. Ann. \textbf{317} (2000), no.~1, 173--193. \MR{1760673}

\bibitem{Mazzeo_Pacard_1999}
R.~Mazzeo and F.~Pacard, \emph{Constant scalar curvature metrics with isolated
  singularities}, Duke Math. J. \textbf{99} (1999), no.~3, 353--418.
  \MR{1712628}

\bibitem{Mazzeo_Pacard_2001}
R.~Mazzeo and F.~Pacard, \emph{Constant mean curvature surfaces with {D}elaunay
  ends}, Comm. Anal. Geom. \textbf{9} (2001), no.~1, 169--237. \MR{1807955}

\bibitem{Mazzeo_Pacard_Pollack_2001}
R.~Mazzeo, F.~Pacard, and D.~Pollack, \emph{Connected sums of constant mean
  curvature surfaces in {E}uclidean 3 space}, J. Reine Angew. Math.
  \textbf{536} (2001), 115--165. \MR{1837428}

\bibitem{McDuff_Tehrani_Fukaya_Joyce_2019}
Dusa McDuff, Mohammad Tehrani, Kenji Fukaya, and Dominic Joyce, \emph{Virtual
  fundamental cycles in symplectic topology}, Mathematical Surveys and
  Monographs, vol. 237, American Mathematical Society, Providence, RI, 2019,
  Papers based on the lecture courses held at Simons Center for Geometry and
  Physics, 2013--2014, Edited by John W. Morgan. \MR{3929752}

\bibitem{Meinrenken_group_actions_manifolds_lecture_notes}
Eckhard Meinrenken, \emph{Group actions on manifolds}, Internet, 2003, Lecture
  notes, University of Toronto,
  \url{http://www.math.toronto.edu/mein/teaching/LectureNotes/action.pdf}.

\bibitem{Melrose_differential_analysis_manifolds_corners}
Richard~B. Melrose, \emph{Differential analysis on manifolds with corners},
  Massachusetts Institute of Technology, 1996, unfinished book (chapters 1 to 6
  of book with 10 chapters planned), available at
  \url{http://www-math.mit.edu/~rbm/book.html}.

\bibitem{Montgomery_Zippin_topological_transformation_groups}
Deane Montgomery and Leo Zippin, \emph{Topological transformation groups},
  Interscience Publishers, New York-London, 1955. \MR{0073104}

\bibitem{MrowkaThesis}
Tomasz~S. Mrowka, \emph{A local {M}ayer-{V}ietoris principle for
  {Y}ang--{M}ills moduli spaces}, {Ph.D}. thesis, University of California,
  Berkeley, CA, 1988. \MR{2637291}

\bibitem{Peng_1995}
Xiao~Wei Peng, \emph{Asymptotic behavior of the {$L^2$}-metric on moduli spaces
  of {Y}ang--{M}ills connections}, Math. Z. \textbf{220} (1995), 127--158.
  \MR{1347161 (96f:58025)}

\bibitem{Peng_1996}
Xiao~Wei Peng, \emph{Asymptotic behavior of the {$L^2$}-metric on moduli spaces
  of {Y}ang--{M}ills connections. {II}}, Math. Z. \textbf{222} (1996),
  425--449. \MR{1400201 (97m:58028)}

\bibitem{Reed_Simon_v1}
M.~Reed and B.~Simon, \emph{Methods of modern mathematical physics. {I}},
  second ed., Academic Press, New York, 1980, Functional analysis. \MR{751959
  (85e:46002)}

\bibitem{Rudin}
Walter Rudin, \emph{Functional analysis}, second ed., International Series in
  Pure and Applied Mathematics, McGraw-Hill, Inc., New York, 1991. \MR{1157815}

\bibitem{Schottenloher_2008}
M.~Schottenloher, \emph{A mathematical introduction to conformal field theory},
  second ed., Lecture Notes in Physics, vol. 759, Springer--Verlag, Berlin,
  2008. \MR{2492295 (2011a:81219)}

\bibitem{Simmons_introduction_topology_modern_analysis}
George~F. Simmons, \emph{Introduction to topology and modern analysis}, Robert
  E. Krieger Publishing Co., Inc., Melbourne, Fla., 1983, Reprint of the 1963
  original. \MR{695310}

\bibitem{TauSelfDual}
Clifford~H. Taubes, \emph{Self-dual {Y}ang--{M}ills connections on
  non-self-dual {$4$}-manifolds}, J. Differential Geom. \textbf{17} (1982),
  139--170. \MR{658473 (83i:53055)}

\bibitem{TauPath}
Clifford~H. Taubes, \emph{Path-connected {Y}ang--{M}ills moduli spaces}, J.
  Differential Geom. \textbf{19} (1984), 337--392. \MR{755230 (85m:58049)}

\bibitem{TauIndef}
Clifford~H. Taubes, \emph{Self-dual connections on {$4$}-manifolds with
  indefinite intersection matrix}, J. Differential Geom. \textbf{19} (1984),
  517--560. \MR{755237 (86b:53025)}

\bibitem{TauFrame}
Clifford~H. Taubes, \emph{A framework for {M}orse theory for the
  {Y}ang--{M}ills functional}, Invent. Math. \textbf{94} (1988), 327--402.
  \MR{958836 (90a:58035)}

\bibitem{tomDieck}
Tammo tom Dieck, \emph{Transformation groups}, De Gruyter Studies in
  Mathematics, vol.~8, Walter de Gruyter \& Co., Berlin, 1987. \MR{889050}

\bibitem{UhlLp}
Karen~K. Uhlenbeck, \emph{Connections with {$L^{p}$} bounds on curvature},
  Comm. Math. Phys. \textbf{83} (1982), 31--42. \MR{648356 (83e:53035)}

\bibitem{Ulyanov}
Alexander~P. Ulyanov, \emph{Polydiagonal compactification of configuration
  spaces}, J. Algebraic Geom. \textbf{11} (2002), 129--159,
  arXiv:math.AG/9904049. \MR{1865916 (2002j:14004)}

\bibitem{Warner}
Frank~W. Warner, \emph{Foundations of differentiable manifolds and {L}ie
  groups}, Graduate Texts in Mathematics, vol.~94, Springer, New York, 1983.
  \MR{722297 (84k:58001)}

\bibitem{Wehrheim_2004}
Katrin Wehrheim, \emph{Uhlenbeck compactness}, EMS Series of Lectures in
  Mathematics, European Mathematical Society (EMS), Z\"urich, 2004. \MR{2030823
  (2004m:53045)}

\bibitem{Whitney_1934tams_b}
Hassler Whitney, \emph{Differentiable functions defined in closed sets. {I}},
  Trans. Amer. Math. Soc. \textbf{36} (1934), no.~2, 369--387. \MR{1501749}

\bibitem{Whitney_1936tams}
Hassler Whitney, \emph{Differentiable functions defined in arbitrary subsets of
  {E}uclidean space}, Trans. Amer. Math. Soc. \textbf{40} (1936), no.~2,
  309--317. \MR{1501875}

\bibitem{Wilkins_1989}
D.~R. Wilkins, \emph{Slice theorems in gauge theory}, Proc. Roy. Irish Acad.
  Sect. A \textbf{89} (1989), no.~1, 13--34. \MR{1021219}

\end{thebibliography}
\bibliographystyle{/Users/pfeehan/Dropbox/LATEX/Texinputs/amsplain-nodash}

\end{document}